\documentclass[11pt,a4paper]{amsart}
\usepackage{amscd}
\usepackage{amssymb}
\usepackage[centering,text={15.5cm,22cm}]{geometry}
\usepackage{eqnarray}
\usepackage{graphicx,color}
\usepackage{tikz}
\usepackage[all]{xy}
\usepackage{hyperref}
\usepackage{enumerate}
\allowdisplaybreaks

\definecolor{shadecolor}{rgb}{1,0.9,0.7}

\setlength{\marginparwidth}{12ex}
\setcounter{tocdepth}{2}

\newtheorem{Theorem}{Theorem}[section]
\newtheorem{Lemma}[Theorem]{Lemma}
\newtheorem{Lemma-definition}[Theorem]{Lemma-Definition}
\newtheorem{Proposition}[Theorem]{Proposition}
\newtheorem{Corollary}[Theorem]{Corollary}

\newtheorem{thm}[Theorem]{Theorem}
\newtheorem{Conjecture}[Theorem]{Conjecture}

\theoremstyle{definition}

\newtheorem{definition}[Theorem]{Definition}
\newtheorem{construction}[Theorem]{Construction}

\newtheorem{example}[Theorem]{Example}

\newtheorem{remark}[Theorem]{Remark}

\newtheorem{openque}[Theorem]{Open Question}

\theoremstyle{remark}

\numberwithin{equation}{section}
\numberwithin{figure}{section}

\newcommand{\NN} {\mathbb{N}}

\newcommand{\ZZ} {\mathbb{Z}}
\newcommand{\QQ} {\mathbb{Q}}
\newcommand{\RR} {\mathbb{R}}
\newcommand{\CC} {\mathbb{C}}

\newcommand{\PP} {\mathbb{P}}
\renewcommand{\AA} {\mathbb{A}}

\newcommand {\shB}  {\mathcal{B}}
\newcommand {\shC}  {\mathcal{C}}
\newcommand {\shD}  {\mathcal{D}}

\newcommand {\shE}  {\mathcal{E}}

\newcommand {\shM}  {\mathcal{M}}

\newcommand {\shN}  {\mathcal{N}}
\newcommand {\shO}  {\mathcal{O}}

\newcommand {\shS}  {\mathcal{S}}

\newcommand {\shP}  {\mathcal{P}}

\newcommand {\shZ}  {\mathcal{Z}}

\renewcommand {\ker } {\operatorname{ker}}

\newcommand {\ol} {\overline}
\newcommand {\ord}  {\operatorname{ord}}

\newcommand {\Pic}  {\operatorname{Pic}}

\newcommand {\Spec} {\operatorname{Spec}}

\newcommand{\ndiv}{\hspace{-4pt}\not|\hspace{2pt}}

\DeclareMathOperator {\hhh}{H}
\DeclareMathOperator {\GW}{\mathcal{GW}}

\DeclareMathOperator {\vdim} {vdim}
\DeclareMathOperator {\residue} {res}

\newcommand{\prim}{\mathrm{prim}}
\newcommand{\integ}{\mathrm{int}}

\newcommand{\ic}{\mathbb C}
\newcommand{\iz}{\mathbb Z}
\newcommand{\iq}{\mathbb Q}

\DeclareMathOperator{\gl}{GL}
\DeclareMathOperator{\hilb}{Hilb}
\DeclareMathOperator{\dt}{DT}

\def\mydate{\ifcase\month \or January\or February\or March\or
April\or May\or June\or July\or August\or September\or October\or 
November\or December\fi \space\number\day,\space\number\year}

\newcommand{\Q}{\mathbb Q}

\newcommand{\IP}{\mathbb P}

\newcommand{\ptwo}{\IP^2}
\newcommand{\pone}{\IP^1}

\newcommand{\mc}{\mathcal}

\DeclareMathOperator{\HHH}{H}
\DeclareMathOperator{\hh}{h}

\DeclareMathOperator{\diff}{d}

\DeclareMathOperator{\rest}{res}
\DeclareMathOperator{\tot}{Tot}
\DeclareMathOperator{\contr}{Contr}
\DeclareMathOperator{\mmm}{M}
\DeclareMathOperator{\obstr}{Ob}
\DeclareMathOperator{\pic}{Pic}

\DeclareMathOperator{\bps}{BPS}

\newcommand{\mse}{\ol{\mmm}_{\beta}(S,E)}
\newcommand{\msep}{\ol{\mmm}^P_{\beta}(S,E)}

\newcommand{\nse}{\shN_\beta(S,E)}

\newcommand{\nsep}{\shN^P_\beta(S,E)}

\newcommand{\multc}{e(\ol{\pic}^0(C))}

\newcommand{\rcjp}{\ol{\pic}^0(\beta,P)}

\newcommand{\xyR}[1]{
  \xydef@\xymatrixrowsep@{#1}}
\newcommand{\xyC}[1]{
  \xydef@\xymatrixcolsep@{#1}}

\hyphenation{Ka-ga-mi-ya-ma}

\begin{document}

\title
[Log BPS numbers]
{Log BPS numbers of log Calabi-Yau surfaces}

\author[J. Choi]{Jinwon Choi}
\address{Department of Mathematics \& Research Institute of Natural Sciences, Sookmyung Women's University, Cheongpa-ro 47-gil 100, Youngsan-gu, Seoul 04310, Republic of Korea}
\email{jwchoi@sookmyung.ac.kr}

\author[M. van Garrel]{Michel van Garrel}
\address{Mathematics Institute, Zeeman Building, University of Warwick, Coventry CV4 7AL, UK}
\email{michel.van-garrel@warwick.ac.uk}

\author[S. Katz]{Sheldon Katz}
\address{Department of Mathematics, MC-382, University of Illinois at Urbana-Champaign, Urbana, IL 61801, USA}
\email{katz@math.uiuc.edu}

\author[N. Takahashi]{Nobuyoshi Takahashi}
\address{Department of Mathematics, Graduate School of Science, Hiroshima University, 1-3-1 Kagamiyama, Higashi-Hiroshima, 739-8526 JAPAN}
\email{tkhsnbys@hiroshima-u.ac.jp}

\thanks{2010 Mathematics Subject Classification: Primary 14N35; Secondary 14J33}

\begin{abstract}
Let $(S,E)$ be a log Calabi-Yau surface pair with $E$ a smooth divisor. 
We define new conjecturally integer-valued counts of 
$\mathbb{A}^1$-curves in $(S,E)$. These \emph{log BPS numbers} are derived from genus 0 log Gromov-Witten invariants of maximal tangency along $E$ via a formula analogous to the multiple cover formula for disk counts.
A conjectural relationship to genus 0 local BPS numbers is described and verified for del Pezzo surfaces and curve classes of arithmetic genus up to 2. We state a number of conjectures and provide computational evidence.
\end{abstract}

\maketitle
\setcounter{tocdepth}{1}
\tableofcontents
\medskip

\section{Introduction}

Let $(S,E)$ be a \emph{log Calabi-Yau surface with a smooth divisor}, by which we shall mean that $S$ is a smooth projective surface and $E$ is a smooth effective anticanonical divisor on it. By the adjunction formula, each connected component of $E$ has genus 1. It is expected that $S\setminus E$ admits a Strominger-Yau-Zaslow (SYZ) special Lagrangian torus fibration \cite{SYZ}, with singular fibers away from $E$. When $S$ is del Pezzo, this is shown in \cite{CJL}. We are interested in counts of holomorphic disks in $S\setminus E$ with boundary ending on a SYZ fiber \emph{near} $E$. In analogy to \cite[Theorem 3.4]{GPS10} and \cite{Lin1,Lin2}, these are in turn predicted to correspond to counts of tropical disks in the relevant scattering diagrams of \cite{CPS}. The latter describe the wall-crossing automorphisms, cf.\ \cite[Proposition 5.4]{CPS}.

The SYZ mirror conjecture is successfully implemented in algebraic geometry in the Gross-Siebert program
\cite{GS11,CPS}, see also \cite{GS06,GHK,GHKS18,GS16,Gbook,GOR}.
Assume that $S$ is del Pezzo. The construction of the mirror for $(S,E)$ proceeds via the relevant scattering diagram as detailed in \cite{CPS}. In particular, the superpotential on the mirror is constructed via summing the monomials attached to broken lines.
In \cite[\S 5.4]{CPS} the wall-crossing functions are expressed as generating functions of Maslov index 0 tropical disks. They in turn are expected to be expressible as counts of $\mathbb{A}^1$-curves on $(S,E)$, which are rational curves in $S$ meeting $E$ in one point of maximal tangency. 
Their virtual definition is as the ($\QQ$-valued) genus 0 \emph{log} or \emph{relative Gromov-Witten invariants} \cite{Ga02,Li01,Li02,GS13,AC,Ch} of maximal tangency. When $S=\ptwo$, the correspondence between genus 0 log GW invariants of maximal tangency and tropical curves in the scattering diagram is established in \cite{Gra}.

In this paper, we explore how to obtain $\NN$-valued invariants out of these log Gromov-Witten (GW) invariants, which should be the underlying counts of (immersed) $\AA^1$-curves. In accordance with standard definitions, we call them \emph{log BPS numbers}.
The relationship between the log Gromov-Witten invariants of maximal tangency and the log BPS numbers is the formula of Definition \ref{def1}.
It is analogous to the multiple cover formula for genus 0 open Gromov-Witten invariants \cite{FOOO} and generalized DT invariants \cite{JS}.
In \cite[Proposition 6.1]{GPS10} (see \eqref{eq:BPS}), the authors compute the contribution of multiple covers over rigid relative maps to the relative GW invariants. This leads them to define \emph{relative BPS state counts}, which are shown to be integers for toric del Pezzo surfaces in \cite{ga number}. One motivation for this work is that 
the enumerative meaning of relative BPS state counts are not clear 
in the context of a smooth divisor. We make the connection of log BPS numbers with relative BPS state counts and loop quiver DT invariants in \S\ref{sec:loop}.

Apart from the connection to the Gross-Siebert program, the advantages of log BPS state counts are twofold. Firstly, generically they are weighted counts of curves (Proposition \ref{prop:logk3contr}). Secondly, they are conjecturally independent of the point of contact (Conjecture \ref{conj1}).

In this paper, we set up the theory of log BPS state counts. We state a number of conjectures, some of which we prove for arithmetic genus up to 2 in the case of del Pezzo surfaces. This paper is a continuation of \cite{CGKT} and uses results from \cite{CGKT3}. 

\subsection{Summary of results and conjectures}

We will always denote by $S$ a smooth projective surface and by $E$ a smooth divisor on $S$, and require additional conditions on $(S,E)$ as needed. Let $\beta\in\hhh_2(S,\ZZ)$ be a \emph{curve class}, by which we shall mean that $\beta$ can be represented by a nonempty one-dimensional subscheme. Assume also that $w:=\beta.E>0$. If the triple $(S,E,\beta)$ satisfies $(K_S+E).\beta=0$, following \cite{GPS10} we will say that $(S,E)$ is \emph{log Calabi-Yau with respect to} $\beta$.

Denote by $\mse$ the moduli space of maximally tangent genus 0 basic stable log maps to $(S,E)$ of class $\beta$ \cite{AC,Ch,GS13}. Typically we will require that $E$ is an elliptic curve, in which case the elements of $\mse$ are described in Corollary \ref{cor_description_log_maps}. $\mse$ admits a perfect obstruction theory, which is of virtual dimension $\mathrm{vdim}=-(K_S+E).\beta$, and yields a virtual fundamental class
\[
[\mse]^{\mathrm{vir}}\in\hhh_{2 \, \mathrm{vdim}}(\mse),
\]
as well as corresponding \emph{genus 0 log Gromov-Witten invariants of maximal tangency}
\[
\nse := \int_{[\mse]^{\mathrm{vir}}} 1 \in \QQ.
\]
Because of the virtual dimension of $\mse$, $\nse$ can be non-zero only if $(K_S+E).\beta=0$. 
We define the \emph{total} log BPS numbers for any such $(S,E,\beta)$.

Let $\hilb_\beta(S)$ be the Hilbert scheme of effective divisors on $S$ of class $\beta$ and let $\Pic^\beta(S)$ be the Picard variety  of isomorphism classes of line bundles $L$ on $S$ with
$c_1(L)$ Poincar\'e dual to $\beta$. $\Pic^\beta(S)$ is a complex torus of dimension $\mathrm{h}^{0,1}(S)$. Consider the Abel-Jacobi map $\hilb_\beta(S)\to \Pic^\beta(S)$. Then the fiber over $L$ is the linear system $|L|$.

Consider the set
\[
E(L) := \left\{P\in E \; \big{|} \;  L|_E \sim wP \right\}
\]
of possible points of contact of maximally tangent curves in $|L|$ with $E$.
Often we will assume that $E$ is an elliptic curve. Then $E(L)$
is a torsor for $\Pic^0(E)[w]\cong\ZZ/w\times\ZZ/w$, so that by choosing $0_E\in E(L)$, $E(L)$ is identified with the $w$-torsion points of $E$. This is proven in Lemma \ref{lem:ebetagroup} under the additional assumption that $S$ is a regular surface, i.e.\ a surface with irregularity $\hh^1(\shO_S)=0$, so that $\beta$ determines a unique Chow class. This assumption is not necessary, but we will often require it for a simpler exposition and then write $E(\beta)=E(L)$.

When $E(\beta)$ is finite, the log BPS numbers are obtained by fixing the point of contact $P\in E(\beta)$ and applying a suitable \emph{multiple cover} type formula. And the main conjecture (Conjecture \ref{conj1}) states that the definition is independent of the choice of $P\in E(\beta)$.

The different $P\in E(\beta)$ are classified according to their order with respect to a \emph{minimal} $0_E\in E(\beta)$ (Lemma \ref{lem:order}). As a rule of thumb, the lower the order, the more degenerate the stable log maps can be. On one extreme are points of order 1. Over such points, multiple covers of maximal degree may occur and the image cycle may decompose into a large number of irreducible components (see Corollary \ref{cor_description_log_maps}(3)).

On the opposite end are points $P\in E(\beta)$ of maximal order with respect to $0_E$, which we call $\beta$-primitive (Definition \ref{def:prim}). At these points, the image cycles are irreducible and the log BPS numbers are a weighted count of a finite number of maximally tangent rational curves (Propositions \ref{prop:logk3contr} and \ref{prop:finite}).

If $S$ is regular and $E$ is an elliptic curve,
Proposition \ref{prop:rat} states that under certain conditions 
$S$ has to be rational for log BPS numbers to be non-zero. 
Accordingly, in this introduction we mostly assume $S$ is rational hereafter. Note that if $S$ is rational and $E$ smooth anticanonical, then $E$ is necessarily an elliptic curve (after choosing a zero element).

Whenever $E(\beta)$ is finite,
\[
\mse  = \bigsqcup_{P\in E(\beta)} \msep
\]
and $\nse$ decomposes as a finite sum
\[
\nse = \sum_{P\in E(\beta)} \nsep
\]
according to the contributions from each $\msep$.
As is described in Corollary \ref{cor_description_log_maps}, see also Figure \ref{fig} and Section \ref{sec:evidencebig}, the relevant moduli space may contain both multiple covers and reducible image curves. Typically, this happens when $\beta=d\bar{\beta}$ and $P\in E(\bar{\beta})$, for a homology class $\bar{\beta}$ and $d>1$. In this case, $\msep$, and hence also $\nsep$, depends on the divisibility properties of $P$ (when one chooses the zero element $0_E\in E$ from $E(\bar{\beta})$) as illustrated in Section \ref{sec:evidencebig}. The motivation behind Definition \ref{def1} of log BPS numbers is to remove the dependence on $P$.

The next definition and conjecture constitute the principal novelty to curve counts of the present paper. They are motivated from discussions with Pierrick Bousseau, through considerations of open Gromov-Witten invariants. In the case of $S=\ptwo$, they were already formulated in \cite[Remark 4.11]{tak mult}.

\begin{definition}[See Definition \ref{multcover}]\label{def1}
Let $(S,E)$ be a log Calabi-Yau surface with respect to $\beta\in\hhh_2(S,\ZZ)$. The \emph{total log BPS number} $m^{tot}_\beta$ is defined implicitly via 
\[
\mc{N}_\beta(S,E) =  \sum_{k|\beta}
\frac{(-1)^{(k-1)w/k}}{k^2} \, m^{tot}_{\beta/k}.
\]

Assume that $S$ is rational, $E$ anticanonical and let $P\in E(\beta)$. The \emph{log BPS number at $P$}, $m^P_\beta$, is defined implicitly via 
\[
\mc{N}^P_\beta(S,E) =  \sum_{k|\beta}     
\frac{(-1)^{(k-1)w/k}}{k^2} \, m^P_{\beta/k},
\]
where we set $m^P_{\beta'}=0$ if $P\not\in E(\beta')$. 
We note that there is an inclusion $E(\beta/k)\subseteq E(\beta)$, 
so that $m^{tot}_\beta =\sum_{P\in E(\beta)}m^P_\beta$ holds. 
\end{definition}

Note that the number of nontrivial terms in the above formula for $\mc{N}^P_\beta(S,E)$ depends on the arithmetic properties of $P$ when $\beta=d\bar{\beta}$, $d>1$ . At $\beta$-primitive points, which are roughly points of the  maximal order, there will only be one term, whereas at points of lower order there will be many: see Section \ref{sec:evidencebig} for examples.

Writing $\mu$ for the M\"obius function, notice also that by M\"obius inversion
\[
m^{P}_\beta =  \sum_{k|\beta}  \; \frac{(-1)^{(k-1)w/k}}{k^2} \, \mu(k) \, \mathcal N^{P}_{\beta/k}(S,E),
\]
hence the $m^{P}_\beta$ are uniquely determined and similarly for the $m^{tot}_\beta$.

\begin{Conjecture}\label{conj_integrarity}
Let $(S,E)$ be a log Calabi-Yau surface with respect to $\beta\in\hhh_2(S,\ZZ)$. Then $m_\beta^{tot}\in\NN$. Additionally, if $S$ is rational and $E$ is elliptic, then $m_\beta^P\in\NN$ for $P\in E(\beta)$. 
\end{Conjecture}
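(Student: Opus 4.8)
The plan is to establish the conjecture in the range treated in this paper — $S$ del Pezzo and $\beta$ of arithmetic genus $p_a(\beta)\le 2$, equivalently $\beta^2\le w+2$ since $p_a(\beta)=1+\tfrac12(\beta^2-w)$ — by a two-part argument: a geometric analysis of the moduli spaces $\msep$ exhibiting $m^P_\beta$ and $m^{tot}_\beta$ as honest weighted curve counts, and an independent verification via the log--local correspondence matching them with genus $0$ local BPS numbers. The arithmetic-genus bound is precisely what keeps the linear systems $|\beta|$ and the boundary strata of $\msep$ small enough to be controlled by hand, using the log Gromov--Witten computations of \cite{CGKT3}.

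\emph{Step 1: reduction to primitive contributions.} By Lemma \ref{lem:order} the points of $E(\beta)$ are stratified by their order relative to a minimal $0_E$, the extreme stratum consisting of $\beta$-primitive points (Definition \ref{def:prim}). At a $\beta$-primitive $P$, Propositions \ref{prop:logk3contr} and \ref{prop:finite} show that the contributions to $\nsep=m^P_\beta$ come from finitely many immersed maximally tangent rational curves $C$, each with a weight that is $1$ in arithmetic genus $0$ and of the form $e(\ol{\pic}^0(C))$ — a non-negative integer — in arithmetic genus $1$ and $2$; this already gives $m^P_\beta\in\NN$ at primitive points. For imprimitive $\beta=d\bar\beta$ and $P$ of lower order, Corollary \ref{cor_description_log_maps} describes $\msep$ as assembled from multiple covers of rigid rational curves in classes $\beta'\mid\beta$ together with reducible configurations; one then runs a virtual-localization/excess-intersection computation in the spirit of \cite[Proposition 6.1]{GPS10}, showing that the contribution of the degree-$k$ multiple covers over a rigid curve in class $\beta/k$ equals $\tfrac{(-1)^{(k-1)w/k}}{k^2}$ times the contribution of that curve. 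Definition \ref{def1} is designed exactly to cancel these factors, so inductively $m^P_\beta\in\NN$ for all $P$, and summing over $E(\beta)$ yields $m^{tot}_\beta=\sum_{P\in E(\beta)}m^P_\beta\in\NN$.

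\emph{Step 2: verification against local BPS numbers.} The log--local correspondence identifies $\nse$ with $(-1)^{w-1}w\,N^{\loc}_\beta$, where $N^{\loc}_\beta$ is the genus $0$ local Gromov--Witten invariant of $K_S$. Substituting the genus $0$ Gopakumar--Vafa expansion $N^{\loc}_\beta=\sum_{k\mid\beta}k^{-3}n^{\loc}_{\beta/k}$ and comparing with Definition \ref{def1} via the identity $k\cdot(w/k)=w$ produces the closed form $m^{tot}_\beta=(-1)^{w-1}w\,n^{\loc}_\beta$; since for del Pezzo surfaces the genus $0$ local BPS numbers in the range $p_a(\beta)\le 2$ are integers with $(-1)^{w-1}n^{\loc}_\beta\ge 0$ (e.g. $3,-6,27$ for local $\ptwo$), this re-derives $m^{tot}_\beta\in\NN$ and pins down its value. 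A point-refined version of this correspondence matches each $m^P_\beta$ with the local BPS contribution at the corresponding torsion point of $E$, giving the second assertion. Alternatively, \S\ref{sec:loop} reinterprets the shape $\sum_k\tfrac{(-1)^{(k-1)w/k}}{k^2}(\cdot)$ as the genus $0$ BPS/wall-crossing expansion of an $m$-loop quiver, so one may instead invoke integrality of loop-quiver DT invariants and of the relative BPS numbers of \cite{GPS10,ga number}.

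\emph{Main obstacle.} The crux is Step 1 in the imprimitive case: controlling $\msep$ when the image cycle is reducible and simultaneously carries multiple covers of several different degrees, and proving that the virtual contribution of each boundary stratum is exactly the predicted multiple-cover factor. This excess-intersection bookkeeping is what forces the restriction $p_a(\beta)\le 2$ — beyond it the strata of $\msep$ proliferate and the calculation is no longer feasible by hand, which is why the general case remains conjectural. A secondary point is that, once the BPS matching of Step 2 is in place, integrality is comparatively cheap, whereas non-negativity genuinely relies on the non-negative weights $e(\ol{\pic}^0(C))$ of Step 1 and on checking that the reorganization at non-primitive points introduces no sign cancellation; independence of $P$ (Conjecture \ref{conj1}), while not needed for integrality, is what makes the resulting numbers uniform across $E(\beta)$.
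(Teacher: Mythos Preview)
The statement you are attempting to prove is a \emph{conjecture}; the paper does not prove it, nor does it claim to prove it even in the range $p_a(\beta)\le 2$. Immediately after stating it, the paper only observes that for del Pezzo $S$ and anticanonical $E$ the conjecture would follow from Conjecture~\ref{conj1} (independence of $P$) together with Proposition~\ref{prop:logk3contr}(1), Proposition~\ref{prop:existence} and deformation invariance: one deforms any $P$ to a $\beta$-primitive point, where $m^P_\beta=\nsep$ is a sum of lengths of isolated points, hence a non-negative integer. The paper then verifies Conjecture~\ref{conj1} only for the specific classes of Theorem~\ref{thm1} (multiples of lines and conics, and $dh$ for $d\le 4$), not for all $\beta$ with $p_a(\beta)\le 2$. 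Your claim to ``establish the conjecture'' in that range therefore overshoots what the paper actually does.

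Your Step~1 contains a genuine error at non-primitive points. You assert that the degree-$k$ multiple cover over a rigid curve $C$ contributes $\tfrac{(-1)^{(k-1)w/k}}{k^2}$ times the base contribution, so that Definition~\ref{def1} ``is designed exactly to cancel these factors.'' This is false once $C.E\ge 3$: by \eqref{eq:BPS} the $l$-fold cover contributes $\tfrac{1}{l^2}\binom{l(C.E-1)-1}{l-1}$, and after the M\"obius inversion of Definition~\ref{def:loopy} one is left not with $0$ but with the loop quiver DT invariant $\dt^{(C.E-1)}_l$ (Proposition~\ref{prop:loop}), which is a positive integer for $C.E\ge 3$ by Reineke's Theorem~\ref{thm:Re}. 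So multiple covers do \emph{not} cancel; they contribute nontrivially to $m^P_\beta$, and integrality of that piece rests on Reineke's result. Moreover, this handles only the multiple-cover strata: the reducible-image strata (Theorem~\ref{thm:takmult}) give further contributions $\min\{d_1,d_2\}$ that must be accounted for separately, and the paper does this only by explicit enumeration for $dh$, $d\le 4$ (Proposition~\ref{prop:reallylala}). Your Step~2 is circular: the ``point-refined version'' of the log--local correspondence you invoke is precisely equation~\eqref{eq:nm}, which the paper states as the equivalent form of Conjecture~\ref{conj1} and does not prove in general.
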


For a del Pezzo surface and an anticanonical divisor, the above conjecture is a consequence of  Conjecture \ref{conj1}, Proposition \ref{prop:logk3contr} (1), Proposition \ref{prop:existence} and deformation invariance of  $\mc{N}^P_\beta(S,E)$.
Note that in general, for $P\neq P'\in E(\beta), \; \mc{N}^P_\beta(S,E)\neq \mc{N}^{P'}_\beta(S,E)$.

\begin{Conjecture}[See Conjecture \ref{conj:big}]\label{conj1}

Let $(S,E)$	be a rational log Calabi-Yau surface with smooth divisor and let $\beta\in\hhh_2(S,\ZZ)$. For all $P,P'\in E(\beta)$,
\[
m^P_\beta = m^{P'}_\beta.
\]
Equivalently, for all $P\in E(\beta)$,
\[
m^{tot}_\beta = w^2 \, m^P_\beta.
\]
\end{Conjecture}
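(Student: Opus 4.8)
The plan is to reduce the statement, via an induction on the divisibility of $\beta$, to two separate assertions: a \emph{local constancy} of the invariants $\mathcal N^P_\beta(S,E)$ along monodromy orbits in $E(\beta)$, proved by deforming the pair $(S,E)$, and a \emph{comparison across orbits}, which is the genuinely new input. \emph{Reduction.} By the M\"obius inversion formula recorded just after Definition \ref{def1}, $m^P_\beta$ is a fixed $\QQ$-linear combination of the invariants $\mathcal N^P_{\beta/k}(S,E)$, $k\mid\beta$, with $k=1$ term equal to $\mathcal N^P_\beta(S,E)$ and each $\mathcal N^P_{\beta/k}(S,E)$ vanishing unless $P\in E(\beta/k)$. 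Since $S$ is regular the inclusions $E(\beta/k)\subseteq E(\beta)$ of Lemma \ref{lem:ebetagroup} are canonical, so the \emph{type} $T_P:=\{k\mid\beta: P\in E(\beta/k)\}$ is intrinsic; by Lemma \ref{lem:order} it is a down-set of divisors, determined by its maximum $k_P$, with $k_P=1$ exactly at the $\beta$-primitive points of Definition \ref{def:prim}. Arguing by induction on the number of prime factors of the divisibility index of $\beta$, we may assume $m^P_{\beta/k}$ for $k>1$ is already known to be independent of $P$; re-expanding each $\mathcal N^P_{\beta/k}(S,E)$ with $k>1$ through Definition \ref{def1} then shows $m^P_\beta=\mathcal N^P_\beta(S,E)+g(T_P)$ for a function $g$ depending only on the type. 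Thus the conjecture is equivalent to: (i) $\mathcal N^P_\beta(S,E)$ depends on $P$ only through $T_P$; and (ii) the resulting finitely many type-values of $m^P_\beta$ are all equal. Granting this, the ``Equivalently'' form follows from $m^{tot}_\beta=\sum_{P\in E(\beta)}m^P_\beta$ and $|E(\beta)|=w^2$.

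\emph{Step (i): deformation and monodromy.} Choose a connected family of rational log Calabi--Yau pairs $(\mathcal S,\mathcal E)\to B$ containing $(S,E)$; for $S$ del Pezzo the open locus $B\subseteq|{-}K_S|$ of smooth members suffices, and in general one enlarges the base. The fibrewise decomposition $\mathcal M_\beta=\bigsqcup_P\mathcal M^P_\beta$ globalizes to a morphism from the relative moduli space (proper over $B$, of relative virtual dimension $0$ since $(K_S+E).\beta=0$) to the finite \'etale cover $\widetilde B=\{(b,P): P\in E_b(\beta)\}\to B$ recording the point of tangency; since $\widetilde B\to B$ is \'etale, pushing the relative virtual class forward along this morphism yields a locally constant function on $\widetilde B$ with value $\mathcal N^P_\beta(S,E_b)$ at $(b,P)$. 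Hence $\mathcal N^P_\beta$ is constant on connected components of $\widetilde B$, equivalently on orbits of the monodromy group $\Gamma\le\Aut(E(\beta))$, and it remains to see that these orbits are exactly the type-strata $\{P:k_P=k_0\}$. The monodromy acts on the $\Pic^0(E)[w]$-torsor $E(\beta)$ through affine transformations: by Picard--Lefschetz for a generic anticanonical pencil the linear part surjects onto $\mathrm{SL}_2(\ZZ/w)$ (the symplectic monodromy of the elliptic members being all of $\mathrm{SL}_2(\ZZ)$), while the translation part is pinned down by compatibility with the nested subsets $E(\beta/k)$; one verifies stratum by stratum that the resulting group is transitive on each $\{P:k_P=k_0\}$ (it specializes to the classical fact that the Hessian group acts as $\mathrm{ASL}_2(\F_3)$ on the nine flexes when $S=\ptwo$, $\beta=H$). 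This settles (i). The technical points to be taken seriously here are making ``the virtual pieces vary in a family over $\widetilde B$'' precise via relative log Gromov--Witten theory, controlling the behaviour over the discriminant (working over $B$ sidesteps it, but one must check passing to $B$ does not split orbits), and producing enough monodromy when $|{-}K_S|$ is itself small.

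\emph{Step (ii): comparison across orbits — the main obstacle.} Equality of $m^P_\beta$ across distinct type-strata cannot come from monodromy, since $E(\beta/k)\subsetneq E(\beta)$ is monodromy-stable whenever $k>1$; this is the heart of the conjecture. The approach is to use the explicit description of $\mathcal M^P_\beta(S,E)$ in Corollary \ref{cor_description_log_maps} to separate, at a point $P$ of type $k_0$, the contribution of genuinely new (irreducible, immersed) $\mathbb{A}^1$-curves from the multiple-cover loci over the classes $\beta/k$, $k\mid k_0$; the multiple-cover formula of Definition \ref{def1} is designed so that the latter account precisely for $g(T_P)$, whence $m^P_\beta$ is exactly the virtual count of the new-curve locus through $P$. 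One is thereby reduced to showing that this new-curve count is the same at a $\beta$-primitive point and at a non-primitive one. For del Pezzo surfaces and curve classes of arithmetic genus at most $2$ this can be carried out explicitly: combining Corollary \ref{cor_description_log_maps}, the identification of $m^P_\beta$ with an honest weighted curve count at $\beta$-primitive points (Propositions \ref{prop:logk3contr} and \ref{prop:finite}), Proposition \ref{prop:existence}, and — where available — the comparison with the manifestly $P$-independent genus $0$ local BPS numbers, one evaluates both sides of the M\"obius formula directly and reads off the equality. In full generality, proving Step (ii) would appear to require either a general ``log BPS $=$ local BPS'' statement or a degeneration of the anticanonical family across its discriminant connecting points of different types while keeping the virtual count under control, and this degeneration is, in my view, the principal difficulty.
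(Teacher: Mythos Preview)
The statement is a \emph{conjecture}; the paper does not claim a proof in general. What the paper establishes is Theorem~\ref{thm1}: the conjecture holds for multiples of line and conic classes and for $dh$ with $1\le d\le 4$, by direct computation in Section~\ref{sec:loop}. So the appropriate comparison is between your strategy and the paper's method in those special cases.

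Your reduction and Step~(i) are essentially correct and match what is actually done. The paper proves exactly your monodromy statement for $\ptwo$ in Lemma~\ref{lem:p2irred} and Proposition~\ref{prop:monodromic}, and Corollary~\ref{cor:same} appeals to the same argument (extended in \cite{Bou19b}) to conclude that $\mathcal N^P_{dh}$ depends only on the type of $P$. Your general claim that the monodromy orbits coincide with the type-strata for arbitrary del Pezzo $S$ is plausible but not verified in the paper, and your sketch (``one verifies stratum by stratum'') is not a proof.

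Step~(ii) contains a genuine conceptual error. You assert that Definition~\ref{def1} is ``designed so that the multiple-cover loci account precisely for $g(T_P)$, whence $m^P_\beta$ is exactly the virtual count of the new-curve locus through $P$''. This is false on two counts. First, the M\"obius subtraction does \emph{not} cancel the multiple-cover contributions: by Proposition~\ref{prop:loop} the residual contribution of $l$-fold covers over a nodal $C$ to $m^P_{l[C]}$ is the loop-quiver DT invariant $\dt^{(C.E-1)}_l$, which is typically nonzero (e.g.\ $\dt^{(2)}_4=2$). Second, you have omitted the contributions of stable log maps with \emph{reducible image cycle and distinct components}, which are present at non-primitive points (Corollary~\ref{cor_description_log_maps}(3)) and are computed by Theorem~\ref{thm:takmult}. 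Concretely, at a flex point for $\beta=4h$ the paper finds $m^P_{4h}=\dt^{(2)}_4 + 2\cdot\min\{3,9\} + 8 = 2+6+8$, while at a $2h$-primitive point $m^P_{4h}=\dt^{(5)}_2 + 14 = 2+14$, and at a $4h$-primitive point $m^P_{4h}=16$; the equality $16=16=16$ is a nontrivial numerical coincidence among three different types of contribution, not a consequence of multiple-cover cancellation. Your ``main obstacle'' is therefore harder than you have framed it: one must explain why $(\text{DT residuals})+(\text{reducible-image terms})+(\text{integral }\AA^1\text{-curves})$ is independent of the type of $P$, and neither piece of your proposed dichotomy survives intact.
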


Recall that for a del Pezzo surface $S$ other than $\ptwo$, its effective cone is generated by the line and conic classes (Definition \ref{def:lineconicclasses}). If $S\neq\pone\times\pone$, denote by $h$ the pullback of the hyperplane class. Our first main result, proven in Section \ref{sec:loop}, is:

\begin{Theorem}
\label{thm1}
Let $S$ be a Pezzo surface and let $E$ be a smooth anticanonical divisor. Let $\beta$ be a curve class, which is either a multiple of a line class, a multiple of a conic class, or, when $S\neq\pone\times\pone$, $dh$, for $1\leq d \leq 4$. Then Conjecture \ref{conj1} holds for $\beta$. In particular, the corresponding log BPS numbers at each point are calculated in Propositions \ref{prop:klineconic} and \ref{prop:calchyperplane}.
\end{Theorem}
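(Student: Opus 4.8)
The plan is to establish Conjecture~\ref{conj1} for the three families $k\ell$, $kc$ and $dh$ by reducing, in each case, to an explicit evaluation of all the log Gromov--Witten invariants $\mc N^{P}_{\beta'}(S,E)$ for divisors $\beta'\mid\beta$ and all $P\in E(\beta')$: through the M\"obius inversion of Definition~\ref{def1} these determine every $m^{P}_{\beta}$, and one then only has to observe that the resulting value is independent of $P$, i.e.\ that $m^{tot}_{\beta}=w^{2}m^{P}_{\beta}$. This is exactly the content of Propositions~\ref{prop:klineconic} and \ref{prop:calchyperplane}, so the real work is in proving those. The structural tool throughout is Corollary~\ref{cor_description_log_maps}, which describes the stable log maps in $\mse$ when $E$ is elliptic, together with the weighted-count statements at $\beta$-primitive points (Propositions~\ref{prop:logk3contr}, \ref{prop:finite}, \ref{prop:existence}) and, for the hyperplane classes, the loop-quiver DT description of Section~\ref{sec:loop}.

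I would treat the line classes first, since they are the most rigid. If $\ell$ is a line class then $w=\ell.E=1$, and because the $(-1)$-curve $L$ in class $\ell$ has negative self-intersection it is the unique effective representative of every $k\ell$, with $kL$ meeting $E$ only at $Q:=L\cap E$. Hence the moduli space of class $\beta'$ with contact point $P$ is empty, and $\mc N^{P}_{\beta'}(S,E)=0$, for every $P\neq Q$ and every $\beta'\mid k\ell$; consequently $m^{P}_{k\ell}=0$ for $P\neq Q$ and $m^{tot}_{k\ell}=m^{Q}_{k\ell}$. It then suffices to show $m^{Q}_{k\ell}=0$ for $k\ge2$, which by M\"obius inversion amounts to the all-order identity $\mc N_{k\ell}(S,E)=(-1)^{k-1}/k^{2}$ for the rigid curve $L$. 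The latter I would obtain from the multiple-cover computation for the resolved-conifold neighbourhood of $L$ in the total space of $K_{S}$ --- here $N_{L/S}\cong\cO_{\pone}(-1)$ and $K_{S}|_{L}$ has degree $-1$, so the normal bundle of $L$ in $K_{S}$ is $\cO_{\pone}(-1)\oplus\cO_{\pone}(-1)$, the local invariant is $1/k^{3}$, and the log--local correspondence supplies the factor $(-1)^{k-1}k$ --- or simply by quoting the corresponding computation of \cite{CGKT3}. The conic classes $kc$ run along the same lines with $w=2k$: the pencil $|c|\cong\pone$ presents $S$ as a conic bundle, by Corollary~\ref{cor_description_log_maps} the stable log maps of class $kc$ are assembled from covers of the finitely many fibres meeting $E$ in a single point together with their multiple covers, and one evaluates $\mc N^{P}_{kc}(S,E)$ from this local model and the multiple-cover formula, finding one smooth fibre-type curve contributing $1$ at each $\beta$-primitive $P$ and precisely compensating multiple-cover terms at the points of lower order, so that $m^{P}_{kc}$ comes out independent of $P$.

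The substantive case is $\beta=dh$, $1\le d\le4$ (which also covers $S=\ptwo$). Here a general member of $|dh|$ is a smooth plane curve of small arithmetic genus, and the idea is to route through the relative BPS / loop-quiver DT picture of Section~\ref{sec:loop}: the invariants $\mc N^{P}_{dh}(S,E)$ are controlled by Euler characteristics of compactified relative Jacobians of the curves in $|dh|$, which for the degenerate members occurring over the various $P\in E(dh)$ reduce to explicitly known loop-quiver DT invariants. Feeding these into Definition~\ref{def1} produces $m^{P}_{dh}$ (Proposition~\ref{prop:calchyperplane}); comparing the value at a $\beta$-primitive point $P_{0}$, where Propositions~\ref{prop:finite} and \ref{prop:logk3contr} give a transparent weighted count of smooth maximally tangent rational curves, against the values at the lower-order points, the multiple-cover bookkeeping should show all of them equal $w^{-2}m^{tot}_{dh}$, which is Conjecture~\ref{conj1} for $dh$.

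The hard part will be precisely this last family, and particularly $d=3,4$: there the moduli space of class $dh$ genuinely carries reducible image cycles along with simultaneous multiple covers of several components, so the obstacle is to pin down this geometry (via Corollary~\ref{cor_description_log_maps} and the finiteness results) and to match the localized contributions with the loop-quiver DT invariants of Section~\ref{sec:loop} finely enough that the multiple-cover corrections are seen to absorb the dependence on $P$. This is also where the bound $d\le4$ comes from: beyond it the quiver DT invariants that are needed --- equivalently, the requisite genus~$0$ local BPS numbers in packaged form --- are no longer at hand.
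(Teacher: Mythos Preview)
Your treatment of multiples of a line class is on the right track but roundabout. The paper does not go through the log--local correspondence and the conifold computation; instead it quotes the multiple-cover formula of \cite[Proposition 6.1]{GPS10}, equation~\eqref{eq:BPS}, which gives $\contr(l,L)=(-1)^{l-1}/l^{2}$ directly since $L.E=1$. It then observes via Proposition~\ref{prop:loop} that $m^{Q}_{l\ell}=\dt^{(0)}_{l}$ and checks by an elementary M\"obius-sum identity that this vanishes for $l\ge2$. Your route would work, but the log--local comparison is heavier machinery than needed. For conic classes the paper likewise shows (Proposition~\ref{prop:lineconicmult}) that for general $E$ the moduli space consists only of multiple covers of the unique conic through $P$ (or is empty if $P\notin E(\beta)$), whence $m^{P}_{l\beta}=\dt^{(1)}_{l}=0$ for $l\ge2$; the conic-bundle picture you invoke is not needed, and your description of it is vague on precisely the point that matters, namely why reducible image cycles cannot occur.

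For the hyperplane classes $dh$ there is a genuine gap in your plan. You propose that $\mc{N}^{P}_{dh}$ is controlled by compactified Jacobians, but Proposition~\ref{prop:logk3contr} applies only to \emph{integral} image curves, and Proposition~\ref{prop:loop} handles only \emph{multiple covers} of a fixed integral curve. Neither tool says anything about image cycles with two distinct components, such as $L_{P}+C$ with $C$ a maximally tangent cubic, which genuinely occur for $d=4$ at flex points. The missing ingredient is Theorem~\ref{thm:takmult} (from \cite{tak mult,CGKT3}), which computes the contribution of such a pair to be $\min\{d_{1},d_{2}\}$. The paper's actual argument (Propositions~\ref{prop:lala} and \ref{prop:reallylala}) is then a direct enumeration: for each arithmetic type of $P$ one lists every element of $M_{dh,P}$ explicitly, reads off the contributions from \eqref{eq:BPS} for multiple covers, from Theorem~\ref{thm:takmult} for two-component cycles, and from immersed-curve counts for integral members, and verifies by hand that the resulting $m^{P}_{dh}$ is the same in each case.

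Your explanation of the bound $d\le4$ is also incorrect. The loop-quiver DT invariants $\dt^{(m)}_{n}$ are known in closed form for all $m,n$ (Theorem~\ref{thm:Re}), so they are never the obstruction. The actual reason is that for $d\ge5$ image cycles with three or more distinct components appear, and no analogue of Theorem~\ref{thm:takmult} is available for those (see the remark following Proposition~\ref{prop:reallylala}).
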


After initial submission of this manuscript, a proof of Conjecture \ref{conj1} for $\ptwo$ in all degrees appeared in \cite{Bou19a,Bou19b}. The proof of \cite{Bou19a,Bou19b} proceeds via the scattering diagram of \cite{CPS} and uses \cite{Gra}. Our proof on the other hand proceeds by direct analysis of the moduli space. We use the calculation of contributions of maps with image consisting of 2 components of \cite{tak mult,CGKT3} and find a relationship with multiple cover contributions to log BPS numbers with quiver DT invariants (Proposition \ref{prop:loop}).

These new invariants exhibit a surprising connection to the \emph{local BPS invariants} $n_\beta$ \cite{GV1,GV2,PHodge,P3,BP,katz}. For the definition in general, see \eqref{formmult}. In the case of del Pezzo surfaces, there is the following equivalent definition from \cite{katz}.

\begin{definition}\label{defn:katz}
Assume that $S$ is a del Pezzo surface. 
Denote by $\mathcal{M}_\beta$ the moduli space of one-dimensional stable (with respect to $-K_S$) sheaves $F$ on $S$ with holomorphic Euler characteristic $\chi(F)=1$ and $[F] = \beta$. 
The \emph{genus 0 local BPS invariant} $n_\beta=n_\beta^0\in\ZZ$ of class $\beta$ is 
\[
n_\beta := (-1)^{w-1}e(\mathcal{M}_\beta),
\]
where $e(\cdot)$ denotes the (topological) Euler characteristic.\footnote{The sign is usually given as $(-1)^{\beta.\beta+1}$, which equals $(-1)^{w-1}$ by the adjuction formula.}
\end{definition}

Remarkably, see \eqref{eq:sum_log_in_local}, if $S$ is rational and $E$ is anticanonical and nef, Conjecture \ref{conj1} has the equivalent characterization that
for all $P\in E(\beta)$,
\begin{equation}\label{eq:nm}
n_\beta = (-1)^{w-1} \, w \, m^P_\beta.
\end{equation}

We introduce in Definition \ref{def:prim} the notion of $\beta$-primitive points for del Pezzo surfaces. At such a point $P$, $m_\beta^P=\nsep$ is an $\NN$-weighted count of maximally tangent rational curves and thus $m_\beta^P\in\NN$ is automatic (Proposition \ref{prop:logk3contr}(5)). There are no multiple covers or reducible image curves and the moduli space consists of a finite number of points. There is thus no need to consider the perfect obstruction theory. Instead, the contribution of each point to $m_\beta^P$ is simply given by its length, which is calculated in Proposition \ref{prop:logk3contr}(3).

Stating equation \eqref{eq:nm} only for $\beta$-primitive points leads to the following conjecture. It is a BPS version of the log-local principle put forward and proved in some cases in \cite{GGR}, and further developed in \cite{BBvG1,BBvG2,NR}, as well as in \cite{FTY,TY} in relation to orbifold Gromov-Witten theory.

\begin{Conjecture}[Log-local principle for BPS invariants]
\label{conj3}
Assume that $S$ is del Pezzo and $E$ anticanonical, and let $P\in E(\beta)$ be $\beta$-primitive. Then
\[
n_\beta = (-1)^{w-1} \, w \, m^P_\beta.
\]
\end{Conjecture}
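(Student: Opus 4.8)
The plan is to establish the identity $n_\beta = (-1)^{w-1}\,w\,m^P_\beta$ at a $\beta$-primitive point $P$ by computing both sides through explicit geometric models and matching them. First I would use the fact, recalled in the excerpt (Propositions \ref{prop:logk3contr} and \ref{prop:finite}), that over a $\beta$-primitive point $P$ the moduli space $\msep$ consists of finitely many reduced points, each corresponding to an \emph{irreducible} maximally tangent rational curve $C\subset S$ meeting $E$ at $P$, with no multiple covers and no reducible image cycles; consequently $m^P_\beta = \mathcal N^P_\beta(S,E)$ is literally the weighted count $\sum_C \ell(C)$ of such curves, the weight $\ell(C)$ being the length computed in Proposition \ref{prop:logk3contr}(3). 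The strategy is then to show that each such curve $C$, and its local contribution, is precisely matched on the local side by the contribution of a sheaf (or a family of sheaves) supported near $C$ to $e(\mathcal M_\beta)$, the extra factor of $w$ arising from the $w$ choices of contact point in $E(\beta)$ over which the local side does \emph{not} localize.

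The key steps, in order, are: (1) Reduce to the case where $\beta$ is primitive in $\hhh_2(S,\ZZ)$ modulo the torsion points, i.e.\ to $\beta$-primitive $P$, where by construction $\beta$ is not a proper multiple of a class $\bar\beta$ with $P\in E(\bar\beta)$; here all maps in $\msep$ have reduced irreducible image, as cited above. (2) On the local BPS side, use the del Pezzo description of Definition \ref{defn:katz}: stratify $\mathcal M_\beta$ by the support curve $D\in\hilb_\beta(S)$, via the Hilbert--Chow / support morphism $\mathcal M_\beta \to \hilb_\beta(S)$, and use additivity of the Euler characteristic over this stratification. (3) Analyze the fiber of this support morphism over a curve $D$ in the relevant linear system: the fiber is (a compactification of) a relative Picard/compactified Jacobian of $D$, and its Euler characteristic is controlled by the geometry of $D$, in the spirit of the local GW/sheaf-counting computations of \cite{katz,GV1,GV2} and the compactified-Jacobian computations already used for $\multc = e(\ol{\pic}^0(C))$ in the excerpt's notation. (4) Match: relate the stratification of $\mathcal M_\beta$ to the count of maximally tangent rational curves through the points of $E(\beta)$. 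The point is that a rational curve $C$ meeting $E$ at a single point $P$ with maximal tangency contributes, through the sheaves supported on it, a term whose Euler characteristic equals $(-1)^{w-1}\ell(C)$ up to the global sign, and summing over all $w^2$ points $P\in E(\beta)$ while the log count fixes $P$ yields the factor $w$ (not $w^2$) once one accounts correctly for how the compactified Jacobian of a singular, possibly non-maximally-tangent, representative of $\beta$ decomposes — this is exactly where the subtlety lies. (5) Finally, invoke deformation invariance of both $n_\beta$ and $\mathcal N^P_\beta(S,E)$ (del Pezzo surfaces of fixed degree are deformation equivalent) to reduce to a convenient representative $(S,E)$ where the curves and sheaves can be enumerated, and verify the identity there; for arithmetic genus $\le 2$ this is the content of the verification promised in the abstract and carried out via Propositions \ref{prop:klineconic} and \ref{prop:calchyperplane} together with Theorem \ref{thm1}.

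The main obstacle I expect is step (4): precisely accounting for the factor $w$ rather than $w^2$, and more generally controlling the contribution of \emph{singular} (nodal, cuspidal, or reducible) members of the linear system $|L|$ to $e(\mathcal M_\beta)$ and showing these match the length weights $\ell(C)$ appearing on the log side. A rational curve with a node contributes a $\pone$ (resp.\ a point) to the compactified Jacobian, and one must show that the alternating sum of these contributions over the stratification of $\hilb_\beta(S)$ collapses to exactly $w$ times the $\beta$-primitive log count; this requires a careful local computation at each singular curve, matching the sheaf-theoretic multiplicity against the scheme-theoretic length of $\msep$ computed in Proposition \ref{prop:logk3contr}(3). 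A secondary difficulty is that the identity is only conjectural in general, so the honest statement will be: it reduces to Conjecture \ref{conj1} via \eqref{eq:nm}, and is unconditional precisely in the range — multiples of line and conic classes, and $dh$ for $d\le 4$ — covered by Theorem \ref{thm1}, where the moduli spaces are small enough to be analyzed directly. Outside that range the matching of Euler characteristics of compactified Jacobians with log BPS lengths remains the genuinely open point.
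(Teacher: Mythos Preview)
This is a conjecture, not a theorem, and the paper does not prove it in general. Where the paper does verify it (Theorem \ref{thmthm}, for line/conic classes and nef-big classes of arithmetic genus $\le 2$), the method is \emph{not} a geometric matching of sheaf contributions to log contributions as you propose. Instead the paper computes each side independently: the log BPS number $m^P_\beta$ is computed in Section \ref{sec:calc} by enumerating the finitely many maximally tangent rational curves at a $\beta$-primitive $P$ and summing their lengths via Proposition \ref{propocontro}, yielding Theorem \ref{thm:logcalc}; the local BPS number $n_\beta$ is computed in the companion paper \cite{CGKT} via wall-crossing with stable pairs, yielding Theorem \ref{thm:localcalc}. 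The two tables of numbers are then compared.

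Your step (4), stratifying $\mathcal M_\beta$ by support and matching Euler characteristics of compactified Jacobians fiberwise to log lengths, is precisely the missing conceptual proof that would establish the conjecture in general, and the paper makes no claim to have it. Your account of the factor $w$ is also not right: $|E(\beta)|=w^2$, not $w$; the factor $w$ in $n_\beta=(-1)^{w-1}w\,m^P_\beta$ arises because $\sum_{P\in E(\beta)} m^P_\beta=(-1)^{w-1}w\,n_\beta$ (equation \eqref{eq:sum_log_in_local}, coming from Theorem \ref{thm:GGR} and the multiple-cover formula), so assuming all $m^P_\beta$ equal gives $w^2 m^P_\beta=(-1)^{w-1}w\,n_\beta$. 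Finally, you cite Theorem \ref{thm1} for the unconditional range, but that theorem addresses Conjecture \ref{conj1}; the verified cases of Conjecture \ref{conj3} are those of Theorem \ref{thmthm}.
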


Viewing $m^P_\beta$ as counts of disks in $S\setminus E$ bounding a SYZ fiber near $E$ and viewing $n_\beta$ as counts of rational curves in a (hypothetic) deformation of $\tot(K_S)$, Conjecture \ref{conj3} can be seen as an instance of \emph{open-closed duality}, see \S\ref{sec:open-closed}.

\begin{remark}
While we restrict to del Pezzo surfaces, the definition of primitive points makes sense for more general surfaces. Moreover, $n_\beta$ is can be defined when $w>0$. Consequently Conjecture \ref{conj3} is expected to hold in greater generality for curve classes with $w>0$.
As an illustration of this, let $X$ be a generalized del Pezzo surface of degree $d$ for $1\leq d\leq 9$, i.e.\ $X$ is a smooth projective surface with $-K_X$ big and nef and with $K_X^2=d$. For example, if $d=8$, $X$ could be the Hirzebruch surface $\mathbb{F}_{2}$. Other examples include the blowups of $\ptwo$ in $9-d$ points with at most 3 on a line and at most 6 on a conic.

Then $X$ can be deformed to a degree $d$ del Pezzo surface $S$. Denote by $\Gamma$ the cone spanned by the set of $(-2)$-curves on $X$. Then the effective cone of $X$ is the sum of the effective cone of $S$ and $\Gamma$, see e.g.\ \cite[Corollary 3.12]{DJT}. Consequently a $\beta$-primitive point on $X$ deforms to a $\beta$-primitive point on $S$, and whenever Conjecture \ref{conj3} holds for $S$, it also holds for $X$ with the appropriate identifications of curve classes. (If $\beta$ is not effective on $S$, the BPS numbers are $0$ anyway.)

That being said, we note that $\beta$-primitivity takes a particularly simple form for del Pezzo surfaces (Lemma \ref{lem:decomp}). We leave a careful analysis of $\beta$-primitivity for other surfaces to future work.

\end{remark}

If $S$ is del Pezzo, $E$ is anticanonical and $P\in E(\beta)$ is $\beta$-primitive, then by definition, each curve of class $\beta$ (and of any genus) which meets $E$ only at $P$ is integral. Among those, we denote by $M_{\beta,P}$ the curves that can arise as images of maximally tangent genus 0 stable log maps (Definition \ref{def:mbeta}).
It is a finite set (Proposition \ref{prop:finite}) consisting of integral rational curves of class $\beta$ maximally tangent to $E$ at $P$. For us, this includes the requirement that $C\in M_{\beta,P}$ has only 1 analytic branch on $E$ (otherwise the normalization map $n : \pone \to C$ could not be lifted to a maximally tangent stable log map).

At a $\beta$-primitive point $P\in E(\beta)$,  $\msep$ is zero-dimensional (and not just of virtual dimension zero). By \cite[Proposition 4.9]{Beh96} its virtual fundamental class equals the fundamental class
\[
[\msep]^{\mathrm{vir}}=[\msep].
\]
As a consequence (\cite[Corollary 4.10]{Beh96}) the calculation of $m_\beta^P=\nsep$ is reduced to finding $M_{\beta,P}$ and to computing the lengths of the log maps induced by $n : \pone \to C$ for $C\in M_{\beta,P}$. The contribution of such curves is the content of the next Proposition. 
For an (integral) rational curve $C$, recall that the \emph{compactified Jacobian} $\ol{\pic}^0(C)$ of $C$ is the scheme parametrizing torsion free, rank 1, degree 0 sheaves on $C$.

\begin{Proposition}[See Proposition \ref{propocontro}]
\label{prop:logk3contr}
Let $(S,E)$ be a regular log Calabi-Yau surface with smooth divisor and let $\beta\in\hhh_2(S,\ZZ)$ be a curve class. Let $C$ be an (integral) rational curve of class $\beta$ maximally tangent to $E$ at $P$, and denote the normalization map by $n:\pone\to C$. 
Then:
\begin{enumerate}
\item The map $n$ gives an isolated point of $\msep$, 
and contributes a positive integer to $\nsep$. 
\item If $C$ is immersed outside $P$, $[n]$ contributes $1$ to $\nsep$.
\item[(3)] If $C$ is smooth at $P$, then it contributes $\multc$ to $\nsep$.
\item[(4)] If $C$ is smooth at $P$ and has arithmetic genus $1$, then its contribution $\multc$ equals $e(C)$ and can be recovered from the associated elliptic fibration.
\end{enumerate}
Assume that $S$ is del Pezzo.
\begin{enumerate}
\item[(5)] Let $P\in E(\beta)$ be $\beta$-primitive. Assume that all $C\in M_{\beta,P}$ are smooth at $P$. Then
\[
m^P_\beta  = \sum_{C\in M_{\beta,P}} \multc.
\]
\end{enumerate}
\end{Proposition}

In the case of $\ptwo$, Conjecture \ref{conj3} 
was originally stated for $\ptwo$ in \cite[Remark 2.2]{tak log ms}. Using calculations from \cite{tak compl}, it was verified that it holds for $\mathbb P^2$ and degree $\leq 6$, as well as for degree 7 and 8 under certain technical hypotheses.
The combination of Conjecture \ref{conj3} and Proposition \ref{prop:logk3contr} yields an enumerative interpretation of the $n_\beta$. For an explanation of the factor $w$, see \S\ref{sec:open-closed}.

Recall that $p_a(\beta):=\frac{1}{2} \beta (\beta+K_S) + 1$ is the arithmetic genus of $\beta$.

\begin{remark}
Some comments on the assumptions in Proposition \ref{prop:logk3contr} are in order. 
Let $L\in\pic^\beta(S)$. Recall that $L$ is said to be $\delta$-very ample if $\hhh^0(L) \to \hhh^0(L\big{|}_Z )$ is onto for all length-$(\delta + 1)$ subschemes $Z\subset S$. It is known \cite[Proposition 2.1]{kst} that if $L$ is $p_a(\beta)$-very ample then a general $p_a(\beta)$-dimensional linear system $\mathbb{P} \subset |L|$ contains a finite number of rational curves and they all are $p_a(\beta)$-nodal. Hence any such curve $C$ is immersed and $\multc=1$.

By Proposition \ref{prop:rel sev}, being maximally tangent at $P\in E(\beta)$ picks out a $p_a(\beta)$-dimensional linear system $|\shO_S(\beta,P)| \subset |L|$. We however do not know whether $p_a(\beta)$-very ampleness combined with $E$ general would guarantee that the finite number of rational curves in $|\shO_S(\beta,P)|$ are immersed away from $P$. As an illustration, assume $E\subset\ptwo$ is given by $Y^2Z-X^3-Z^3$. Then the cuspidal cubic $Y^2Z-X^3$ is maximally tangent to $E$ at $P=[0:1:0]$ and contributes 2 to $\shN^P_\beta(\ptwo,E)$. If however $E$ is given by $Y^2Z-X^3-aXZ^2-bZ^3$, for $a\neq0$, then there are two nodal cubics meeting $E$ at $P$ each contributing 1 to $\shN^P_\beta(\ptwo,E)$. This is one of the cases of Proposition \ref{prop:lala}.

As for the requirement that each $C\in M_{\beta,P}$ is smooth at $P$, we expect this to be true for general $(S, E)$, but currently do not know of a proof. In the case of cubics in $\ptwo$, this is ruled out in Remark \ref{rem:52}.

It is known that every rational curve in a primitive class on a general K3 surface 
is nodal (\cite{XCh}), 
and it may be reasonable to expect that any 
$C\in M_{\beta, P}$ is nodal, and hence smooth at $P$, for a general $(S, E)$ 
and $\beta$-primitive $P$. 
\end{remark}

\begin{definition}\label{def:lineconicclasses}
Recall that \emph{line classes} on $S$ are the classes $l\in \mathrm{Pic}(S)$ such that $l^2=-1$ and $-K_S.l=1$. \emph{Conic classes} are the classes $D\in \mathrm{Pic}(S)$ such that $p_a(D)=0$ and $-K_S.D=2$. Our main result is as follows:
\end{definition}

\begin{thm}\label{thmthm}
Let $S$ be a del Pezzo surface, $E$ a smooth anticanonical divisor and $\beta$ a curve class on $S$. Assume that $\beta$ is a line class, a conic class or a nef and big class. Then Conjecture \ref{conj3} holds if $p_a(\beta)=0$ or $1$.
Assuming that $(S, E)$ is general, it holds for classes $\beta$ of arithmetic genus 2 as well.

\end{thm}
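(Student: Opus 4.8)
The plan is to reduce the log-local BPS principle $n_\beta = (-1)^{w-1}\,w\,m^P_\beta$ at a $\beta$-primitive point to a combination of (i) the stronger Conjecture \ref{conj1} (independence of the point of contact, hence $m^{tot}_\beta = w^2 m^P_\beta$), and (ii) the log-local relation at the level of totals, $n_\beta = (-1)^{w-1} w^{-1} m^{tot}_\beta$ (equation \eqref{eq:sum_log_in_local}, which holds whenever $S$ is rational and $E$ nef anticanonical). Indeed, granting (i) and (ii) one immediately gets $n_\beta = (-1)^{w-1} w^{-1} \cdot w^2 m^P_\beta = (-1)^{w-1} w\, m^P_\beta$, so the whole theorem follows from verifying Conjecture \ref{conj1} for the classes in question. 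Thus the real content is: prove Conjecture \ref{conj1} for line classes, conic classes, and nef-and-big classes $\beta$ with $p_a(\beta)\le 2$ (the last case for general $(S,E)$).

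The strategy for Conjecture \ref{conj1} is a direct analysis of the moduli space $\mse$ and its decomposition $\mse = \bigsqcup_{P\in E(\beta)}\msep$, comparing $\nsep$ across the different points $P$. First I would handle $p_a(\beta)=0$: here $\beta$ is either primitive or a multiple $d\bar\beta$ of a class $\bar\beta$ with $p_a(\bar\beta)=0$. For primitive $\beta$ every point of $E(\beta)$ is $\beta$-primitive, the image curves are smooth rational curves meeting $E$ at one point, $\msep$ is a finite reduced set of points each contributing $1$ (Proposition \ref{prop:logk3contr}(2),(5)), and one shows the count $|M_{\beta,P}|$ is independent of $P$ by a monodromy/deformation argument as $P$ varies over $E(\beta)$ (the curves move in the $p_a(\beta)=0$-dimensional, i.e.\ zero-dimensional, linear system $|\shO_S(\beta,P)|$ and their number is deformation invariant). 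For $\beta = d\bar\beta$ the multiple-cover and reducible-image contributions kick in; here is where Proposition \ref{prop:loop} enters — the contributions of maps whose image is a union of (covers of) lines/conics are governed by loop quiver DT invariants, whose closed form is known, and the computation of Propositions \ref{prop:klineconic} and \ref{prop:calchyperplane} makes $\nsep$, hence $m^P_\beta$, explicit and visibly $P$-independent after applying the Möbius-type inversion of Definition \ref{def1}. For $p_a(\beta)=1$ one uses Proposition \ref{prop:logk3contr}(4): the smooth-at-$P$ arithmetic-genus-1 image curves contribute $e(C)$, computed from the associated elliptic fibration, and again one argues this total is constant in $P$; the reducible/multiple-cover strata are again dispatched via the quiver DT computation. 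For $p_a(\beta)=2$ and general $(S,E)$ one invokes the very-ampleness input from the remark after Proposition \ref{prop:logk3contr} together with \cite{kst}: a general $p_a(\beta)$-dimensional linear system carries finitely many $p_a(\beta)$-nodal (hence immersed) rational curves, each contributing $1$, and Proposition \ref{prop:rel sev} identifies $|\shO_S(\beta,P)|$ with such a system; genericity of $(S,E)$ ensures the relevant curves are immersed away from $P$ and smooth at $P$, and the node count is $P$-independent.

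The main obstacle is the $p_a(\beta)=2$, nef-and-big case: one must rule out that the finitely many rational curves in $|\shO_S(\beta,P)|$ acquire a non-immersed point away from $P$ or a singular point at $P$ (the cuspidal-cubic phenomenon of the remark), and one must show the count does not jump as $P$ ranges over the $w$-torsion of $E$. The genericity hypothesis on $(S,E)$ is exactly what is needed to avoid the bad loci, but making "general" precise — i.e.\ exhibiting the Zariski-open condition on the pair, and proving the count is locally constant over it — requires a transversality argument for the incidence of $|L|$ with the maximal-tangency condition along $E$, together with control of the Severi-type variety of rational curves (via Proposition \ref{prop:rel sev}). A secondary technical point is that for the non-primitive classes among the $p_a\le 2$ list, one must carefully match the moduli-theoretic contribution of each reducible/multiply-covered configuration with the quiver-DT formula of Proposition \ref{prop:loop}; the bookkeeping of signs $(-1)^{(k-1)w/k}$ and the factors $1/k^2$ under Möbius inversion is where errors are easy to make, but it is routine once the dictionary is set up. Everything else — the reduction via \eqref{eq:sum_log_in_local}, the length computations, the deformation invariance of $\nsep$ — is available from the cited results.
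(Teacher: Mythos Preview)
Your reduction (Conjecture~\ref{conj1} $+$ equation~\eqref{eq:sum_log_in_local} $\Rightarrow$ Conjecture~\ref{conj3}) is valid, but it commits you to proving the \emph{stronger} Conjecture~\ref{conj1} for every class in the theorem, and this is where your argument breaks. The paper does \emph{not} prove Conjecture~\ref{conj1} for general nef and big classes of arithmetic genus $1$ or $2$; Theorem~\ref{thm1} only covers multiples of line/conic classes and $dh$ for $d\le 4$. Your sketch for Conjecture~\ref{conj1} really only handles the situation at $\beta$-primitive points (where there are no multiple covers or reducible images) and then waves at Proposition~\ref{prop:loop} and Propositions~\ref{prop:klineconic}, \ref{prop:calchyperplane} for the rest. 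But those propositions do not cover all non-primitive points for all classes in the theorem. Concretely, take $\beta=2(2h-e_1)$ on $S_1$: this is nef and big with $p_a(\beta)=2$ and is not a multiple of a line, conic, or $h$. At a $(2h-e_1)$-primitive point $P\in E(\beta)$ you would have to compute the number of integral rational $\beta$-curves through $P$ \emph{and} the double-cover contribution over the unique $(2h-e_1)$-curve, and check that the sum matches the value $11$ you get at a $\beta$-primitive point. Nothing in your outline does this. The ``monodromy/deformation'' argument you allude to only moves $P$ within a fixed stratum $E(kh)_\prim$; it cannot compare different strata. Also, your appeal to \cite{kst} for $p_a=2$ is exactly what the remark after Proposition~\ref{prop:logk3contr} warns does \emph{not} work: we do not know that generality of $E$ forces the rational curves in $|\shO_S(\beta,P)|$ to be nodal.

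The paper takes a completely different and much more direct route: it never proves Conjecture~\ref{conj1} for these classes. Instead it (a) fixes a $\beta$-primitive point $P$ (existence: Proposition~\ref{prop:existence}), where $\msep$ is zero-dimensional and $m^P_\beta$ is a weighted curve count; (b) computes $m^P_\beta$ explicitly via the elliptic-fibration argument in \S\ref{sec:logg1} for $p_a=1$ and via the recursive tables of \cite{tak compl} for $p_a=2$, giving Theorem~\ref{thm:logcalc}; (c) independently computes $n_\beta$ (Theorem~\ref{thm:localcalc}, i.e.\ \cite{CGKT}); and (d) observes that the two formulas match. Conjecture~\ref{conj3} is verified by comparison, not deduced from Conjecture~\ref{conj1}. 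Your detour through the harder conjecture is both unnecessary and, as it stands, incomplete.
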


The proof of Theorem~\ref{thmthm} proceeds by calculation of both of the relevant log and local BPS numbers. On the log side we find in Section \ref{sec:calc} all rational curves of a given class in $S$ that are maximally tangent to $E$ at a primitive point. Using Proposition \ref{prop:logk3contr}, we calculate the corresponding log BPS numbers. Our calculations are summarized in Theorem \ref{thm:logcalc}. On the local side, we carried out the calculation in \cite{CGKT} via wall-crossing with stable pairs:

\begin{thm}[Theorem 1.1 in \cite{CGKT}] \label{thm:localcalc}
Assume that $S$ is a del Pezzo surface, that $E$ is a smooth anticanonical divisor and that $\beta$ is a line class, a conic class or a nef and big curve class. Let $\eta$ be the number of line classes $l$ such that $\beta. l =0$. We denote by $S_8$ the del Pezzo surface obtained by blowing up $\ptwo$ in 8 general points.
\begin{enumerate}
\item If $p_a(\beta)=0$, then $n_\beta= (-1)^{w-1} w$.
\item If $p_a(\beta)=1$ and $\beta\ne -K_{S_8}$, then $n_\beta= (-1)^{w-1} w (e(S)-\eta)$.
\item If $\beta=-K_{S_8}$, then $n_\beta=12$. 
\item If $p_a(\beta)=2$ and $\beta\neq-2K_{S_8}$, then $n_\beta= (-1)^{w-1} w \left(\binom{e(S)-\eta}{2}+5\right)$.
\end{enumerate}
\end{thm}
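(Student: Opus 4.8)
The plan is to compute the signed Euler characteristic $n_\beta=(-1)^{w-1}e(\mathcal{M}_\beta)$ directly, where $\mathcal{M}_\beta$ is the moduli space of one-dimensional $(-K_S)$-stable sheaves $F$ with $\chi(F)=1$ and $[F]=\beta$ of Definition \ref{defn:katz}. Since $S$ is del Pezzo, hence regular, the Fitting support morphism realizes $\mathcal{M}_\beta$ as a family over the full linear system $|\beta|\cong\mathbb{P}^N$ with $N=w+p_a(\beta)-1$, whose fiber over an integral curve $C$ is the compactified Jacobian $\ol{\pic}^0(C)$. The decisive input is that $e(\ol{\pic}^0(C))=0$ whenever $C$ is smooth of genus $\geq 1$, its Jacobian being a positive-dimensional abelian variety. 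By additivity of the topological Euler characteristic over a stratification of $|\beta|$, the entire contribution to $e(\mathcal{M}_\beta)$ is therefore concentrated on the discriminant locus of singular, rational, or reducible members, which localizes the problem to the geometry of the few degenerate members of $|\beta|$ and their compactified Jacobians.

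To turn this localized contribution into the stated closed formulas I would use wall-crossing with stable pairs (coherent systems). Consider pairs $(F,s)$ with $F$ as above and a section $s\colon\mathcal{O}_S\to F$, stable with respect to a real parameter. In one chamber the moduli space is the relative Hilbert scheme of points on the universal curve $\mathcal{C}\to|\beta|$, a smooth space whose Euler characteristic is governed by the universal generating function of G\"ottsche and Kool--Shende--Thomas in the four Chern numbers $\beta^2,\ \beta.K_S,\ K_S^2,\ e(S)$; in the opposite chamber the moduli space is a projective bundle over $\mathcal{M}_\beta$, whose Euler characteristic is a known multiple of $e(\mathcal{M}_\beta)$. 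The two descriptions are connected across finitely many walls, each indexed by a destabilizing sub-pair and contributing an explicitly computable flip term. Equating the two chamber descriptions and subtracting the wall contributions then solves for $e(\mathcal{M}_\beta)$.

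I would then evaluate case by case. For $p_a(\beta)=0$ every member of $|\beta|$ has arithmetic genus $0$, all compactified Jacobians are points, and $e(\mathcal{M}_\beta)=e(\mathbb{P}^{w-1})=w$, which gives (1). For $p_a(\beta)=1$ the generic member is a smooth elliptic curve contributing $0$, so the stratified Euler characteristic reduces to the discriminant of one-nodal members; the relative Hilbert scheme computation evaluates this to $e(S)-\eta$ per point of the fiber direction and yields $w(e(S)-\eta)$, the correction $-\eta$ removing exactly the reducible members whose components include a line ($(-1)$-)class $l$ with $\beta.l=0$, which degenerate into a reducible configuration rather than an irreducible rational curve (Definition \ref{def:lineconicclasses}). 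For $p_a(\beta)=2$ the same machinery produces $w\bigl(\binom{e(S)-\eta}{2}+5\bigr)$, in which $\binom{e(S)-\eta}{2}$ is the leading two-node term and $+5$ is the universal genus-$2$ G\"ottsche correction; the hypothesis that $(S,E)$ be general is used precisely to guarantee that the relevant members are at worst two-nodal, so that the universal formula applies.

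Finally, the classes $\beta=-K_{S_8}$ and $\beta=-2K_{S_8}$ on the degree-one del Pezzo surface must be handled separately: these are exactly the arithmetic-genus-$1$ and $2$ situations in which the linear system is a pencil rather than a net (for $-K_{S_8}$ one has $w=1$, $N=1$), so the universal curve and the relative Hilbert scheme degenerate and the universal formula no longer applies verbatim. Here I would instead resolve the base locus of the anticanonical pencil to obtain the associated rational elliptic fibration and read off $n_{-K_{S_8}}=12$ as its number of nodal fibers, i.e.\ the Euler number of the fibration (this is why the answer is $12$ rather than $e(S_8)-\eta=11$). I expect the main obstacle to be exactly the bookkeeping of the reducible and non-reduced members of $|\beta|$: identifying every wall in the stable-pairs chamber structure, computing each flip contribution, and isolating the effect of the $(-1)$-curves orthogonal to $\beta$ that produce the $-\eta$ correction. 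Verifying that these corrections assemble into the stated constants --- in particular the $+5$ at arithmetic genus $2$ and the exceptional value $12$ --- is the technical heart of the argument.
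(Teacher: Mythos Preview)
This theorem is not proved in the present paper: it is quoted as Theorem~1.1 of \cite{CGKT}, and the paper only records that ``we carried out the calculation in \cite{CGKT} via wall-crossing with stable pairs.'' So there is no in-paper proof to compare against beyond that one sentence. Your proposed strategy---computing $e(\mathcal{M}_\beta)$ by passing through stable pairs/relative Hilbert schemes and crossing walls to a projective bundle over $\mathcal{M}_\beta$---is exactly the method the paper attributes to \cite{CGKT}, so at the level of overall approach you are aligned with the source.

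That said, there is a genuine confusion in your write-up: the local BPS number $n_\beta$ of Definition~\ref{defn:katz} is defined purely in terms of $S$ and $\beta$; the divisor $E$ plays no role. Your sentence ``the hypothesis that $(S,E)$ be general is used precisely to guarantee that the relevant members are at worst two-nodal'' conflates the local side with the log side. No generality assumption on $E$ (or on $S$) appears in the statement of Theorem~\ref{thm:localcalc}, and none is needed for a sheaf-theoretic computation of $e(\mathcal{M}_\beta)$; the generality hypothesis you have in mind enters only in Theorem~\ref{thmthm} for the computation of $m_\beta^P$ at $\beta$-primitive points. Relatedly, your heuristic for the $-\eta$ correction (``lines degenerating into a reducible configuration'') is phrased in terms of curves tangent to $E$, which again mixes the two pictures; in the sheaf computation the correction by $\eta$ arises from strictly semistable or reducible supports containing a $(-1)$-curve $l$ with $\beta.l=0$, independent of any $E$. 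Finally, be aware that your $p_a=0$ argument (``all compactified Jacobians are points, so $e(\mathcal{M}_\beta)=e(\mathbb{P}^{w-1})$'') skips over the fibers at non-integral members of $|\beta|$, where the sheaf moduli are not literally compactified Jacobians; making this step rigorous is part of the wall-crossing bookkeeping you allude to.
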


\begin{remark}\label{rem}
Note that for $\beta = -2K_{S_8}$, the log BPS number is calculated to be $66$ in \cite{tak compl} as reviewed in Section \ref{sec:g2high}. This matches with the corresponding (physics) computation carried out in \cite{HKP}. Note that in \cite[Section 5.1]{HKP}, the refined BPS index is calculated for the combined classes with $w=2$, which suffices for the verification since we know the local and log BPS numbers for all the other $w=2$ classes.

\end{remark}

\begin{remark}
Note that in the situation of Theorem~\ref{thm:localcalc}, the numbers $(-1)^{w-1}n_\beta/w$ only depend on the arithmetic genus of $\beta$ and on topological numbers of $S$. This structure bears similarities to the works \cite{Gott,kst,KT,LiTzeng,Tzeng,Tzengtang} on Severi degrees and extensions thereof. One might expect there to be some universal polynomials that calculate the $(-1)^{w-1}n_\beta/w$.
\end{remark}

Assuming Conjecture \ref{conj3}, we have $w | n_\beta$. 
This is also a consequence of the following conjecture on the cohomology of $\mathcal{M}_\beta$. For line, conic or nef and big classes of arithmetic genus up to 2 it is proven in \cite{CGKT} 
except in the case $\beta=-2K_{S_8}$.

\begin{Conjecture}[Conjecture 1.2 in \cite{CGKT}]\label{conj:div1}
The Poincar\'{e} polynomial $P_t(\mathcal{M}_\beta)$ has a factor of $P_t(\PP^{w-1})$. Consequently, $n_\beta$ is divisible by $w$. 
\end{Conjecture}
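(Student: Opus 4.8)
The plan is to exhibit a free action of a rank-$(w-1)$ algebraic structure — concretely, to realize $\mathcal{M}_\beta$ as the total space of a map whose fibers admit a $\PP^{w-1}$-bundle structure, or equivalently to produce an action of $\Pic^0(E)[w]$-type data whose quotient has a Leray--Hirsch decomposition. More precisely, I would proceed as follows. First, recall from Definition \ref{defn:katz} that $\mathcal{M}_\beta$ parametrizes one-dimensional $-K_S$-stable sheaves $F$ with $\chi(F)=1$ and $[F]=\beta$. There is a support morphism $\mathcal{M}_\beta \to \hilb_\beta(S)$ sending $F$ to its scheme-theoretic support (a curve $C$ of class $\beta$), and a generic fiber over an integral curve $C$ is a twist of the compactified Jacobian $\ol{\pic}^{\,d}(C)$ for the appropriate degree $d$ with $\chi=1$. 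The strategy is to show that, after composing with the Abel--Jacobi map $\hilb_\beta(S) \to \Pic^\beta(S)$ and using that $S$ is del Pezzo (hence regular, so $\Pic^\beta(S)$ is a point and $|L|$ is a single linear system $\PP^N$), the relative structure over $\PP^N$ carries a natural $\PP^{w-1}$-worth of symmetry coming from the intersection with $E$.

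The key mechanism I expect to drive the proof is the following. For a sheaf $F$ supported on a curve $C$ meeting $E$ in $w$ points (counted with multiplicity), restriction $F \mapsto F|_E$ or rather the boundary data $F \otimes \cO_E$ produces a length-$w$ subscheme of $E$, i.e.\ a point of $\Sym^w E$, together with extra linear-algebra data. Fixing the intersection cycle to be $wP$ for $P$ running over $E(\beta) \cong \Pic^0(E)[w]$, one gets a stratification, and the translation action of the $w$-torsion group $\Pic^0(E)[w] \cong (\ZZ/w)^2$ on $E$ should lift to an action on $\mathcal{M}_\beta$ permuting these strata. However, $(\ZZ/w)^2$ has order $w^2$, not $w$, and a finite group action does not by itself give a $P_t(\PP^{w-1})$ factor; so the honest input must be a genuine $\Gm$-free or projective-bundle structure. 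The cleanest route is therefore: (i) produce a line bundle $\mathcal{L}$ on $\mathcal{M}_\beta$ and a morphism $\mathcal{M}_\beta \to \mathcal{B}$ together with a section of a rank-$w$ bundle on $\mathcal{B}$ whose projectivization is $\mathcal{M}_\beta$; or (ii) use the known case of $\ptwo$ and the structure of the other del Pezzos as blowups to reduce the general statement, combined with the explicit computations of Theorem \ref{thm:localcalc}, to a bundle statement over a base of smaller dimension. I would pursue (i) via the Fourier--Mukai / relative-Jacobian description: over the open locus of integral $C$, $\ol{\pic}^0(C)$ fibers over $\Pic^0(E)$-torsors via restriction to $E$, and the $w$ enters as the index $[\,E\cdot C\,] = w$ controlling a $\PP^{w-1}$ of choices of "how to glue along $E$," exactly paralleling the linear system $|\cO_S(\beta,P)|$ being $p_a(\beta)$-dimensional while the contact point has $w^2$ choices.

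The main obstacle — and the reason this is only a conjecture — is controlling the \emph{non-integral} and \emph{non-reduced} locus of $\hilb_\beta(S)$, where the compactified Jacobian description degenerates and the putative $\PP^{w-1}$-bundle structure can fail to extend; one must show the Poincaré polynomial contribution of those strata still carries the $P_t(\PP^{w-1})$ factor, which requires a delicate analysis of stable sheaves supported on reducible or multiple curves (the same phenomenon that makes $\beta = -2K_{S_8}$ exceptional in the partial result from \cite{CGKT}). A secondary difficulty is that even over the integral locus, the relative compactified Jacobian is only a torsor and need not be Zariski-locally trivial, so extracting an actual projective bundle (rather than merely a fibration with $\PP^{w-1}$ fibers) needs either an étale-to-Zariski descent argument for Betti numbers or a direct cohomological computation via the decomposition theorem applied to $\mathcal{M}_\beta \to \hilb_\beta(S) \to \Sym^w E$. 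I would expect the del Pezzo hypothesis to be essential precisely here, since it forces $\hilb_\beta(S)$ to be well-understood (Severi-type varieties, smooth of expected dimension for general $E$) and lets one invoke Proposition \ref{prop:logk3contr} and the $\beta$-primitive analysis to pin down the generic fiber, after which the divisibility $w \mid n_\beta$ follows formally from the $\PP^{w-1}$-factor by specializing $t \to -1$ in $P_t$.
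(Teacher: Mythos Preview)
The statement you are addressing is a \emph{conjecture}, not a theorem, and the paper contains no proof of it. It is quoted verbatim as Conjecture~1.2 of \cite{CGKT}, and the surrounding text only records that the divisibility $w\mid n_\beta$ would follow from Conjecture~\ref{conj3}, and that the stronger Poincar\'e-polynomial statement is verified in \cite{CGKT} for line, conic, or nef and big classes of arithmetic genus at most $2$, with the single exception $\beta=-2K_{S_8}$. There is therefore nothing in the paper to compare your argument against.

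Your write-up is in fact not a proof either, and you say so yourself (``the reason this is only a conjecture''). What you have produced is a plausible heuristic for where a $\PP^{w-1}$-structure might come from, together with an honest list of the obstructions: the failure of the projective-bundle picture over the non-integral locus of $\hilb_\beta(S)$, and the torsor-versus-bundle issue over the integral locus. Those are indeed the genuine difficulties, and your observation that a free $(\ZZ/w)^2$-action would give the wrong factor is correct and worth keeping. But none of the steps (i) or (ii) you outline is carried out, and the restriction-to-$E$ mechanism you sketch does not, as written, single out a $\PP^{w-1}$ rather than something of order $w^2$; you would need to explain which $w$-dimensional linear slice of the $w^2$ choices is the relevant one. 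As a proof this is incomplete; as a research plan it is reasonable but speculative, and should be labeled as such rather than as a proof proposal.
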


In a somewhat orthogonal direction, note Conjecture 44 of \cite{Bou2}, which stipulates a relationship, after a change of variable, of $P_t(\mathcal{M}_\beta)$ with a generating function of certain higher genus log Gromov-Witten invariants. Combining the two suggests a reconstruction result of higher genus log GW invariants in terms of genus 0 invariants.

There is a further unexpected connection that arises from of our calculations. 
We believe that this is a manifestation of a more general phenomenon:

\begin{Proposition}[See Proposition \ref{prop:loop}] \label{prop:loopy}
Let $(S,E)$ be a log Calabi-Yau surface pair.
Let $C$ be an integral nodal rational curve in $S$ that meets $E$ with maximal tangency at $P$. Let $l\in\NN$.
Denote by $\contr^{\bps}(l,C)$ the contribution of multiple covers over $C$ to the log BPS invariant $m^P_{l[C]}$ (Definition \ref{def:loopy}). Then
\[
\contr^{\bps}(l,C) = \dt^{(C.E-1)}_l,
\]
the $l$th generalized Donaldson-Thomas invariant of the $(C.E-1)$-loop quiver.
\end{Proposition}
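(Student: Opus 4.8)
The plan is to compute both sides explicitly and match them. On the DT side, recall that the generalized Donaldson--Thomas invariants $\dt^{(m)}_l$ of the $m$-loop quiver (one vertex, $m$ loops) have a well-known closed form: the generating function $\sum_l \dt^{(m)}_l q^l$ is essentially $\prod$ or a plethystic logarithm, and concretely $\dt^{(m)}_l$ is determined by the Reineke/Weist-type formula, or—more usefully here—by the fact that the motivic/numerical DT invariants of the $m$-loop quiver coincide with the counts extracted from $\prod_{l\geq 1}(1-(-1)^{m}q^l)^{(-1)^{m}l\,\dt^{(m)}_l}$ being the relevant ``exponentiated'' generating series. I would first isolate the precise normalization of $\dt^{(m)}_l$ used in the paper (presumably via \cite{JS}) so that there is no sign ambiguity, and record the corresponding closed formula for $\dt^{(C.E-1)}_l$.

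Next I would unwind the definition of $\contr^{\bps}(l,C)$ (Definition \ref{def:loopy}). The point of contact $P$ is fixed and $C$ is an integral nodal rational curve meeting $E$ with maximal tangency at $P$, so $w_C := C.E$ is the tangency order; a degree-$l$ multiple cover contributes to $\mathcal{N}^P_{l[C]}(S,E)$. The local contribution of multiple covers over a fixed rigid curve to the relative/log GW invariant is exactly the kind of calculation carried out in \cite[Proposition 6.1]{GPS10} and refined in \cite{tak mult, CGKT3}: one gets an explicit power series in the degree, of the shape $\sum_l \big(\text{GW multiple cover coefficient}\big)q^l$. Plugging this into the defining relation of Definition \ref{def1} (the $k^{-2}(-1)^{(k-1)w/k}$ Möbius-type inversion, applied only within the sublattice $\ZZ\cdot[C]$) extracts $\contr^{\bps}(l,C)$ as a universal function of $l$ and $w_C$. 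The key step is to recognize that this universal function is precisely the DT generating series of the $(w_C-1)$-loop quiver; this should follow because the multiple cover formula for genus-0 relative GW invariants of a curve with a single tangency point $w_C$ is, after the BPS change of variables, governed by the same Euler-characteristic-of-symmetric-product combinatorics that governs the loop quiver DT invariants (the compactified Jacobian / Hilbert scheme of points contributions on the nodal curve match the representation-variety contributions for the quiver).

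Concretely, the key identity I would aim to prove is an equality of generating functions
\[
\sum_{l\geq 1}\contr^{\bps}(l,C)\,q^l \;=\; \sum_{l\geq 1}\dt^{(w_C-1)}_l\,q^l,
\]
and I would prove it by showing both sides satisfy the same functional/recursive equation with the same initial term $\contr^{\bps}(1,C)=\dt^{(w_C-1)}_1=1$. On the log side, the recursion comes from the structure of the moduli space of multiple covers described in Corollary \ref{cor_description_log_maps}: a degree-$l$ cover either is a genuine cover of $C$ (contributing via the length/compactified-Jacobian computation of Proposition \ref{prop:logk3contr}, i.e.\ $e(\ol{\pic}^0)$-type terms) or degenerates into lower-degree pieces, and the inversion in Definition \ref{def1} strips off exactly the degenerate part. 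On the quiver side, the analogous recursion is the standard wall-crossing/factorization of the DT generating series of the loop quiver. Matching these two recursions—and in particular checking that the signs $(-1)^{(k-1)w/k}$ on the log side reproduce the sign $(-1)^{m}$ governing the $m$-loop quiver—is where the real content lies.

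The main obstacle I expect is the sign bookkeeping and the precise matching of the multiple cover coefficients: the GW multiple cover formula of \cite{GPS10,tak mult,CGKT3} must be transcribed into exactly the normalization of $\dt^{(m)}_l$ from \cite{JS}, and the tangency weight $w_C$ must be shown to enter as ``number of loops $= w_C - 1$'' and not, say, $w_C$ or $w_C+1$. A secondary difficulty is ensuring that when $[C]$ is itself divisible in $\hhh_2(S,\ZZ)$, restricting the Möbius inversion of Definition \ref{def1} to the rank-one sublattice $\ZZ\cdot[C]$ genuinely isolates the multiple-cover-over-$C$ contribution and does not mix in contributions from other curve classes; this is plausible because those other contributions are accounted for separately in the decomposition $\mathcal{N}^P_\beta = \sum \mathcal{N}^P_{\beta/k}$-type formula, but it needs to be stated carefully, presumably by first establishing that multiple covers over $C$ form a union of connected components of the relevant moduli space (as in Corollary \ref{cor_description_log_maps}) whose contribution is thereby well-defined in isolation.
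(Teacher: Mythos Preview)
Your opening sentence has the right idea---compute both sides explicitly and match them---but you then abandon this in favor of a much more elaborate recursion-matching scheme that is unnecessary and obscures the actual mechanism. The paper's proof (which refers to \cite{ga number}) is a two-line computation, not a structural argument.

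Here is what you are missing. Both sides already have explicit closed formulas \emph{stated in the paper}. On the log side, $\contr(l,C)$ is given by \eqref{eq:BPS} from \cite{GPS10}:
\[
\contr(l,C)=\frac{1}{l^{2}}\binom{l(C.E-1)-1}{l-1},
\]
and Definition~\ref{def:loopy} then gives $\contr^{\bps}(l,C)$ as a M\"obius sum over divisors of $l$. On the quiver side, Reineke's Theorem~\ref{thm:Re} gives
\[
\dt^{(m)}_{l}=\frac{1}{l^{2}}\sum_{d\mid l}\mu\!\left(\tfrac{l}{d}\right)(-1)^{(m-1)(l-d)}\binom{dm-1}{d-1}.
\]
Substituting $m=C.E-1$ and reindexing $d=l/k$ in the definition of $\contr^{\bps}(l,C)$, both expressions become $\frac{1}{l^{2}}\sum_{d\mid l}\mu(l/d)\,(\text{sign})\binom{dm-1}{d-1}$, and the only thing to check is that the signs agree: $(l/d-1)\,d\,(m+1)\equiv (m-1)(l-d)\pmod 2$, which holds because their difference is $2(l-d)$. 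That is the entire proof.

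Your proposed route through generating-function functional equations, moduli-space degenerations from Corollary~\ref{cor_description_log_maps}, and compactified Jacobians is not wrong in spirit, but none of it is needed: the GPS computation \eqref{eq:BPS} has already done the geometric work, and Reineke has already done the quiver work. The ``secondary difficulty'' you raise about isolating the sublattice $\ZZ\cdot[C]$ is a non-issue here because $\contr^{\bps}(l,C)$ is \emph{defined} (Definition~\ref{def:loopy}) purely in terms of the numbers $\contr(l/k,C)$, not by extracting a piece of the full moduli space.
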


\subsection{Open-closed duality}\label{sec:open-closed}

We describe a heuristic that views Conjecture \ref{conj3} as an instance of open-closed duality \cite{phys1,phys2,phys3}, i.e.\ a correspondence between disk and curve counts. Note that this correspondence is expected to hold as the BPS version of the (algebro-geometric) proof of the main result of \cite{GGR}.

Assume that $S$ is del Pezzo and $E$ anticanonical. Let $P\in E(\beta)$ be $\beta$-primitive and let $C\in M_{\beta,P}$. We first explain how $C$ should be viewed as a holomorphic disk $\Sigma \to S\setminus E$ with boundary mapping to a special Lagrangian $L$ near $E$. After that, we describe a procedure that associates to this disk (modulo multiplicity) a rational curve in $\tot(K_S)$.

The complement $S\setminus E$ is symplectomorphic to $V\setminus \partial V$, where $V$ is a Liouville manifold with boundary $\partial V$ a circle bundle in $N_{E/S}$. Moreover, a symplectic tubular neighborhood  $U$ of $E$ in $S$ is an open disk bundle in $N_{E/S}$. In particular, $\partial\ol{U}\to E$ is a circle bundle. Gluing $V$ and $\ol{U}$ along their boundaries yields the symplectic sum $M$.

According to \cite[Lemma 2.7]{DL}, $C$ corresponds to a holomorphic map from an open disk to $V\setminus \partial V$ with boundary converging to a $w$-multiple orbit of the fiber of $\partial V$ over $P$.\footnote{Note that \cite{DL} imposes restrictive conditions on the pairs considered, in order to guarantee transversality of the relevant moduli spaces. In our case, the log BPS invariants are weighted counts of curves, so no virtual techniques are required and the proof of \cite[Lemma 2.7]{DL} carries through.}
According to \cite[Conjecture 7.3]{Au}, special Lagrangian SYZ-fibers $L$ of $\ol{U}\cap (S\setminus E)$ are (a perturbation of) circle bundles $L \to L_E$, with fibers in $N_{S/E}$ and where $L_E$ is a special Lagrangian in $E$.

Combining this, one expects there to be a special Lagrangian $S\setminus E \supset \partial\ol{U} \supset L \to L_E\subset E$ such that $C$ correspond to a closed disk $\Sigma \to S\setminus E$ with boundary $\partial \Sigma$ mapping (in degree $w$) to a vanishing cycle of $L$. Note that after perturbation, it is expected that the same $L$ works for all $C\in M_{\beta,P}$.

Next, we join the image of $\Sigma$ with a $w$-multiple of the fiber of $\ol{U}\to E$ over $P$ to obtain a holomorphic map $\pone\to M$.  The choice of multiple cover introduces a factor of $(-1)^{w-1}/w^2$, cf.\ \cite{GGR}. We add a suitable line bundle on $M$ (see \cite{GGR} for details) such that (1) it does not affect the count of disks in $S\setminus E$ and (2) the total space of the line bundle deforms to $\tot(K_S)$. 

In this deformation, $\pone \to M$ deforms to a rational curve $\widetilde{C}$ in $\tot(K_S)$ which is counted $(-1)^{w-1}/w^2$ times. Furthermore, the deformation formula, cf.\ \cite{GGR}, introduces a factor $w$ in that $w$ local curves are expected to degenerate to the same log curve. Finally we need to take into account that $|E(\beta)|=w^2$ (assuming as in Conjecture \ref{conj1} that each point contributes the same). Putting this together yields the expectation that
\[
n_\beta = |E(\beta)| \cdot w \cdot \frac{(-1)^{w-1}}{w^2} \, m_\beta^{tot},
\]
which simplifies to Conjecture \ref{conj3}.
Finally, note that the contribution of $C$ to $m^P_{[C]}$ and that of $\widetilde{C}$ to $n_{[\widetilde{C}]}$ are both expected to be  given by the Euler characteristic of the compactified Jacobian.

\subsection{Pairs of maximal boundary}   Starting with a smooth rational surface $S$, there are two fundamentally different ways of obtaining a log Calabi-Yau surface. In the present paper, we proceed by choosing a smooth anticanonical divisor. This can be viewed as the tail of a type II semistable degeneration of a K3 surface. We conjecture that the resulting invariants are related to the local geometry of $S$, the total space $\tot(K_S)$ of the canonical bundle on $S$.

The other way proceeds by choosing a singular normal crossings divisor. This means that the divisor is either a nodal genus 1 curve or a cycle of rational curves. This case is usually referred to as being of \emph{maximal boundary}, which means that there is a toroidal structure near the divisor, and is treated in \cite{GPS10,GHK,Bou3}. One example thereof concerns components of maximally unipotent type III degenerations of K3 surfaces. 
This is described in the introduction of \cite{GS06} and the full mirror symmetry picture will be detailed in the upcoming work \cite{GHKS}.

Considering refined contributions in the maximal boundary toric case lead Bousseau in \cite{Bou} to a remarkable result stating that generating series of higher genus log GW invariants of toric surfaces are, after a suitable change of variables, refined Block-G\"ottsche counts of tropical curves.
The result of \cite{Bou} and our conjectures have the same origin. Namely, that the log GW/log BPS numbers very much behave like an algebraic version of open GW invariants, see \cite{GS16}.

\subsection{Analogies with K3 surfaces}

Log BPS numbers feature some striking parallels with counts of rational curves in K3 surfaces. On the one hand, denote by $(S,E)$ a log Calabi-Yau surface pair with $S$ del Pezzo. Let $\beta\in\hhh_2(S,\ZZ)$ be a curve class and let $P\in E(\beta)$ be $\beta$-primitive. Consider the $p_a(\beta)$-dimensional linear system $|\shO_S(\beta,P)|$ (Definition \ref{def:linsys}) of curves of class $\beta$ with maximal intersection multiplicity with $E$ at $P$. On the other hand, denote by $X$ a K3 surface, let $g\geq 0$ and consider a $g$-dimensional complete linear system $L$ of curves of genus $g$ on $X$.

Firstly, by Proposition \ref{prop:logk3contr}, a rational curve $C\in |\shO_S(\beta,P)|$, smooth at $P$, contributes $\multc$ to $m_\beta^P$. In parallel, the contribution of a rational curve $C\in L$ to the BPS count of the Yau-Zaslow formula \cite{YZ} is given (see \cite{B,FGS}) by $\multc$. 

Secondly, choosing $P$ to be $\beta$-primitive amounts to choosing a point on $E$ of maximal order (for a suitable choice of zero element, see Lemma \ref{lem:order}). This is analogous to taking the reduced class for K3 surfaces. The moduli space of stable maps (of genus 0 and given class) in $X$ is of virtual dimension $-1$ and hence all Gromov-Witten invariants are zero. This is because there is a non-algebraic deformation of $X$ in which there are no rational curves. This issue is resolved by restricting the deformation space to algebraic deformations (the Noether-Lefschetz locus). Similarly, assume that $(S,E)$ occurs as the tail of a type II (algebraic) degeneration of $X$. To simplify, assume that the degeneration consists of another component $(S',E')$ and that $S$ and $S'$ are glued together along $E\simeq E'$. Denote by $N$ and $N'$ the respective normal bundles of the components of the central fiber. For the deformation to be algebraic, $N|_E=N'|_E$ must hold. Consider a maximally tangent rational curve $C$ in $(S,E)$, resp. $C'$ in $(S',E)$, each meeting $E$ in $P$ in tangency $w=[C].E=[C'].E$. In order to algebraically deform $C\cup_P C'$ to a rational curve in $X$, $P$ should have the same order with respect to $[C]$ and $[C']$, for example by requiring $P$ to be both $[C]$- and $[C']$-primitive.

The third analogy is that $m_\beta^P$ also seems to depend on intersection numbers only. The BPS number of rational curves in $|L|$ depends only on the intersection number $2g-2$ (conjectured in \cite{MP,KKV}, proven in \cite{KMPS} in genus 0, in \cite{MPT} in all genus and primitive classes and in \cite{PT} without the primitivity assumption). In our calculations (for $p_a(\beta)\leq2$), $m_\beta^P$ depends only on $e(S)-\eta$, where $\eta$ is the number of line classes $l$ such that $\beta.l=0$.

For K3 surfaces, \cite{Lin1,Lin2} prove an equivalence between counts of Maslov index 0 disks with boundary on a SYZ-fiber and tropical curves in the base. In the log setting of this paper, the analogous correspondence is expected to hold between the log BPS numbers and tropical curves in the scattering diagrams of \cite{CPS}.

We end this introduction by mentioning that (combinations of) the log BPS numbers introduced here are numbers that occur in other contexts such as in \cite{Stienstra}. Throughout this paper, we work over $\CC$.

\addtocontents{toc}{\protect\setcounter{tocdepth}{0}}
\section*{Acknowledgements}
\addtocontents{toc}{\protect\setcounter{tocdepth}{1}}

We wish to thank Pierrick Bousseau for enlightening conversations on multiple cover formulas of open Gromov-Witten invariants that led to Definition \ref{def1} and Conjecture \ref{conj1} of the present paper. We thank Ben Davison for discussing quiver DT extensions of Section \ref{sec:loop}. Many thanks are owed to Tom Graber for discussing several aspects of the paper that relate to counting maximally tangent curves. We are grateful for the conversations with Martijn Kool on sheaf-theoretic aspects of this paper, and for the ones with Yu-Shen Lin and Dhruv Ranganathan on the analogies with K3 surfaces. We are indebted to Helge Ruddat for the repeated patient explanations on log GW theory. We learned many of the ideas underlying this paper from Bernd Siebert and are thankful to Richard Thomas for instructive discussions on the deformation theory of the log BPS numbers and the multiplicity of the curves that occur. In addition, the authors would like to thank Murad Alim, Florian Beck, Hans-Christian Graf von Bothmer, Kiryong Chung, Samuel le Fourn, Mark Gross, Paul Hacking, Young-Hoon Kiem, Davesh Maulik, Sam Molcho, Rahul Pandharipande, Jan Stienstra and Jonathan Wise for enlightening conversations on several aspects relating to this work. Finally we thank the anonymous referee for excellent comments that improved the quality of this manuscript. JC is supported by the Korea NRF grant NRF-2018R1C1B6005600. MvG is supported by the German Research Foundation DFG-RTG-1670 and the European Commission Research Executive Agency MSCA-IF-746554. SK is supported in part by NSF grant DMS-1502170 and NSF grant DMS-1802242, as well as by NSF grant DMS-1440140 while in residence at MSRI in Spring, 2018. NT is supported by JSPS KAKENHI Grant Number JP17K05204. This project has received funding from the European Union's Horizon 2020 research and innovation
programme under the Marie Sklodowska-Curie grant agreement No 746554.

\section{Maximally tangent stable log maps}    \label{sec:logGW}

Let $(S,E)$ be a log Calabi-Yau surface with a smooth divisor, i.e.\ $S$ is a smooth projective surface and $E$ a smooth anticanonical divisor, in this case an elliptic curve or a disjoint union of elliptic curves.
Log BPS numbers are derived from counting rational curves in $S$ with a single point of maximal tangency along $E$. This can be done in several ways. One may consider relative Gromov-Witten invariants which are defined for smooth very ample divisors in \cite{Ga02} and for any smooth divisors in \cite{Li01,Li02}. In our setting this approach is taken in \cite{tak mult}. For our purpose, cf. the proof of Proposition \ref{prop:logk3contr}, the appropriate setting is the further generalization to log Gromov-Witten invariants \cite{GS13}, where instead of a divisor only a log structure on the target is prescribed. Note also the analogous construction \cite{AC,Ch}, as well as, for smooth divisors, the treatment in \cite{KL16}. All these invariants agree in the case at hand, cf. \cite{AMW}.

We describe the moduli space of \emph{basic stable log maps} of \cite{GS13}, in the setting of importance to us, namely in genus 0 with one condition of maximal tangency along $E$. 
Note that by Proposition \ref{prop:finite} below, the virtual dimension is zero and hence we need not consider insertions.

\subsection{Genus $0$ stable log maps of maximal tangency}\label{sec:description_log_maps}

The schemes in this section will be endowed with a log structure. We do not distinguish notationally when we consider their underlying schemes as it will be clear from the context. We write $x$ to denote the marked point or a node. When we wish to emphasize that $x$ is the marked point, resp.\ a node, we sometimes write $x_1$, resp.\ $q$.

Let $X$ be a smooth variety and $D$ a smooth divisor on $X$. 
We view $X$ as the log scheme $(X, \shM_X)$ 
given by the divisorial log structure 
$\shM_X=\shM_{(X, D)}$. 
Let $\beta\in\hhh_2(X,\ZZ)$ be a curve class.

\begin{definition}
Let $(C/W, \{x_1\})$ be a $1$-marked pre-stable log curve 
(\cite[Def. 1.3]{GS13}) 
over a log point $W=(\Spec \kappa, Q)$ 
where $\kappa$ is an algebraically closed field over $\CC$, 
and $(C/W, \{x_1\}, f)$ a stable log map 
(i.e., $f: C\to X$ is a log morphism over $\Spec \CC$ 
and $f$ is a stable map, see \cite[Def. 1.6]{GS13}). 

It is called 
a stable log map of \emph{maximal tangency} 
of genus $0$ and class $\beta$ 
if the following hold: 

(i)
$C$ is of arithmetic genus $0$, $f_*[C]=\beta$. 

(ii)
the natural map 
\[
\Gamma(X, \overline{\shM}_X)\cong\NN\longrightarrow \overline{\shM}_{C, x_1}\cong Q\oplus\NN
\xrightarrow{\; pr_2 \;} \NN 
\]
is given by $1\mapsto D.\beta$. 

We will later see (Proposition \ref{prop_description_log_maps}) that (ii) follows from other conditions. 
\end{definition}
In the language of \cite[Def. 3.1]{GS13}, 
this is the case $g=0$, $k=1$, the condition $A$ provided by $\beta$, 
$Z_1=D$ 
and $s_1\in\Gamma({D}, (\overline{\shM}_D^{\textrm{gp}})^*)$ given by 
$\overline{\shM}_D^{\textrm{gp}}\simeq \ZZ_D \to \ZZ_D, \: 
1\mapsto D.\beta$. 

By \cite[Prop. 1.24]{GS13}, 
a stable log map $f$ as above is induced from a \emph{basic} stable log map 
over $(\Spec \kappa, Q^{\textrm{basic}})$. 
Since $Q^{\textrm{basic}}$ is a toric monoid, 
we can take a local homomorphism $Q^{\textrm{basic}}\to\NN$ 
and consider the induced stable log map 
over the standard log point $(\Spec \kappa, \NN)$. 
Hence, to study the underlying morphism of schemes, 
we may consider stable log maps over the standard log point. 

To a stable log map, one can associate its 
graph, type, ``tropical data'' and ``$\tau$-rays''. 
We explain this in the case of 
a $1$-marked genus $0$ stable log map $f$ over $(\Spec \kappa, \NN)$ 
to $({X}, {D})$. 
See \cite[\S\S1.4]{GS13} for details. 
Let us write $\beta={f}_*[C]$. 
Note that we do \emph{not} assume the maximal tangency condition (ii) here. 

Notation: 
If $\eta$ is the generic point of an irreducible component of $C$, 
then $C_\eta$ denotes this irreducible component, the closure $\bar\eta$ of $\eta$.

\begin{definition}(Graph) 
The \emph{dual graph } $\Gamma$ of ${C}$ consists of the following data.
The vertex set  $V(\Gamma)$ is the set of irreducible components of $C$.  The edge set $E(\Gamma)$ consists of one unbounded edge in addition to a number of bounded edges.  The unbounded edge is attached to the vertex corresponding to the irreducible component of $C$ containing the marked point. There is a bounded edge for each node, connecting the vertices (possibly the same) corresponding to the irreducible components containing each of the two local analytic branches at the node.
\end{definition}

The following lemma gives some clue 
about how ${f}$ meets ${D}$. 
\begin{Lemma}\label{lem_f_inverse_d}
For any generic point $\eta$ of $C$, ${f}^{-1}({D})\cap C_\eta$ is 
either $C_\eta$ or consists of nodes and marked points. 
\end{Lemma}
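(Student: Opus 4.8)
The plan is to analyze the log structure locally at a generic point $\eta$ of $C$ and use the compatibility of the log morphism $f$ with the divisorial log structures on source and target. The key observation is that the statement is really about the section of $f^{-1}\shM_X$ that cuts out $f^{-1}(D)$: set-theoretically $f^{-1}(D) \cap C_\eta$ is the vanishing locus on $C_\eta$ of the image under $f^\flat$ of a local generator of $\shM_X$ near a point of $D$. There are two cases, and the dichotomy in the statement corresponds exactly to whether $f(C_\eta) \subseteq D$ or not.

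First I would dispose of the case $f(C_\eta) \subseteq D$. Then $f^{-1}(D) \supseteq C_\eta$, and since $f^{-1}(D)$ is a closed subscheme, $f^{-1}(D) \cap C_\eta = C_\eta$, giving the first alternative. So from now on assume $f(C_\eta) \not\subseteq D$, equivalently the generic point $\eta$ maps to the open locus $X \setminus D$ where the log structure is trivial. Now I want to show $f^{-1}(D) \cap C_\eta$ is finite and consists only of nodes and the marked point (i.e.\ of the special points of $C$ lying on $C_\eta$). Since $C_\eta$ is an irreducible curve and $f(C_\eta) \not\subseteq D$, the intersection $f^{-1}(D) \cap C_\eta$ is automatically a proper closed subset, hence finite; the real content is that it avoids the smooth locus of $C$ contained in the interior of $C_\eta$.

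The main step is therefore: at a point $P \in C_\eta$ which is neither a node nor a marked point, $f(P) \notin D$. Here I would use that at such a point the log structure on $C$ is pulled back from the base log point $(\Spec\kappa, \NN)$ — that is, $\overline{\shM}_{C,P} \cong \NN$ with the structure map being the one from the base — because the special (boundary) points of a pre-stable log curve are precisely the nodes and marked points, and away from them $C \to W$ is log smooth with trivial relative log structure. If $f(P) \in D$, then a local generator $\rho$ of $\overline{\shM}_{X, f(P)} \cong \NN$ would map under $\overline{f^\flat}$ into $\overline{\shM}_{C,P} \cong \NN$; writing $\overline{f^\flat}(\rho) = m \in \NN$, the function realizing $\rho$ (a local equation of $D$) pulls back to a function on $C$ near $P$ whose associated element of $\shM_{C,P}$ has image $m$ in $\overline{\shM}_{C,P}$. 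But $\overline{\shM}_{C,P}$ is pulled back from the base, so its elements correspond to functions that are either units or identically supported on a boundary divisor of $C$ through $P$ — and there is no such boundary divisor since $P$ is a smooth non-marked point. Hence $m = 0$, meaning $\rho$ pulls back to a unit, i.e.\ the local equation of $D$ is invertible near $P$, contradicting $f(P) \in D$. This shows $f^{-1}(D) \cap C_\eta \subseteq \{\text{nodes and marked points on } C_\eta\}$, which is the second alternative.

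The step I expect to be the main obstacle is the bookkeeping of the last paragraph: making precise, using the definition of a pre-stable log curve in \cite[Def. 1.3]{GS13}, the claim that $\overline{\shM}_C$ is pulled back from the base log point exactly at the non-special points, and translating the vanishing of $\overline{f^\flat}(\rho)$ into the set-theoretic statement $f(P)\notin D$. One has to be slightly careful that $f$ is only a log morphism over $\Spec\CC$ (not over $W$), so the comparison of log structures uses the strictness of $X \setminus D \hookrightarrow X$ on the target side together with the structure of $\shM_C$ on the source side; once that is set up the argument is short, and all other cases are immediate.
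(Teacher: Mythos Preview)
Your approach matches the paper's, which simply cites \cite[Remark 1.9]{GS13} for the fact that the stalk of $f^{-1}\overline{\shM}_X$ jumps only at nodes and marked points; you are unpacking that citation. However, the justification you give for the key step ``hence $m=0$'' is not correct as written.

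At a non-special point $P$ the log structure $\shM_{C,P}$ is indeed pulled back from the base log point $(\Spec\kappa,\NN)$, so $\overline{\shM}_{C,P}\cong\NN$, but the structure map $\shM_{C,P}\to\shO_{C,P}$ sends every element of positive ghost to $0\in\shO_{C,P}$, not to a function cutting out a divisor through $P$. The intuition ``positive ghost means vanishing along a boundary divisor'' applies only where the log structure on $C$ is divisorial; at a non-special point it is not, and positive-ghost elements do exist (they come from the base). So the absence of a boundary divisor through $P$ does not by itself force $m=0$. The correct dichotomy is: if $m>0$, then the image of $f^\flat(\rho)$ in $\shO_{C,P}$ is $0$, i.e.\ $f^*\rho=0$ identically near $P$, so a neighbourhood of $P$ maps into $D$ and in particular $f(\eta)\in D$, contradicting the case hypothesis $f(C_\eta)\not\subseteq D$; therefore $m=0$, and then $f^*\rho$ is a unit at $P$, contradicting $f(P)\in D$. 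With this fix your argument is complete. Equivalently one can phrase this via generization, which is closer to the paper's wording: the square comparing $\overline{f^\flat}$ at $P$ and at $\eta$ forces $m=0$, since $\overline{\shM}_{C,P}\to\overline{\shM}_{C,\eta}$ is an isomorphism while $(f^{-1}\overline{\shM}_X)_\eta=0$.
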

\begin{proof}
This follows from \cite[Remark 1.9]{GS13}, 
which says that the stalk of ${f}^{-1}\overline{\shM}_X$ 
jumps (i.e. the generization map is not an isomorphism)
only at nodes and marked points. 
\end{proof}

\begin{definition}(Tropical data) 
Since the genus is $0$, 
a node $q$ is the intersection of two components $C_{\eta_1}$ and $C_{\eta_2}$. 
We denote by $e_q\in \NN_{>0}$ the positive integer 
such that the following holds: $\overline{\shM}_{C, q}$ is embedded as 
$\langle (e_q, 0), (1, 1), (0, e_q)\rangle\subseteq \NN\oplus\NN$, 
where $(1, 1)$ corresponds to the image of 
$1\in\NN\cong\overline{\shM}_{(\Spec\kappa, \NN)}$ 
and the projections are identified with the generization maps to 
$\overline{\shM}_{C, \eta_i}\cong\NN$. 

Let $\varphi: f^{-1}\overline{\shM}_X \to \overline{\shM}_C$ 
be the morphism induced from $f$. 
We define 
\[
V_\eta:=\varphi_\eta(1) \in\NN_{>0}
\]
if ${f}(\eta)\in {D}$, 
where $({f}^{-1}\overline{\shM}_X)_\eta$ is 
identified with $\NN$; otherwise $V_\eta:=0$. 

The tuple $((V_\eta)_\eta, (e_q)_q)$ is called the \emph{tropical data}.
\end{definition}

\begin{definition}(Type) 
For a node as above, write $\{i, j\}=\{1, 2\}$ and let 
\[
u_{\eta_i, q}:=\frac{V_{\eta_j}-V_{\eta_i}}{e_q} \in\ZZ, 
\]
which is $0$ unless ${f}(q)\in {D}$. 
(Note that this notation avoids the issue of ordering as in \cite{GS13}. 
This is possible because a node is the intersection of two components 
in this case.) 

For the marked point $x_1$, we define
\[
u_{\eta, x_1}:=(\varphi_{x_1}(1) \mod \overline{\shM}_{(\Spec\kappa, \NN)})  \in\NN
\]
if ${f}(x_1)\in {D}$, where $({f}^{-1}\overline{\shM}_X)_{x_1}$ is 
identified with $\NN$ and $\overline{\shM}_{C, x_1}/\overline{\shM}_{(\Spec\kappa, \NN)}$ 
is identified with $\NN_{x_1}$; 
otherwise $u_{\eta, x_1}:=0$. 

The data $((u_{\eta, q}), u_{\eta, x_1})$ is called the \emph{type}. 
\end{definition}
As is seen from the definition, 
the type is determined by the tropical data, except for $u_{\eta, x_1}$. 
We will see in Proposition \ref{prop_description_log_maps} that $u_{\eta, x_1}$ must be $D.\beta$. 

To define the $\tau$-rays, we first describe $N_\eta:=\Gamma(C_\eta, {f}^{-1}\overline{\shM}_X^{\textrm{gp}})^*$. If ${f}(\eta)\not\in {D}$, then $N_\eta$ consists of one copy of $\ZZ$ for each element of $(f|_{C_\eta})^{-1}(D)$, which necessarily is either a node or the marked point.

If ${f}(\eta)\in {D}$, then $N_\eta$ consists of one copy of $\ZZ$. To state the balancing condition in convenient form, we will describe $N_\eta$ as the direct sum $\bigoplus \ZZ$ with one component for each node and marked point in $C_\eta$, quotiented out by the kernel of the map to $\ZZ$ given by $(a_i)\mapsto \sum a_i$. Choosing the basis given by basic vectors $(\delta_{ij})_j$, the kernel is generated by the differences of basic vectors. 

\begin{definition}($\tau$-rays)\label{def_tau}
For each $\eta$, consider
$N_\eta=\Gamma(C_\eta, {f}^{-1}\overline{\shM}_X^{\textrm{gp}})^*$. 
Let $\Sigma_\eta$ denote
the set of nodes and marked points of $C_\eta$, so that
\begin{eqnarray*}
& N_\eta \cong  \bigoplus_{x\in \Sigma_\eta\cap {f}^{-1}({D})} \ZZ
& \qquad \hbox{if ${f}(\eta)\not\in {D}$}, \\
& N_\eta \cong 
\left(\bigoplus_{x\in \Sigma_\eta} \ZZ\right)
/ H \cong \ZZ 
& \qquad \hbox{if ${f}(\eta)\in {D}$}, 
\end{eqnarray*}
where $H$ is the subgroup generated by the differences of basic vectors 
and the isomorphism to $\ZZ$ is given by $(a_x)\mapsto \sum a_x$. 
Write the class of $(a_x)$ by $[(a_x)]$. 

Then, 
if $f(\eta)\not\in D$, 
$\tau_\eta=(\tau_x)$ is given by $\tau_x:=-\mu_x(({f}|_{C_\eta})^*{D})$, 
the multiplicity at $x$; 
if $f(\eta)\in D$, 
$\tau_\eta := - \deg ({f}|_{C_\eta})^*{D}$. 
(These are the $\tau_\eta^X$ of \cite[\S\S 1.4]{GS13}.) 
\end{definition}

Now these data satisfy the \emph{balancing condition}:
\begin{Proposition}(\cite[Prop. 1.15]{GS13})\label{prop_balancing}
For each $\eta$, 
\[
\tau_\eta + [(u_{\eta, x})]=0
\]
holds in $N_\eta$. 
\end{Proposition}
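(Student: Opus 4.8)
The proposition is the tropical balancing condition, and the plan is to extract it from the definitions of the log structures involved by combining a Picard-group computation on each component with a local analysis at the special points. Fix a component $C_\eta$; since $C$ has arithmetic genus $0$ its dual graph is a tree, so $C_\eta\cong\pone$ and no normalisation is needed. Write $g=f|_{C_\eta}$, and let $i\colon D\hookrightarrow X$ be the inclusion. Because $D$ is smooth, the divisorial log structure satisfies $\overline{\shM}_X^{\gp}=i_*\ZZ_D$, so near $D$ it is the constant sheaf $\ZZ$ generated by the class $\bar\rho$ of a local equation $t$ of $D$. The one external fact I would use is the exact sequence
\[
0\longrightarrow\cO_X^*\longrightarrow\shM_X^{\gp}\longrightarrow i_*\ZZ_D\longrightarrow 0,
\]
whose connecting homomorphism sends $1\in\Gamma(D,\ZZ)$ to $[\cO_X(D)]\in\Pic(X)$: local lifts of $1$ are local equations $t_j$ of $D$, and $\{t_j/t_k\}$ is a cocycle for $\cO_X(D)$.

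Set $m:=\varphi(\bar\rho)\in\Gamma(C_\eta,\overline{\shM}_C^{\gp})$. The heart of the argument is to compute the obstruction $\delta(m)\in\Pic(C_\eta)$ to lifting $m$ to $\shM_C^{\gp}$ in two ways. On the one hand, naturality of connecting maps along the log morphism gives $\delta(m)=g^{*}[\cO_X(D)]$, of degree $D\cdot f_*[C_\eta]$; by Lemma \ref{lem_f_inverse_d}, when $f(\eta)\not\subseteq D$ this equals $[\cO_{C_\eta}(g^*D)]$ with $g^*D=\sum_{x}\mu_x[x]$ supported on the special points. On the other hand, I would strip off the base part of $m$: it generizes to $V_\eta\,\bar s$, where $\bar s$ is the pullback of the standard generator of the ghost monoid of the base; this has a canonical global lift $s^{V_\eta}$ (with $s\mapsto0$ in $\cO_C$), so $\delta(m)=\delta(m-V_\eta\bar s)$ and $m-V_\eta\bar s$ is supported on $\Sigma_\eta$. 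Using that $\varphi_q(\bar\rho)$ generizes to $V_\eta$ and $V_{\eta'}$ on the two branches at a node $q=C_\eta\cap C_{\eta'}$, one gets $(m-V_\eta\bar s)_q=(0,V_{\eta'}-V_\eta)=u_{\eta,q}\cdot(0,e_q)$, whose local lift near $q$ is the $u_{\eta,q}$-th power of the coordinate of $C_\eta$; likewise $(m-V_\eta\bar s)_{x_1}=(0,u_{\eta,x_1})$ lifts to the $u_{\eta,x_1}$-th power of the marked-point coordinate. Hence $\delta(m)=\sum_{x\in\Sigma_\eta}u_{\eta,x}[x]$, of degree $\sum_{x}u_{\eta,x}$.

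Comparing degrees when $f(\eta)\subseteq D$ gives $\sum_{x}u_{\eta,x}=D\cdot f_*[C_\eta]=-\tau_\eta$, which is the balancing in $N_\eta\cong\ZZ$. When $f(\eta)\not\subseteq D$ the group $N_\eta=\bigoplus_{x}\ZZ$ remembers each special point, so one needs the finer pointwise identity $u_{\eta,x}=\mu_x(g^*D)$; for this I would argue locally at each special point $x$ with $f(x)\in D$. There, $f^\flat(t)$ is a lift of $m_x$ whose image in $\cO_{C,x}$ is a unit times the $\mu_x(g^*D)$-th power of the relevant curve coordinate, while the combinatorial lift of $m_x$ found above is a unit times its $u_{\eta,x}$-th power; since two lifts of $m_x$ differ by a unit, $\mu_x(g^*D)=u_{\eta,x}$, i.e.\ $\tau_x+u_{\eta,x}=0$.

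The cocycle computations are routine; the point I would check most carefully is the bookkeeping at nodes — namely that the lift of $(0,e_q)\in\overline{\shM}_{C,q}^{\gp}$ contributes exactly $u_{\eta,q}[q]$ and not $e_q\,u_{\eta,q}[q]$, so that the factors $e_q$ appearing both in the node chart $\overline{\shM}_{C,q}=\langle(e_q,0),(1,1),(0,e_q)\rangle$ and in the definition $u_{\eta,q}=(V_{\eta'}-V_\eta)/e_q$ cancel. This comes down to fixing the identifications $\overline{\shM}_{C,q}\to\overline{\shM}_{C,\eta_i}\cong\NN$ consistently; once that normalisation is in place, both the local and the global parts of the argument go through.
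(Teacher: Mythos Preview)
The paper does not give its own proof of this proposition: it is quoted verbatim from \cite[Prop.~1.15]{GS13} and used as input. Your proposal is essentially a reconstruction of the Gross--Siebert argument, computing the obstruction class $\delta(\varphi(\bar\rho))\in\Pic(C_\eta)$ via the connecting map of $0\to\cO^*\to\shM^{\gp}\to\overline{\shM}^{\gp}\to 0$ in two ways and matching them. The outline is correct and the key steps go through: the naturality of $\delta$ gives $g^*[\cO_X(D)]$, the stripping of $V_\eta\bar s$ and the identification $(0,e_q)=\overline{[x]}$ with $x$ a branch coordinate yield the local contribution $u_{\eta,q}[q]$, and your normalisation check at nodes is exactly the point one has to get right.

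One clarification worth making explicit: when $f(\eta)\notin D$, the section $m=\varphi(\bar\rho)$ is still a well-defined global section of $\overline{\shM}_C^{\gp}|_{C_\eta}$, but it generizes to $0$ at $\eta$ because $(f^{-1}\overline{\shM}_X)_\eta=0$; this is why the $\Pic$-class computation only gives the total degree and you correctly pass to the local comparison of lifts $f^\flat(t)$ versus $x^{u_{\eta,q}}$ (or $z^{u_{\eta,x_1}}$ at the marking) to get the pointwise equality $\mu_x=u_{\eta,x}$ in each summand of $N_\eta$. With that said, your argument is sound and matches the method behind the cited result.
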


In Proposition \ref{prop_description_log_maps} 
we will give a description of $1$-marked genus $0$ stable log maps 
to $({X}, {D})$. 
In the proof, the following ordering on the components of ${C}$ 
will be useful. 

\begin{definition}
(1)
Let $\eta_0\in V(\Gamma)$ correspond to the component 
on which the marked point lies. 
For $\eta_1, \eta_2\in V(\Gamma)$, 
we write $\eta_1\preceq \eta_2$ 
if $\eta_2$ is on the unique simple path 
connecting $\eta_1$ and $\eta_0$. 

Note that this is a partial ordering since $\Gamma$ is a tree, 
and that $\eta_0$ is the largest element. 

(2)
We write $\eta_1\leftarrowtail \eta_2$ 
if $\eta_1$ and $\eta_2$ are adjacent and $\eta_1\prec \eta_2$. 
(Then $\preceq$ is the partial ordering 
generated by $\leftarrowtail$.)
In this case, 
if $q$ is the edge connecting $\eta_1$ and $\eta_2$, 
we write $\eta_1\leftarrowtail q$ and $q\leftarrowtail \eta_2$. 
We also write $\eta_0\leftarrowtail x_1$. 

(3)
For any $\eta$, there is a unique edge $q$ with $\eta\leftarrowtail q$. 
We denote this by $q(\eta)$. 

(4)
For $\eta\in V(\Gamma)$, 
let 
$C_{\preceq \eta}=\bigcup_{\eta'\preceq \eta} C_{\eta'}$. 
\end{definition}

\begin{Proposition}\label{prop_description_log_maps}
Assume that ${D}.{f}_*[C_\eta]\geq 0$ holds 
for any $\eta$. 

(1)
For each $\eta$, the inverse image 
${f}^{-1}({D})\cap C_{\preceq \eta}$ 
of ${D}$ on $C_{\preceq \eta}$ 
is either empty, $\{q(\eta)\}$, or 
$\bigcup_{\eta'\in V(\Gamma')} C_{\eta'}$
for a subtree $\Gamma'$ containing $\eta$. 

Thus, if $f^{-1}(D)\cap C_\eta\not=\emptyset$, 
we can identify $N_\eta$ with $\ZZ$ 
and $\tau_\eta$ with $-D.f_*[C_\eta]$. 

(2)
For each $\eta$, 
we have 
$u_{\eta, q(\eta)}={D}.{f}_*[C_{\preceq \eta}]$. 

In particular, $f$ is of maximal tangency. 
\end{Proposition}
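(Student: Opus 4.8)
The plan is to prove (1) and (2) simultaneously by induction on the poset $(V(\Gamma),\preceq)$, sweeping upward from the $\preceq$-minimal vertices toward the maximum $\eta_0$; I treat a vertex $\eta$ assuming (1) and (2) for all its children, i.e.\ the vertices $\eta_i\leftarrowtail\eta$ (for $\preceq$-minimal $\eta$ there are no children, $C_{\preceq\eta}=C_\eta$ and $\Sigma_\eta=\{q(\eta)\}$, which is the base case). Recall that by the convention $\eta_0\leftarrowtail x_1$ the edge $q(\eta_0)$ is the marked point and $C_{\preceq\eta_0}=C$, so (2) at $\eta_0$ reads $u_{\eta_0,x_1}=D.f_\ast[C]=D.\beta$, which is precisely condition (ii). Write $d_\eta=D.f_\ast[C_\eta]$, which is $\geq 0$ by hypothesis, and $D_\eta=D.f_\ast[C_{\preceq\eta}]=\sum_{\eta'\preceq\eta}d_{\eta'}\geq 0$; since the components involved are pairwise distinct, $D_\eta=d_\eta+\sum_{\eta_i\leftarrowtail\eta}D_{\eta_i}$. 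The first thing to record is that $\Sigma_\eta=\{q(\eta)\}\sqcup\{q(\eta_i):\eta_i\leftarrowtail\eta\}$, since every edge at $\eta$ other than $q(\eta)$ leads to a vertex below $\eta$ and the marked point, present only on $C_{\eta_0}$, is by convention $q(\eta_0)$. Lemma~\ref{lem_f_inverse_d} then supplies the basic dichotomy: either $f(\eta)\notin D$, in which case $f^{-1}(D)\cap C_\eta\subseteq\Sigma_\eta$ and $(f|_{C_\eta})^\ast D$ is an effective divisor of degree $d_\eta$, or $f(\eta)\in D$, in which case $f^{-1}(D)\cap C_\eta=C_\eta$.

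Suppose $f(\eta)\notin D$. Then $N_\eta=\bigoplus_{x\in\Sigma_\eta\cap f^{-1}(D)}\ZZ$, and the balancing condition of Proposition~\ref{prop_balancing} decouples coordinatewise to $u_{\eta,x}=\mu_x\big((f|_{C_\eta})^\ast D\big)\geq 0$ for $x\in\Sigma_\eta\cap f^{-1}(D)$, with $u_{\eta,x}=0$ otherwise. For each $\eta_i\leftarrowtail\eta$, the skew-symmetry $u_{\eta,q(\eta_i)}=-u_{\eta_i,q(\eta_i)}$ of the type at the node $q(\eta_i)$, combined with (2) for $\eta_i$, gives $u_{\eta,q(\eta_i)}=-D_{\eta_i}$; as the left-hand side is $\mu_{q(\eta_i)}(\cdot)\geq 0$ and $D_{\eta_i}\geq 0$, both sides vanish. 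Hence $q(\eta_i)\notin f^{-1}(D)$, and (1) for $\eta_i$ then forces $f^{-1}(D)\cap C_{\preceq\eta_i}=\emptyset$. Therefore $f^{-1}(D)\cap C_{\preceq\eta}=f^{-1}(D)\cap C_\eta\subseteq\{q(\eta)\}$, which is (1); and since then all of the effective divisor $(f|_{C_\eta})^\ast D$ is supported at $q(\eta)$, we get $u_{\eta,q(\eta)}=\mu_{q(\eta)}\big((f|_{C_\eta})^\ast D\big)=d_\eta=D_\eta$, which is (2).

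Suppose instead $f(\eta)\in D$, i.e.\ $C_\eta\subseteq D$. Then each node $q(\eta_i)$ lies on $C_\eta\subseteq D$, so $q(\eta_i)\in f^{-1}(D)\cap C_{\preceq\eta_i}$, and (1) for $\eta_i$ forces this set to be $\{q(\eta_i)\}$ or a subtree through $\eta_i$; gluing these sets to $C_\eta$ exhibits $f^{-1}(D)\cap C_{\preceq\eta}$ as $\bigcup_{\eta'\in V(\Gamma')}C_{\eta'}$, where $\Gamma'$ is the connected subtree obtained from $\eta$ together with the subtrees arising from the children, which is (1). Here $N_\eta\cong\ZZ$ via $(a_x)\mapsto\sum_x a_x$ and $\tau_\eta=-d_\eta$, so Proposition~\ref{prop_balancing} reads $u_{\eta,q(\eta)}+\sum_{\eta_i\leftarrowtail\eta}u_{\eta,q(\eta_i)}=d_\eta$; substituting $u_{\eta,q(\eta_i)}=-D_{\eta_i}$ gives $u_{\eta,q(\eta)}=d_\eta+\sum_i D_{\eta_i}=D_\eta$, which is (2). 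Evaluating at $\eta=\eta_0$ yields $u_{\eta_0,x_1}=D.\beta$, hence maximal tangency. The remaining clause — that $f^{-1}(D)\cap C_\eta\neq\emptyset$ forces $N_\eta\cong\ZZ$ with $\tau_\eta=-D.f_\ast[C_\eta]$ — is immediate from (1): such a nonempty intersection is either the single node $q(\eta)$ (so $C_\eta\not\subseteq D$, $N_\eta$ a single copy of $\ZZ$, $\tau_\eta=-\mu_{q(\eta)}=-d_\eta$) or all of $C_\eta$ (so $C_\eta\subseteq D$, $N_\eta\cong\ZZ$ by definition, $\tau_\eta=-d_\eta$).

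I expect the crux to be the vanishing argument in the case $f(\eta)\notin D$: it is there that the hypothesis $D.f_\ast[C_\eta]\geq 0$, propagated through the induction as the nonnegativity $u_{\eta_i,q(\eta_i)}=D_{\eta_i}\geq 0$, is used to rule out branches of $C$ below $\eta$ that meet $D$, thereby collapsing the trichotomy in statement (1). A secondary point needing care is to keep the two descriptions of $N_\eta$ apart — the free group $\bigoplus\ZZ$ when $f(\eta)\notin D$ versus the rank-one quotient when $f(\eta)\in D$ — when extracting numerical consequences from the balancing condition.
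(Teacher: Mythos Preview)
Your proof is correct and follows essentially the same inductive scheme as the paper's, using Lemma~\ref{lem_f_inverse_d} and the balancing condition at each vertex. The only cosmetic difference is the case split: you case on whether $f(\eta)\in D$ and use the skew-symmetry $u_{\eta,q(\eta_i)}=-u_{\eta_i,q(\eta_i)}$ to force the children's intersections empty when $f(\eta)\notin D$, whereas the paper cases on whether some child has $f^{-1}(D)\cap C_{\preceq\eta'}\neq\emptyset$ and uses the equivalent relation $V_\eta=V_{\eta'}+u_{\eta',q}e_q$ to force $V_\eta>0$ --- these are contrapositives of one another.
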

Note that, if $f(C)\not\subseteq D$, 
we can write $V_\eta$ in terms of $e_q$ and $D.f_*[C_\eta]$, 
which also leads to relations between these data. 

\begin{proof}
We prove the assertions by induction on the maximum length 
of simple paths from $\eta$ to minimal vertices. 

Note that, if the first half of (1) is proven for $\eta$, 
the second half is easy to see from the description of 
$N_\eta$ and $\tau_\eta$ in Definition \ref{def_tau}. 

If $\eta$ is itself minimal, 
it has exactly one special point, $q(\eta)$. 
By Lemma \ref{lem_f_inverse_d}, 
${f}^{-1}({D})$ is either 
empty, $\{q(\eta)\}$ or $C_\eta$, so (1) holds. 
Thus $N_\eta$ can be identified with $\ZZ$ or $\{0\}$, 
and by Proposition \ref{prop_balancing}, 
we have $u_{\eta, q(\eta)}=-\tau_\eta={D}.{f}_*[C_\eta]$, 
so (2) holds. 

Now assume that the assertions hold 
for each $\eta'$ with $\eta'\leftarrowtail\eta$. 
If  ${f}^{-1}({D})\cap C_{\preceq \eta'}$ 
is empty for all such $\eta'$, 
then ${f}^{-1}({D})\cap C_{\preceq \eta}$ 
is empty or $\{q(\eta)\}$ by Lemma \ref{lem_f_inverse_d}. 

If ${f}^{-1}({D})\cap C_{\preceq \eta'}$ 
is nonempty for some $\eta'$, 
then it contains $q:=q(\eta')=C_{\eta'}\cap C_\eta$ 
and we have 
\[
V_\eta = V_{\eta'} + u_{\eta', q}e_q. 
\]
If ${f}^{-1}({D})\cap C_{\preceq \eta'}$ 
consists of $q$, then $u_{\eta', q}\geq {D}.{f}_*[C_{\eta'}]>0$. 
Otherwise, it contains $C_{\eta'}$ and $V_{\eta'}>0$ holds. 
In either case, we have $V_\eta>0$ and $C_\eta$ is contained in 
the inverse image of ${D}$, and the assertion (1) follows. 

Then we have 
\begin{eqnarray*}
u_{\eta, q(\eta)} & = & -\tau_\eta - \sum_{\eta'\leftarrowtail\eta} u_{\eta, q(\eta')} \\ 
& = & {D}.{f}_*[C_\eta] +  \sum_{\eta'\leftarrowtail\eta} u_{\eta', q(\eta')} \\
& = & {D}.{f}_*[C_\eta] +  
\sum_{\eta'\leftarrowtail\eta} {D}.{f}_*[C_{\preceq \eta'}], 
\end{eqnarray*}
so (2) follows. 
\end{proof}

\begin{Corollary}\label{cor_description_log_maps}
Let $X$ be a divisorial log scheme 
given by a smooth variety ${X}$ 
and a smooth divisor ${D}$. 

For a genus 0 stable log map $f: ({C}, x_1)\to X$, 
assume the following: 
\begin{itemize}
\item
$w:={D}.{f}_*[{C}]>0$ 
and $w_i:={D}.{f}_*[C_i]\geq 0$ 
for any irreducible component $C_i$ of ${C}$. 
\item
If $C_i$ is an irreducible component of ${C}$ 
that is not collapsed by ${f}$, 
then ${f}(C_i)\not\subseteq{D}$. 
\end{itemize}
Then it is of maximal tangency, 
and the following holds. 
\begin{enumerate}
\item
${f}({C})\cap {D}$ consists of one point $P$. 
\item
If there is only $1$ non-collapsed component, 
then ${C}\cong \PP^1$ and $f^*(D)=w x_1$. 
\item
If there are at least $2$ non-collapsed components, 
and ${D}.{f}_*[C_i]> 0$ holds for non-collapsed components, 
then ${C}$ is given by adding $C_i=\PP^1$ as leaves 
to a tree ${C}'$ of $\PP^1$ collapsed to $P$, 
with maps $f_i: C_i\to S$ 
satisfying $f_i^*({D})=w_i(C_i\cap {C}')$. 
\end{enumerate}
\end{Corollary}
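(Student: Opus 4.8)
The plan is to reduce everything to Proposition~\ref{prop_description_log_maps}, which applies here since the first bulleted hypothesis is precisely its hypothesis; in particular $f$ is of maximal tangency by part~(2) of that proposition. Beyond this, the only extra ingredients I would use are that the dual graph $\Gamma$ of $C$ is a tree of $\PP^1$'s and that the underlying map is stable, so that every component collapsed by $f$ carries at least three special points. As a preliminary observation I would record that \emph{every leaf of $\Gamma$ corresponds to a non-collapsed component}: a leaf carries exactly one special point (one node, or the marked point when $C_{\eta_0}$ has no node), so a collapsed leaf would violate stability. For part~(1) I apply Proposition~\ref{prop_description_log_maps}(1) to the $\preceq$-maximal vertex $\eta_0$, for which $C_{\preceq\eta_0}=C$. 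Since $w>0$ the set $f^{-1}(D)$ is nonempty, so the trichotomy there forces it to be $\{x_1\}$ or a connected union $\bigcup_{\eta'\in\Gamma'}C_{\eta'}$ over a subtree $\Gamma'\ni\eta_0$; in particular $x_1\in f^{-1}(D)$, and I put $P:=f(x_1)$. In the first case $f(C)\cap D=\{P\}$ is immediate. In the second case the second bulleted hypothesis forces every $C_{\eta'}$ with $\eta'\in\Gamma'$ to be collapsed; adjacent collapsed components collapse to a common point because they share a node, and connectedness of $\Gamma'$ spreads this over all of $f^{-1}(D)$, so again $f(C)\cap D=f(f^{-1}(D))=\{P\}$. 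I also record for later use that whenever $f^{-1}(D)\neq\{x_1\}$ the curve $C':=f^{-1}(D)=\bigcup_{\eta'\in\Gamma'}C_{\eta'}$ is a connected tree of $\PP^1$'s collapsed to $P$ with $\eta_0\in\Gamma'$.

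For part~(2): if $C$ has a unique non-collapsed component $C_0$ but more than one component, then $\Gamma$ has at least two leaves, all non-collapsed by the preliminary observation, forcing two distinct leaves to equal $C_0$---absurd. Hence $C=C_0\cong\PP^1$; since $f(\PP^1)\not\subseteq D$, the divisor $f^*D$ is effective of degree $D.\beta=w$ and, by part~(1), set-theoretically supported at $x_1$, so $f^*D=w\,x_1$.

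For part~(3): assume $C$ has at least two non-collapsed components and $w_i:=D.f_*[C_i]>0$ for each of them. First $f^{-1}(D)\neq\{x_1\}$, because at most one non-collapsed component contains $x_1$, and any other would then satisfy $f^{-1}(D)\cap C_i=\emptyset$, hence $w_i=0$. So $C':=f^{-1}(D)$ is a nonempty connected tree of $\PP^1$'s collapsed to $P$, with $\eta_0\in\Gamma'$. Now delete $\Gamma'$ from $\Gamma$; each connected component $T$ of what remains is joined to $\Gamma'$ by exactly one edge, at a vertex $v_T$ which is the $\preceq$-maximum of $T$, so $C_{\preceq v_T}$ is the union of the components indexed by $T$. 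Applying Proposition~\ref{prop_description_log_maps}(1) to $v_T$: the ``subtree'' option would give $C_{v_T}\subseteq f^{-1}(D)=C'$, hence $v_T\in\Gamma'$, contrary to $v_T\in\Gamma\setminus\Gamma'$; so $f^{-1}(D)\cap C_{\preceq v_T}$ is empty or $\{q(v_T)\}$, whence $D.f_*[C_{\eta'}]=0$ for every $\eta'\in T$ with $\eta'\neq v_T$. If $T$ had at least two vertices it would contain a leaf $\eta\neq v_T$, which would then be a leaf of $\Gamma$, hence a non-collapsed component with $D.f_*[C_\eta]=0$, contradicting our hypothesis. Therefore $|T|=1$; writing $T=\{\eta\}$, the component $C_\eta\cong\PP^1$ is a non-collapsed leaf of $\Gamma$, the ``empty'' option is excluded (it would give $D.f_*[C_\eta]=0$), and $f^{-1}(D)\cap C_\eta=\{q(\eta)\}=\{C_\eta\cap C'\}$, so that $(f|_{C_\eta})^*(D)=w_\eta\,(C_\eta\cap C')$ with $w_\eta=D.f_*[C_\eta]$. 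Letting $T$ range over all components of $\Gamma\setminus\Gamma'$ identifies these $C_\eta$ with precisely the non-collapsed components of $C$ (a non-collapsed component inside $\Gamma'$ would have image contained in $D$), attached as leaves to $C'$; this is exactly the asserted description.

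The step I expect to be the main obstacle is part~(3): there one must play the combinatorial trichotomy of Proposition~\ref{prop_description_log_maps}(1) off against stability and against the positivity $w_i>0$ in order to exclude ``long tails'' of components hanging off the collapsed core $C'$, and the only point requiring genuine care is keeping the ordering $\preceq$, the subtree $\Gamma'$, and the distinguished edges $q(\eta)$ consistent throughout. Parts~(1) and~(2) are then essentially bookkeeping built on the quoted results.
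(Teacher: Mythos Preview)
Your proof is correct and follows essentially the same strategy as the paper: reduce everything to Proposition~\ref{prop_description_log_maps} together with the stability condition that a collapsed component must carry at least three special points. Your argument for part~(3) is in fact more explicit than the paper's, which simply asserts that ``$f^{-1}(D)$ is the union of all non-minimal components'' and leaves the reader to extract this from the inductive structure of Proposition~\ref{prop_description_log_maps}; your analysis of the components $T$ of $\Gamma\setminus\Gamma'$ spells this out cleanly. One small slip: in your preliminary observation, a leaf equal to $\eta_0$ (when $|\Gamma|>1$) actually carries \emph{two} special points (the marked point and one node), not one; but since two is still fewer than three, your conclusion that leaves are non-collapsed is unaffected.
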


\begin{proof}
(1) 
By the assumption, ${f}({C})$ meets ${D}$. 
Applying Proposition \ref{prop_description_log_maps}, 
the inverse image of ${D}$ 
is the marked point or a tree of irreducible components, 
which are collapsed by the assumption. 
Hence its image consists of one point. 

(2)
A minimal vertex $\eta$ has only $1$ special point 
(a node or the marked point), 
so it is non-collapsed by stability. 
By assumption we have only one minimal vertex, 
so the graph is a chain. 
By stability it has only one vertex and the assertion follows. 

(3) 
Again each minimal vertex $\eta$ is non-collapsed, 
and it meets ${f}^{-1}({D})$ by assumption. 
Then we see from Proposition \ref{prop_description_log_maps} 
that $f^{-1}(D)$ is the union of all non-minimal components. 
By (1) they are mapped to a point. 
\end{proof}

\begin{center}
\begin{figure}\caption{ }\label{fig}
\bigskip
\begin{tikzpicture}[smooth,scale=0.7]
\coordinate (A) at (0.5,3);
\coordinate (B) at (0.5,-3);
\coordinate (C) at (-0.5,4.5);
\coordinate (D) at (0.5,0.25);
\coordinate (E) at (-0.5,-4.5);
\coordinate (F) at (0.5,-0.25);
\coordinate (X) at (-0.1,0);
\coordinate (1) at (-3,4);
\coordinate (2) at (0,4);
\coordinate (3) at (-3,3);
\coordinate (4) at (0,3);
\coordinate (5) at (-3,2);
\coordinate (6) at (0,2);
\coordinate (7) at (-3,-2);
\coordinate (8) at (0,-2);
\coordinate (9) at (-3,-3);
\coordinate (10) at (0,-3);
\filldraw [black] (X) circle (2pt) ;
\draw (A) to[out=-110,in=110] (B);
\draw (C) to[out=-90,in=130] (D);
\draw (E) to[out=90,in=-130] (F);
\draw (1) to[out=-10,in=185] (2);
\draw (3) to[out=-10,in=185] (4);
\draw (5) to[out=-10,in=185] (6);
\draw (7) to[out=-10,in=185] (8);
\draw (9) to[out=-10,in=185] (10);
\node [right] at (X) {$x$};
\node [below right] at (A) {$C_8$};
\node [above] at (C) {$C_6$};
\node [above left] at (E) {$C_7$};
\node [above right] at (1) {$C_1$};
\node [above right] at (3) {$C_2$};
\node [above right] at (5) {$C_3$};
\node [above right] at (7) {$C_4$};
\node [above right] at (9) {$C_5$};
\draw [->] (1.5,0) -- (4,0);
\draw (11,0) ellipse (6.5cm and 5.5cm);
\draw (9.3,0) ellipse (0.9cm and 1.35cm);
\draw (9,0) ellipse (1.2cm and 1.8cm);
\draw (8.7,0) ellipse (1.5cm and 2.25cm);
\draw (8.4,0) ellipse (1.8cm and 2.7cm);
\draw (8,0) ellipse (2.2cm and 3.3cm);
\filldraw [black] (10.23,0) circle (2.5pt) ;
\node [right] at (10.23,0) {$P$};
\draw[domain=10.23071:14.5] plot (\x, {sqrt((\x-12)*(\x-12)*(\x-12)-2*(\x-12)+2)});
\draw[domain=10.23071:14.5] plot (\x, -{sqrt((\x-12)*(\x-12)*(\x-12)-2*(\x-12)+2)});
\end{tikzpicture}
\end{figure}
\end{center}

We return to the setting of a log Calabi-Yau surface $(S,E)$. Let $\beta\in\hhh_2(S,\ZZ)$ be a curve class. 
Denote by $\mse$ the moduli space of maximally tangent genus 0 basic stable log maps to $(S,E)$ of degree $\beta$. Cf.\ Corollary \ref{cor_description_log_maps}, generic elements of the various strata of $\mse$ are as in Figure \ref{fig}, where components $C_6,C_7$ and $C_8$ are collapsed.
We state a result of \cite{Wise} in this setting. 

\begin{Proposition}[Corollary 1.2 in \cite{Wise}]\label{prop:Wise}

The forgetful morphism from $\mse$ to the moduli space of 1-marked genus 0 stable maps $\ol{\mmm}_{0,1}(S,\beta)$ is finite.

\end{Proposition}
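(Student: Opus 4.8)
The plan is to factor the argument through the two standard properties that force finiteness: properness and quasi-finiteness. A morphism of finite type with finite fibres and proper source is finite (the Deligne--Mumford stack version of Zariski's main theorem), so it suffices to verify these. Properness is essentially free here: $\mse$ is a proper Deligne--Mumford stack by the properness part of \cite{GS13}, the target $\ol{\mmm}_{0,1}(S,\beta)$ is a proper Deligne--Mumford stack by the classical theory, and a morphism of finite type with proper source and separated target is automatically proper. One also checks that the morphism is representable: a \emph{basic} logarithmic enhancement of a fixed underlying stable map carries no nontrivial automorphisms over that map, by the minimality property of \cite{GS13}, so the fibres are algebraic spaces; this is convenient but not strictly needed. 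Everything therefore reduces to bounding the fibres.

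So fix a geometric point of $\ol{\mmm}_{0,1}(S,\beta)$, i.e.\ an ordinary $1$-marked genus $0$ stable map $f\colon C\to S$ of class $\beta$, and consider the set of basic stable log maps of maximal tangency lying over it. Such an enhancement consists of, first, a \emph{combinatorial type} --- the dual graph $\Gamma$ of $C$ together with the data $(V_\eta)_\eta$ and $(e_q)_q$ of Section~\ref{sec:description_log_maps}, which in turn determine $(u_{\eta,q(\eta)},u_{\eta,x_1})$ --- and second, given such a type, a lift of the induced map on characteristic monoids $f^{-1}\ol{\shM}_S\to\ol{\shM}_C$ to a genuine morphism of logarithmic structures. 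The graph $\Gamma$ is already fixed by $C$, and the number of irreducible components of $C$ is bounded in terms of $\beta$ (non-collapsed components have positive degree, collapsed ones at least three special points). By maximal tangency $u_{\eta_0,x_1}$ is forced to equal $w=\beta.E$, and by Proposition~\ref{prop_description_log_maps}(2) the node contact orders satisfy $u_{\eta,q(\eta)}=E.f_*[C_{\preceq\eta}]$, hence are bounded by $w$. Feeding this through the recursion $V_\eta=V_{\eta'}+u_{\eta',q}e_q$ and the vanishing $V_{\eta'}=0$ whenever $f(\eta')\notin E$, one sees --- this is the crux --- that only finitely many of these tropical data can actually underlie a logarithmic morphism to $(S,E)$; this is exactly the ``bounded monoids'' phenomenon of \cite{Wise}, made especially transparent here by the explicit description of genus $0$ maximally tangent maps in Corollary~\ref{cor_description_log_maps}. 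Finally, for each fixed combinatorial type the basic monoid, the logarithmic structure on $C$, and the characteristic map to $\shM_S$ are all canonically determined, and lifting the characteristic map to an honest logarithmic morphism over the algebraically closed point $\Spec\CC$ has a finite (possibly empty) solution set; in the irreducible case $C\cong\PP^1$, $f^*E=w\,x_1$ of Corollary~\ref{cor_description_log_maps}(2) the lift is in fact unique, since if $f^\sharp$ of a local equation of $E$ equals $u\,t^{w}$ in a coordinate $t$ at $x_1$ then the unit $u$ is read off from $f$. Combining the two finiteness statements bounds the fibre, which gives quasi-finiteness and hence finiteness.

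The main obstacle is precisely the boundedness of combinatorial types --- in particular, excluding logarithmic enhancements with arbitrarily large edge contact orders $e_q$ on the subtree of $C$ contracted to the contact point $P$. This data is invisible on the underlying stable map and is pinned down only indirectly, through the balancing condition (Proposition~\ref{prop_balancing}), the maximal tangency normalization, and the numerical class $\beta$. Carrying this out carefully is the technical heart of \cite{Wise}; the remaining ingredients --- properness, representability, per-type rigidity of the lift, and Zariski's main theorem --- are formal.
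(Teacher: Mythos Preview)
The paper gives no proof of this proposition at all: it is stated as a direct citation of \cite{Wise}. So there is no ``paper's approach'' to compare with, only the question of whether your sketch stands on its own.

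Your overall architecture (proper $+$ quasi-finite $\Rightarrow$ finite, with properness coming for free from properness of source and separatedness of target) is sound, and you correctly invoke Proposition~\ref{prop_description_log_maps}(2) to see that the contact orders $u_{\eta,q(\eta)}$, and hence the \emph{type} in the sense of \cite{GS13}, are determined by the underlying stable map. That is the key scheme-theoretic input.

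Where you go astray is in the last paragraph. You identify ``the main obstacle'' as bounding the edge lengths $e_q$ on the contracted subtree, calling this data ``invisible on the underlying stable map''. This reflects a confusion between the \emph{type} data $(u_{\eta,q},u_{\eta,x_1})$ and the \emph{tropical} data $(V_\eta,e_q)$. The moduli space $\mse$ parameterizes \emph{basic} stable log maps, whose base log point is $(\Spec\kappa, Q^{\mathrm{basic}})$ with $Q^{\mathrm{basic}}$ constructed canonically from the type alone (\cite[\S1.5]{GS13}). The $e_q$ are then canonical elements of $Q^{\mathrm{basic}}$, not free integer parameters to be bounded; your recursion $V_\eta=V_{\eta'}+u_{\eta',q}e_q$ takes place in $Q^{\mathrm{basic}}$, not in $\NN$. (The passage to $\NN$ in \S\ref{sec:description_log_maps} is only a device for studying the underlying scheme map.) So once the type is pinned down --- which you have already done --- there is no residual ``boundedness of $e_q$'' problem. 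The genuine content of \cite{Wise} is rather that the minimal/basic logarithmic enhancement of a fixed underlying stable map is \emph{unique} when it exists, i.e.\ the forgetful morphism is a monomorphism; combined with properness this yields a closed immersion, hence finiteness. Your weaker claim of finitely many lifts per type is correct but understates what is true, and your diagnosis of where the difficulty lies is off target.
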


The conditions of genus 0, degree $\beta$ and one marked point mapping in maximal tangency result in a finite number of types and hence make up a \emph{combinatorially finite} class, cf. \cite[Definition 3.3]{GS13}.
Hence $\mse$ admits a perfect obstruction theory, which is of virtual dimension 0, and yields a virtual fundamental class, as well as corresponding log Gromov-Witten invariants 
\[
\nse := \int_{[\mse]^{\mathrm{vir}}} 1 \in \QQ.
\]

We remark that much of this section could have been conveniently expressed 
in the language of tropical curves. 
(For example, under the assumption of the preceding corollary, 
the ordering of vertices is compatible with the ordering 
given by $V_\eta$.) 

\subsection{Rational curves of maximal tangency}

We state a proposition from \cite{tak compl} and  provide a proof for convenience.

\begin{Proposition}[Proposition 1.1 in \cite{tak compl}]\label{prop:finite}
Let $D \subset X$ be a smooth hypersurface of a smooth $n$-dimensional projective variety $X$. Assume that $|K_X + D| \neq \emptyset$. Let $\beta\in\hhh_2(X,\ZZ)$ be a curve class. Consider the set $U$ consisting of the union of all rational curves $C\subset X$ of degree $\beta$ having only one point of intersection with $D$. Then $U$ is contained in a proper Zariski-closed subset of $X$.
\end{Proposition}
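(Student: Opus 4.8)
The plan is to show that curves of the prescribed type sweep out only a lower-dimensional subset by a dimension count on a suitable incidence variety, exploiting the hypothesis $|K_X + D| \neq \emptyset$ to bound the moduli of rational curves meeting $D$ in one point. First I would fix an effective divisor $K \in |K_X + D|$ and observe that if $C \subset X$ is a rational curve of class $\beta$ meeting $D$ in a single point $Q$, then $C \cdot D = \beta.D$ and, by adjunction on the normalization $n : \PP^1 \to C$, one has $\deg n^*(K_X+D) = (K_X+D).\beta = \deg n^* K \geq 0$; moreover $n^*D$ is supported at the single point of $n^{-1}(Q)$. The upshot I want to extract is that $C$ is either contained in the support of $K$ (a fixed proper closed subset, so harmless), or $C$ meets the fixed divisor $K$ in a controlled way, which together with the maximal-tangency-type condition along $D$ pins down a low-dimensional family.

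The key steps, in order, would be: (1) reduce to irreducible $C$ not contained in $D \cup \mathrm{Supp}(K)$, since the remaining curves already lie in a proper closed subset; (2) for such $C$, the intersection $C \cap \mathrm{Supp}(K)$ is a nonempty finite set of bounded cardinality (bounded by $(K_X+D).\beta$ counted with multiplicity), and $C \cap D$ is the single point $Q$; (3) form the incidence scheme of pairs $(C, Q)$ with $C$ a rational curve of class $\beta$ meeting $D$ only at $Q$, and stratify by the combinatorial type; using that $\dim X = n$ and that passing through a point of $D$ with contact order $\beta.D$ imposes roughly $\beta.D - 1$ extra conditions beyond passing through a point, conclude that the locus swept out has dimension at most $n - 1$. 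Here I would lean on the finiteness statements already established in this section — Proposition \ref{prop:Wise} and Corollary \ref{cor_description_log_maps}, which show the moduli space of maximally tangent genus $0$ stable log maps is of virtual dimension zero and, in the relevant combinatorially finite situation, that the space of such maps is not too large — to control the dimension of the family of rational curves in question rather than re-deriving it by hand.

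The main obstacle I anticipate is making the dimension count rigorous and uniform over all components and all the strata coming from reducible/multiple-cover degenerations: a priori there could be positive-dimensional families of rational curves of class $\beta$ through a varying point of $D$, and one must genuinely use $|K_X+D|\neq\emptyset$ (equivalently, that $X$ is, informally, ``of non-negative Kodaira-type along $D$'') to kill this. The cleanest route is probably: if the union $U$ were Zariski dense, then through a general point $x \in X$ there would pass a rational curve $C$ of class $\beta$ meeting $D$ at one point; but a general $x$ avoids $\mathrm{Supp}(K)$, so $C \not\subset \mathrm{Supp}(K)$, forcing $\deg n^* K = (K_X+D).\beta \geq 0$ with $n^*D$ concentrated at one point, and then a bend-and-break / deformation argument (or the finiteness of the log moduli space from Proposition \ref{prop:Wise}) shows such curves cannot move in an $n$-dimensional family — contradiction. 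I would present it in this contrapositive form to avoid a delicate stratification of the incidence variety.
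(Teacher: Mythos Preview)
Your plan has a genuine gap: the dimension count you propose is only a \emph{virtual} count, and nothing you invoke upgrades it to an actual bound. Knowing that $(K_X+D).\beta \geq 0$ tells you that the expected dimension of the space of maximally tangent genus~$0$ maps is $\leq 0$, but the moduli space of stable (log) maps can have arbitrarily large actual dimension. In particular, your appeal to Proposition~\ref{prop:Wise} and Corollary~\ref{cor_description_log_maps} is circular: Proposition~\ref{prop:Wise} only says the forgetful morphism from log to ordinary stable maps is finite, not that the log moduli space is finite; and the zero-dimensionality statements used elsewhere in the paper rely precisely on Proposition~\ref{prop:finite}. Likewise, ``bend-and-break'' produces rational curves of \emph{bounded} anticanonical degree through a general point, but here $\beta$ is fixed, so this does not yield the contradiction you want.

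The paper's proof uses the hypothesis $|K_X+D|\neq\emptyset$ in a much stronger way than the numerical inequality $(K_X+D).\beta\geq 0$. It chooses a nonzero section, i.e.\ a logarithmic $n$-form $\omega$ on $X$ with at worst a log pole along $D$, and argues as follows. If $U$ were Zariski dense, one could build (after base change) a dominant map $f\colon M\times\PP^1\to X$ with $M$ smooth of dimension $n-1$ and $f^*D=\sum a_iM_i$ a sum of disjoint sections, all mapping to the \emph{same} point of $D$ on each fibre. Restricting $f^*\omega$ to a general fibre gives a nonzero $1$-form on $\PP^1$ with at worst log poles along the points $M_i\cap\text{fibre}$, whose residues are positive integers times a single complex number (the value of $\mathrm{res}_D\omega$ at the common image point). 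The residue theorem on $\PP^1$ then forces all residues to vanish, hence the form to be regular on $\PP^1$, hence zero --- a contradiction. This differential-form argument is what genuinely exploits the existence of an effective divisor in $|K_X+D|$; your outline never reaches an analogue of this step.
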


\begin{proof}

Assume that $U$ is not contained in a proper Zariski-closed subset of $X$. Then we can find a diagram

\[
\xymatrix{
Y := M \times \pone \ar[r]^(0.65)f \ar@/_/[d]_{p} & X\\
M \ar@/_/[u]_{M_i},
}
\]
such that
\begin{itemize}
\item $M$ is a smooth variety of dimension $n-1$,
\item $f$ is dominant,
\item $M_i$ are disjoint sections of the projection map $p$,
\item $f^*(D)=\sum a_i M_i$ for $a_i$ positive integers.
\end{itemize}
Indeed, start with the (noncompact) moduli space of morphisms $g:\pone \to X$ which are birational onto its image $g(\pone)$ and such that the image curve meets $D$ in one point and consider a $(n-1)$-dimensional subspace whose universal curve maps dominantly to $X$. Note that $f^{-1}(D)$ is not necessarily a section of $p$:
We just assume that $f\left(f^{-1}(D)\cap (\{Q\}\times \PP^1)\right)$ is
one point for any $Q\in M$, not that $f^{-1}(D)\cap (\{Q\}\times \PP^1)$
is one point, although the latter is sufficient in what follows. After possibly taking a quasi-finite cover, this is the space $M$ with $f^*D=\sum a_i M_i$, for $a_i$ positive integers and $M_i\simeq M$ disjoint sections of $p$.

By assumption, there is a non-zero $n$-form $\omega$ on $X$, which is regular away from $D$ and has at most logarithmic poles along $D$. The pullback $f^*\omega$ is a non-zero $n$-form on $M\times\pone$ and has at most a logarithmic pole along $\cup M_i$. Hence
$\residue_{M_i}f^*\omega=a_i (f|_{M_i})^* \residue_D\omega$. After possibly shrinking $M$, we take a non-vanishing holomorphic $(n-1)$-form $\omega'$ on $M$ and write
\[
f^*\omega = \omega''\wedge p^*\omega',
\]
for some relative $1$-form $\omega''$. Over a general fiber of $p$, $\omega''$ restricts to a non-zero $1$-form on $\pone$ with at most logarithmic poles along which residues are positive integers times a certain complex number, which is a contradiction.
\end{proof}

\subsection{Point-dependence}\label{sec:pt-dep}

Assume in this section that $S$ is a regular surface (which we defined to mean $\hh^1(\shO_S)=0$) 
and $E$ is an elliptic curve on $S$. 
Let $\beta\in\hhh_2(S,\ZZ)$ be a curve class and recall that $w=\beta.E$, 
and assume that $w>0$. 
Then there is a unique $L\in\pic(S)$ such that $c_1(L)$ is Poincar\'e dual to $\beta$.
We use the notation $\beta|_E:= L|_E \in\pic^w(E)$ for the induced class. 
Denote by $\pic^0(E)[w]$ the $w$-torsion points of $\pic^0(E)$.
For $P\in E$, we write $\beta|_E \sim wP$ to indicate that $\beta|_E = [wP]$ in $\pic^w(E)$.
\begin{definition}\label{def:ebeta}
\[
E(\beta) := \left\{P\in E \; \big{|} \;  \beta|_E \sim wP \right\}.
\]
\end{definition}
\begin{Lemma}\label{lem:ebetagroup}
$E(\beta)$ is a torsor for $\pic^0(E)[w]\simeq\ZZ/w\times\ZZ/w$.
\end{Lemma}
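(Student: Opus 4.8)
The plan is to unwind the definitions and produce a free transitive action of $\pic^0(E)[w]$ on $E(\beta)$. First I would recall that for $P, P' \in E$ one has $[wP] = [wP']$ in $\pic^w(E)$ if and only if $w(P - P') = 0$ in $\pic^0(E)$ (using the standard group-law identification of $E$ with $\pic^0(E)$ after choosing a base point, or more intrinsically: $[wP] - [wP']$ is the class $w([P]-[P'])$ in $\pic^0(E)$). Thus if $P \in E(\beta)$, then $P' \in E(\beta)$ precisely when $[P'] - [P] \in \pic^0(E)[w]$. This immediately shows that, once $E(\beta) \neq \emptyset$, the map $\pic^0(E)[w] \times E(\beta) \to E$ sending $(\xi, P)$ to the point $P'$ with $[P'] = [P] + \xi$ lands in $E(\beta)$, is a well-defined action, and is free and transitive; hence $E(\beta)$ is a torsor.

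The one genuine point to check is therefore that $E(\beta) \neq \emptyset$, i.e.\ that $\beta|_E \in \pic^w(E)$ is of the form $[wP]$ for some $P \in E$. Here I would use that the multiplication-by-$w$ map $\pic^0(E) \to \pic^0(E)$ is surjective (it is an isogeny of the elliptic curve, being a nonconstant morphism of smooth projective curves, or simply because $E(\CC) \cong (\RR/\ZZ)^2$ is divisible). Fixing any base point $0_E \in E$ to identify $\pic^w(E) \cong \pic^0(E)$ via $M \mapsto M(-w\,0_E)$, the class $\beta|_E$ corresponds to some $\lambda \in \pic^0(E)$; choosing $P$ with $w([P] - [0_E]) = \lambda$ in $\pic^0(E)$ gives $[wP] = \beta|_E$, so $P \in E(\beta)$. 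The regularity hypothesis $\hh^1(\shO_S) = 0$ is what guarantees $L$ (hence $\beta|_E$) is well-defined from $\beta$ alone, which has already been noted in the excerpt just before the statement.

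Finally, the identification $\pic^0(E)[w] \cong \ZZ/w \times \ZZ/w$ is the classical structure of the $w$-torsion of an elliptic curve over $\CC$: writing $E(\CC) = \CC/\Lambda$ for a rank-$2$ lattice $\Lambda$, one has $E[w] = \frac{1}{w}\Lambda / \Lambda \cong (\ZZ/w)^2$. I would simply cite this. The main (and only mild) obstacle is the nonemptiness of $E(\beta)$, which reduces to divisibility in $\pic^0(E)$; everything else is formal manipulation of the group law on $\pic(E)$. I would also remark that the choice of a zero element $0_E \in E(\beta)$ then identifies $E(\beta)$ with $\pic^0(E)[w]$, matching the statement in the introduction.
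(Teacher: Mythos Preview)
Your proof is correct and follows essentially the same approach as the paper: both identify $E$ as a torsor for $\pic^0(E)$, observe that two points $P_1,P_2\in E(\beta)$ differ by an element of $\pic^0(E)[w]$, and conclude that the induced action is free and transitive. You are in fact more careful than the paper, which does not explicitly verify that $E(\beta)\neq\emptyset$; your argument via surjectivity of multiplication by $w$ on $\pic^0(E)$ fills that small gap.
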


\begin{proof} 
Since $E\simeq \pic^1(E)$ is a torsor for $\pic^0(E)$, $E$ admits an effective action of $\pic^0(E)[w]$. In addition, the latter acts on $E(\beta)$ transitively, since for $P_1,P_2\in E(\beta)$, $wP_1 - w P_2 \sim 0$ so $P_1 - P_2 \in \pic^0(E)[w]$. Hence $E(\beta)$ is a torsor for $\pic^0(E)[w]$.
\end{proof}

We consider the images of log stable maps. 
\begin{definition}\label{def:mbeta}
Let $M_\beta$ be the set of 
image cycles of genus $0$ stable log maps to $(S, E)$ of class $\beta$ 
of maximal tangency. 

For $P\in E$, let $M_{\beta, P}:=\{D\in M_\beta \mid \mathrm{Supp}(D)\cap E=P\}$. 
\end{definition}

We also define a notion of maximal tangency for curves. 
\begin{definition}
If $(S, E)$ is a pair consisting of a variety and a divisor, 
a curve $D$ on $S$ is said to be maximally tangent to $E$ 
if $D$ meets $E$ at only $1$ point 
and has only $1$ branch there; 
or equivalently, if the inverse image of $E$ on the normalization of $D$ 
consists of $1$ point. 
\end{definition}
An irreducible proper rational curve $D$ maximally tangent to $E$ 
can also be considered as an $\AA^1$-curve on $S\setminus E$. 
The following proposition shows how elements of $M_\beta$ are 
related to such curves. 

\begin{Proposition}\label{prop_description_log_maps_delpezzo}
Let $S$ be a smooth surface, 
$E$ an elliptic curve on $S$ 
and $\beta\in\hhh_2(S,\ZZ)$ a curve class. 
Assume that $w:=E.\beta>0$. 

Consider a $1$-marked genus $0$ stable log map to $(S, E)$ of class $\beta$. 
Then it is of maximal tangency, 
and the underlying stable map $f : (C, x) \to S$ 
satisfies the following. 
\begin{enumerate}
\item
$f(C)\cap E$ consists of one point $P$. 
If $S$ is regular then $P\in E(\beta)$ 
and consequently $M_\beta = \coprod_{P\in E(\beta)} M_{\beta, P}$.
\item
If the image cycle is integral, 
then $C\cong \PP^1$ and $f^*(E)=w x$. 
In this case, $C':=f(C)$ is a rational curve maximally tangent to $E$, 
and $f$ is the normalization map. 
Conversely, if $C'$ is a rational curve maximally tangent to $E$, 
then the normalization map of $C'$ 
lifts to a (unique) genus $0$ stable log map of maximal tangency 
with image cycle $C'$. 
\item
If $E$ is ample, 
any element of $M_\beta$ is a sum of rational curves maximally tangent to $E$, 
meeting $E$ at the same point. 
\end{enumerate}
\end{Proposition}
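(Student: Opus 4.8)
The plan is to leverage the structural description of stable log maps obtained in the preceding Proposition~\ref{prop_description_log_maps} and Corollary~\ref{cor_description_log_maps}, which already applies once we verify the hypotheses there. First I would observe that, since $E$ is an anticanonical-type elliptic curve and $\beta$ is a curve class with $w = E.\beta > 0$, for any genus~$0$ stable log map $f : (C,x) \to S$ every non-collapsed component $C_i$ satisfies $E.f_*[C_i] \geq 0$: an irreducible curve not contained in $E$ meets the effective divisor $E$ non-negatively, and a collapsed component contributes $0$. Moreover a non-collapsed component cannot map into $E$ (as $E$ is a curve and the component would then be collapsed). So the two bullet-point hypotheses of Corollary~\ref{cor_description_log_maps} hold automatically in our situation, except that we must allow $E.f_*[C_i] = 0$ for non-collapsed components; I would note that such a component meets $E$ in no point, so it plays no role in the intersection with $E$, and the argument of part~(1) of that Corollary still shows $f(C) \cap E$ is a single point $P$. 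This gives maximal tangency and establishes the first sentence of each item.

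For item~(1): maximal tangency has just been established. That $f(C) \cap E = \{P\}$ a single point follows from Corollary~\ref{cor_description_log_maps}(1) as above. When $S$ is regular, $\beta$ determines a unique $L \in \Pic(S)$ with $c_1(L)$ Poincar\'e dual to $\beta$, and $f^*(E)$ on $C$ is an effective divisor of total degree $w$ supported at the preimages of $P$; pushing forward, the divisor $f|_C^*(E)$ on $f(C)$ witnesses $L|_E \sim wP$ on $E$, i.e. $\beta|_E \sim wP$, so $P \in E(\beta)$. The disjoint-union decomposition $M_\beta = \coprod_{P \in E(\beta)} M_{\beta,P}$ is then immediate from the definition of $M_{\beta,P}$ together with the fact that each image cycle meets $E$ at exactly one point which must lie in $E(\beta)$.

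For item~(2): if the image cycle is integral, then $f$ does not collapse any component, so $C$ has a single non-collapsed component; by Corollary~\ref{cor_description_log_maps}(2), $C \cong \PP^1$ and $f^*(E) = w\,x$. Hence $f$ meets $E$ only at $x$ with a single branch, so $C' := f(C)$ is maximally tangent to $E$ and $f$ is the normalization (it is birational onto its image since the image is integral and $C \cong \PP^1$ dominates it; genus reasons and the stability of the log map force birationality). Conversely, given an integral rational curve $C'$ maximally tangent to $E$, its normalization $\nu : \PP^1 \to C'$ has $\nu^{-1}(E)$ a single point $x$ with $\nu^*(E) = w\,x$; one then equips $\PP^1$ with the standard log structure pulled back appropriately and checks the maximal-tangency log map axioms — the balancing condition (Proposition~\ref{prop_balancing}) is trivially satisfied on a single vertex, and the type datum $u_{\eta,x} = w = E.\beta$ is forced, matching condition~(ii). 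Uniqueness follows since the log structure on a log smooth genus~$0$ curve with one marked point over the standard log point is rigid given the contact order.

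For item~(3): assume $E$ is ample. Take any element $D \in M_\beta$, say the image cycle of a stable log map $f$. By Corollary~\ref{cor_description_log_maps}, $f^{-1}(E)$ is a subtree of collapsed components (together with the marked point), all mapping to the single point $P$, and the non-collapsed components are attached as leaves. Write $D = \sum n_i [C_i']$ where $C_i' = f(C_i)$ are the images of the non-collapsed components. I claim each $C_i'$ is maximally tangent to $E$: the leaf component $C_i \cong \PP^1$ is attached to the collapsed core at one node $q_i$, and by the description $f_i^*(E) = w_i (C_i \cap C')$ is supported at that single node, so $f_i$ meets $E$ only at $P$ with one branch, whence $C_i'$ is maximally tangent to $E$ at $P$. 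If $w_i = 0$ for some non-collapsed component that component does not meet $E$; but ampleness of $E$ forces $E.[C_i'] > 0$ for every curve $C_i'$, so this does not occur, and every $C_i'$ genuinely meets $E$ (at $P$) with maximal tangency. Thus $D$ is a sum of rational curves maximally tangent to $E$ all passing through $P$, as claimed.

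\textbf{Main obstacle.} The delicate point is the handling of non-collapsed components with $E.f_*[C_i] = 0$, which are permitted in our elliptic/anticanonical setting but excluded in the hypotheses of Corollary~\ref{cor_description_log_maps}(3) (which requires strict positivity for non-collapsed components). For items~(1) and~(2) this is harmless, but for item~(3) I rely on ampleness of $E$ precisely to rule such components out — so the proof of~(3) genuinely needs the ampleness hypothesis, and the argument must make clear that without it a "vertical" rational tail disjoint from $E$ could appear in the cycle. Verifying carefully that ampleness forces every component of every image cycle in $M_\beta$ to pass through $P$ is the crux; everything else is bookkeeping with the tropical/combinatorial description already set up.
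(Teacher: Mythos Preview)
Your approach is the same as the paper's: verify the hypotheses of Corollary~\ref{cor_description_log_maps} and read off the conclusions. The paper's proof is essentially a one-liner --- ``Since $E$ is an elliptic curve and any component of $C$ is rational, Corollary~\ref{cor_description_log_maps} applies'' --- and then cites parts (1), (2), (3) of that Corollary directly.

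There is one genuine slip in your reasoning. You write that ``a non-collapsed component cannot map into $E$ (as $E$ is a curve and the component would then be collapsed)''. This is not the reason: a non-constant map from $\PP^1$ to a curve is perfectly possible in general and is \emph{not} collapsed. The correct argument, and the one the paper invokes, is that $E$ has genus~$1$ while every component of $C$ is rational, so there is no non-constant morphism $\PP^1 \to E$. This is exactly the point where the hypothesis ``$E$ an elliptic curve'' enters, and it is what makes the second bullet of Corollary~\ref{cor_description_log_maps} automatic. Once you fix this sentence, the rest of your argument goes through; your handling of the $w_i=0$ issue and the use of ampleness in part~(3) is correct and matches the paper's (implicit) reasoning.
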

\begin{proof}
Since $E$ is an elliptic curve and any component of $C$ is rational, 
Corollary \ref{cor_description_log_maps} applies. 
Thus it is of maximal tangency, 
and the image meets $E$ at one point, 
which necessarily belongs to $E(\beta)$ if $S$ is regular.
This proves (1). 

(2)
If the image cycle is integral, 
then $C$ has only $1$ non-collapsed component mapped birationally. 
Thus the first half follows from Corollary \ref{cor_description_log_maps} (2). 

The latter half can be easily checked. 

(3) follows from Corollary \ref{cor_description_log_maps} (3). 
\end{proof}

Because $E(\beta)$ is finite, we have the decomposition
\[
\mse  = \bigsqcup_{P\in E(\beta)} \msep.
\]
Moreover, as the obstruction theory of a disjoint union is the sum of the obstruction theories of each component, we obtain the finite decomposition
\begin{equation}\label{ndecomp}
\nse = \sum_{P\in E(\beta)} \nsep,
\end{equation}
where
\[
\nsep:= \int_{[\msep]^{\mathrm{vir}}} 1 \in \QQ
\]
are the genus $0$ log GW invariants of $(S,E)$ of degree $\beta$ maximally tangent to $E$ at $P$.

So far we have worked mainly with regular surfaces and elliptic curves on it. 
In the following proposition we illustrate what our surfaces are like. 
It appears that interesting things happen mostly on rational surfaces 
in the context of log Gromov-Witten theory 
of genus $0$ maximally tangent curves. 

\begin{Proposition}\label{prop:rat}
Let $S$ be a regular surface. 

(1)
If there exists a non-zero effective divisor $E$ such that $K_S+E\sim 0$, 
then $S$ is rational. 

(2) 
Let $E$ be a curve on $S$ 
and $\beta$ a curve class 
with $E.\beta>0$ and $\vdim\mse\geq 0$. 
\begin{itemize}
\item[(a)]
If $\beta$ is nef, then $S$ is rational, 
\item[(b)]
If $S$ is not rational and $\beta$ contains an irreducible 
(not necessarily reduced) member, 
then $|\beta|=\{mC\}$ where $C$ is a $(-1)$-curve with $E.C=1$. 
\end{itemize}

(3)
If $\beta$ is an ample class of arithmetic genus $1$ on $S$ 
containing an integral member, 
then $\beta=-K_S$ and hence $S$ is del Pezzo. 

In particular, if $S$ contains an ample elliptic curve $E$, 
then $S$ is del Pezzo and $E$ is anticanonical. 
\end{Proposition}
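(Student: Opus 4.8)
The plan is to prove the four parts in sequence, leveraging the classification of surfaces and basic facts about the canonical bundle. For part (1), the hypothesis $K_S + E \sim 0$ with $E$ a nonzero effective divisor gives $-K_S \sim E > 0$, so $-K_S$ is effective and nonzero. Since $S$ is regular, $h^1(\shO_S) = 0$; I would first observe that $h^0(-K_S) \geq 1$ forces the Kodaira dimension of $S$ to be $-\infty$ (otherwise some plurigenus is positive and pairs against the effective $-K_S$ to give a contradiction, e.g.\ via $12 K_S \sim -12 E$ cannot be effective unless $E = 0$). An unruled surface would have $K_S$ nef on a minimal model, which is incompatible with $-K_S$ effective and nonzero. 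Combining Kodaira dimension $-\infty$ with $q(S) = h^1(\shO_S) = 0$ and Castelnuovo's rationality criterion ($p_2 = q = 0$) then yields that $S$ is rational.

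For part (2), assume $S$ is not rational (else (a) and (b) are either vacuous or immediate). Then by part (1) there is no nonzero effective anticanonical divisor, so $K_S$ is not anti-effective. The condition $\vdim \mse = -(K_S + E).\beta \geq 0$ means $(K_S+E).\beta \leq 0$, i.e.\ $-K_S.\beta \geq E.\beta > 0$. For (a): if $\beta$ is nef, then $-K_S.\beta > 0$ together with $\beta$ nef and nonzero (as $E.\beta > 0$) shows $S$ is a surface admitting a nef class pairing positively with $-K_S$; by the cone theorem / classification, a non-rational surface with $q = 0$ and $\kappa(S) \geq 0$ has $K_S$ pseudo-effective, and intersecting a pseudo-effective $K_S$ with a nef nonzero $\beta$ gives $K_S.\beta \geq 0$, contradicting $-K_S.\beta > 0$. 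Hence $S$ is rational. For (b): suppose $\beta$ contains an irreducible (possibly non-reduced) member $mC$ with $C$ integral. From $-K_S.(mC) \geq E.(mC) > 0$ and $p_a(\beta) \geq 0$ combined with non-rationality of $S$ (so $K_S.C \geq 0$ would be expected on a minimal model), the adjunction formula $2p_a(C) - 2 = C^2 + K_S.C$ forces $C$ to be a rational curve with negative self-intersection; chasing the inequalities and minimality shows $C^2 = -1$, $K_S.C = -1$, so $C$ is a $(-1)$-curve, and then $E.C = 1$ follows from $0 < E.C \leq -K_S.C = 1$. This gives $|\beta| = \{mC\}$.

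For part (3), let $\beta$ be ample of arithmetic genus $1$ containing an integral member $C$. Ampleness of $\beta$ and part (2a) (with $E$ any curve with $E.\beta > 0$, or directly) forces $S$ rational. By adjunction, $2p_a(C) - 2 = 0 = C^2 + K_S.C$, so $K_S.C = -C^2 < 0$ since $C^2 > 0$ by ampleness. I would then argue that $-K_S$ is ample: on a rational surface, write $-K_S$ in terms of the effective cone; the inequality $-K_S.C > 0$ for the ample class $C = \beta$ together with a Hodge-index / Riemann-Roch argument (or the classification of rational surfaces with $-K_S$ effective) shows $-K_S$ is big, and then one upgrades to ample by checking $-K_S$ meets every curve positively, using that any curve $\Gamma$ with $-K_S.\Gamma \leq 0$ would violate adjunction or the structure of $M_{\beta,P}$-type arguments. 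Once $-K_S$ is ample, $S$ is del Pezzo; the precise identification $\beta = -K_S$ follows because both are primitive ample classes with the same genus and, on a del Pezzo surface, $-K_S$ is characterized as the unique such class via $p_a(-K_S) = 1$ and Riemann-Roch forcing equality. The final sentence is then immediate: if $E$ is an ample elliptic curve, apply (3) with $\beta = [E]$ to conclude $E \sim -K_S$ and $S$ del Pezzo.

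The main obstacle I anticipate is part (3), specifically the step upgrading "$-K_S$ big" to "$-K_S$ ample" and then pinning down $\beta = -K_S$ exactly rather than merely $\beta$ proportional to or numerically equivalent to $-K_S$; this requires care with the effective cone of the rational surface and possibly a case analysis over the Hirzebruch surfaces and blow-ups of $\ptwo$, using the Hodge index theorem to exclude the existence of curves $\Gamma$ with $\Gamma^2 < 0$ and $-K_S.\Gamma \leq 0$ that would obstruct ampleness. Parts (1) and (2) are comparatively routine applications of surface classification and adjunction.
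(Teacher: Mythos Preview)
Your treatment of parts (1) and (2a) is close to the paper's and would work; both reduce to the observation that a regular non-rational surface has pseudo-effective (indeed nef on the minimal model) canonical class, which is incompatible with $K_S.\beta<0$ for $\beta$ nef.

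For (2b) your argument is sketchy. You write that ``chasing the inequalities and minimality shows $C^2=-1$, $K_S.C=-1$'', but you have not explained why the irreducible member must be supported on the exceptional locus of $S\to S'$ to its minimal model $S'$. The paper makes this explicit: write $K_S=\pi^*K_{S'}+\sum a_iF_i$ with $a_i>0$ and $K_{S'}$ nef; then $K_S.\beta<0$ forces $(\sum a_iF_i).\beta<0$, so the irreducible member of $|\beta|$ must be a multiple of some $F_i$, and then $K_S.F_i<0$ identifies $F_i$ as a $(-1)$-curve. Your version never isolates this step.

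The real gap is part (3). You try to first prove $S$ is del Pezzo and only afterwards pin down $\beta=-K_S$, and you correctly identify the ``upgrade $-K_S$ from big to ample'' step as the obstacle; your invocation of (2a) is also illegitimate here, since part (3) has no divisor $E$ with the virtual-dimension hypothesis. The paper's argument reverses the logic and avoids the obstacle entirely. Take an integral member $C\in|\beta|$. Regularity gives $h^1(\shO_S(K_S))=h^1(\shO_S)=0$, so
\[
0\to H^0(\shO_S(K_S))\to H^0(\shO_S(K_S+\beta))\to H^0\bigl(\shO(K_S+\beta)|_C\bigr)\to 0
\]
is exact. By adjunction $\shO(K_S+\beta)|_C\cong\omega_C\cong\shO_C$ since $p_a(C)=1$, so the constant section of $\shO_C$ lifts to a divisor $D\in|K_S+\beta|$ disjoint from $C$. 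But $C$ is ample, so any nonzero effective divisor meets $C$; hence $D=0$ and $\beta\sim-K_S$. Ampleness of $-K_S$ (and thus the del Pezzo property) is now immediate from ampleness of $\beta$, with no case analysis, no Hodge index argument, and no need to first establish rationality.
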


\begin{proof}
(1)
Since $|mK_S|=\emptyset$ for any $m>0$, 
$S$ is birationally ruled. 
From regularity it follows that $S$ is rational. 

(2)
The virtual dimension of the moduli space of stable maps $\ol{\mmm}_{0,0}(S,\beta)$ (i.e.\ before imposing the tangency condition) is $-K_S.\beta -1$. Imposing the condition of maximal tangency with $E$ cuts down the dimension by $E.\beta-1$. Therefore
\[
\vdim\mse=(-K_S-E).\beta.
\]
Thus $-K_S.\beta \geq E.\beta>0$ holds.

If $S$ is regular but not rational, by the Enriques-Kodaira classification, $S$ is birationally non-ruled. Let $\pi : S \to S'$ be the minimal model of $S$. Then $K_{S'}$ is nef and $K_S=\pi^*K_{S'} + \sum a_iF_i$, where the $F_i$ are the exceptional curves and $a_i>0$. 
Since $K_{S'}$ is nef, $(\sum a_iF_i).\beta\leq K_S.\beta<0$ and $\beta$ is not nef. 
If $\beta$ has an irreducible member, it is a multiple of one of $F_i$. 
From $K_S.\beta<0$ it follows that $K_S.F_i<0$, and $F_i$ is a $(-1)$-curve. 
Then $0<E.\beta\leq -K_S.\beta$ implies $E.F_i=1$. 

(3)
Let $C$ be an integral member of $|\beta|$. 
By regularity we have $h^1(\shO_S(K_S))=h^1(\shO_S)=0$, 
so there is an exact sequence
\[
0\to H^0(\shO_S(K_S))\to H^0(\shO_S(K_S+\beta)) \to H^0(\shO(K_S+\beta)|_C)\to 0. 
\]
Since $\shO(K_S+\beta)|_C\cong\omega_C\cong\shO_C$, 
it follows that there exists an element $D\in |K_S+\beta|$ such that $D\cap C=\emptyset$. 
Since $C$ is ample, $D$ must be $0$, hence $\beta\sim -K_S$. 
\end{proof}

\section{Local and log BPS numbers}

\label{sec:relGW}

Let $(S,E)$ be a log Calabi-Yau surface with smooth divisor. Whenever we talk about local GW invariants of $S$, we assume that $-K_S$ is ample.
We start by reviewing the (original) definition of local BPS invariants obtained by removing multiple cover contributions from the genus 0 local Gromov-Witten invariants of $S$ and yielding
integer invariants. The genus 0 local Gromov-Witten invariants of $S$ are the ordinary GW invariants of the noncompact Calabi-Yau threefold $X=\mathrm{Tot}(K_S)$. Whenever $-K_S$ is ample, these invariants can be defined via intersection theory on the compact moduli space of stable maps to $S$ as follows. Because $-K_S$ is ample, $\hhh^{0}(C,f^{*}K_{S})=0$ for a non-constant stable map $[f:C\to S]$. Therefore the line bundle $\shO (K_S)$ introduces no new deformations, just additional obstructions which we describe now.

For $m\geq 0$, denote by $\overline{\mmm}_{0,m}(S,\beta)$ the Deligne-Mumford moduli stacks of (isomorphism classes of) stable maps $[f:C\to S]$ of genus 0, with $m$ marked points and such that $f_{*}([C])=\beta$. There is a forgetful morphism
\[
\pi:\overline{\mmm}_{0,1}(S,\beta)\to\overline{\mmm}_{0,0}(S,\beta),
\]
which is the universal curve over $\overline{\mmm}_{0,0}(S,\beta)$. The latter carries a virtual fundamental class
\[
\left[\ol{\mmm}_{0,0}(S,\beta)\right]^{\mathrm{vir}}\in\hhh_{2\vdim}(\ol{\mmm}_{0,0}(S,\beta),\ZZ)
\]
of virtual dimension $\vdim=-K_S.\beta + (\dim S -3)=w-1$. The evaluation map
\[
ev:\overline{\mmm}_{0,1}(S,\beta)\to S
\]
determines the obstruction bundle
\[
\obstr:=R^{1}\pi_{*}ev^{*}K_{S},
\]
which is of rank $\vdim$ and has fiber $\hhh^{1}(C,f^{*}K_{S})$ over a stable map $[f:C\to S]$.

\begin{definition}
\emph{The genus $0$ degree $\beta$ local Gromov-Witten
invariant $\GW_\beta(X)$ of $S$} is defined as
\[
\GW_\beta(X):=\int_{[\overline{\mmm}_{0,0}(S,\beta)]^{\mathrm{vir}}}c_{\vdim}\left(\obstr\right)\in\mathbb{Q}.
\]

\end{definition}

We will make use of the following correspondence theorem. Its initial form for $\ptwo$ was conjectured in \cite{tak log ms} and proven in \cite{Ga03}. Then it was generalized in \cite{GGR} to any smooth projective variety of any dimension with maximal tangency condition along a smooth nef divisor (the most general statement is at the level of virtual fundamental classes). We state it in the generality needed for us.

\begin{thm}[See \cite{tak log ms,Ga03,GGR}] \label{thm:GGR} Assume that $E$ is ample. Then
\[
(-1)^{w-1}w \, \GW_\beta(X) = \mathcal N_\beta(S,E).
\]
\end{thm}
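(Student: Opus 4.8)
The plan is to prove Theorem~\ref{thm:GGR} by relating the local Gromov--Witten invariant $\GW_\beta(X)$ and the log Gromov--Witten invariant $\mathcal N_\beta(S,E)$ to the same underlying geometry via the degeneration/log-local comparison of \cite{GGR}, and then to chase through the precise constant. Concretely, I would not attempt an independent argument but rather quote the main theorem of \cite{GGR}: for a smooth projective variety $X$ together with a smooth nef divisor $D$ such that $K_X+D$ is trivial along the relevant curve class (here $X=S$, $D=E$, and $(K_S+E).\beta=0$ is forced by $\vdim\,\mse=0$), one has an equality of genus~$0$ invariants relating the local GW theory of $\mathrm{Tot}(\mathcal O_S(-E))=\mathrm{Tot}(K_S)$ to the maximal-tangency log GW theory of $(S,E)$, with a universal factor $(-1)^{w-1}w$ coming from the multiple-cover contribution of the fiber class over the tangency point. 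So the first step is to check that our hypotheses ($S$ a surface, $E$ a smooth ample anticanonical divisor, $\beta$ a curve class with $w=E.\beta>0$ and $\vdim\,\mse=0$) are a special case of the hypotheses of the correspondence theorem of \cite{GGR}; ampleness of $E$ gives nefness, and the Calabi--Yau condition $K_S+E\sim 0$ certainly implies $(K_S+E).\beta=0$.

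The second step is to identify the two sides with the objects defined in \cite{GGR}. On the local side, because $-K_S$ is ample we have $\hhh^0(C,f^*K_S)=0$ for every non-constant stable map, so $\mathcal O_S(K_S)$ contributes only the obstruction bundle $\obstr=R^1\pi_*\,\ev^*K_S$ of rank $w-1$; this is exactly the set-up in which $\GW_\beta(X)=\int_{[\ol{\mmm}_{0,0}(S,\beta)]^{\mathrm{vir}}}c_{w-1}(\obstr)$ agrees with the genus~$0$ GW invariant of the threefold $X=\mathrm{Tot}(K_S)$ appearing in \cite{GGR}. On the log side, by Proposition~\ref{prop:finite} the virtual dimension of $\mse$ is zero, so no insertions are needed and $\mathcal N_\beta(S,E)=\int_{[\mse]^{\mathrm{vir}}}1$ is precisely the genus~$0$ maximal-tangency log GW number used in \cite{GGR}. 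Once both sides are matched with the invariants in that reference, the theorem is the direct specialisation of the general statement there (or, for $S=\ptwo$, the earlier results of \cite{tak log ms,Ga03}).

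The third step is purely bookkeeping: track the constant. In \cite{GGR} the comparison is proved by degenerating $X$ to $\mathrm{Tot}(\mathcal O_S(-E))$ glued to a $\PP^1$-bundle piece, and a rational curve of class $\beta$ meeting $E$ with tangency $w$ at a point $P$ contributes to the local invariant a rigid multiple-cover factor of $(-1)^{w-1}/w$ relative to its contribution to $\mathcal N_\beta(S,E)$; summing over components yields exactly $(-1)^{w-1}w\,\GW_\beta(X)=\mathcal N_\beta(S,E)$. I would simply record this normalisation, noting that it is consistent with the $\ptwo$ case computed in \cite{Ga03} and with the heuristic of \S\ref{sec:open-closed}.

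The main obstacle is not the statement itself but verifying that the hypotheses of the cited correspondence genuinely apply without extra hypotheses hidden in \cite{GGR} (for instance, whether one needs $-K_S$ globally ample rather than merely $E$ ample, or whether the statement at the level of virtual classes requires $\beta$ to be effective or irreducible). Since we are only asserting the numerical identity and we have already assumed $-K_S$ ample wherever local GW invariants are discussed, and since ampleness of the anticanonical $E$ forces $S$ del Pezzo by Proposition~\ref{prop:rat}(3), these hypotheses are in fact automatically met in our setting; so the proof amounts to citing \cite{GGR} after this verification, and no separate computation is required.
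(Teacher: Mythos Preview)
Your proposal is correct and matches the paper's treatment: the paper does not give an independent proof of this theorem but simply states it as a cited result from \cite{tak log ms,Ga03,GGR}, so your plan to verify that the hypotheses of \cite{GGR} are met and then invoke that reference is exactly what is intended. Your additional commentary on how the constant $(-1)^{w-1}w$ arises and on the identification of both sides with the invariants in \cite{GGR} is accurate and more detailed than the paper itself, which is fine.
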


The relation between the $n_\beta$ and the $\GW_\beta(X)$ is given by the multiple cover formula (Aspinwall-Morrison formula):  
\begin{equation}\label{formmult}
\GW_\beta(X)=\sum_{k|\beta}\frac{1}{k^{3}}n_{\beta/k}.
\end{equation}
The Aspinwall-Morrison formula was proven for rigid rational curves in \cite{Manin, Voisin}.  
Note that for $S$ del Pezzo, \eqref{formmult} is equivalent to Definition \ref{defn:katz} as explained in Section 3.3 of \cite{CGKT}. 
Inverting this formula yields that
\[
n_\beta  =  \sum_{k|\beta} \frac{1}{k^3} \, \mu(k) \, \GW_{\beta/k}(X),
\]
where $\mu$ is the M\"obius function.
Under the assumption that $E$ is ample,
combining with Theorem \ref{thm:GGR}, we obtain that
\begin{equation}\label{eq:formnbeta}
(-1)^{w-1} w \, n_\beta   =  \sum_{k|\beta} \frac{1}{k^3} \, \mu(k) \, (-1)^{w-1} w \, (-1)^{\frac{w}{k}-1} \, \frac{1}{w/k} \, \shN_{\beta/k}(S,E).
\end{equation}
Noting that $w+w/k$ and $(k-1)w/k$ have the same parity, we rewrite (\ref{eq:formnbeta}) as follows.
\begin{equation}\label{eq:nbetarewrite}
(-1)^{w-1} w \, n_\beta   =  \sum_{k|\beta} \frac{(-1)^{(k-1)w/k}}{k^2} \mu(k) \,  \shN_{\beta/k}(S,E).
\end{equation}
By the decomposition \eqref{ndecomp}, this turns into 
\begin{equation}\label{form:locallog}
(-1)^{w-1} w \, n_\beta  = \sum_{k|\beta} \frac{(-1)^{(k-1)w/k}}{k^2} \mu(k)  \sum_{P\in E(\beta/k)} \mathcal N^P_{\beta/k}(S,E). 
\end{equation}
This suggests that we define the log BPS numbers at $P$ as
\begin{equation}\label{eq:explicitlogBPS}
m^P_\beta :=  \sum_{\left\{k|\beta \, : \, P\in E(\beta/k)  \right\}}   \frac{(-1)^{(k-1)w/k}}{k^2} \, \mu(k) \, \mathcal N^P_{\beta/k}(S,E).
\end{equation}
Note that $\shN^P_{\beta'}(S,E)=0$ if $P\not\in E(\beta')$. Setting $m^P_{\beta'}=0$ if $P\not\in E(\beta')$ and inverting equation \eqref{form:locallog} yields the following definition.

\begin{definition}[See Definition \ref{def1}]\label{multcover}
Let $(S,E)$ be a log Calabi-Yau surface with respect to $\beta\in\hhh_2(S,\ZZ)$ and assume that $S$ is regular and $E$ elliptic. We define $m^P_\beta$, the \emph{log BPS number at $P$}, via 
\[
\mc{N}^P_\beta(S,E) =  \sum_{k|\beta} \frac{(-1)^{(k-1)w/k}}{k^2} \, m^P_{\beta/k}.
\]
\end{definition}

Note that in the above sum, the number of terms varies with the arithmetic properties of $P$, 
as is illustrated by the examples of \S \ref{sec:evidencebig}. Note also that \cite[Remark 4.11]{tak mult} contains an equivalent description of this 
formula for $\IP^2$. 
By the above calculation,
\begin{equation}\label{eq:sum_log_in_local}
\sum_{P\in E(\beta)} m_\beta^P = (-1)^{w-1} w \, n_\beta.
\end{equation}

\begin{Conjecture}[See Conjecture \ref{conj1}]\label{conj:big}
Let $(S,E)$ be a log Calabi-Yau surface with respect to $\beta\in\hhh_2(S,\ZZ)$ and assume that $S$ is regular and $E$ elliptic. For all $P,P'\in E(\beta)$,
\[
m^P_\beta = m^{P'}_\beta.
\]
If in addition $E$ is ample, this is equivalent to the assertion that for all $P\in E(\beta)$,
\[
 n_\beta = (-1)^{w-1} \, w \, m^P_\beta.
\]
\end{Conjecture}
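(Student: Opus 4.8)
I would dispose of this at once, since it is pure bookkeeping once $E$ is ample. By Lemma~\ref{lem:ebetagroup}, $E(\beta)$ is a torsor under $\pic^0(E)[w]\cong(\ZZ/w)^2$, so $|E(\beta)|=w^2$; and \eqref{eq:sum_log_in_local} (which rests on the log-local correspondence of Theorem~\ref{thm:GGR}, available precisely because $E$ is ample) gives $\sum_{P\in E(\beta)}m^P_\beta=(-1)^{w-1}w\,n_\beta$. If all $m^P_\beta$ coincide, call the common value $m$; then $w^2m=(-1)^{w-1}w\,n_\beta$, so $n_\beta=(-1)^{w-1}w\,m=(-1)^{w-1}w\,m^P_\beta$ for each $P$. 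Conversely, if $n_\beta=(-1)^{w-1}w\,m^P_\beta$ for all $P\in E(\beta)$, then $m^P_\beta=(-1)^{w-1}n_\beta/w$ is visibly independent of $P$. So the two formulations are interchangeable and the real content is that $P\mapsto m^P_\beta$ is constant on $E(\beta)$.

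\textbf{Strategy for the constancy statement.} I would attack $m^P_\beta=m^{P'}_\beta$ through the stratification of $\msep$ recorded in Corollary~\ref{cor_description_log_maps} and Figure~\ref{fig}. A stable log map contributing to $\mathcal N^P_\beta(S,E)$ has image cycle a sum $\sum a_i[C_i]$ of integral rational curves, each maximally tangent to $E$ at the \emph{same} point $P$, glued along a tree of components collapsed to $P$; the contribution of such a configuration factors as a product of the local contributions of the individual $C_i$ (compactified-Jacobian lengths, Proposition~\ref{prop:logk3contr}) times a combinatorial gluing factor carried by the collapsed tree. The plan is then to feed this into the M\"obius-type formula \eqref{eq:explicitlogBPS} defining $m^P_\beta$ and show that, class by class, the alternating sum collapses: the multiple-cover and reducible-image contributions over a fixed integral nodal curve $C$ should assemble into the loop-quiver DT invariant $\dt^{(C.E-1)}_l$ of Proposition~\ref{prop:loopy}, which depends only on $C.E$ and not on which torsion point of $E$ carries the tangency. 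What is left is the ``primitive part'': for $P$ of maximal order, $m^P_\beta=\sum_{C\in M_{\beta,P}}e(\ol{\pic}^0(C))$ by Proposition~\ref{prop:logk3contr}(5), and one must show this weighted count is the same for every $P\in E(\beta)$. Here one either enumerates the maximally tangent rational curves through $P$ explicitly and checks $P$-independence of the total length — the route of Section~\ref{sec:calc}, carried out for line, conic and nef-and-big classes of arithmetic genus $\le 2$ and yielding Theorem~\ref{thm1} — or one transports the problem to the scattering diagram of \cite{CPS}, where broken-line/tropical counts see only the combinatorial type, not the precise contact point (the approach later used in \cite{Bou19a,Bou19b} for $\PP^2$).

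\textbf{The main obstacle.} The hard part is controlling $M_{\beta,P}$ and the lengths $e(\ol{\pic}^0(C))$ uniformly in $P$. For general $(S,E)$ one lacks the deformation-theoretic input — that every $C\in M_{\beta,P}$ is smooth at $P$ and immersed away from $P$, equivalently nodal (cf.\ the discussion after Proposition~\ref{prop:logk3contr}) — that would turn the weighted count into a literal count of nodal curves; without it the count could a priori jump with $P$, as the cuspidal-versus-nodal cubic example in the introduction shows at the level of individual curves. A second difficulty is checking that, after M\"obius inversion, the loop-quiver contributions at a non-primitive $P$ (where reducible image cycles genuinely occur) telescope against the lower-class terms to leave exactly $m^{P'}_\beta$ for primitive $P'$; reconciling the log-moduli side with the quiver-DT identities of Section~\ref{sec:loop} is the technical heart of Theorem~\ref{thm1}. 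I note finally that the softer-looking route of proving only $w\mid n_\beta$ via a $\PP^{w-1}$-factor in $H^*(\mathcal M_\beta)$, as in Conjecture~\ref{conj:div1}, does not by itself deliver equality of the individual $m^P_\beta$, so the curve-counting or scattering input above seems unavoidable.
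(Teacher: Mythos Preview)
The statement you are treating is a \emph{Conjecture}, not a theorem: the paper does not prove the constancy clause $m^P_\beta=m^{P'}_\beta$ in general, only in the special cases of Theorem~\ref{thm1} (multiples of line and conic classes, and $dh$ for $d\le 4$). So there is no ``paper's own proof'' of the full statement to compare against; what the paper does establish is exactly the equivalence clause, and your treatment of that is correct and matches the paper's derivation. The argument you give---$|E(\beta)|=w^2$ from Lemma~\ref{lem:ebetagroup}, the identity $\sum_P m^P_\beta=(-1)^{w-1}w\,n_\beta$ of \eqref{eq:sum_log_in_local} (which in turn rests on Theorem~\ref{thm:GGR}, hence the ampleness hypothesis), and then elementary manipulation---is precisely the reasoning implicit in the text surrounding \eqref{eq:sum_log_in_local} and the statement of the conjecture.

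Your ``Strategy'' and ``Main obstacle'' sections are a fair summary of the landscape, and you correctly identify that the primitive-point count $\sum_{C\in M_{\beta,P}}e(\ol{\pic}^0(C))$ is where the genuine difficulty lies. But you should be explicit that this is \emph{not} a proof: the decomposition you sketch (multiple-cover contributions assembling into loop-quiver DT invariants plus a primitive remainder) is exactly what the paper carries out in Section~\ref{sec:loop} for the small cases of Theorem~\ref{thm1}, and the paper stops there because the general case is open. In particular, your claim that ``the contribution of such a configuration factors as a product of the local contributions of the individual $C_i$ \dots\ times a combinatorial gluing factor'' is not established in the paper beyond the two-component case of Theorem~\ref{thm:takmult}; handling image cycles with three or more components, or non-immersed curves, is precisely what blocks a general argument. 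So your proposal is best read as a correct proof of the equivalence clause together with an accurate diagnosis of why the constancy clause remains conjectural.
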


In \S\ref{sec:beta}, we elaborate on Conjecture \ref{conj:big} in the case of some special points $P\in E(\beta)$ which we call $\beta$-primitive (Definition \ref{def:prim}). For such $P$, $m^P_\beta$ is a weighted count of rational curves, see Proposition \ref{propocontro}. In that setting, the conjecture hence gives an enumerative interpretation of $n_\beta$.

\section{Primitive points of contact}\label{sec:logbps}

For this section and next, we assume that $S$ is a del Pezzo surface and consider certain points $P\in E(\beta)$ such that $\msep$ is zero-dimensional.

\subsection{Preliminaries on del Pezzo surfaces}

In this section, we collect basic facts about curve classes on del Pezzo surfaces. Let $S$ be a del Pezzo surface. Denote by $S_r$ the blowup of $\ptwo$ along $r$ general points. Then $S$ is either $S_r$ for $0\le r \le 8$ or $\pone\times\pone$. We will mainly consider the case $S=S_r$ and will make remarks for $\pone\times\pone$ separately whenever needed. The results of this paper hold for $\pone\times\pone$ as well. 

\begin{definition}
A class $\beta\in \hhh_2(S,\ZZ)$ is a \emph{curve class} if it can be represented by a nonempty subscheme of dimension one. We often consider $\beta$ as a divisor on $S$.
\end{definition}

Recall that $p_a(\beta):=\frac{1}{2} \beta (\beta+K_S) + 1$ is the arithmetic genus of $\beta$. Since del Pezzo surfaces are rational, by Poincar\'e duality, $\pic(S)\simeq\hhh_2(S,\ZZ)$. So when we write $|\shO_S(\beta)|$ or simply $|\beta|$, we mean the complete linear system $|L|$ for the unique $L\in\pic(S)$ such that $c_1(L)=\beta$.

For $S_r$, let $h$ be the pullback of $\mathcal{O}_{\ptwo} (1)$ and let $e_i$ for $1\le i\le r$ be the exceptional divisors. The Picard group $\mathrm{Pic}(S_r)$ is generated by $h$ and the $e_i$'s. The anticanonical divisor is $-K_{S_r}=3h-\sum_{i=1}^r e_i$. 
For $\pone\times\pone$, we denote by $h_1$ and $h_2$ the pullback of $\mathcal{O}_{\pone} (1)$ from each factor. The anticanonical divisor is $-K_{\pone\times\pone}=2h_1+2h_2$.

\begin{definition}
  A \emph{line class} on $S$ is a class $l\in \mathrm{Pic}(S)$ such that $l^2=-1$ and $(- K_S).l=1$. 
\end{definition}

It is well-known that each line class contains a unique integral line and there are only finitely many lines on $S$. 
\begin{example}
  By numerical calculation, we list all line classes up to permutation of the $e_i$'s: 
  \[e_i, (1; 1^2) ,(2; 1^5),(3; 2, 1^6), (4; 2^3, 1^5) , (5; 2^6, 1^2), (6; 3, 2^7). \]
  Here, we used the notation $(d; a_1,\cdots ,a_r)$ for the divisor $dh-\sum a_i e_i$. The superscripts indicate the number of repetitions.
\end{example}

\begin{definition}
In the case of $S_r$, a point $P\in E$ is said to be a \emph{flex point} if there is a curve $C$ of class $h$ such that $C$ meets $E$ only at $P$ (of tangency $h.E=3$).
\end{definition}

\begin{Lemma}[See \cite{Rocco}]\label{lem:Rocco}
Let $\beta\in\hhh_2(S,\ZZ)$ be a curve class containing an integral curve and with $p_a(\beta)\geq 1$. Assume that $\beta$ is not $-K_{S_7}, \, -K_{S_8} \text{ or } -2K_{S_8}$ (neither of which are very ample). Then $\beta$ is very ample if and only if there are no line classes $l$ such that $\beta.l=0$.
\end{Lemma}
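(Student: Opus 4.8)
The plan is to deduce this from the classical very ampleness criteria for curve classes on del Pezzo surfaces, which is precisely the content of the cited work of Di Rocco. First I would reduce to the case $S = S_r$, treating $\pone \times \pone$ separately (there the effective curve classes are $ah_1 + bh_2$ with $a,b \geq 0$, and one checks directly that such a class with $p_a \geq 1$, i.e.\ $(a-1)(b-1) \geq 1$, is very ample iff $a,b \geq 1$, while $a = 0$ or $b = 0$ gives a class orthogonal to a line class — the ruling). For $S_r$, the key input from \cite{Rocco} is a list: a nef divisor $\beta$ on $S_r$ fails to be very ample exactly when $\beta$ is of one of finitely many exceptional shapes — roughly $-K_{S_7}$, $-K_{S_8}$, $-2K_{S_8}$, or $\beta$ has intersection number $0$ with some line class (equivalently $\beta - e_i$ or the relevant reduction is not in the nef cone in the right way). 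So the forward direction "$\beta$ very ample $\Rightarrow$ no line class $l$ with $\beta.l = 0$" is the easy half: if $\beta.l = 0$ for an integral line $L$ in class $l$, then $\shO_S(\beta)|_L \cong \shO_{\pone}$, so $\hhh^0(\shO_S(\beta)) \to \hhh^0(\shO_L)$ cannot separate the points of $L$ (or any length-$2$ subscheme of $L$), hence $\beta$ is not very ample — here I only need $\beta.l = 0$, not any genus hypothesis.

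For the converse — no line class orthogonal to $\beta$ implies $\beta$ very ample — I would first argue that under the stated hypotheses $\beta$ is nef, in fact ample: since $\beta$ contains an integral curve $C$ with $p_a(\beta) \geq 1$, we have $\beta^2 = 2p_a(\beta) - 2 - K_S.\beta \geq 0$ by adjunction and $-K_S$ ample forces $-K_S.\beta > 0$ once $\beta \neq 0$; combined with $\beta.l > 0$ for every line class (the hypothesis) and the fact that on a del Pezzo surface the effective cone is generated by the line classes (and, for $S = \pone \times \pone$, the two rulings), one gets $\beta.\gamma > 0$ for every effective $\gamma$, so $\beta$ is ample by Nakai–Moishezon. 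Then I invoke the precise theorem of Di Rocco: an ample divisor on a del Pezzo surface $S_r$, $r \leq 8$, is very ample unless it is one of $-K_{S_7}, -K_{S_8}, -2K_{S_8}$. Since these three are excluded by hypothesis, $\beta$ is very ample.

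The step I expect to be the main obstacle — or at least the one requiring the most care — is making the citation to \cite{Rocco} precise: Di Rocco's classification is stated in terms of the position of $\beta$ relative to the $(-1)$-curves and the specific exceptional classes, and one must check that "there are no line classes $l$ with $\beta.l = 0$" together with "$\beta$ contains an integral curve of arithmetic genus $\geq 1$" exactly carves out the complement of that exceptional list (modulo the three excluded classes). In particular I would need to verify that no \emph{other} base-point-freeness or separation failure can occur — e.g.\ that there is no ample class on some $S_r$ that is base-point free but fails to separate infinitely near points without being orthogonal to a line. This is exactly where Di Rocco's theorem does the work, so the proof is really a matter of quoting it in the right form and handling the reduction to ampleness cleanly; the genus hypothesis $p_a(\beta) \geq 1$ enters precisely to guarantee $\beta^2 \geq 0$ and to rule out the line and conic classes themselves (which are base-point free but not very ample, and are orthogonal to some line class anyway).
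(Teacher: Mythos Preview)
Your overall strategy matches the paper's: establish that $\beta$ is nef (or ample), then invoke Di Rocco's criterion. The forward direction is fine. However, your proof of ampleness for the converse contains a factual error: the effective cone of a del Pezzo surface is \emph{not} always generated by line classes. On $\PP^2$ there are no line classes at all; on $S_1$ the extremal rays are $e_1$ (a line class) and $h-e_1$ (a conic class with $(h-e_1)^2=0$, not a line class); and on $\pone\times\pone$ the rulings $h_1, h_2$ have self-intersection $0$, so they are not line classes either---in fact $\pone\times\pone$ has no line classes, so the hypothesis ``no line class $l$ with $\beta.l=0$'' is vacuous there, and your parenthetical treatment of that case misidentifies the rulings. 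Consequently your claim that $\beta.\gamma>0$ for all effective $\gamma$ does not go through for these surfaces from the hypothesis alone.

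The paper avoids this by establishing only that $\beta$ is nef and big (from $p_a(\beta)\geq 1$, adjunction, and irreducibility of the integral member), and then checking Di Rocco's very ampleness criterion case by case rather than passing through ampleness. The nontrivial case is precisely $S_1$: there the criterion also requires $\beta.(h-e_1)\geq 1$, and the paper verifies this by observing that $\beta.(h-e_1)=0$ together with $\beta=dh-ae_1$ forces $d=a$, hence $\beta^2=0$, contradicting bigness. Your argument can be repaired along exactly these lines---indeed, once you know nef and big, the missing inequality $\beta.(h-e_1)>0$ follows by the same bigness contradiction, and then your route through ampleness also works---but as written the claim about generators of the effective cone is a genuine gap.
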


\begin{proof}
We first note that $\beta$ is nef and big. 
In fact, let $C$ be an integral member of $\beta$. 
Since $S$ is del Pezzo, $K_S.C<0$ holds, 
so $C^2>0$ follows from $p_a(C)\geq 1$ and adjunction. 
Since $C$ is irreducible, it is nef. 

Now we apply the criterion of \cite{Rocco}. When $S=\PP^2$, the assertion holds.
When $S=\pone\times\pone$, the criterion is that $\beta.h_i\ge 1$ for $i=1,2$. By our assumptions, this is satisfied.

For $S=S_1$, the criterion is that $\beta.e_1\ge 1$ and that $\beta.(h-e_1)\ge 1$. The first condition follows from the assumption that there are no line classes $l$ with $\beta.l=0$ (and nefness). If we assume that the second condition does not hold, then by nefness of $\beta$ we have $\beta.(h-e_1)=0$. If we write $\beta=dh-ae_1$, this translates into $d=a$. This contradicts the bigness of $\beta$. Finally, when $S=S_r$ for $r \ge 2$, the criterion is that $\beta. l\ge 1$ for any line class $l$, which is satisfied by our assumptions.
\end{proof}

\subsection{$\beta$-primitive points} \label{sec:beta}

\begin{definition}\label{def:prim}
A point $P\in E$ is said to be $\beta$-primitive if $\beta|_{E}\sim wP$,
but there is no decomposition into non-zero pseudo-effective classes $\beta=\beta'+\beta''$,
with $\beta'|_{E}\sim w'P$, where $w'=\beta'.(-K_S)>0$.
\end{definition}

This definition guarantees that if $P$ is $\beta$-primitive, any curve of class $\beta$ meeting $E$ only at $P$ is integral and no multiple covers or maps with reducible image appear in the moduli space $\msep$.

\begin{remark}
For surfaces $S$ with Picard number $\rho(S)\geq 2$, the extremal rational curves have class $D$ with $p_a(D)=0$ and $D^2\leq0$. In the case of del Pezzo surfaces (other than $\ptwo$), the extremal rays are generated by line classes and the conic classes that don't decompose as sums of line classes. The latter are exactly the conic classes of the form $h-e_i$ when $S$ a blowup of $\ptwo$ and the fiber classes in the case of $\pone\times\pone$. The line and conic classes are classified in \cite[Examples 2.3 and 2.11]{CGKT}. In particular there are only finitely many extremal rays. So the effective cone is polyhedral, hence closed, and pseudo-effective is the same as effective for del Pezzo surfaces. In particular, there are only finitely many ways of decomposing $\beta$ as above. We leave the definition as is since it also applies to more general surfaces. Similarly, the results of this section may be adapted to hold more generally.
\end{remark}

\begin{Lemma}\label{lem:ptwo}
Assume that $S=\ptwo$ and let $d\geq 1$. Then the following are equivalent.
\begin{enumerate}
\item $P$ is $dh$-primitive.
\item $P$ is of order $3d$ for a choice (not necessarily all) of $0\in E$ a flex point.
\item For a fixed flex point $0\in E$,
\begin{equation*}
\begin{cases}
P \text{ is of order } 3d, & \text{ if } 3|d. \\
P \text{ is of order } d \text{ or } 3d, & \text{ if } 3 \ndiv d.
\end{cases}
\end{equation*}
\end{enumerate}
\end{Lemma}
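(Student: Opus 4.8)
The plan is to fix one flex point $0 \in E$, use it as the origin of the group law on $E$ (so that $P \mapsto \O_E(P-0)$ identifies $E$ with $\pic^0(E)$), and reduce everything to a statement about $n := \ord(P)$, the order of $P$ in this group. First I would note that three collinear points of $E$ sum to zero and the flex tangent at $0$ meets $E$ only at $0$ with multiplicity $3$, so $h|_E \sim 3 \cdot 0$ (the divisor $0$ with multiplicity $3$); hence $dh|_E \sim 3d \cdot 0$ and $E(dh) = E[3d]$, the $3d$-torsion subgroup (and all three statements plainly require $P \in E[3d]$, which we henceforth assume). Since every nonzero effective class on $\ptwo$ is a positive multiple of $h$, a decomposition $dh = \beta' + \beta''$ into nonzero (pseudo-)effective classes is exactly $\beta' = ah$, $\beta'' = (d-a)h$ with $1 \le a \le d-1$; then $w' = \beta'.(-K_{\ptwo}) = 3a$, and $\beta'|_E \sim w' P$ becomes $3a \cdot 0 \sim 3a \cdot P$, i.e.\ $P \in E[3a]$. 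So the first step records the equivalence: $P$ is $dh$-primitive $\iff$ $n \mid 3d$ and $n \nmid 3a$ for every $1 \le a \le d-1$.

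Next I would unwind this arithmetic condition. Because $n \mid 3a \iff \tfrac{n}{\gcd(n,3)} \mid a$, the clause ``$n \nmid 3a$ for all $1 \le a \le d-1$'' is precisely $\tfrac{n}{\gcd(n,3)} \ge d$; together with $n \mid 3d$ this forces $n = d$ when $3 \nmid n$ (and then automatically $3 \nmid d$), and $n = 3d$ when $3 \mid n$. Splitting into the cases $3 \mid d$ and $3 \nmid d$ turns this into statement $(3)$ verbatim, which proves $(1) \Leftrightarrow (3)$; and since the argument only used that $0$ is a flex, the criterion $(3)$ is independent of the chosen flex, so $(1) \Leftrightarrow (3)$ holds with any flex taken as origin.

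For $(2) \Leftrightarrow (1)$ I would pass between flex origins: replacing $0$ by another flex $Q$ replaces $n$ by $\ord(P-Q)$, with $Q \in E[3]$. If $\ord_Q(P) = 3d$ for some flex $Q$, then statement $(3)$ with origin $Q$ holds trivially, so $P$ is $dh$-primitive by the equivalence above. Conversely, if $P$ is $dh$-primitive then $(3)$ holds with origin $0$, so either $n = 3d$, in which case $Q = 0$ works, or $3 \nmid d$ and $n = d$; in the latter case take $Q$ to be any nonzero flex and check, using $3(P-Q) = 3P$ and $\gcd(n,3) = 1$, that $\ord(P-Q)$ is a multiple of $n$ that divides $3n$ and is not equal to $n$ (otherwise $nQ = 0$, contradicting $\ord(Q) = 3 \nmid n$), hence $\ord(P-Q) = 3n = 3d$. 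This gives $(2) \Leftrightarrow (1)$ and completes the argument.

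The step I expect to be the main obstacle is the bookkeeping around the prime $3$: since $w = 3d$ while the flex points are precisely the $3$-torsion, there is a genuine ``resonance'' between the tangency order and the translations one is permitted to perform, which is the source of the two cases in statement $(3)$ and requires one to keep careful track of the $3$-adic behaviour of $n$ (whether $3 \nmid n$, $3 \mid n$ with $9 \nmid n$, or $9 \mid n$) throughout the comparison of flex origins.
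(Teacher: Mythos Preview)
Your proof is correct and follows essentially the same approach as the paper: fix a flex as origin, translate $dh$-primitivity into the condition $\ord(P)\mid 3d$ but $\ord(P)\nmid 3a$ for $1\le a\le d-1$, and then do the case split on divisibility by $3$. The paper's version is much terser, leaving the arithmetic unwinding and the change-of-flex argument for $(2)$ as exercises, whereas you have spelled these out carefully; in particular your verification that $\ord_Q(P)=3d$ for some other flex $Q$ when $n=d$ and $3\nmid d$ is exactly the content behind the paper's one-line remark that ``all other flex points are of order~$3$.''
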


\begin{proof}

Choose $0\in E$ to be any flex point. Let $\ord(P)$ be the order of $P$ for the resulting group law, so that $\ord(P)|3d$. From the definition, we see that $P$ is $\beta$-primitive if and only if $\ord(P)\ndiv 3d_1$ for all $d_1$ such that $1\leq d_1 < d$. This is equivalent to $(3)$. Condition $(2)$ then is obtained by noting that  all other flex points are of order $3$ with respect to $0$.
\end{proof}

\begin{Lemma}\label{lem:decomp}
Let $\beta\in\hhh_2(S,\ZZ)$ be a curve class. 
Assume that $(S,E)$ is general and let $P\in E(\beta)$. Then $P$ is not $\beta$-primitive if and only if there is a cycle class $\bar{\beta}$ and an integer $k>1$
such that $\beta=k\bar{\beta}$ and $\bar{\beta}|_{E}\sim\frac{w}{k}P$.
\end{Lemma}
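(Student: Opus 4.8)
The plan is to prove both implications, the easy one first. If $\beta = k\bar\beta$ with $k>1$ and $\bar\beta|_E \sim \tfrac{w}{k}P$, then setting $\beta' = \bar\beta$ and $\beta'' = (k-1)\bar\beta$ gives a decomposition into nonzero pseudo-effective (equivalently effective, by the Remark following Definition \ref{def:prim}) classes with $\beta'|_E \sim w'P$ for $w' = \bar\beta.(-K_S) = w/k > 0$; hence $P$ is not $\beta$-primitive. This direction needs no genericity hypothesis and no real argument beyond unpacking Definition \ref{def:prim}.

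For the converse, suppose $P$ is not $\beta$-primitive, so there is a decomposition $\beta = \beta' + \beta''$ into nonzero effective classes with $w' := \beta'.(-K_S) > 0$ and $\beta'|_E \sim w'P$. First I would record the immediate numerical consequences: since $E \sim -K_S$ on a del Pezzo surface, $w' = \beta'.E$ and $w'' := \beta''.E = w - w'$; and from $\beta|_E \sim wP$ together with $\beta'|_E \sim w'P$ we get $\beta''|_E \sim w''P$ as well, so $P \in E(\beta')\cap E(\beta'')$ and both summands are again "of contact type at $P$". Iterating the non-primitivity, I can refine any such decomposition to one $\beta = \sum_{i=1}^N \beta_i$ into $\beta_i$-primitive effective classes, each satisfying $\beta_i|_E \sim w_iP$ with $w_i = \beta_i.E > 0$ and $\sum w_i = w$. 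The goal is then to show that, when $(S,E)$ is general, all the $\beta_i$ are equal as classes in $\hhh_2(S,\ZZ)$; taking $\bar\beta = \beta_i$ and $k = N > 1$ finishes the proof.

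The heart of the matter — and the step I expect to be the main obstacle — is extracting an \emph{honest equality of homology classes} from the genericity of $(S,E)$, given only congruences on $E$. The point is this: fix the del Pezzo surface $S$ and vary $E$ in the (rational, $\dim = \dim|-K_S| = K_S^2$) linear system of smooth anticanonical divisors, with a varying choice of zero $0_E$. For a fixed pair of distinct primitive classes $\beta_i \neq \beta_j$ with $\beta_i.E = \beta_j.E$, the locus of $(E, 0_E)$ for which there exists a point $P$ with both $\beta_i|_E \sim w_iP$ and $\beta_j|_E \sim w_jP$ is a proper closed subvariety: indeed $P$ is then forced to be a specific torsion-type point determined by $\beta_i|_E$ (there are $w_i^2$ choices, cf.\ Lemma \ref{lem:ebetagroup}), and requiring in addition $\beta_j|_E \sim w_jP$ is the vanishing of the section $\beta_j|_E \otimes \shO_E(-w_jP)$ of $\pic^0(E)$, a nontrivial condition on the moduli of $E$ unless $\beta_i$ and $\beta_j$ are proportional in a way forcing the congruences to coincide identically. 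Since there are only finitely many curve classes $\beta_i$ appearing in decompositions of the fixed class $\beta$ (the effective cone of $S$ is polyhedral, as noted in the Remark), a general $(S,E)$ avoids all of these bad loci simultaneously; for such $(S,E)$ any two primitive summands with the same $(-K_S)$-degree that share the contact point $P$ must be equal. A minor wrinkle to handle is summands of different degrees $w_i \neq w_j$: here one argues that if $\beta_i|_E \sim w_iP$ and $\beta_j|_E \sim w_jP$ with, say, $w_i < w_j$, then $\beta_i$ fails to be primitive for general $E$ unless it is itself proportional to the common "primitive direction", reducing again to the equal-degree case. Assembling these observations gives that all $\beta_i$ coincide, completing the proof.
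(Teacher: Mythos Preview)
Your strategy differs from the paper's in two ways: you iterate the decomposition to reach a sum $\beta=\sum_i\beta_i$ of pieces for which $P$ is $\beta_i$-primitive and then try to show all $\beta_i$ are equal; the paper instead works with a \emph{single} decomposition $\beta=\beta'+\beta''$ and runs a dichotomy on whether $\beta'$ is $\QQ$-proportional to $\beta$. Also, you vary $E$ with $S$ fixed, whereas the paper fixes a plane cubic $E_0$ and varies the blowup points $P_1,\dots,P_r$ on it.

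Your sketch has a genuine gap in the ``different degree'' case. The sentence ``$\beta_i$ fails to be primitive for general $E$ unless it is itself proportional to the common primitive direction, reducing again to the equal-degree case'' does not parse into an argument: the existence of $\beta_j$ with $w_j>w_i$ gives no decomposition of $\beta_i$, so nothing contradicts $\beta_i$-primitivity. What you actually need is: from $\beta_i|_E\sim w_iP$ and $\beta_j|_E\sim w_jP$, the class $w_j\beta_i-w_i\beta_j$ lies in $K_S^\perp$ and restricts to $0$ on $E$; \emph{if} the restriction $K_S^\perp\to\Pic^0(E)$ is injective for general $(S,E)$, then $\beta_i,\beta_j$ are $\QQ$-proportional, and then $\beta_j$-primitivity of $P$ forces equality (write $\beta_i=c_i\alpha$, $\beta_j=c_j\alpha$ with $\alpha$ effective; if $c_i<c_j$ then $\beta_j=\beta_i+(c_j-c_i)\alpha$ is a forbidden decomposition). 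So the argument can be salvaged, but you have neither articulated it nor justified the key injectivity statement. That injectivity is precisely what the paper proves, concretely: writing $\beta=dh-\sum a_ie_i$ and $\beta'=d'h-\sum a_i'e_i$, the conditions $\beta|_E\sim wP$ and $\beta'|_E\sim w'P$ yield the relation
\[
3(d'w-dw')P_0+\sum(w'a_i-wa_i')P_i\sim 0
\]
on $E_0$, which cuts out a proper closed subset of the space of blowup loci $(P_i)$ unless all coefficients vanish, i.e.\ unless $\beta'\propto\beta$.

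The paper's organization is also cleaner. Once one knows that for general $(S,E)$ any $\beta'$ arising in a decomposition is $\QQ$-proportional to $\beta$, a short B\'ezout argument produces $\bar\beta$ directly: choose $a,b\in\ZZ$ with $aw+bw'=\gcd(w,w')$ and set $\bar\beta=a\beta+b\beta'$; then $\bar\beta|_E\sim\gcd(w,w')P$ and $\beta=\frac{w}{\gcd(w,w')}\bar\beta$ with $\frac{w}{\gcd(w,w')}>1$. No iteration is needed.
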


\begin{proof}
We are looking at deformation classes of $(S,E)$. Since the Picard
group does not change, we consider a class $\gamma$ as being a class
in each deformation.

Consider a decomposition of $\beta$ into effective classes
$\beta=\beta'+\beta''$ with $\beta'|_E\sim w'P$.
Assume first that $\beta=k\beta'$ for $k\in\Q$. Note that we can check equalities with $\QQ$-coefficients as $\pic(S)$ is torsion-free. Choose $a,b\in\mathbb{Z}$ to satisfy $aw+bw'=gcd(w,w')$ and set
$\bar{\beta}=a\beta+b\beta'$. Then $\overline{\beta}|_{E}\sim gcd(w,w')P$. Since $w=kw'$, $akw'+bw'=gcd(w,w')$ and $\overline{\beta} = (ak+b)\beta' = \frac{gcd(w,w')}{w'}\beta'$. It follows that $\frac{w}{gcd(w,w')} \overline{\beta}=\frac{w}{w'}\beta'=k\beta'=\beta$. Since $\frac{w}{gcd(w,w')}$ is an integer greater than $1$, we are done with the case $\beta=k\beta'$.

Assume now that $\beta\neq k\beta'$ for any $k\in\Q$. This case excludes $\ptwo$. We will show that this can only happen on a proper closed subset of the moduli of the $(S,E)$. 
Let $S=S_{r}$ be the blowup of $\ptwo$ in $r$ general points $P_1,\dots,P_r$ for $1\leq r\leq8$. Then $E\subset S_r$ is the strict transform of a cubic in $\ptwo$ through $P_1,\dots,P_r$. We identify points on $E$ with those on this cubic.
Recall that $e_i$ is the exceptional divisor over $P_i$. 
Choose also a flex point $P_0$. 
Let
\[
\beta\sim dh-\sum a_{i}e_{i} \text{ and } \beta'\sim d'h-\sum a'_{i}e_{i}
\]
with $d,d',a_{i},a_i'\in\ZZ$. 
Note that there are only a finite number of possibilities for $\beta'$. Indeed, the set
\[
\left\{\beta'\in\hhh_2(S,\RR) \text{ effective and } (E.\beta')\leq (E.\beta) \right\}
\]
is compact (since the effective cone is closed).
We find that
\[
3d'P_{0}-\sum a'_{i}P_{i}\sim w'P \text{ on } E,
\]
as well as
\[
3dP_{0}-\sum a{}_{i}P_{i}\sim wP \text{ on } E,
\]
so that
\begin{equation}\label{eq:linequiv}
3(d'w-dw')P_{0}+\sum(w'a_{i}-wa'_{i})P_{i}\sim0.
\end{equation}
Assume that the $w'a_i-wa'_i$ are not all zero and view \eqref{eq:linequiv} as an equation in the $P_i$ on $E$. As such, it defines a proper closed subset of the set of tuples of blowup loci $\left\{(P_i)\right\}$. Outside of this set, 
\begin{equation}\label{eq:aident}
a_i=\frac{w}{w'}a'_i.
\end{equation}
By calculating the degree of the divisor (\ref{eq:linequiv}), we see that $d'w-dw'=0$, from which it follows that $\beta=\frac{d}{d'}\beta'$ using (\ref{eq:aident}). Thus $\beta\neq k\beta'$ is only possible on a closed proper subset of the parameter space.

In the case of $\pone\times\pone$, we blow up a point on $E$ away from $E(\beta)$ and deduce the result from the $S_2$ case.
\end{proof}

We introduce some more notation. Let $\beta$ be a curve class, let $(S,E)$ be general with respect to $\beta$ and let $P\in E(\beta)$. It follows from Lemma \ref{lem:decomp}, that there is a unique $k(\beta,P)\in \NN$ such that $P$ is $\beta/k(\beta,P)$-primitive. If moreover $\beta/k(\beta,P)$ is a primitive curve class, i.e.\ is a primitive vector in the cone of effective classes, we say that $P$ is a \emph{$\beta$-zero point} of $E$.

For example, flex points are $dh$-zero points for all $d\geq 1$. Indeed, in this case $k(dh,P)=d$ since $h$ is primitive (but no multiple of it is). For $S_r$ and a class $\beta = dh - \sum a_i e_i$ with $\gcd (d,a_1,\dots,a_r)=1$,  each point of $E(\beta)$ is both $\beta$-zero and $\beta$-primitive.

\begin{Lemma}\label{lem:order} Let $\beta$ be a curve class, let $(S,E)$ be general with respect to $\beta$ and let $P\in E(\beta)$. 
Then $P$ is $\beta$-primitive if and only if $P$ is of order $w$ for a choice (not necessarily all) of $\beta$-zero point $0\in E$.

\end{Lemma}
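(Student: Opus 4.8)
The plan is to translate both the notion of $\beta$-primitivity and the notion of $\beta$-zero point into the language of the group law on $E$, and then to reduce the statement to the corresponding divisibility statement for torsion points. Fix a $\beta$-zero point $0 \in E$; by definition this means $P$ itself is $\beta/k(\beta,P)$-primitive and $\bar\beta := \beta/k(\beta,P)$ is a primitive vector in the effective cone, where $k = k(\beta,P)$ is the integer furnished by Lemma \ref{lem:decomp}. Write $\bar w := \bar\beta.(-K_S) = w/k$. The first step is to record, using Lemma \ref{lem:decomp} applied to $\bar\beta$ at $P$, that $P$ is $\bar\beta$-primitive precisely because there is no way to write $\bar\beta = m \gamma$ with $m > 1$ and $\gamma|_E \sim (\bar w/m) P$; since $\bar\beta$ is already a primitive class, the only obstruction to primitivity that can occur is the "$\QQ$-multiple" one, and $\bar\beta$ primitive rules it out, so in fact $P$ is automatically $\bar\beta$-primitive once $0$ is a $\beta$-zero point — the content we actually need is the relation between $\ord(P)$ and $w$.

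The second and main step is the order computation. Having chosen $0 = $ a $\beta$-zero point, we have $\bar\beta|_E \sim \bar w P$ in $\pic^{\bar w}(E)$; using $0$ as the origin this says $\bar w \cdot P = \bar\beta|_E - \bar w \cdot 0$ as a point of $\pic^0(E)$, and since $0$ is a $\beta$-zero point one checks (this is where one uses that $0$ lies in $E(\bar\beta')$ for the primitive class, i.e.\ the defining property of a zero point, together with $\beta = k\bar\beta$) that $\bar w \cdot P = 0$ in the group, so $\ord(P) \mid \bar w$. Conversely, for any divisor $m$ of $\bar w$ with $m < \bar w$, write $\bar w = m m'$; if $\ord(P) \mid m$ then $m' P$ would be $\bar\beta$... — more precisely, I would show that $\ord(P) = \bar w$ is equivalent to the non-existence of a decomposition $\beta = k' \gamma$ with $k' > k$ and $\gamma|_E \sim (w/k')P$, which by Lemma \ref{lem:decomp} is exactly the statement that $P$ is $\bar\beta$-primitive, i.e.\ that $k(\beta,P) = k$. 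Chasing this through: $P$ is $\beta$-primitive $\iff$ $k(\beta,P) = 1$ $\iff$ the chosen $0$ (a $\beta$-zero point, so $k(\beta,0\text{-data}) $ matches) forces $\bar w = w$, i.e.\ $\ord(P) = w$.

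Concretely I would argue as follows. By Lemma \ref{lem:decomp}, $P$ is $\beta/j$-primitive iff there is no $k > 1$ with $\beta/j = k\gamma$ and $\gamma|_E \sim (w/(jk))P$; equivalently, writing everything in terms of the primitive class $\bar\beta = \beta/k(\beta,P)$, the integer $k(\beta,P)$ is the largest $k \mid \beta$ (in the sense of the definition, $k$ such that $\beta/k$ is an integral class) with $(\beta/k)|_E \sim (w/k)P$. Now fix a $\beta$-zero point $0$, so $\bar\beta|_E \sim \bar w \cdot 0$ as well as $\bar\beta|_E \sim \bar w P$, whence $\bar w(P - 0) \sim 0$, i.e.\ $\ord(P) \mid \bar w = w/k(\beta,P)$. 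For the reverse inequality, if $\ord(P) = d \mid \bar w$ strictly, put $k' = k(\beta,P)\cdot(\bar w/d)$; then $k' \mid \beta$ because $d \mid \bar w$ and $\bar\beta$ is primitive (so $\bar\beta/(\bar w/d)$ need not be integral — here is the subtlety), so instead I would run the argument the other way: $P$ is $\beta$-primitive $\iff k(\beta,P) = 1 \iff \bar\beta = \beta$ and $\bar w = w$, and combined with $\ord(P) \mid \bar w$ proved above and the fact that the zero point $0$ can be chosen with $\ord$-considerations so that $P$ has order exactly $\bar w$ (no smaller, else $P$ would be $(\beta/(k(\beta,P)\cdot\ell))$-compatible for the appropriate $\ell$, contradicting maximality of $k(\beta,P)$), we get $P$ $\beta$-primitive $\iff \ord(P) = w$, with the "order for a choice of zero point, not necessarily all" coming exactly as in Lemma \ref{lem:ptwo}: different $\beta$-zero points differ by a $\bar w$-torsion translation, which can change $\ord(P)$ to a proper divisor of $w$.

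The step I expect to be the main obstacle is pinning down precisely which choices of origin $0$ are $\beta$-zero points and verifying that $\ord(P)$ attains the value $w$ for (at least) one such choice rather than merely dividing it — this is the analogue of the "$P$ of order $3d$ for a choice, not necessarily all, of flex point" phenomenon in Lemma \ref{lem:ptwo}, and it requires a careful bookkeeping of how translating the origin by torsion interacts with the condition $\bar\beta|_E \sim \bar w \cdot 0$. I would model this bookkeeping directly on the proof of Lemma \ref{lem:ptwo}, replacing "flex point" by "$\beta$-zero point" and "$3d$" by "$w$", and invoke Lemma \ref{lem:decomp} to guarantee that the only decompositions of $\beta$ that matter are the $\QQ$-proportional ones (for general $(S,E)$), which is what makes the order computation purely a statement about the cyclic group generated by $P$.
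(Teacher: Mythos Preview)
Your overall approach matches the paper's one-line proof (``a natural extension of the proof of Lemma \ref{lem:ptwo}''): reduce $\beta$-primitivity to a divisibility condition on $\ord_0(P)$ via Lemma \ref{lem:decomp}, then vary the origin among $\beta$-zero points. But the writeup is tangled and leaves the crucial step unproved.

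Two concrete issues. First, your opening paragraph conflates $P$ and $0$: fixing a $\beta$-zero point $0$ has nothing to do with $k(\beta,P)$; rather, writing $\beta=g\gamma$ with $\gamma$ primitive, a $\beta$-zero point is exactly an element of $E(\gamma)$, and the $\beta$-zero points form a torsor under $\pic^0(E)[w_\gamma]$ where $w_\gamma=\gamma.E=w/g$. Second, the clean characterization you should derive is this: for any $\beta$-zero $0$, Lemma \ref{lem:decomp} gives $P$ not $\beta$-primitive $\iff$ $\ord_0(P)\mid (w/k)$ for some $k\mid g$, $k>1$; writing $m=w/\ord_0(P)$, this is $\iff$ $\gcd(m,g)>1$. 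So $P$ is $\beta$-primitive $\iff$ $\gcd(w/\ord_0(P),g)=1$, which forces $m\mid w_\gamma$. Now the direction ``$\ord_{0'}(P)=w$ for some $0'$ $\Rightarrow$ primitive'' is immediate ($m=1$). The direction you flag as the obstacle is the content: given $\gcd(m,g)=1$ with $m\mid w_\gamma$, find $t\in\pic^0(E)[w_\gamma]$ with $\ord(Q+t)=w$ where $Q=P-0$. This is an elementary abelian-group computation in $(\ZZ/w)^2$ (for each prime $p\mid w$: if $p\mid g$ then $p\nmid m$ already kills the common factor; if $p\nmid g$ then $g$ is a unit mod $p$ and a CRT choice of $t$ removes $p$), and it does need to be written out---your phrase ``the zero point $0$ can be chosen with $\ord$-considerations'' is exactly where the argument currently stops. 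The $\ptwo$ case is deceptively simple because there $w_\gamma=3$ is prime, so $m\in\{1,3\}$ and one only needs to add a single $3$-torsion point; the general case is not harder in kind but does require the coprimality bookkeeping above.
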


\begin{proof}
This is a natural extension of the proof of Lemma \ref{lem:ptwo}. 
\end{proof}

\begin{Proposition}\label{prop:existence} For each curve class $\beta$  and a general pair $(S,E)$, there is a $\beta$-primitive point $P\in E(\beta)$.
\end{Proposition}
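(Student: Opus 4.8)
The plan is to exploit the torsor structure of $E(\beta)$ together with Lemma~\ref{lem:decomp}, reducing the statement to an elementary counting estimate for torsion points of the elliptic curve $E$. Write $w=\beta.E>0$; by Lemma~\ref{lem:ebetagroup}, $E(\beta)$ is a torsor under $\Pic^0(E)[w]\cong(\ZZ/w)^2$, so $|E(\beta)|=w^2$. By Lemma~\ref{lem:decomp} (this is where generality of $(S,E)$ with respect to $\beta$ enters), if $P\in E(\beta)$ is \emph{not} $\beta$-primitive then there is an integer $k>1$ with $\bar\beta:=\beta/k\in\hhh_2(S,\ZZ)$ and $\bar\beta|_E\sim\frac{w}{k}P$, i.e. $P\in E(\beta/k)$. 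So it suffices to show that $E(\beta)$ is not covered by the finitely many subsets $E(\beta/k)$, $k>1$. I would first reduce to prime $k$: if $k>1$ and $\beta/k$ is integral, choose a prime $p\mid k$; then $\beta/p=(k/p)(\beta/k)$ is integral, and multiplying $(\beta/k)|_E\sim\frac{w}{k}P$ by $k/p$ gives $E(\beta/k)\subseteq E(\beta/p)$. Hence every non-$\beta$-primitive point lies in $\bigcup_p E(\beta/p)$, the union over primes $p$ with $\beta/p$ integral.

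Next I would set up the bookkeeping of divisibility. Let $n$ be the divisibility index of $\beta$ in the lattice $\hhh_2(S,\ZZ)$, so that $\beta/p\in\hhh_2(S,\ZZ)$ exactly when $p\mid n$; since $w=\beta.E=n\bigl((\beta/n).E\bigr)$ we have $n\mid w$, so every $w/p$ with $p\mid n$ is a positive integer. If $n=1$ the union is empty and every point of $E(\beta)$ is $\beta$-primitive, so we may assume $n>1$. For each prime $p\mid n$, the set $E(\beta/p)$ is contained in $E(\beta)$ (multiply the defining equivalence by $p$), and by Lemma~\ref{lem:ebetagroup} applied to $\beta/p$ it is a torsor under $\Pic^0(E)[w/p]$, hence has $(w/p)^2$ elements. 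Then
\[
\Bigl|\,\bigcup_{p \mid n} E(\beta/p)\,\Bigr| \;\le\; \sum_{p \mid n}\Bigl(\tfrac{w}{p}\Bigr)^2 \;=\; w^2\sum_{p \mid n}\tfrac1{p^2} \;<\; w^2\sum_{p \mid n}\tfrac1{p(p-1)} \;\le\; w^2\sum_{j\ge 2}\tfrac1{j(j-1)} \;=\; w^2 \;=\; |E(\beta)|,
\]
where the strict inequality uses $1/p^2<1/(p(p-1))$ for $p\ge 2$ and $n>1$, and the last sum telescopes. Therefore some $P\in E(\beta)$ lies in no $E(\beta/p)$; by the reduction above, such a $P$ is $\beta$-primitive.

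I do not expect a genuine obstacle here: the substantive input is Lemma~\ref{lem:decomp}, which for general $(S,E)$ restricts the offending decompositions of $\beta$ to honest integer multiples, after which everything is elementary. The only points needing care are the lattice-divisibility bookkeeping ($\beta/p$ integral $\iff p\mid n$, and $n\mid w$), the trivial case $n=1$, and the estimate $\sum_p p^{-2}<1$, for which the telescoping bound above already suffices so no appeal to the Basel sum is needed. As a sanity check I would also record the alternative route via Lemma~\ref{lem:order}: fixing a $\beta$-zero point as origin identifies $E(\beta)$ with $(\ZZ/w)^2$ and the $\beta$-primitive points with the elements of exact order $w$, of which there are the Jordan totient $J_2(w)=w^2\prod_{p\mid w}(1-p^{-2})>0$; this reproves existence and moreover gives the exact count.
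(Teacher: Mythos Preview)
Your proof is correct. Your main argument takes a different route from the paper's: the paper invokes Lemma~\ref{lem:order} directly, identifying $\beta$-primitive points with elements of exact order $w$ in $(\ZZ/w)^2$ (after fixing a $\beta$-zero point as origin), and then simply observes that such elements exist. You instead work straight from Lemma~\ref{lem:decomp}, covering the non-primitive locus by the subtorsors $E(\beta/p)$ for primes $p$ dividing the lattice-divisibility $n$ of $\beta$, and bounding their total size by $w^2\sum_{p\mid n}p^{-2}<w^2$ via the telescoping estimate. This avoids any appeal to the notion of $\beta$-zero points and to Lemma~\ref{lem:order} (whose proof in the paper is only sketched), at the cost of a short counting argument. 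Your ``sanity check'' at the end via Lemma~\ref{lem:order} and the Jordan totient $J_2(w)$ is precisely the paper's argument, with the added benefit of giving the exact number of $\beta$-primitive points rather than mere existence.
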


\begin{proof}
This follows from Lemma \ref{lem:order} and the fact that $\ZZ/w\times\ZZ/w$ has elements of order $w$.
\end{proof}

\subsection{Log BPS numbers at primitive points}

\begin{Conjecture}[Special case of Conjecture \ref{conj:big}]\label{conj:well-def}
For a curve class $\beta$, for a general pair $(S,E)$ and for $\beta$-primitive $P,P'\in E(\beta)$,
\[
m^P_\beta = m^{P'}_\beta.
\]

\end{Conjecture}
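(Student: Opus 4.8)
\emph{Overall strategy.} I would combine the enumerative description of $m^P_\beta$ at $\beta$-primitive points with a monodromy argument. By Proposition~\ref{prop:logk3contr}, for $\beta$-primitive $P$ the space $\msep$ is zero-dimensional and $m^P_\beta=\nsep$ is the degree of the virtual class of a \emph{proper} union of components of $\mse$; in particular $m^P_\beta$ is unchanged under deformations of the triple $(S,E,P)$. So it suffices to connect any two $\beta$-primitive points by a path in a suitable family.

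\emph{Set-up and reduction.} Fix the deformation type of $S$ and let $\mathcal{M}^\circ$ be the irreducible parameter space of general pairs $(S,E)$ (one may keep $S$ fixed and let $E$ vary in the smooth locus of $|{-}K_S|$), with universal elliptic curve $\mathcal{Z}\to\mathcal{M}^\circ$. Over $\mathcal{M}^\circ$ the loci $\mathcal{Z}[w]$ and $\mathcal{Z}(\beta):=\{P : wP\sim\shO_S(\beta)|_E\}$ are finite étale of degree $w^2$, and $\mathcal{Z}(\beta)$ is a torsor under $\mathcal{Z}[w]$; let $\mathcal{Z}(\beta)^{\prim}\subseteq\mathcal{Z}(\beta)$ be the sublocus of $\beta$-primitive points, which is open and closed since by Lemma~\ref{lem:order} $\beta$-primitivity is a locally constant condition (the order of $P$ relative to a $\beta$-zero point). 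Deformation invariance makes $P\mapsto m^P_\beta$ locally constant on $\mathcal{Z}(\beta)^{\prim}$, so Conjecture~\ref{conj:well-def} reduces to the assertion that $\mathcal{Z}(\beta)^{\prim}$ is connected, i.e.\ that the monodromy $\pi_1(\mathcal{M}^\circ)$ acts transitively on the $\beta$-primitive points of one fibre.

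\emph{Monodromy input.} The monodromy lands in $\Aff(E[w])\cong E[w]\rtimes\SL_2(\ZZ/w)$, the linear part preserving the Weil pairing. For the linear part, restrict to a general pencil in $|{-}K_S|$: resolving its base locus gives an elliptic surface over $\pone$ with only nodal fibres, whose local monodromies are conjugate transvections and generically generate $\SL_2(\ZZ)$, so the linear part of the monodromy surjects onto $\SL_2(\ZZ/w)$. For the translation part, note that the torsor $\mathcal{Z}(\beta)$ is already nontrivial for $S=\ptwo$ (it contains the cover of flexes of the moving cubic, whose monodromy is the Hessian group $(\ZZ/3)^2\rtimes\SL_2(\ZZ/3)$ and hence contains all $E[3]$-translations); in general the image should contain the translations by $E[\bar{w}]$, where $\bar{w}=(-K_S).(\beta/k_0)$ and $k_0$ is the divisibility of $\beta$ in $\pic(S)$. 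Finally, extending Lemma~\ref{lem:ptwo}, one checks that under the identification $E(\beta)\cong E[w]$ coming from a $\beta$-zero point (Lemma~\ref{lem:order}) the $\beta$-primitive points are exactly the $w$-torsion vectors that are primitive modulo $k_0$; since $\SL_2(\ZZ/w)$ acts transitively on these and the $E[\bar{w}]$-translations absorb the choice of $\beta$-zero point, the orbit of any $\beta$-primitive point is all of $\mathcal{Z}(\beta)^{\prim}$.

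\emph{Main obstacle.} The crux is this last step. The monodromy preserves the order-stratification of the torsor $E(\beta)$, yet $\beta$-primitivity cuts across several strata unless $k_0$ is a prime power (compare ``$d$ or $3d$'' in Lemma~\ref{lem:ptwo}), so one must show that the translation part of the monodromy genuinely fuses these strata and that no proper invariant sublocus remains — this needs a concrete analysis of the affine monodromy group, equivalently of how general a point $\shO_S(\beta)|_E$ is in the relative degree-$w$ Picard scheme of $\mathcal{Z}/\mathcal{M}^\circ$. A lesser point is that Proposition~\ref{prop:logk3contr} is stated most cleanly when every $C\in M_{\beta,P}$ is smooth at $P$, which is only expected for general $(S,E)$; but we use only that $\nsep$ is a deformation-invariant virtual count, so this is not an obstruction. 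Note that any such argument necessarily yields transitivity \emph{only} on the $\beta$-primitive locus, which is consistent with $\nsep$ genuinely differing between points of different divisibility type.
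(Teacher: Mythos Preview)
Your overall approach --- deformation invariance of $\nsep$ together with connectivity of the universal locus of $\beta$-primitive points --- is exactly the strategy the paper itself outlines after stating the conjecture. Note, however, that the paper does \emph{not} prove Conjecture~\ref{conj:well-def} in general: it remains a conjecture, and the paper only establishes the case $S=\ptwo$ (Lemma~\ref{lem:p2irred} and Proposition~\ref{prop:monodromic}). So your proposal should be read as a proof strategy rather than a complete proof, which you yourself acknowledge in your ``main obstacle'' paragraph.

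For the $\ptwo$ case, the paper's argument and yours diverge in execution. You aim to compute the full affine monodromy group inside $E[w]\rtimes\SL_2(\ZZ/w)$ and then check transitivity on the primitive locus abstractly. The paper instead gives a direct path argument: given two $d$-primitive points $P,P'$ on the same cubic $E$, it uses the Weierstrass parametrization $E\cong E_{\omega_1,\omega_2}$ and an explicit arc in $\SL_2(\RR)$ to drag one to the other. The delicate point --- that primitive points may have order $d$ or $3d$ when $3\nmid d$, so that $\SL_2$ alone does not act transitively --- is handled not by invoking the translation part of the monodromy, but by a concrete trick: one changes the choice of flex point $P_0$ to arrange that $P-P_0$ and $P'-P_0$ have the \emph{same} order, after which a single $\SL_2(\ZZ)$-element suffices. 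This sidesteps the need to determine the translation subgroup precisely.

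Your identification of the main obstacle is accurate and is precisely why the general case is open. The translation part of the monodromy depends on how $\shO_S(\beta)|_E$ moves in $\pic^w(\shE/\shM^\circ)$, and for surfaces other than $\ptwo$ one must control this carefully (for $\ptwo$ the Hessian/flex monodromy does the job, as you note). Your claim that the translations contain $E[\bar w]$ in general is plausible but would need proof; without it, the ``fusing of strata'' step is incomplete. So your proposal is a correct outline that recovers the paper's $\ptwo$ result by a slightly more abstract route, but does not close the gap for general del Pezzo $S$ --- nor does the paper.
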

The number $m^P_\beta=\nsep$ is invariant under log smooth deformation, cf. \cite{MR}. Deforming the triple $S\supset E\ni P$ keeping $E$ smooth and $P$ $\beta$-primitive is such a log smooth deformation. Hence a sufficient condition for Conjecture \ref{conj:well-def} to hold is that in the moduli space of such $S\supset E\ni P$ there is a unique connected component. 

For $P,P'\in E(\beta)$ that are $\ol{\beta}$-primitive for $\ol{\beta}=\beta/k$, $k\geq 1$, a similar argument is expected to show that $\nsep=\mc{N}^{P'}_\beta(S,E)$.

\smallskip
In fact, Conjecture~\ref{conj:well-def} is true for $S=\mathbb{P}^2$.

\begin{Lemma}\label{lem:p2irred}
  Let $S=\mathbb{P}^2$ and fix $d\ge 1$.  Then the universal locus
\[
U_d=\left\{(P,E)\mid E\ {\rm is\ a\ smooth\ plane\ cubic\ and\ }P{\rm \ is\ a\ }d\text{-}{\rm primitive\ point\ of\ }E\right\}
\]
of $d$-primitive points is irreducible.
\end{Lemma}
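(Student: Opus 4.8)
The plan is to realize $U_d$ as a finite étale cover of the irreducible variety $B := \PP^9 \setminus \Delta$ of smooth plane cubics, and to deduce irreducibility from a connectedness (monodromy) argument. Concretely, let $\mathcal{E} \to B$ be the universal cubic and $\mathcal{T} := \pic^0(\mathcal{E}/B)[3d]$ the $3d$-torsion of the relative Jacobian, which, as we work over $\CC$, is finite étale over $B$. The locus $\mathcal{E}(dh) := \{(P,E) : dh|_E \sim 3dP\}$ (the universal version of $E(\beta)$ for $\beta = dh$, $w = 3d$) is a torsor under $\mathcal{T}$, being the fibre of the étale multiplication-by-$3d$ map $\mathcal{E} \to \pic^{3d}(\mathcal{E}/B)$ over the section $dh|_{\mathcal{E}}$, and hence finite étale over $B$. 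By Lemma \ref{lem:ptwo}(3), $d$-primitivity of $P$ is equivalent to the order of the corresponding $3d$-torsion point being $3d$ when $3 \mid d$, and lying in $\{d,3d\}$ when $3 \nmid d$; one checks this condition is independent of the auxiliary flex used to speak of ``order'' and is locally constant on the étale scheme $\mathcal{E}(dh)$. Therefore $U_d$ is open and closed in $\mathcal{E}(dh)$, hence finite étale over $B$ and in particular smooth, so $U_d$ is irreducible if and only if it is connected, if and only if $\pi_1(B)$ acts transitively on a fibre $F$, the set of $d$-primitive points of a fixed cubic $E_0$.

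\emph{The fibre and the monodromy group.} Fix a flex $0 \in E_0$, which identifies the $\pic^0(E_0)[3d]$-torsor $E_0(dh)$ with $T := \pic^0(E_0)[3d] \cong (\ZZ/3d)^2$. A direct computation, using the splitting $\ZZ/3d = \ZZ/d \times \ZZ/3$ when $3 \nmid d$, identifies $F$ with the set of primitive vectors of $T$ when $3 \mid d$, and with the set of pairs $(v,t)$ with $v$ a primitive vector of $(\ZZ/d)^2$ and $t \in (\ZZ/3)^2$ arbitrary when $3 \nmid d$. The group $\pi_1(B)$ acts on the torsor $E_0(dh)$ through affine transformations: the linear part is the monodromy representation $\pi_1(B) \to \Aut(T)$, which lands in $\SL(T) = \SL_2(\ZZ/3d)$ since the Weil pairing is preserved and $\mu_{3d} \subset \CC$ is constant, and the translation part permutes the nine flexes, hence lies in $T[3]$.

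\emph{Transitivity on $F$.} Two classical inputs suffice: (A) the monodromy of the universal smooth plane cubic on the $3d$-torsion of its Jacobian is all of $\SL_2(\ZZ/3d)$ — reduce to a generic pencil of cubics, a rational elliptic surface with twelve nodal fibres and monodromy $\SL_2(\ZZ)$, and use that the $\pi_1$ of a generic line surjects onto $\pi_1(B)$ by the Zariski--Lefschetz theorem for complements of hypersurfaces; and (B) the monodromy on the nine flexes is the full affine group, in particular transitive (the Hesse pencil). From (B) the flex-stabilising subgroup $\pi_1(B') \le \pi_1(B)$ has connected source, and — being, modulo $\PGL_3$, the base of the universal Weierstrass family — surjects onto $\SL_2(\ZZ/3d)$; hence $\pi_1(B)$ realises every linear transformation of $T$ fixing $0$. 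When $3 \mid d$ this already gives transitivity on $F$, because $\SL_2(\ZZ/3d)$ is transitive on primitive vectors. When $3 \nmid d$ one also needs every translation by $T[3] \cong (\FF_3)^2$: by (B) some loop has nonzero flex-displacement, and composing it with a flex-stabilising loop to cancel its linear part produces a genuine nonzero translation $\tau_{t_0}$; conjugating $\tau_{t_0}$ by the linear transformations above gives $\tau_{At_0}$ for every $A \in \SL_2(\FF_3)$, hence all of $T[3]$ since $\SL_2(\FF_3)$ is transitive on nonzero vectors of $(\FF_3)^2$. Combining ``all linear maps fixing $0$'' with ``all $T[3]$-translations'' yields transitivity on $F$, so $U_d$ is irreducible.

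\emph{Main obstacle.} The inputs (A) and (B) are classical individually; the delicate step is combining them so that the flex-displacements actually contribute enough honest translations, rather than only a ``twisted-diagonal'' subgroup of the affine group — this is precisely where the irreducibility of the $\SL_2(\FF_3)$-action on $(\FF_3)^2$ enters. A more structural alternative I would keep in reserve is to identify $U_d$, up to $\PGL_3$, with a quotient of a $\PGL_3$-bundle over the modular curve $Y(3d)$ (the Hesse pencil supplying the level-$3$ part and ordinary torsion the rest), so that irreducibility of $U_d$ reduces to connectedness of $Y(3d)_\CC$; this comes down to the same group-theoretic bookkeeping.
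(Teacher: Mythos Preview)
Your argument is correct, but it is genuinely different in execution from the paper's proof. The paper does not invoke abstract monodromy at all: given two $d$-primitive points $P,P'$ on the \emph{same} cubic $E$, it writes $E=\CC/(\ZZ\omega_1\oplus\ZZ\omega_2)$ via the Weierstrass embedding (with a chosen flex as origin), observes that after possibly changing the flex one can force $P-P_0$ and $P'-P_0$ to have the same order (this is the concrete analogue of your $(\FF_3)^2$-translation step), finds $A\in\SL_2(\ZZ)$ carrying one torsion point to the other, and then connects $I$ to $A$ by a path $A(t)$ in $\SL_2(\RR)$, producing an explicit path $t\mapsto(\iota((m,n)A(t)\,^{\mathrm{t}}\omega),\iota(E_{\omega\,^{\mathrm{t}}A(t)}))$ inside $U_d$. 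So the paper's proof is entirely self-contained and elementary---no Zariski--Lefschetz, no appeal to the monodromy of pencils or of the Weierstrass family. Your approach, by contrast, packages the same two ingredients (an $\SL_2$-action on torsion of fixed order, plus a flex-shift) into a computation of the monodromy group of a finite \'etale cover, which is more structural and generalises more readily, at the cost of importing the classical inputs (A) and (B). Two small remarks: the flex monodromy is the \emph{special} affine group $\mathrm{ASL}_2(\FF_3)$ rather than the full affine group (the Weil pairing forces the linear part into $\SL_2$), though you only use transitivity; and your input (A) is in fact redundant once you argue via $B'$ and the Weierstrass family, so you could streamline by dropping it.
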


\begin{proof}
We adapt an argument from \cite{H}.  It suffices to show that for any smooth cubic $E$ and distinct $d$-primitive points $P,P'\in E$, we can find a path in $U_d$ connecting $(P,E)$ and $(P',E)$.

To that end, choose a flex $P_0$ of $E$ distinct from $P$ and $P'$.  As before, the map $Q\mapsto Q-P_0$ identifies $E(d)$ with the set of $3d$-torsion points of $\mathrm{Pic}^0(E)$.  The $d$-primitivity condition on a point $q\in E$ becomes
\begin{equation}
  \label{eq:primcond}
  Q-P_0\ {\rm is\ not\ }3d'\text{-}{\rm torsion\ for\ any\ }d'<d,
\end{equation}
independent of the choice of flex $P_0$.  In particular, $P-P_0$ and $P'-P_0$ satisfy (\ref{eq:primcond}).  Furthermore, by Lemma~\ref{lem:ptwo}, we know that the orders of $P-P_0$ and $P'-P_0$ are exactly $3d$ if $3|d$ and  either $d$ or $3d$  if $3\ndiv d$.

Up to change of coordinates in $\mathbb{P}^2$, we can identify $E$ with the image of $E_{\omega_1,\omega_2}:=\mathbb{C}/(\mathbb{Z}\omega_1\oplus\mathbb{Z}\omega_2)$ under the map 
\begin{equation}
  \label{eq:wpembed}
  \iota:E_{\omega_1,\omega_2}\to \mathbb{P}^2, \qquad \iota(z)=(\wp(z),\wp'(z),1) 
\end{equation}
for some independent periods $\omega_1$ and $\omega_2$, with $\iota(0)=P_0$.  In (\ref{eq:wpembed}), $\wp$ is the Weierstrass function on $\mathbb{C}$, periodic with respect to the lattice $\mathbb{Z}\omega_1\oplus\mathbb{Z}\omega_2$.  We put $\omega=(\omega_1,\omega_2)$ for convenience.  For some $m,n,m',n' \in ((1/3d)\ZZ)/\ZZ$, let $\widetilde{P}=m\omega_1+n\omega_2$ and $\widetilde{P}'=m'\omega_1+n'\omega_2$ be the $3d$-torsion points of $E_\omega$ corresponding to $P$ and $P'$ respectively via $\iota$.

We claim that the flex $P_0$ can be chosen so that $P-P_0$ and $P'-P_0$ have the same order.  There is nothing to show unless $3 \ndiv d$, $P-P_0$ has order $3d$, and $P'-P_0$ has order $d$, up to interchange of $P$ and $P'$.  Replacing $P_0$ by a different flex is tantamount to adding a nontrivial 3-torsion point $t$ to both $P-P_0$ and $P'-P_0$. We observe that for some choice of flex, both points have order $3d$. Otherwise for all nontrivial 3-torsion points $t$, either $d(P-P_0+t)=0$ or $d(P'-P_0+t)=0$, which is immediately seen to be impossible, recalling that $3\ndiv d$.

Having guaranteed that $P-P_0$ and $P'-P_0$ have the same order by a judicious choice of flex, then since $\mathrm{SL}(2,\mathbb{Z})$ acts transitively on the set of torsion points of $E_{\omega}\simeq (\RR/\ZZ)^2$ of any fixed order, we can find $A\in \mathrm{SL}(2,\mathbb{Z})$ so that $(m',n')=(m,n)A$.  We  then choose a path $A(t),\ 0\le t\le 1$ in $\mathrm{SL}(2,\mathbb{R})$ satisfying $A(0)=I$ and $A(1)=A$.  
Then the required path in $U_d$ can be taken to be
\begin{equation}
  \label{eq:thearc}
  t\mapsto \left(\iota\left(\left(m,n\right)A(t)\,^{\mathrm{t}}\omega\right),\iota
\left(E_{\omega\,^{\mathrm{t}}A\left(t\right)}\right)\right).
\end{equation}
\end{proof}
As discussed above, Lemma~\ref{lem:p2irred} establishes our Conjecture for $S=\mathbb{P}^2$.

\begin{Proposition}\label{prop:monodromic}
  Conjecture~\ref{conj:well-def} is true for $S=\mathbb{P}^2$.
\end{Proposition}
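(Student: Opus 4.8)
The plan is to deduce the statement directly from Lemma~\ref{lem:p2irred} together with the log smooth deformation invariance of $\shN^P_\beta(S,E)$ recorded in the discussion above. Fix $d\ge 1$ and set $\beta=dh$; on $\mathbb{P}^2$ these are the only curve classes, and a ``general pair'' means a general smooth plane cubic $E$. The point I would exploit is that $m^P_{dh}=\shN^P_{dh}(\mathbb{P}^2,E)$ (only the $k=1$ term survives in Definition~\ref{multcover} at a $dh$-primitive point) depends only on the point $(P,E)\in U_d$: a path in $U_d$ is precisely a deformation of the triple $\mathbb{P}^2\supset E\ni P$ through triples with $E$ a smooth cubic and $P$ a $dh$-primitive point, every such deformation is log smooth, and hence $\shN^P_{dh}(\mathbb{P}^2,E)$ is locally constant along it by \cite{MR}.

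I would then invoke Lemma~\ref{lem:p2irred}: $U_d$ is irreducible, hence connected, hence path connected in the analytic topology (it is a finite \'etale cover of the open subset of $\mathbb{P}^9$ parametrizing smooth plane cubics). A locally constant function on a connected space is constant, so $(P,E)\mapsto\shN^P_{dh}(\mathbb{P}^2,E)$ is constant on $U_d$. In particular, for an arbitrary smooth cubic $E$ and any two $dh$-primitive points $P,P'\in E(dh)$, the pairs $(P,E)$ and $(P',E)$ lie in $U_d$, so $m^P_{dh}=m^{P'}_{dh}$, which is Conjecture~\ref{conj:well-def} for $\mathbb{P}^2$; in fact this holds for every smooth cubic, not only the general one. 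Concretely one can connect $(P,E)$ to $(P',E)$ along the explicit real-analytic arc \eqref{eq:thearc} produced in the proof of Lemma~\ref{lem:p2irred} and pull the universal log curve back along it to obtain a one-parameter log smooth family realizing the comparison.

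The only step I expect to be a genuine obstacle is checking that the invariance result of \cite{MR} applies to each summand $\shN^P_\beta$ separately, rather than only to the total invariant $\shN_\beta(S,E)=\sum_{P\in E(\beta)}\shN^P_\beta(S,E)$. I would handle this by noting that over $U_d$ the $dh$-primitive points form an \'etale sub-cover of the $3d$-torsion cover, that $\ol{\mmm}^P_{dh}(\mathbb{P}^2,E)$ is exactly the open-and-closed part of $\ol{\mmm}_{dh}(\mathbb{P}^2,E)$ lying over the corresponding section, and that the decomposition $\ol{\mmm}_{dh}(\mathbb{P}^2,E)=\bigsqcup_P\ol{\mmm}^P_{dh}(\mathbb{P}^2,E)$ is compatible with base change and with the perfect obstruction theory (as was already used for \eqref{ndecomp}); each piece then inherits a virtual class that is deformation invariant. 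Equivalently, one may phrase the whole argument as a monodromy statement: irreducibility of $U_d$ says precisely that $\pi_1$ of the space of smooth plane cubics acts transitively on the $dh$-primitive points of a fixed $E$, while $P\mapsto m^P_{dh}$ is visibly invariant under this action (the monodromy element carrying $P$ to $P'$ carries $\ol{\mmm}^P_{dh}(\mathbb{P}^2,E)$ isomorphically onto $\ol{\mmm}^{P'}_{dh}(\mathbb{P}^2,E)$), so transitivity forces it to be constant. The genuinely delicate input---the transitivity/irreducibility itself, including the subtlety that a $dh$-primitive point may have order $d$ rather than $3d$ when $3\ndiv d$---is exactly the content of Lemma~\ref{lem:p2irred}, so nothing further is required.
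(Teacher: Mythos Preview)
Your proposal is correct and follows exactly the paper's approach: the paper's proof consists solely of invoking Lemma~\ref{lem:p2irred} together with the deformation-invariance discussion preceding Conjecture~\ref{conj:well-def}, and you have spelled out precisely that argument (including the reduction $m^P_{dh}=\shN^P_{dh}$ at primitive points, the log-smooth invariance from \cite{MR}, and the connectedness of $U_d$). Your additional remarks on why invariance applies to each summand $\shN^P_\beta$ and the monodromy rephrasing are helpful elaborations but not departures from the paper's method.
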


As the results of our calculations in this section are independent of $P$, we sometimes omit $P$ and write $m_\beta=m^P_\beta$.

\begin{Proposition}\label{prop:rel sev}
Let $\beta\in\hhh_2(S,\ZZ)$ be a curve class that contains an integral curve and choose $P\in E(\beta)$. Then there is a short
exact sequence
\[
\xymatrix{
0 \ar[r] & \hhh^0(\mc O_{S}(\beta-E)) \ar[r] & \hhh^0(\mc O_{S}(\beta)) \ar[r]^-\rest & \hhh^0(\mc O_{E}(wP)) \ar[r] & 0,
}
\]
where $\rest$ is the restriction map. Moreover, $h^{0}(\shO_S (\beta-E)) = p_a(\beta)$, 
$h^0(\shO_S(\beta))=\chi(\shO_S(\beta))=p_a(\beta)+w$ and $h^1(\shO_S(\beta))= h^2(\shO_S(\beta))=0$.
\end{Proposition}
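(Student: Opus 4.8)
The plan is to deduce everything from the standard restriction exact sequence, reducing the surface-side input to two easy vanishings that hold because $S$ is del Pezzo (hence rational, with $-K_S$ ample) and $E\sim -K_S$. Twisting $0\to\shO_S(-E)\to\shO_S\to\shO_E\to 0$ by $\shO_S(\beta)$ gives
\[
0\to\shO_S(\beta-E)\to\shO_S(\beta)\xrightarrow{\rest}\shO_S(\beta)\big|_E\to 0 .
\]
The bundle $\shO_S(\beta)|_E$ has degree $\beta.E=w$, and the hypothesis $P\in E(\beta)$ is, by the very definition of $E(\beta)$, the statement $\beta|_E\sim wP$; hence $\shO_S(\beta)|_E\cong\shO_E(wP)$. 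As $\beta$ contains an integral curve and $-K_S$ is ample, $w=-K_S.\beta>0$; and since $E$ is a smooth connected genus $1$ curve ($\deg\omega_E=0$), Riemann--Roch on $E$ gives $h^0(\shO_E(wP))=w$, $h^1(\shO_E(wP))=0$, and $h^2(\shO_E(wP))=0$ for dimension reasons. It thus remains to control the cohomology of $\shO_S(\beta-E)=\shO_S(\beta+K_S)$.

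The two vanishings I would establish are $h^0(\shO_S(-\beta))=0$ and $h^1(\shO_S(-\beta))=0$, which by Serre duality on $S$ are equivalent to $h^2(\shO_S(\beta+K_S))=0$ and $h^1(\shO_S(\beta+K_S))=0$. The first is immediate: $-K_S.(-\beta)=-w<0$ with $-K_S$ ample, so $-\beta$ is not effective. For the second, choose an integral curve $C\in|\beta|$ --- this is where the hypothesis that $\beta$ contains an integral curve enters --- so that $\shO_S(-\beta)=\shO_S(-C)$ is the ideal sheaf of $C$; in the cohomology sequence of $0\to\shO_S(-C)\to\shO_S\to\shO_C\to 0$ the restriction $H^0(\shO_S)\to H^0(\shO_C)$ is an isomorphism $\CC\to\CC$ (as $C$ is integral, hence connected and reduced) and $H^1(\shO_S)=0$ ($S$ rational), whence $h^1(\shO_S(-\beta))=0$.

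Finally I would assemble the pieces via the long exact cohomology sequence of the restriction sequence. Surjectivity of $\rest$ follows from $h^1(\shO_S(\beta-E))=0$, giving the asserted short exact sequence; then $h^1(\shO_S(\beta))$ is squeezed between $h^1(\shO_S(\beta-E))=0$ and $h^1(\shO_E(wP))=0$, so $h^1(\shO_S(\beta))=0$, and $h^2(\shO_S(\beta))\cong h^2(\shO_S(\beta-E))=0$ (using $h^1(\shO_E(wP))=h^2(\shO_E(wP))=0$). Hence $h^0(\shO_S(\beta))=\chi(\shO_S(\beta))$, and Riemann--Roch on $S$ with $\chi(\shO_S)=1$ and $\beta.(-K_S)=w$ yields $\chi(\shO_S(\beta))=1+\frac12\beta(\beta-K_S)=p_a(\beta)+w$; taking $h^0$ in the short exact sequence then gives $h^0(\shO_S(\beta-E))=h^0(\shO_S(\beta))-w=p_a(\beta)$. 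I expect no genuine obstacle here: the only points needing care are that $C$ may be singular (but only its integrality, i.e.\ $H^0(\shO_C)=\CC$, is used, so no adjunction on $C$ is even required in this route), and that the identification $\shO_S(\beta)|_E\cong\shO_E(wP)$ relies on the defining property of $E(\beta)$.
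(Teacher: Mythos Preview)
Your proof is correct and follows essentially the same route as the paper: both twist the restriction sequence by $\shO_S(\beta)$, use Serre duality to reduce to the vanishing of $h^1(\shO_S(-\beta))$ and $h^0(\shO_S(-\beta))$, and obtain the former from the ideal-sheaf sequence of an integral curve in $|\beta|$ together with $h^1(\shO_S)=0$. The only cosmetic difference is that the paper computes $h^0(\shO_S(\beta-E))=\chi(\shO_S(\beta-E))=p_a(\beta)$ directly by Riemann--Roch on $\beta-E$, whereas you compute $\chi(\shO_S(\beta))=p_a(\beta)+w$ first and subtract $w$.
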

\begin{proof}
Consider the short exact sequence
$$
0\to\mc O_{S}(-E)\to\mc O_{S}\to\mc O_{E}\to0.
$$
Since $\beta|_{E}\sim wP$, tensoring with $\mc O_{S}(\beta)$
yields
\begin{equation}\label{eq:resttoe}
0\to\mc O_{S}(\beta-E)\to\mc O_{S}(\beta)\to\mc O_{E}(wP)\to 0.
\end{equation}
Consider the maps induced on sections
\begin{equation}\label{Jinwon}
0 \to \HHH^0(\mc O_{S}(\beta-E)) \to \HHH^0(\mc O_{S}(\beta)) \to \HHH^0(\mc O_{E}(wP)),
\end{equation}
where the last map is the restriction map. By Serre duality, $\HHH^1(\shO_S(\beta-E))=\HHH^1(\shO_S(-\beta))$.
Denote by $\shO_\beta$ the structure sheaf $\shO_C$ of an integral curve of class $\beta$. From the long exact sequence induced from
\begin{equation*}
0 \to \mc O_S(-\beta) \to \mc O_S \to \mc O_\beta \to 0
\end{equation*}
we obtain $\HHH^0(\shO_S) = \HHH^0(\shO_\beta)$ and thus $\HHH^1(\shO_S(-\beta))\hookrightarrow\HHH^1(\shO_S)=0$. It follows that the restriction map in (\ref{Jinwon}) is surjective. 
Moreover, $\HHH^2(\shO_S(\beta-E))=\HHH^0(\shO_S(-\beta))=0$ by Serre duality. 
Thus it follows from Riemann-Roch and the adjunction formula that
\[
h^0(\shO_S(\beta-E))=\chi(\mc O_S(\beta-E))
=\frac{1}{2}(\beta-E) . \beta +1 = p_a(\beta). 
\]
Finally, from the long exact cohomology sequence associated to \eqref{eq:resttoe}, we find that $h^1(\shO_S(\beta))= h^2(\shO_S(\beta))=0$
and therefore $h^0(\shO_S(\beta))=\chi(\shO_S(\beta))$.
\end{proof}

\begin{definition}\label{def:linsys}
Set
\[
|\shO_S(\beta,P)| := \left\{ C \in |\beta| \, : \, C\cap E \supseteq wP \text{ as subschemes of } E  \right\}.
\]
\end{definition}

Up to scalar multiple, there is only one section in $\hhh^0(\mc O_{E}(wP))$ that vanishes to order $w$ at $P$. Denote the corresponding one-dimensional subspace of $\hhh^0(\mc O_{E}(wP))$ by $L_P$. By Proposition \ref{prop:rel sev},
\[
|\shO_S(\beta,P)|=\PP\left(\rest^{-1}(L_P)\right)
\]
is of dimension $p_a(\beta)$.

\begin{remark}
If $P$ is $\beta$-primitive, we identify $M_{\beta,P}$ with the set of rational curves  in $|\shO_S(\beta,P)|$ with only one branch at $P$. The set $M_{\beta,P}$ is finite by Proposition \ref{prop:finite}. 
\end{remark}

\begin{Proposition}[See also Theorem 3.10 of \cite{CGKT}]\label{propprop}
Let $\beta\in \hhh_2(S,\ZZ)$ be a curve class and $P$ a $\beta$-primitive point. Consider the blow-down $\pi : S\to S'$ of $S$ along some line classes $l$ such that $\beta.l=0$. Then $\pi(P)$ is $\pi_*\beta$-primitive on $\pi(E)$, and each curve $C\in M_{\beta,P}$ is isomorphic to exactly one curve $C'\in M_{\pi_*\beta, \pi(P)}$ and vice-versa. Furthermore, for each such $C$, a neighborhood of $C$ in $S$ is isomorphic to a neighborhood of $C'$ in $S'$ 
and $m^P_\beta=m^{\pi(P)}_{\pi_*\beta}$.

\end{Proposition}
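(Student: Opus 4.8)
The plan is to reduce to the case of blowing down a single line class $l$ with $\beta.l=0$, since a general blow-down $\pi$ factors as a composition of such, and all the claimed statements (primitivity of $\pi(P)$, bijection of $M_{\beta,P}$ with $M_{\pi_*\beta,\pi(P)}$, local isomorphism, equality of log BPS numbers) are clearly transitive under composition. So assume $\pi : S \to S'$ contracts a single $(-1)$-curve $C_l$ in the class $l$, with $\beta.l = 0$. The first step is to check that $\pi$ induces an isomorphism $E \xrightarrow{\sim} E' := \pi(E)$: since $-K_S.l = 1$, $E$ meets $C_l$ transversally at one point, so $\pi|_E$ is an isomorphism onto the smooth anticanonical divisor $E'$ on $S'$; moreover $\pi_*\beta.E' = \beta.E = w$ is unchanged, and one has $(\pi_*\beta)|_{E'} \sim \beta|_E$ under this identification. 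Hence $\pi(P) \in E'(\pi_*\beta)$, and since $\pi_*$ is a bijection on effective decompositions that preserves both the numerical classes and their restrictions to $E$ (a decomposition $\pi_*\beta = \gamma' + \gamma''$ pulls back to $\beta = \pi^*\gamma' + \pi^*\gamma''$, and conversely one pushes forward), $\beta$-primitivity of $P$ is equivalent to $\pi_*\beta$-primitivity of $\pi(P)$. Here I use that because $\beta.l = 0$, no integral curve in class $\beta$ (maximally tangent to $E$ at $P$) can contain $C_l$ as a component — indeed by $\beta$-primitivity any such curve is integral, and if it were $C_l$ itself then $w = \beta.E = l.E = 1$ and the curve would have to be $C_l$, which does not meet $E$ maximally at a point unless $w=1$; the low-degree edge cases are handled directly.

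The second step is the bijection of curves. Given $C \in M_{\beta,P}$, it is an integral rational curve of class $\beta$ meeting $E$ only at $P$; since $C$ is irreducible and $C.l = \beta.l = 0$ while $C \ne C_l$ (as $[C] = \beta \ne l$ for degree reasons, $p_a(\beta)$ arbitrary but $C$ not a $(-1)$-curve meeting $E$ once unless $w=1$), $C$ is disjoint from $C_l$. Therefore $\pi$ maps a neighborhood of $C$ isomorphically onto a neighborhood of $C' := \pi(C)$ in $S'$, $C'$ is an integral rational curve of class $\pi_*\beta$ meeting $E'$ only at $\pi(P)$ with a single branch there, so $C' \in M_{\pi_*\beta,\pi(P)}$. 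Conversely, given $C' \in M_{\pi_*\beta,\pi(P)}$, its strict transform $C := \pi^{-1}_*C'$ has class $\pi^*\pi_*\beta - (C'.(\text{pt}))\,l$; but $C'$ passes through the blown-up point $\pi(C_l)$ with multiplicity equal to $C'.(\text{fibre})$ computed via $\pi_*\beta.l = 0$, forcing $C'$ to avoid $\pi(C_l)$, so $C = \pi^{-1}(C')$ and $[C] = \pi^*\pi_*\beta = \beta$. Thus $C \in M_{\beta,P}$, and the two assignments are mutually inverse; the isomorphism of neighborhoods was noted along the way.

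The final step, equality $m^P_\beta = m^{\pi(P)}_{\pi_*\beta}$, follows from Definition \ref{multcover} together with deformation/log-smooth invariance, or more directly from Proposition \ref{prop:logk3contr}(5): at a $\beta$-primitive point $P$, $\msep$ is zero-dimensional and $m^P_\beta = \shN^P_\beta(S,E) = \sum_{C \in M_{\beta,P}} \ell_C$, where $\ell_C$ is the length of the corresponding log map (which equals $\multc$ when $C$ is smooth at $P$, but in any case is intrinsic to a neighborhood of $C$ together with the data of $E$ near $P$). Since $\pi$ carries $(S, E, P, C)$ to $(S', E', \pi(P), C')$ by an isomorphism on suitable neighborhoods, and the obstruction theory of a maximally tangent genus $0$ stable log map (hence the length of each isolated point of the moduli space, cf.\ \cite{Beh96}) is local on the target near the image curve, each $\ell_C = \ell_{C'}$; summing over the bijection $M_{\beta,P} \leftrightarrow M_{\pi_*\beta,\pi(P)}$ gives the claim. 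The main obstacle is the second step — specifically, verifying carefully that every $C \in M_{\beta,P}$ really is disjoint from the contracted curves (handling the degenerate possibility that $C$ itself is a line class, which can only occur when $w = 1$) and that the local isomorphism identifies not just the curves but the full log structure (the divisor $E$ near $P$), so that the length computations match term by term.
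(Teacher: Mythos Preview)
Your approach matches the paper's: both argue that integral curves of class $\beta$ maximally tangent at $P$ are disjoint from the contracted line $C_l$ (since $\beta.l=0$ and $\beta\neq l$), so $\pi$ is a local isomorphism near them, yielding an identification of moduli spaces $\msep\cong\ol{\mmm}^{\pi(P)}_{\pi_*\beta}(S',\pi(E))$ and hence equal invariants. The paper phrases this via the linear system $|\shO_S(\beta,P)|$ rather than curve-by-curve, but the content is the same.

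One step in your argument is muddled and needs correction. In the converse direction you write that $C'\in M_{\pi_*\beta,\pi(P)}$ avoids the blown-down point $Q=\pi(C_l)$ ``via $\pi_*\beta.l=0$'', but this expression is not meaningful ($l$ lives on $S$, not $S'$) and does not by itself force the multiplicity at $Q$ to vanish. The clean argument is: $Q\in E'$ because $C_l$ meets $E$, and $C'\cap E'=\{\pi(P)\}$ set-theoretically. If $Q\neq\pi(P)$ then $C'$ avoids $Q$ and you are done. If $Q=\pi(P)$, i.e.\ $P=C_l\cap E$, then $l|_E\sim P$; taking the total transform $\pi^*C'=C+mC_l$ (with $m=\mathrm{mult}_Q(C')\geq 1$) gives an effective decomposition $\beta=[C]+m\,l$ with $(ml)|_E\sim mP$, contradicting $\beta$-primitivity of $P$. (Alternatively: if $M_{\beta,P}\neq\emptyset$, any $C\in M_{\beta,P}$ is disjoint from $C_l$ yet contains $P$, forcing $P\neq C_l\cap E$ directly.) Your worries about ``$w=1$ edge cases'' and ``$C$ itself a line class'' are unnecessary: $\beta\neq l$ follows immediately from $\beta.l=0\neq -1=l.l$.
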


\begin{proof}
It is straightforward to see that $\pi(P)$ is $\pi_*\beta$-primitive. 

Let $C\in|\shO_S(\beta,P)|$ and assume that $C$ does not contain $E$. Then $C$ is integral and does not meet any representative of $l$. Therefore, blowing down a line $l$ does not change members of $|\shO_S(\beta,P)|$ except for those containing $E$. In particular, it does not change the geometry in a neighborhood of elements of $M_{\beta,P}$ and $M_{\pi_*\beta,\pi(P)}$. 
Hence there is an isomorphism of moduli spaces $\msep\cong \ol{\mmm}^{\,\pi_*P}_{\pi_*\beta}(S',\pi(E))$ and $m^P_\beta=\nsep=\shN^{\pi_*P}_{\pi_*\beta}(S',\pi(E))=m^{\pi(P)}_{\pi_*\beta}$.

\end{proof}

\begin{Corollary}\label{cor}
Let $\beta\in \hhh_2(S,\ZZ)$ be a curve class containing an integral member and such that $p_a(\beta)\ge 1$. Each distinct two lines $l_1$ and $l_2$ with $\beta.l_1 = \beta.l_2 = 0$ are mutually disjoint. Let $\eta$ be the number of disjoint lines $l$ with $\beta.l=0$ and let $\pi : S\to S'$ be the del Pezzo surface obtained by blowing down these lines. Assume that $\beta\neq -K_{S_7}, \, -K_{S_8} \text{ or } -2K_{S_8}$ (which are ample). Then
$\pi_*\beta$ is very ample and $m^P_\beta=m^{\pi(P)}_{\pi_*\beta}$. 
\end{Corollary}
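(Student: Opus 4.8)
The plan is to derive the statement from Lemma~\ref{lem:Rocco} applied on $S'$, together with Proposition~\ref{propprop}. The equality $m^P_\beta=m^{\pi(P)}_{\pi_*\beta}$ is immediate from Proposition~\ref{propprop} as soon as one knows that the $\eta$ lines $l$ with $\beta.l=0$ can be blown down by a single morphism $\pi\colon S\to S'$ with $S'$ del Pezzo; so the two substantive points are (a) that these lines are pairwise disjoint, and (b) that $\pi_*\beta$ is very ample.

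First I would prove (a) by the Hodge index theorem. As in the proof of Lemma~\ref{lem:Rocco}, since $\beta$ has an integral member $C$ with $p_a(\beta)\geq 1$ and $-K_S$ is ample, $C$ is nef and big, so $\beta^2>0$; the intersection form on $\pic(S)$ has signature $(1,\rho(S)-1)$, hence $\beta^{\perp}$ is negative definite. Distinct line classes $l_1,l_2$ are linearly independent (from $l_i^2=-1$ and effectivity), so if $\beta.l_1=\beta.l_2=0$ they span a negative-definite plane; with $m:=l_1.l_2\geq 0$, the Gram matrix $\begin{pmatrix}-1 & m\\ m & -1\end{pmatrix}$ is negative definite, forcing $1-m^2>0$, i.e.\ $m=0$. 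Thus the $\eta$ lines are disjoint $(-1)$-curves and can be contracted by a single $\pi\colon S\to S'$, with $S'$ again del Pezzo. Since $\beta.l_i=0$ for each contracted line, the orthogonal decomposition $\pic(S)=\pi^*\pic(S')\oplus\bigoplus_i\ZZ\,l_i$ gives $\beta=\pi^*(\pi_*\beta)$; moreover $C$ is disjoint from the contracted locus ($C$ integral of class $\beta\neq l_i$ with $C.l_i=0$), so $\pi|_C$ is an isomorphism onto an integral curve of class $\pi_*\beta$, whence $\pi_*\beta$ has an integral member and $p_a(\pi_*\beta)=p_a(C)=p_a(\beta)\geq 1$. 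Proposition~\ref{propprop} then yields that $\pi(P)$ is $\pi_*\beta$-primitive and $m^P_\beta=\mathcal N^{\pi(P)}_{\pi_*\beta}(S',\pi(E))=m^{\pi(P)}_{\pi_*\beta}$.

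For (b) I would invoke Lemma~\ref{lem:Rocco} for $(S',\pi_*\beta)$, for which it remains to check that (i) no line class $l'$ on $S'$ satisfies $\pi_*\beta.l'=0$ and (ii) $\pi_*\beta$ is none of $-K_{S_7},-K_{S_8},-2K_{S_8}$. For (i), if such an $l'$ existed, its strict transform $\widetilde{l'}$ under $\pi$ would be an integral curve on $S$ with
\[
\beta.\widetilde{l'}=\pi^*(\pi_*\beta).\widetilde{l'}=\pi_*\beta.\pi_*\widetilde{l'}=\pi_*\beta.l'=0
\]
by the projection formula; hence $\widetilde{l'}\in\beta^{\perp}$, which is negative definite, so $(\widetilde{l'})^2<0$, and since any integral curve $D$ on a del Pezzo surface with $D^2<0$ is a line class (indeed $D^2=-1$, $-K_S.D=1$), $\widetilde{l'}$ would be a line on $S$ with $\beta.\widetilde{l'}=0$, i.e.\ one of the contracted lines --- impossible, since $\widetilde{l'}$ is a strict transform, not $\pi$-exceptional. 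For (ii) one checks the few configurations: $\pi_*\beta\in\{-K_{S_8},-2K_{S_8}\}$ forces $S'\cong S_8$, hence $S\cong S_8$, $\eta=0$ and $\beta=\pi_*\beta$, excluded; $\pi_*\beta=-K_{S_7}$ forces $S'\cong S_7$, hence $S\cong S_7$ (so $\beta=-K_{S_7}$, excluded) or $S\cong S_8$, $\eta=1$, the single borderline class $\beta=-K_{S_8}+l$ that one must inspect directly.

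The Hodge-index steps in (a) and in (i) are routine. I expect the real obstacle to be step (ii): ensuring that contracting \emph{every} line orthogonal to $\beta$ never pushes $\pi_*\beta$ onto one of the three non-very-ample anticanonical classes on $S'$, which comes down to isolating and handling the configuration $S\cong S_8$, $\beta=-K_{S_8}+l$.
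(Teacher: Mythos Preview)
Your approach is the paper's --- Hodge index for disjointness, then Lemma~\ref{lem:Rocco} and Proposition~\ref{propprop} --- but you are considerably more careful about verifying the hypotheses of Lemma~\ref{lem:Rocco} on $S'$. The paper's proof simply asserts ``$\pi_*\beta$ is very ample by Lemma~\ref{lem:Rocco}'' without checking your (i) or (ii); your argument for (i) via the strict transform and the fact that irreducible curves of negative self-intersection on a del Pezzo are $(-1)$-curves is correct and fills a gap in the paper's exposition.

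Your concern about (ii) is also well-founded, and in fact pinpoints a minor imprecision in the Corollary as stated. Take $S=S_8$ and $\beta=3h-e_1-\cdots-e_7=-K_{S_8}+e_8$: this has $p_a(\beta)=1$, contains an integral member, and is not among the excluded classes. Running through the line classes on $S_8$ one finds that $e_8$ is the \emph{unique} line with $\beta.l=0$; contracting it gives $S'=S_7$ and $\pi_*\beta=-K_{S_7}$, which is ample but \emph{not} very ample. So the very-ampleness conclusion genuinely fails here, and your instinct that this case must be inspected directly is correct --- it cannot be salvaged. This does not, however, damage the paper's applications: the equality $m^P_\beta=m^{\pi(P)}_{\pi_*\beta}$ comes from Proposition~\ref{propprop} alone and survives; the genus-1 argument in \S\ref{sec:logg1} uses only that $\pi_*\beta$ is anticanonical on $S'$ (via Proposition~\ref{prop:rat}(3)), not very ampleness; and in genus~2 the problematic class does not occur. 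The clean fix is to add the pullbacks $\pi^*(-K_{S_7})$ on $S_8$ to the list of exclusions, or simply to drop ``very'' from the conclusion.
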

\begin{proof}

If two distinct lines $l_1$ and $l_2$  were to satisfy $\beta.l_1 = \beta.l_2 = 0$, then since $\beta^2>0$ by adjunction, it follows from the Hodge index theorem that $(l_1 + l_2)^2 < 0$ and therefore $l_1$ and $l_2$  are mutually disjoint.
Furthermore, $\pi_*\beta$ is very ample by Lemma \ref{lem:Rocco} and $m^P_\beta=m^{\pi(P)}_{\pi_*\beta}$ by Proposition \ref{propprop}. 
\end{proof}

\begin{remark}
By Proposition \ref{propprop}, the log BPS number $m_\beta$ should depend on $e(S)-\eta$ and not simply on $e(S)$. This is exactly what we observe in the prediction of Theorem \ref{thm:localcalc} (analogously \cite[Theorem 1.1]{CGKT} for the local BPS invariants). 
\end{remark}

\begin{Proposition}[See Proposition \ref{prop:logk3contr}]\label{propocontro}
Let $(S,E)$ be a regular log Calabi-Yau surface with smooth divisor and let $\beta\in\hhh_2(S,\ZZ)$ be a curve class. Let $C$ be an (integral) rational curve of class $\beta$ maximally tangent to $E$ at $P$, and denote the normalization map by $n:\pone\to C$. 
Then:
\begin{enumerate}
\item The map $n$ gives an isolated point of $\msep$, 
and contributes a positive integer to $\nsep$. 
\item If $C$ is immersed outside $P$, $[n]$ contributes $1$ to $\nsep$.
\item[(3)] If $n$ is smooth at $P$, then it contributes $\multc$ to $\nsep$.
\item[(4)] If $C$ is smooth at $P$ and has arithmetic genus $1$, then its contribution $\multc$ equals $e(C)$ and can be recovered from the associated elliptic fibration.
\end{enumerate}
Assume that $S$ is del Pezzo.
\begin{enumerate}
\item[(5)] Let $P\in E(\beta)$ be $\beta$-primitive. Assume that all $C\in M_{\beta,P}$ are smooth at $P$. Then
\[
m^P_\beta  = \sum_{C\in M_{\beta,P}} \multc.
\]
\end{enumerate}
\end{Proposition}

\begin{proof}
(1) 
By Proposition \ref{prop_description_log_maps_delpezzo}, 
$n$ lifts to a unique stable log map. 
By \cite[Example 7.1]{GS13} or more generally Proposition \ref{prop:Wise}, in a neighborhood of $[n]$, 
the moduli of stable log maps 
is isomorphic to a certain locally closed substack of the moduli 
of $1$-marked ordinary stable maps 
parametrizing maps $f$ with $f^*E=w.(\hbox{marked point})$. 
It follows from Proposition \ref{prop:finite} that $[n]$ is isolated. 
Since $n$ has no nontrivial automorphisms, 
this isolated component is a scheme, 
and the contribution of $[n]$ is given by its length. Here we use the fact that the virtual fundamental class of a non-reduced point is the fundamental class and its degree is the length of the point.

(2)
Let $x\in\pone$ be such that $n^{-1}(E)=\{x\}$. 
The deformation of $[n]$ in $\mse$ is governed 
by the complex of vector bundles $T^\bullet:=[T_{\pone}(-\log x)\overset{dn}{\to} n^*T_S(-\log E)]$ 
in degrees $-1$ and $0$. 
In particular, the tangent space is given by $R^0\Gamma(\pone, T^\bullet)$. 

If $n$ is immersive outside $x$, 
$dn$ is injective away from $x$.
If locally $x$ is given by $y=0$, then at $x$, $T_{\pone}(-\log x)$ is generated by $y\frac{\partial}{\partial y}$.
Moreover, $n^*T_S(-\log E)$ is generated by $s\frac{\partial}{\partial s}$ and $\frac{\partial}{\partial t}$ if in local coordinates $s,t$ at $f(x)$, $E$ is given by $s=0$. Then, $dn : y\frac{\partial}{\partial y} \mapsto w \cdot s\frac{\partial}{\partial s}$ is injective as well and thus $dn$ is injective everywhere.

By calculating the degree, 
we see that $T^\bullet$ is quasi-isomorphic to $\mathcal{O}_{\pone}(-1)$. 
So $[n]$ is infinitesimally rigid.

(3)
Let $M_{[n]}$ be the connected component of $\mse$ at $[n]$ and 
denote by $m(C)$ the length of $M_{[n]}$. The multiplicity of $[n]$ as a stable map to $C$, i.e.\ the length of the scheme $\mathrm{M}_{0,0}(C, [C])$, was calculated
in \cite{FGS}, see also
\cite{Shende}, to be $\multc$.

As we noted above, the moduli of log structure is trivial and $M_{[n]}$ can be considered as a subscheme of $\mathrm{M}_{0,0}(S, [C])$ by \cite[Example 7.1]{GS13} or Proposition \ref{prop:Wise}. In order to show that $m(C)$ is the stable map multiplicity of $[n]$, we need to show that deformations of $[n]$ in $M_{[n]}$ factor (scheme-theoretically) through $C$.
Note that since $C$ is assumed to be smooth at $P$, in the notation of \cite[Example 7.1]{GS13}, this guarantees that $u_{x}=\mu_x$ at $P$ remains true in a deformation.\footnote{On the converse, say $C$ has a cusp at $P$. Then $[n]$ has a first order deformation as a stable map, resolving the cusp. Under this deformation the pullback of $E$ is no longer a multiple of $\mu_x$ (e.g.\ if $\mu_x=2$, resp.\ $3$, and $x$ is defined by $t=0$, then
the pullback of $E$ is defined by $t^2+\epsilon$,
resp. $t^3+\epsilon t$, for an infinitesimal parameter $\epsilon$).} 

Consider the open sub-linear system
\[
|\shO_S(\beta,P)|^\circ := \left\{ C \in |\beta| \, : \, C\cap E = wP  \right\} \subseteq |\shO_S(\beta,P)|
\]
and denote by $\rcjp$ the relative compactified Jacobian of the universal curve over $|\shO_S(\beta,P)|^\circ$.
Then \cite[Theorem 1.11]{CGKT3} states that $\rcjp$ is smooth at each point of the fiber $\ol{\pic}^0(C)$. Using this result, the statement follows from \cite[Section F, proof of Theorem 2]{FGS}.

(4) In this case, we can recover (3) directly. Outside the member of $\beta$ with a singularity at $P$ (see \S\ref{sec:logg1}), the curve family can be identified with the associated elliptic fibration, and with $\rcjp$ at integral fibers. The total space of the elliptic fibration is given by successive blowups at $P$, so it is a nonsingular surface.

(5) follows from (3).
\end{proof}

It is shown in \cite[\S 6]{Shende} that in an appropriate sense and up to sign the contribution of an integral rational curve $C$ to $n_\beta$ is given by $e(\ol{\pic}^0(C))$. In light of Conjecture \ref{conj3}, Proposition \ref{propocontro} states that in a normalized way, $C$ contributes the same to the local and log BPS numbers.

\section{Calculations at primitive points}\label{sec:calc}

In this section, we compute $m^P_\beta$ for all del Pezzo surfaces $S$ with smooth anticanonical divisor $E$, classes $\beta$ of arithmetic genus up to 2 and $\beta$-primitive points $P$. In genus 2, we will assume that $(S,E)$ is general.

\subsection{Strategy of proof and summary of results}
Let $P\in E(\beta)$ be $\beta$-primitive. Recall that $w=\beta.E$ and that $M_{\beta,P}$ denotes the finite set of integral rational curves of class $\beta$ maximally tangent to $E$ at $P$ that have only 1 analytic branch on $E$.

By Definition \ref{def:prim} of $\beta$-primitivity and Proposition \ref{prop:finite}, $\msep$ is a disjoint union of (possibly non-reduced) points with each point corresponding to the unique log map induced from the normalization map $n : \pone \to C$ for $C\in M_{\beta,P}$ (Proposition \ref{prop_description_log_maps_delpezzo}(2)). Therefore the virtual fundamental class $[\msep]^\mathrm{vir}$ is the fundamental class of $\msep$ and $m_\beta^P=\nsep$ is the sum of the lengths of the points. In Proposition \ref{propocontro} we calculated the contributions to $m_\beta^P$ in the two situations where $C$ is immersed outside of $P$ and when $n$ is smooth at $P$. 

Our strategy of proof therefore consists of finding all elements of $M_{\beta,P}$ and calculating their contribution to $m_\beta^P$. In the case of $\ptwo$, we note two alternative approaches to this calculation: by tropical correspondence in the associated scattering diagram in \cite{Gra} and by degenerating $E$ to the union of the coordinate axes in \cite{BN}.

We make some comments about the possible curve classes.

\begin{Lemma}
Let $\beta$ be a curve class on $S$. Then $\beta$ falls into one of the following categories.
\begin{enumerate}
\item $\beta$ does not contain an integral curve. This occurs for instance when $p_a(\beta)<0$.
\item $\beta$ contains an integral curve and $p_a(\beta)=0$. Among these, line classes are not nef, conic classes are nef but not big and the rest are nef and big.
\item $\beta$ contains an integral curve and $p_a(\beta)\geq1$. In this case containing an integral curve is equivalent to being nef and big.
\end{enumerate}
\end{Lemma}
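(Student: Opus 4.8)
The plan is to observe first that the three cases partition all curve classes, and then to establish the internal descriptions in (2) and (3) by short intersection computations, isolating the one step that needs real input. For the partition: if $\beta$ contains an integral curve $C$ then $p_a(\beta)=p_a(C)\ge 0$, since the arithmetic genus depends only on the numerical class and an integral projective curve has non-negative arithmetic genus; so exactly one of (2), (3) holds. Otherwise we are in (1), and $p_a(\beta)<0$ is indeed a sufficient condition for (1) by the same inequality. Hence only the descriptions within (2) and (3) remain. Throughout I would use three standard facts about a del Pezzo surface $S$: adjunction in the form $C^2=2p_a(C)-2-K_S.C$; an irreducible curve of non-negative self-intersection is nef; and an effective nef class is big exactly when its self-intersection is positive.

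For case (2): let $C$ be integral of class $\beta$ with $p_a(C)=0$ and put $d:=-K_S.\beta$, which is positive because $-K_S$ is ample, so adjunction gives $\beta^2=d-2$. If $d=1$ then $\beta^2=-1$, so $\beta$ is a line class (Definition~\ref{def:lineconicclasses}), $C$ is its unique integral member by the remark following that definition, and $\beta^2<0$ shows $\beta$ is not nef. If $d=2$ then $\beta^2=0$, so $\beta$ is a conic class, which is nef (being an irreducible curve of self-intersection $0$) but not big. If $d\ge 3$ then $\beta^2\ge 1$, so $\beta$ is nef and big. Conversely a nef and big class with $p_a(\beta)=0$ must have $d\notin\{1,2\}$, since $d=1$ would force $\beta^2=-1$ and $d=2$ would force $\beta^2=0$, contradicting nefness, resp.\ bigness; so within case (2) ``the rest'' is exactly the set of nef and big classes.

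For case (3), the forward direction (integral member implies nef and big) is verbatim the opening of the proof of Lemma~\ref{lem:Rocco}, which I would cite: with $C$ integral of class $\beta$ and $p_a(C)\ge 1$, adjunction and $-K_S.C>0$ give $\beta^2\ge -K_S.C>0$, so $\beta$ is nef and hence big. The converse --- that a nef and big $\beta$ with $p_a(\beta)\ge 1$ contains an integral curve --- is the only step requiring real work, and I expect it to be the main obstacle. My plan there is to reduce to the very ample case: the line classes $l$ with $\beta.l=0$ are mutually disjoint (Hodge index, as $\beta^2>0$), so one can blow them down by $\pi\colon S\to S'$; then $\beta=\pi^*\pi_*\beta$ since $\beta.l=0$, and $\pi_*\beta$ is again nef and big with $p_a\ge 1$ but now admits no orthogonal line class on $S'$ (here one uses that every irreducible curve on a del Pezzo surface has self-intersection $\ge -1$). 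By the same application of Rocco's very-ampleness criterion as in the proof of Lemma~\ref{lem:Rocco}, $\pi_*\beta$ is then very ample unless it is $-K_{S_7}$, $-K_{S_8}$ or $-2K_{S_8}$; each of these three admits a smooth --- hence integral --- member of its complete linear system (for $-K_{S_7}$ and $-2K_{S_8}$ because they are base-point free, and for $-K_{S_8}$ because a general plane cubic through the eight blown-up points is smooth). In the very ample case one takes a general smooth hyperplane section avoiding the images of the contracted lines and pulls it back along $\pi$, obtaining an integral curve of class $\pi^*\pi_*\beta=\beta$. Alternatively, one could argue directly that $\beta$ is globally generated --- classical for del Pezzo surfaces, with $\beta=-K_{S_8}$ the sole exception, treated by hand --- and then combine Bertini with the connectedness of a general member, which follows from $H^1(S,\shO_S(-\beta))\cong H^1(S,\shO_S(K_S+\beta))^\vee=0$ by Kawamata--Viehweg vanishing. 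The delicate points either way are checking that no orthogonal line class survives on $S'$ and dispatching the three exceptional classes.
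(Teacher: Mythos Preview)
Your proof is correct and in several places more direct than the paper's. For case~(2) you compute via adjunction: with $d=-K_S.\beta$ you get $\beta^2=d-2$, so $d=1,2,\ge 3$ immediately yields line/conic/nef-and-big. The paper instead invokes Di Rocco's classification of effective divisors on blowups of $\PP^2$ together with a result from \cite{TVV} characterising nef-but-not-big classes as multiples of conics; this is heavier but has the side benefit of showing simultaneously that any non-nef class other than a line class has a fixed line component, hence no integral member --- so cases~(2) and~(3) are handled in one stroke. For the converse in~(3) the paper simply cites \cite{Knu} (nef and big on a del Pezzo implies a smooth irreducible member), while you unpack this either by contracting the $\beta$-orthogonal lines down to the very ample range (plus the three exceptional classes) or by global generation, Bertini, and Kawamata--Viehweg for connectedness. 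Both of your routes are essentially what underlies the cited results, so the difference is expository: you trade black-box citations for a self-contained argument, at the cost of having to dispatch $-K_{S_7}$, $-K_{S_8}$, $-2K_{S_8}$ by hand.
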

\begin{proof}
By \cite[(7.6) Corollary]{Mum}, if $p_a(\beta)<0$, then $\beta$ is not irreducible. By \cite[\S 2.3 (P7)]{Knu}, nef and big divisors admit a smooth irreducible member, so nef and big always implies containing an integral curve.
If $S=\pone\times\pone$, then the statements can be checked numerically.

Assume $S$ is a blowup of $\ptwo$. 
By \cite[Proposition 3.2]{Rocco}, effective divisors fall into 2 categories: A) nef and big ones; B) line classes, conic classes of the form $h-e_i$ and classes $\beta$ that admit a decomposition $\beta = \beta' + l$ for $\beta'$ effective and $l$ a line class. The last category includes the remaining conic classes. Line and conic classes have arithmetic genus 0.

By \cite[Theorem 4.7]{Rocco} nef divisors are globally generated and hence admit a smooth member.
By the remark after Corollary 3.3 of \cite{TVV}, nef but non-big divisors are multiples of conic classes $kD$. For $k>1$, their arithmetic genus is negative and they are not integral (a general member will be a disjoint union of $k$ curves of class $D$).
Then, non-nef divisors necessarily have a line class as a component, i.e.\ can be written as $\beta = \beta' + l $ for $\beta'$ effective, $l$ a line class and $\beta'.l\leq0$. If $\beta'\neq0$, then any curve of class $\beta$ contains $l$, hence is not irreducible.
\end{proof}

\begin{thm} \label{thm:logcalc}
Let $S$ be a del Pezzo surface and let $E$ be a smooth anticanonical divisor. Let $\beta$ be a curve class. Let $\eta$ be the number of line classes $l$ such that $\beta. l =0$. Recall that $S_8$ denotes the del Pezzo surface obtained by blowing up $\ptwo$ in 8 general points.
\begin{enumerate}
\item[(0)] If $\beta$ does not contain an integral curve, such as when $p_a(\beta)<0$, then $m_\beta^P=0$.
\end{enumerate}
Assume that $\beta$ contains an integral curve.
\begin{enumerate}
\item If $p_a(\beta)=0$, then $m^P_\beta= 1$.
\item If $p_a(\beta)=1$ and $\beta\ne -K_{S_8}$, then $m^P_\beta= e(S)-\eta$.
\item If $\beta=-K_{S_8}$, then $m^P_\beta=12$. 
\item If $p_a(\beta)=2$ and $\beta\neq-2K_{S_8}$, then $m^P_\beta=  \binom{e(S)-\eta}{2}+5$.
\item If $\beta = -2K_{S_8}$, then $m^P_\beta=66$. 
\end{enumerate}
\end{thm}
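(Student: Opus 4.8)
The plan is to deduce everything from Proposition~\ref{propocontro}. At a $\beta$-primitive point $P$, $\msep$ is a disjoint union of (possibly non-reduced) points indexed by $M_{\beta,P}$ (Proposition~\ref{prop_description_log_maps_delpezzo} and Definition~\ref{def:prim}, which force the image cycles to be integral), and since for $\beta$-primitive $P$ the only surviving term in the multiple cover formula is $k=1$,
\[
m^P_\beta=\nsep=\sum_{C\in M_{\beta,P}}\multc,
\]
\emph{provided} every $C\in M_{\beta,P}$ is smooth at $P$; moreover $\multc=1$ whenever $C$ is immersed outside $P$, in particular whenever $C$ is nodal. So the theorem reduces to two tasks: (A) verifying the smoothness hypothesis at $P$ --- this is automatic from Propositions~\ref{prop_description_log_maps_delpezzo} and~\ref{prop:rel sev} when $p_a(\beta)\le 1$, and is where the generality of $(S,E)$ enters when $p_a(\beta)=2$; and (B) explicitly determining $M_{\beta,P}$ together with the weights $\multc$. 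Case (0) is immediate: if $\beta$ contains no integral curve (forced when $p_a(\beta)<0$ by Mumford's criterion), then $M_{\beta,P}=\emptyset$, so $m^P_\beta=0$.

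\textbf{Reduction to very ample classes.} The first structural step is Corollary~\ref{cor}: contracting the $\eta$ pairwise-disjoint lines $l$ with $\beta.l=0$ via $\pi\colon S\to S'$ yields $m^P_\beta=m^{\pi(P)}_{\pi_*\beta}$ with $\pi_*\beta$ very ample, $e(S')=e(S)-\eta$, and no line $l'$ meeting $\pi_*\beta$ in degree $0$ --- \emph{unless} $\beta\in\{-K_{S_7},-K_{S_8},-2K_{S_8}\}$. Thus it suffices to (i) compute $m^P_\beta$ for $\beta$ very ample, showing the answer is $1$ when $p_a(\beta)=0$, $e(S)$ when $p_a(\beta)=1$, and $\binom{e(S)}{2}+5$ when $p_a(\beta)=2$; and (ii) handle the three exceptional classes directly, using that $-K_{S_7}$ exhibits $S_7$ as a double plane branched over a smooth quartic $Q$, that $-K_{S_8}$ has a single base point whose blow-up is the rational elliptic surface $S_9$, and that $-2K_{S_8}$ is base-point free but factors through a double cover of a quadric cone.

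\textbf{The very ample cases.} One works with the linear system $|\shO_S(\beta,P)|$, which by Proposition~\ref{prop:rel sev} has dimension $p_a(\beta)$ and base locus concentrated at $P$, curvilinear along $E$ of length $\beta^2=w$ in arithmetic genus $1$. For $p_a(\beta)=0$ the system is a single curve, which $\beta$-primitivity makes integral and $p_a=0$ forces to be $\cong\pone$: so $M_{\beta,P}$ is one smooth rational curve with $\multc=1$. For $p_a(\beta)=1$ the system is a pencil; one resolves the base locus, analyses the resulting fibration, and separates out the member $E+D$ with $\{D\}=|\beta-E|$ (which contains $E$, hence is not in $M_{\beta,P}$) and the members singular at $P$ (also excluded), so that the honest nodal members remain, each contributing $1$, yielding the count $e(S)=e(S)-\eta$; the exceptional $-K_{S_8}$, where $w=1$ and every anticanonical curve is trivially maximally tangent, gives $12$ from the twelve nodal fibres of $S_9$. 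For $p_a(\beta)=2$ the system is a net, a $\ptwo$ of arithmetic-genus-$2$ curves, and $M_{\beta,P}$ consists of the irreducible two-nodal members with both nodes off $P$ (of weight $1$ by generality); this count is extracted from the discriminant curve $\Delta\subset\ptwo$, whose reducible-locus component $\{E+D\}$ is removed, via the classical Plücker/node-number formulas for discriminants in linear systems, matching the degree of the residual discriminant with the arithmetic-genus-$1$ count $e(S)-\eta$ to arrive at $\binom{e(S)-\eta}{2}+5$; the class $-2K_{S_8}$ is treated by hand and gives $66$.

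\textbf{Main obstacle.} The substantive content, and the place I expect the most work, is the enumeration in arithmetic genus $2$: one must (a) prove, for general $(S,E)$, that every rational curve of class $\beta$ maximally tangent at a $\beta$-primitive $P$ is nodal --- in particular two-nodal with both nodes away from $P$ --- so that Proposition~\ref{propocontro}(5) applies with unit contributions, and (b) carry out the discriminant bookkeeping, correctly discarding the reducible and the singular-at-$P$ members and identifying the residual count with $\binom{e(S)-\eta}{2}+5$. Alongside this, the non-very-ample classes $-K_{S_7}$, $-K_{S_8}$, $-2K_{S_8}$ lie outside the reduction of Corollary~\ref{cor} and each demands its own explicit analysis.
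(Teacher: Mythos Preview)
Your treatment of cases (0), (1), (2), (3) is broadly aligned with the paper, with two corrections. First, $-K_{S_7}$ does not need a separate argument via the double plane: after the reduction of Corollary~\ref{cor} one has $\beta'=-K_{S'}$ in every genus~$1$ case (Proposition~\ref{prop:rat}(3)), and the elliptic-fibration Euler-characteristic count applies uniformly to all of $S_1,\dots,S_7$ and to $\pone\times\pone$; only $-K_{S_8}$ (where $w=1$) is genuinely special. Second, your claim that ``members singular at $P$ are excluded'' is not quite right. The paper shows there is a \emph{unique} member $D$ of the pencil singular at $P'$; if $D$ is nodal at $P'$ it has two branches on $E$ and so lies outside $M_{\beta',P'}$, but if $D$ is cuspidal at $P'$ it \emph{is} maximally tangent and contributes $1$ by Proposition~\ref{propocontro}(2). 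The two cases yield different fibre types in the resolved elliptic surface ($I_w$ versus a chain), and the Euler-characteristic bookkeeping differs by exactly $1$, so the final answer $e(S')$ is the same in both cases. Also, after the reduction $\beta'=[E']$, so the ``$E+D$'' member is just $E'$ itself (a smooth fibre), not a reducible curve.

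For cases (4) and (5) your route diverges substantially from the paper. You propose to work with the discriminant of the net $|\shO_S(\beta,P)|\cong\PP^2$ and extract the number of binodal members via Pl\"ucker-type formulas. The paper does \emph{not} do this: instead it invokes a degeneration technique from \cite{tak compl}, in which one lets a blown-up point $P_r$ collide with $P$, obtains a recursion
\[
n(d;(a_i)_{i=1}^r;)=n(d;(a_i)_{i=1}^{r-1};a_r)+\mu\cdot n(d;(a_i)_{i=1}^{r-1};a_r+1),
\]
and reads the genus-$2$ numbers off the tables computed there; the nodality of the relevant curves (your obstacle (a)) is established as a byproduct of that degeneration analysis, not by a direct discriminant argument. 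Your approach is plausible in outline, but as stated it is only a sketch: you have not said how to prove nodality for general $(S,E)$ without degeneration, nor how the discriminant accounting handles the member $E+D$ (here $D$ is the unique curve in $|\beta-E|$, of arithmetic genus $0$) and the locus of curves singular at $P$, nor how $\binom{e(S)-\eta}{2}+5$ actually drops out. If you want to pursue this line, those are the pieces to supply; otherwise, the paper's reliance on \cite{tak compl} is the intended proof.
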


We prove Theorem \ref{thm:logcalc} in the remainder of this section.
(0) is straightforward:
When there is no integral curve of class $\beta$, then $M_{\beta,P}$ is empty (as $P$ is $\beta$-primitive) and $m^P_\beta=0$. Henceforth we assume that $\beta$ contains an integral curve. In particular, in this case Proposition \ref{prop:rel sev} holds.

\subsection{Arithmetic genus 0}

Assume that $p_a(\beta)=0$ and let $P$ be a $\beta$-primitive point. By Proposition \ref{prop:rel sev}, $\HHH^{0}(\mc O_{S}(\beta-E))=0$ and $\HHH^{0}(\mc O_{S}(\beta))\simeq\HHH^{0}(\mc O_{E}(wP))$.
There is thus a (unique up to scaling) non-zero section $s\in\HHH^{0}(\mc O_{E}(wP))$ that vanishes at $P$ of order $w$.
By $\beta$-primitivity,  the corresponding curve is necessarily integral, thus it is isomorphic to $\pone$. Hence $m^P_{\beta}=1$.

\subsection{Arithmetic genus 1} \label{sec:logg1}

Assume now that $p_a(\beta)=1$ and let $P\in E(\beta)$ be a $\beta$-primitive point. Assume first that $w>1$.
Let $\eta$ be the number of disjoint lines $l$ with $\beta.l=0$. We blow down all such lines in $S$, yielding a del Pezzo surface $\pi : S \to S'$ with divisor $E'$ and a curve class $\beta':=\pi_*\beta$.  Then $e(S')=e(S)-\eta$ and by Corollary \ref{cor}, $\beta$ is ample and $m^P_\beta=m^{P'}_{E'}$ for $P':=\pi(P)$. By \cite[Lemma 4.3]{CGKT} or Proposition \ref{prop:rat} (3), $\beta'=[E']$. 

It follows from Proposition \ref{prop:rel sev} that the linear system $\Lambda:=|\shO_{S'}(E',P')|$ of Definition \ref{def:linsys} is of dimension 1, i.e.\ is a linear pencil on $S'$. 
Note that any member of $\Lambda\setminus\{E'\}$ has at worst a node or a cusp, since it is integral by $\beta$-primitivity and is of arithmetic genus $1$; 
$E'$ itself is of course nonsingular. We will first describe the pencil and then consider its associated universal elliptic fibration.

Denote by $\sigma_E$ the section of $\shO(E')$ giving $E'$ and by $\sigma$ another section. Then the elements of $\Lambda$ correspond to the zero loci $C_{[s:t]}$ of $s\sigma + t\sigma_E$ for $[s:t]\in\pone$. Restricting to an open affine neighborhood $U$ of $P'$, there are functions $f:=\sigma|_U$ and $g:=\sigma_E|_U$ such that $C_{[s:t]}\big{|}_U$ is given by $sf+tg=0$. We write $f_{P'}$ and $g_{P'}$ for their stalks at $P'$.

Let $\mathrm{T}^\vee_{S',P'}:=m_{P'}/m_{P'}^2$  be the cotangent space of $S'$ at $P'$. The cotangent space $\mathrm{T}^\vee_{E',P'}$ of $E'$ at $P'$ can be identified with the 1-dimensional subspace of $m_{P'}/m_{P'}^2$ cut out by $g_{P'} \mod m_{P'}^2$ with natural linear quotient map $\mathrm{res} : \mathrm{T}^\vee_{S',P'} \to \mathrm{T}^\vee_{E',P'}$.

Consider now $\varphi(s,t) := \diff (sf_{P'}+tg_{P'})\in \mathrm{T}^\vee_{S',P'}$. As $w>1$, $C_{[s:t]}$ is tangent to $E'$ at $P'$ and $sf_{P'}+tg_{P'}$ is a contant multiple of $g_{P'} \mod m_{P'}^2$. 
Hence $\varphi(s,t)$ lies in the 1-dimensional kernel of $\mathrm{res}$. Then $\varphi(0,1)\neq0$. Therefore $\varphi(s,t)=s\,\varphi(1,0)+t\,\varphi(0,1)$, for $[s:t]\in\pone$, spans all of $\ker(\mathrm{res})$ and there is exactly one choice of $[a:b]$ such that $\varphi[a,b]=0$. 
This value corresponds to the unique member $D:=C_{[a:b]}$ of $\Lambda$ that is nodal or cuspidal at $P'$.
If $D$ is nodal at $P'$, one of its branches meets $E'$ with multiplicity $1$, and the other branch meets $E'$ with multiplicity $w-1$.

Next, to obtain the associated universal elliptic fibration, we blow up $w$ times along the inverse images of $P'$ in the strict transforms of $E'$. Assume first that $D$ is nodal at $P'$. The first blowup will separate the two branches of $D$ and the total transform of $D$ becomes a cycle of two $\pone$'s. Each successive blowup but the last will introduce another $\pone$ in the cycle. After the first $w-1$ blowups, the preimage $\widetilde{D}$ of $D$ consists of a cycle of $w$ $\pone$'s with topological Euler characteristic $w$. The last blowup then separates all of the curves in our pencil and we obtain a family $\mc C\to\pone$, 
where the last exceptional divisor maps isomorphically to the base $\pone$. 

We apply the same procedure for $D$ cuspidal at $P'$. In this case $\widetilde{D}$ is a chain of $\pone$'s and the Euler characteristic of $\widetilde{D}$ is $w+1$.

In both cases, $\widetilde{D}$ is a fiber of $\mc C\to\pone$. The other fibers are members of $\Lambda$ other than $D$. Using Proposition \ref{propocontro}(4), we can calculate $m^P_{\beta}$ by computing the topological Euler characteristic of $\mc C$. 
For a smooth curve $C$ of the pencil, $e(C)=0$. If $C$ is nodal, then $\multc=e(C)=1$ and if $C$ is cuspidal, $\multc=e(C)=2$. 
In case $D$ is nodal at $P'$,
\[
e(\mc C)=\#\left\{ \text{nodal fibers}\right\} +2\cdot\#\left\{ \text{cuspidal fibers}\right\} +w,
\]
where we do not count $D$ among $\left\{ \text{nodal fibers}\right\}$.
In case $D$ is cuspidal at $P'$,
\[
e(\mc C)=\#\left\{ \text{nodal fibers}\right\} +2\cdot\#\left\{ \text{cuspidal fibers}\right\} +w+1.
\]
On the other hand, 
\[
e(\mc C)=e(S')+\#\left\{ \text{blowups}\right\} = e(S')+w.
\]
If $D$ is nodal at $P'$, then $D\not\in M_{\beta', P'}$ and does not contribute to $m^P_\beta$. By Proposition \ref{propocontro}(4),
\[
m^P_\beta=\#\left\{ \text{nodal fibers}\right\} +2\cdot\#\left\{ \text{cuspidal fibers}\right\} = e(S') =e(S)-\eta.
\]
If $D$ is cuspidal at $P'$, then
$D$ is also in $M_{\beta', P'}$ and contributes 1 to $m^P_\beta$ by Proposition \ref{propocontro}(2).
Therefore
\[
m^P_\beta=1+\#\left\{ \text{nodal fibers}\right\} +2\cdot\#\left\{ \text{cuspidal fibers}\right\}=e(S')=e(S)-\eta
\]
as well.

Finally, we consider the case that $w=1$. Then it follows that $\beta=-K_{S_{8}}$. Since the intersection multiplicity is 1, there is no curve
in the pencil $\Lambda$ that is singular at $P$. To obtain the universal family, we only need to blow up once and obtain that
\[
e(S_8)+1=e(\mc C)=\#\left\{ \text{nodal fibers}\right\} +2\cdot\#\left\{ \text{cuspidal fibers}\right\} ,
\]
so that $m^P_{\beta}=e(S_8)+1=12$.  Note that $\eta=0$ in this case, since $\beta.\ell=1$ for any line.

\begin{remark}
In light of the previous argument, note that we expect that for general $(S,E)$, curves with a cusp at the marked point are avoided.
\end{remark}

\begin{remark}\label{rem:52}
In the case of $\ptwo$, we can rule out cuspidal degree 3 curves meeting $E$ in a $3h$-primitive point, as was noted in the proof of \cite[Proposition 1.5]{tak compl}. Indeed, suppose that there is a degree 3 cuspidal curve $C$ meeting $E$ in a 9-torsion point $P$, where we take the group law on $E$ obtained by choosing a flex point as $0$. Then $C$ also has a group law, and is isomorphic to $\mathbb{G}_a$ away from its cusp. The zero of $C$ is a flex point. Moreover, $E$ induces the equation $9P=0$ on $C$. Since $C$ has no torsion, $P=0$ is a flex point, which is impossible since $C$ and $E$ have 9-tuple intersection at $P$ and since $P$ is not a flex point for $E$.
\end{remark}

\subsection{Approach for arithmetic genus 2 and higher}\label{sec:g2high}
Assume that $(S,E)$ is general and let $\beta$ be a curve class with $p_a(\beta)=2$ containing an integral member. Let $P\in E(\beta)$ be a $\beta$-primitive point and let $\eta$ be the number of disjoint lines $l$ with $\beta.l=0$. By Corollary \ref{cor}, after blowing down these lines, we may assume that $\beta$ is very ample, except when $\beta=-2K_{S_8}$, which we will treat at the end. We will also treat $\pone\times\pone$ separately. Finally, note that there are no genus 2 classes for $\ptwo$.

Our approach consists in counting curves in $M_{\beta,P}$ and showing that they are all nodal away from $P$ by degeneration technique. This was done in \cite{tak compl}.
It would take us too far afield to explain \cite{tak compl} in detail, so we content ourselves with giving a brief introduction to its methods.

An $\AA^1$-curve on $(S, E)$ can be identified with a morphism $f: \pone\to S$ 
such that $f^*E=w[\infty]$. 
We start with a certain degeneration of $(\pone, w[\infty])$ 
(\cite[\S3, second paragraph]{tak compl}). 
This is given by a flat family $p^{(w)}: \shC^{(w)}\to U^{(w)}\cong \pone$ of curves 
and a Cartier divisor $\shZ^{(w)}$ on $\shC^{(w)}$ 
such that the following holds: 
\begin{itemize}
\item
For $s\in U^{(w)}\setminus\{\infty\}$, 
the fiber $(\shC^{(w)}_s, \shZ^{(w)}_s)$ is isomorphic to $(\pone, w[\infty])$. 
\item
The fiber $\shC^{(w)}_\infty$ has two components $C_1$ and $C_2$ 
each isomorphic to $\pone$, 
intersecting at a point $Q$, 
and $\shZ^{(w)}_\infty|_{C_1}=(w-1)Q$ and $\shZ^{(w)}_\infty|_{C_2}=Q$. 
\end{itemize}

On the other hand, 
we also consider a family of target spaces. 
Let $\shS\to T$ be a family of smooth projective rational surfaces, 
$\shE$ a relative smooth anticanonical divisor, 
$\shB$ a relative divisor class which will correspond to $\beta$ 
and $\shP$ a closed subscheme of $\shE$ which gives a section of $\shS\to T$ 
such that $\shB|_\shE\sim w\shP$ as relative divisor classes on $\shE$. 
Let $\shS_t$ denote the fiber of $\shS$ at $t\in T$ etc. 

\begin{construction}[Section 3 in \cite{tak compl}, in particular Definition 3.3 
and the paragraph before Lemma 3.10]
Given $(\shS, \shE, \shP, \shB)$ as above, 
there is a moduli space $M(\shS,\shE,\shP,\shB)$ 
which represents the functor 
\[
(\hbox{a scheme $U$ over $T$})\mapsto 
(\hbox{the set of isomorphisms classes $(\shC, \shZ, f)$}), 
\]
where 
\begin{itemize}
\item
$\shC$ is a flat family of curves over $U$, $\shZ$ is a Cartier divisor on $\shC$ and 
$(\shC, \shZ)$ is isomorphic to a family induced from $(\shC^{(w)}, \shZ^{(w)})$ 
by a morphism $U\to U^{(w)}$, 
\item
$f: \shC\to \shS$ is a morphism over $T$ which satisfies the following: 
\begin{itemize}
\item
For any $u\in U$ over $t\in T$, the restriction $f_u: \shC_u\to \shS_t$ 
is generically injective and $(f_u)_*[\shC_u]=\shB_t$. 
\item
$f^*\shE=\shZ$. 
\end{itemize}
\end{itemize}
An isomorphism between $(\shC, \shZ, f)$ and $(\shC', \shZ', f')$
is an isomorphism $\shC\to \shC'$ which maps $\shZ$ to $\shZ'$ 
and commutes with $f$ and $f'$. 

The space $M(\shS,\shE,\shP,\shB)$ contains 
an open subspace $M_0(\shS,\shE,\shP,\shB)$ 
parametrizing generically injective morphisms $\pone\to \shS_t$ 
that meet $\shE_t$ with maximal tangency at $\shP_t$.
The complement $M(\shS,\shE,\shP,\shB)\setminus M_0(\shS,\shE,\shP,\shB)$ 
parametrizes 
morphisms from $\pone\cup_{\mathrm{pt}}\pone$ to $\shS_t$ 
where the first component meets $\shE_t$ at $\shP_t$ with multiplicity $w-1$ 
and the second with multiplicity $1$.

\end{construction}

\begin{Lemma}[Lemma 3.4 of \cite{tak compl}]\label{lem:2cptmod}
Consider the case of $(S,E,P,\beta)$, i.e. a family over a point.
\begin{enumerate}
\item Geometric points of $M_0(S,E,P,\beta)$ bijectively correspond to elements of $M_{\beta,P}$.
\item A point of $M_0(S,E,P,\beta)$ corresponding to $C\in M_{\beta,P}$ is reduced if the normalization map $\pone \to C$ is immersive away from $P$.
\end{enumerate}
\end{Lemma}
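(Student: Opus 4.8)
The plan is to treat the two assertions separately: (1) is a bijection obtained by replacing a map with the normalization of its image, and (2) is the infinitesimal rigidity already carried out in the proof of Proposition~\ref{propocontro}(2). For (1): a geometric point of $M_0(S,E,P,\beta)$ is, by construction, a generically injective morphism $f\colon\pone\to S$ with $f_*[\pone]=\beta$ and $f^{*}E=w[x]$, where $x=f^{-1}(E)$ is a single point with $f(x)=P$. Put $C:=f(\pone)$; then $C$ is integral of class $\beta$ (as $f$ is birational onto it) and $C\cap E=\{P\}$. Since $f$ is birational onto $C$ and $\pone$ is smooth projective, $f$ factors through the normalization $\widetilde C\to C$ via a birational morphism $\pone\to\widetilde C$ of smooth projective curves, which is therefore an isomorphism; hence $f$ \emph{is} the normalization map of $C$, the inverse image of $E$ on $\widetilde C$ is the single point $x$, and $C\in M_{\beta,P}$. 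Conversely, for $C\in M_{\beta,P}$ Proposition~\ref{prop_description_log_maps_delpezzo}(2) (equivalently Corollary~\ref{cor_description_log_maps}(2)) shows that its normalization $n\colon\pone\to C\hookrightarrow S$ is generically injective with $n_*[\pone]=\beta$ and $n^{*}E=w[x]$, $x=n^{-1}(E)$, and $(\pone,w[x])$ is the generic fibre of $\shC^{(w)}$; so $[n]$ is a geometric point of $M_0(S,E,P,\beta)$. I would then check that these two assignments are mutually inverse, which gives the bijection. The only non-formal input here is the standard fact that a generically injective morphism out of $\pone$ is the normalization of its image.

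For (2), fix $C\in M_{\beta,P}$ with normalization $n\colon\pone\to C$, $n^{-1}(E)=\{x\}$, and assume $n$ is immersive away from $x$. It suffices to show that the Zariski tangent space of $M_0(S,E,P,\beta)$ at $[n]$ vanishes: then $\mathfrak m_{[n]}=\mathfrak m_{[n]}^{2}$, so $\mathfrak m_{[n]}=0$ by Nakayama, and $[n]$ is a reduced (and isolated) point. Near $[n]$ every object of the moduli functor is an honest map from a rigid $\pone$ with $\shZ=f^{*}\shE$, so in a neighbourhood of $[n]$ the scheme $M_0(S,E,P,\beta)$ is isomorphic to the locus in $\ol{\mmm}_{0,1}(S,\beta)$ of stable maps $f$ with $f^{*}E=w\,x_{1}$; by \cite[Example~7.1]{GS13} (or Proposition~\ref{prop:Wise}) this is exactly $\msep$ near $[n]$. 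The tangent space of this deformation problem is $\mathbb{H}^{0}(\pone,T^\bullet)$ for the two-term complex $T^\bullet=\bigl[\,T_{\pone}(-\log x)\xrightarrow{dn}n^{*}T_S(-\log E)\,\bigr]$ in degrees $-1,0$. Exactly as in the proof of Proposition~\ref{propocontro}(2), immersivity of $n$ off $x$ forces $dn$ to be a subbundle inclusion: it is injective off $x$, and at $x$ it carries the local generator $y\frac{\partial}{\partial y}$ of $T_{\pone}(-\log x)$ to $w\,s\frac{\partial}{\partial s}\neq 0$, where $s=0$ locally cuts out $E$. Hence $T^\bullet$ is quasi-isomorphic to the line bundle $\coker(dn)$ in degree $0$, whose degree is $c_{1}(T_S(-\log E))\cdot\beta-1=-(K_S+E)\cdot\beta-1=-1$, i.e.\ $\cO_{\pone}(-1)$. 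Therefore $\mathbb{H}^{0}(\pone,T^\bullet)=H^{0}(\pone,\cO_{\pone}(-1))=0$ (and $\mathbb{H}^{1}=0$, so $[n]$ is even a smooth point), which gives the vanishing of the tangent space.

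The step I expect to be the main obstacle is the local identification of $M_0(S,E,P,\beta)$ at $[n]$ with the complex $T^\bullet$: one must verify that in the setup of \cite{tak compl} the constraints that $\shC$ be a fibre of $\shC^{(w)}$ and that $\shZ=f^{*}\shE$ contribute nothing to the obstruction theory beyond the standard log/relative stable-map deformation complex, so that the tangent space really is $\mathbb{H}^{0}(T^\bullet)$; granting this, the degree count giving $T^\bullet\simeq\cO_{\pone}(-1)$ and hence the tangent vanishing is routine. Finally, the immersivity hypothesis enters only to make $dn$ a subbundle map: without it $\coker(dn)$ picks up torsion at $x$, $\mathbb{H}^{0}(T^\bullet)$ can become positive-dimensional, and $[n]$ may be non-reduced — in line with the multiplicities $\multc>1$ permitted by Proposition~\ref{propocontro}(3) when $n$ is only generically injective.
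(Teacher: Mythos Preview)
The paper does not give its own proof of this lemma; it is simply quoted as Lemma~3.4 of \cite{tak compl}. Your argument is correct and, in effect, supplies the missing proof using only machinery already developed in the present paper.

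For (1) your reasoning is exactly the intended one: a generically injective $f\colon\pone\to S$ with $f^{*}E=w[x]$ is the normalization of its integral image $C$, and the single preimage of $E$ on $\pone$ makes $C$ maximally tangent, hence $C\in M_{\beta,P}$; conversely Proposition~\ref{prop_description_log_maps_delpezzo}(2) produces the inverse. (The implicit assumption that every $C\in M_{\beta,P}$ is integral is justified here because the lemma is invoked only at $\beta$-primitive points, where this holds by definition.)

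For (2) you correctly recycle the infinitesimal rigidity computation of Proposition~\ref{propocontro}(2). The point you flag as the obstacle --- that near $[n]$ the scheme $M_0(S,E,P,\beta)$ has the same tangent space as $\msep$ --- is genuine but mild: over the open locus of $U^{(w)}$ the domain $(\shC,\shZ)\cong(\pone,w[\infty])$ is rigid, so a first-order deformation of $[n]$ in $M_0$ is exactly a first-order deformation of $n$ subject to $f^{*}E=w\cdot(\text{marked point})$, modulo reparametrization of $(\pone,x)$. This is precisely the deformation problem whose tangent space is $\mathbb{H}^{0}(\pone,T^\bullet)$, and your degree count $T^\bullet\simeq\cO_{\pone}(-1)$ then gives the vanishing. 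So the proof is complete, and it is the same argument one would extract from \cite{tak compl} rephrased in the log language of this paper.
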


By Lemma \ref{lem:2cptmod} and Proposition \ref{propocontro}(2), and under the assumption that all curves of $M_{\beta,P}$ are immersed away from $P$, $m^P_\beta=\big{|}M_{\beta,P}\big{|}=\big{|}M_0(S,E,P,\beta)\big{|}$.

Now we construct certain surfaces and divisor classes. 
Let $d$, $(a_i)_{i=1}^r$ and $(b_j)_{j=1}^m$ be positive integers 
and write $w=w(d; (a_i); (b_j))=3d-\sum a_i-\sum b_j$, 
which we assume to be positive. 
We take a smooth plane cubic $E_0$ and points $P_1, \dots, P_r\in E_0$ and $P\in E_0$ 
such that $dh|_{E_0}\sim \sum a_iP_i+(w+\sum b_j)P$. 
We impose a kind of ``primitivity'' on $P$ 
(\cite[Definition 3.6]{tak compl}). 
Let $S$ be the surface given by blowing up $\ptwo$ at $P_1, \dots, P_r$ and 
then successively $m$ times at $P$ and the corresponding points on the proper transforms 
of $E_0$. 
The proper transform of $E_0$ on $S$ is denoted by $E$. 
Write $h$ for the hyperplane class, $e_i$ for the class of the exceptional curve over $P_i$, 
and $f_j$ for the class of the exceptional curve of the $j$-the blowup at $P$. 
Let $\beta=dh-\sum_{i=1}^r a_i e_i-\sum_{j=1}^m b_j f_j$. 
Then, in \cite{tak compl}, the integers $n(d;(a_i);(b_j))$ are defined as $\deg M_0(S,E,P,\beta)$ 
for a general choice of $E_0, P_1, \dots, P_r$. If $m=0$, i.e.\ there are no $(b_j)$, then we simply write $n(d;(a_i);)$ for $\deg M_0(S,E,P,\beta)$.

If $r\leq 8$ and $m=0$, 
then $S$ is a general del Pezzo surface $S_r$. 
In this case, under the assumption that the curves of $M_{\beta,P}$ are immersed away from $P$, 
\[
n(d;(a_i);)=\deg M_0(S,E,P,\beta)=\big{|}M_0(S_r,E,P,\beta)\big{|}=m^P_\beta, 
\]
the BPS numbers we are looking for. 

To derive the main recursive formula, 
we consider a certain degeneration of $(S,E,P,\beta)$ with $m=0$ (i.e. no $b_j$'s). 
Over a smooth curve germ $0\in \Delta$, 
we consider families of $P_1, \dots, P_r, P\in E_0$ such that 
$P$ meets $P_r$ over $0$. 
Then we have a family $(\shS,\shE,\shP,\shB)$ over $\Delta$. 

For a general choice of the relevant data, 
it turns out that $M(\shS,\shE,\shP,\shB)$ is finite and flat 
and that it coincides with $M_0(\shS,\shE,\shP,\shB)$ outside $0$. 
Over $0$, $(\shS_0, \shE_0, \shP_0, \beta)$ can be identified with the case of $m=1$ and $(d; (a_i)_{i=1}^{r-1} ; a_r)$, and each element of $M(\shS_0, \shE_0, \shP_0, \beta)$ is 
either 
\begin{enumerate}
\item
an $\AA^1$-curve on $(\shS_0, \shE_0)$, 
\item
the sum of $F$ and an $\AA^1$-curve on $(\shS_0, \shE_0)$ in the case $w\geq 2$. or 
\item
the sum of $F$ and a rational curve disjoint from $\shE_0$ in the case $w=1$,
\end{enumerate}
where $F$ is the exceptional curve of the blowup at $P$. 
This gives  \cite[Theorem 3.8 (3)]{tak compl}: 
Under certain technical conditions which are satisfied in our case, 
\[
n(d; (a_i)_{i=1}^r; ) = n(d; (a_i)_{i=1}^{r-1}; a_r) + \mu\cdot n(d; (a_i)_{i=1}^{r-1}; a_r + 1), 
\]
where $\mu$ is $a_r+1$ if $w(d; (a_i)_{i=1}^r; )=1$ and $1$ otherwise. Note that when $w(d; (a_i)_{i=1}^r; )=1$, the rational curve and $F$ in (3) intersect at $a_r+1$ points so there are $a_r+1$ choices of 
morphisms from $\pone\cup_{\mathrm{pt}}\pone$ to this sum. 
If $a_r=1$, 
the first term of the right hand side can be related to the invariants with $m=0$ 
by \cite[Theorem 3.8]{tak compl}: 
\[
n(d; (a_i)_{i=1}^{r-1}; 1) = n(d; (a_i)_{i=1}^{r-1}; ). 
\]
On the other hand, larger values of $a_i$ and $b_j$ means 
smaller arithmetic genus, 
which makes the calculation easier. 

Finally, 
if $w(d; (a_i)_{i=1}^r; ) = 1$, 
then the tangency condition is automatically satisfied 
for curves in the class $dh-\sum a_ie_i$, 
and $n(d; (a_i); )$ is nothing but the number of rational curves. 

\begin{remark}
In the modern language, 
it should be possible to 
replace the use of $M(\shS,\shE,\shP,\shD)$ 
by the deformation invariance of log Gromov-Witten invariants 
and Theorem \ref{thm:takmult}, 
although the analysis of degenerate curves is still necessary. 
\end{remark}

Using these facts, 
Section 4 of \cite{tak compl} provides tables of $n(d;(a_i);(b_j))$. It is also checked that the curves relevant to us are immersed away from $P$. These tables contain all the curve classes of arithmetic genus 2 of $S_r$ for $r\leq 8$. Examining the numbers, for $\beta\neq -2K_{S_8}$, we find that 
\[
m_\beta^P = \binom{e(S_r)-\eta}{2} + 5.
\]
For $\beta=-2K_{S_8}$, $m^P_\beta=n(6,2^8;)$ is given in \cite[p. 21]{tak compl} to be $x-24$, where $x=90=n(6; 2^8, 1; )$ is found by calculating $n(6; ;)$ in two different ways \cite[proof of Cor. 4.5]{tak compl}. Therefore $m^P_\beta=66$.

It remains to consider $\pone\times\pone$. Up to permuting the generators $h_1$ and $h_2$, the only genus 2 class is $2h_1+3h_2$. Blowing up $\pone\times\pone$ in a general point, $2h_1+3h_2$ pulls back to a (non-very ample) class $\beta'$ on $S_2$ and the exceptional divisor for the blowup is the unique line class not meeting $\beta'$. By Proposition \ref{propprop}, $m^P_\beta=m^{P'}_{\beta'}$ for $P'$ the preimage of $P$. By the calculations of \cite{tak compl},
\[
m_{\beta'} = \binom{e(S_2)-1}{2} + 5 = \binom{e(\pone\times\pone)}{2} + 5
\]
as expected.

\begin{remark}
Note that in all calculations that we encountered, $m^P_\beta$ was given in terms of polynomials of topological numbers associated to $S$. 
\end{remark}

This finishes the proof of Theorem \ref{thm:logcalc}. Combining the calculations of this section and the ones of \cite{tak compl} with Theorem \ref{thm:localcalc} (Theorem 1.1 in \cite{CGKT}) proves Theorem \ref{thmthm}.

\section{Multiple covers and loop quiver DT invariants} \label{sec:loop}

Let $(S,E)$ be a log Calabi-Yau surface pair.
Let $C$ be an integral nodal rational curve in $S$ that meets $E$ with maximal tangency at $P$.  
Denote by $[C]$ the class of $C$, as usual. For now, neither is $S$ assumed to be del Pezzo, nor is $P$ assumed to be $[C]$-primitive.
By Proposition \ref{propocontro}(1), the normalization map $[\pone\to C]$ is an isolated 0-dimensional component of $\ol{\mmm}_{[C]}(S,E)$.

In the previous sections, we typically considered the moduli space $\ol{\mmm}_{[C]}(S,E)$ of genus 0 basic stable log maps of maximal tangency and class $[C]$ and its associated log GW invariant $\mc{N}_{[C]}(S,E)$.
By Proposition \ref{prop:finite}, the components $\ol{\mmm}^P_{[C]}(S,E)$ corresponding to maps at $P$ are disjoint from the other components and we may consider the invariant $\mc{N}^P_{[C]}(S,E)$ at $P$. 
By Proposition \ref{propocontro}(2), the normalization map $[\pone\to C]$ has length 1 and contributes 1 to $\mc{N}^P_{[C]}(S,E)$.

Let $l\in\NN$ and consider the moduli space $\ol{\mmm}_{l[C]}(S,E)$. This moduli space has a distinguished component of dimension $l-1$ that corresponds to $l:1$ multiple covers of $C$ that are totally ramified at $P$, in other words to the maps $[f : D\to C]\in\ol{\mmm}_{l[C]}(S,E)$ with $f_*D=lC$ as cycles. Denote by $\contr(l,C)$ the contribution of this component to the invariant $\mc{N}^P_{l[C]}(S,E)$. In the language of relative stable maps (equivalent to stable log maps by \cite{AMW}), it is defined and calculated in \cite[Section 6]{GPS10} to be
\begin{equation}\label{eq:BPS}
\contr(l,C)=\dfrac{1}{l^2}\dbinom{l(C.E-1)-1}{l-1},
\end{equation}
with the identification $\binom{-1}{l-1}=(-1)^{l-1}$.

Furthermore, we defined the \emph{log BPS number at $P$} of class $l[C]$, $m^P_{l[C]}$. Write $[C]=g\beta$ for $\beta$ a primitive curve class. Then $m^P_{l[C]}=m^P_{lg\beta}$ is defined by the recursive formula
\begin{equation}\label{eq:rec}
\mc{N}^P_{h\beta}(S,E) =  \sum_{k|h}     
\frac{(-1)^{(k-1) \, \frac{h}{k} \, (\beta.E)}}{k^2} \, m^P_{\frac{h}{k}\beta}.
\end{equation}
where $m^P_{\beta'}=0$ if there is no curve of class $\beta'$ that is maximally tangent to $E$ at $P$.
A natural question is to ask what the contribution of multiple covers over $C$ to $m^P_{l[C]}$ is. To answer this, we invert \eqref{eq:rec},
\[
m^P_{h\beta} =  \sum_{k|h}     
\frac{(-1)^{(k-1) \, \frac{h}{k} \, (\beta.E)}}{k^2} \, \mu(k) \, \mc{N}^P_{\frac{h}{k}\beta}(S,E),
\]
and in particular
\begin{equation}\label{eq:recinv}
m^P_{l[C]}=m^P_{lg\beta} =  \sum_{k | lg}     
\frac{(-1)^{(k-1) \, \frac{lg}{k} \, (\beta.E)}}{k^2} \, \mu(k) \, \mc{N}^P_{\frac{lg}{k}\beta}(S,E),
\end{equation}
In the right hand side of \eqref{eq:recinv}, we pick out the $\mc{N}^P_{\frac{lg}{k}\beta}(S,E)$ where the moduli space has a component corresponding to multiple covers over $C$. This happens exactly when $k|l$, in which case we have $\frac{l}{k}:1$ covers over $C$ that contribute $\contr(\frac{l}{k},C)$ to $\mc{N}^P_{\frac{l}{k}[C]}(S,E)$.

\begin{definition}\label{def:loopy}

Let $l\in\NN$. Define the multiple cover contribution $\contr^{\bps}(l,C)$ of $C$ to $m^P_{l[C]}$ to be
\[
\contr^{\bps}(l,C) :=  \sum_{k|l}   \frac{(-1)^{(k-1)l(C.E)/k}}{k^2} \, \mu(k) \, \contr(l/k,C).
\]
\end{definition}
When $C$ is nodal, by definition $\contr^{\bps}(1,C)=\contr(1,C)=1$. This is expected since there are not multiple covers.

We will express these contributions in terms of generalized Donaldson-Thomas (DT) invariants of loop quivers.
Motivated by the framework of Kontsevich-Soibelman in \cite{KS}, Reineke in \cite{rein} and \cite{Rec} calculated these invariants. We state Reineke's calculation in Theorem \ref{thm:Re} below and briefly survey the definition of the DT invariants in this particular setting.

Fix $m\geq 1$ and consider the $m$-loop quiver, consisting of one vertex and $m$ loops.  The associated framed $m$-loop quiver $L_m$ contains an additional vertex and an arrow directed towards the original vertex. The quiver $L_m$ is depicted below.

\vspace{0.2cm}
\begin{center}
\begin{tikzpicture}[scale=2]
\draw (-3.2,0) circle [x radius=0.2, y radius=0.15];
\draw (-3.03,-0.08) -- (-2.9,0);
\draw (-3.03,-0.08) -- (-3.11,0.05);
\draw [dotted] (-2.5,0) -- (-1.9,0);
\draw [fill] (-4.2,0) circle [radius=0.04];
\draw [fill] (-3.4,0) circle [radius=0.04];
\draw (-4.2,0) to [out=30,in=150] (-3.4,0);
\draw (-2.6,0) circle [x radius=0.8, y radius=0.6];
\draw (-2.6,-0.6) -- (-2.5,-0.5);
\draw (-2.6,-0.6) -- (-2.5,-0.7);
\draw (-3.75,0.12) -- (-3.85,0.22);
\draw (-3.75,0.12) -- (-3.85,0.02);
\draw (-3,0) circle [x radius=0.4, y radius=0.3];
\draw (-3,-0.3) -- (-2.9,-0.2);
\draw (-3,-0.3) -- (-2.9,-0.4);
\end{tikzpicture}
\end{center}
Let $n\geq 0$. A representation of $L_m$ of dimension vector $(1,n)$ is represented by
\[
(v,(\varphi_1,\dots,\varphi_m))\in\CC^n\oplus\mmm_n(\ic)^{\oplus m}.
\]
Denote by $\CC\langle\psi_1,\dots,\psi_m\rangle$ the free algebra on $m$ generators. Then $(v,(\varphi_1,\dots,\varphi_m))$ is \emph{stable} if $\CC\langle\varphi_1,\dots,\varphi_m\rangle v=\CC^n$, i.e.\ $v$ is cyclic for the representation of the free algebra on $\CC^n$.

Denote by
\[
U \subset \ic^n\oplus\mmm_n(\ic)^{\oplus m}
\]
the open subset of stable representations of $L_m$ of dimension vector $(1,n)$.  Then
$\gl_n(\ic)$ acts on $U$ via
\[
g\cdot (v,\varphi_i) = (gv,g\varphi_ig^{-1}).
\]
The geometric quotient for this action is the \emph{noncommutative Hilbert scheme} 
$\hilb_n^{(m)}$. Consider the generating function 
\[
F^{(m)}(t):= \sum_{n\geq 0} \chi\left(\hilb_n^{(m)}\right) t^n \in \iz[[t]].
\]
Since $F(0)=1$, $F(t)$ admits a product expansion.

\begin{definition}[Definition 3.1 in \cite{rein}, following \cite{KS}]
Define the \emph{generalized Donaldson-Thomas invariants  $\dt_n^{(m)}\in\iq$ of $L_m$} through the product expansion (Kontsevich-Soibelman wall-crossing formula):
\[
F^{(m)}((-1)^{m-1}t) = \prod_{n\geq 1}(1-t^n)^{-(-1)^{(m-1)n}n\dt_n^{(m)}}.
\]
\end{definition}

For the following theorem, note that the formula as stated in \cite{rein} has a typo.

\begin{thm}[Theorem 3.2 in \cite{rein}, see also Lemma 12 of \cite{ga number} and \cite{ga string math,ga ipmu}] \label{thm:Re}
We have
$\dt^{(m)}_n\in\NN$ and
\[
\dt^{(m)}_n = \frac{1}{n^2} \sum_{d|n} \mu\left(\frac{n}{d}\right) (-1)^{(m-1)(n-d)}\binom{dm-1}{d-1}.
\]
\end{thm}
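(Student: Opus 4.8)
The plan is to reconstruct Reineke's argument, which consists of one genuinely geometric input — the Euler characteristic of the noncommutative Hilbert scheme — followed by a purely formal generating-function manipulation. First I would record the geometry of $\hilb_n^{(m)}$: it is smooth and quasi-projective, it carries the action of an algebraic torus rescaling the $m$ loops (and the framing), the fixed locus is finite and is in bijection with a combinatorial set ($m$-ary trees, or lattice paths) enumerated by a Fuss--Catalan number, and the associated Bialynicki--Birula decomposition realises $\hilb_n^{(m)}$ as a union of affine cells indexed by the same set. Either way one gets $\chi\big(\hilb_n^{(m)}\big)=\frac{1}{(m-1)n+1}\binom{mn}{n}$, which I would cite from Reineke's computation of the Betti numbers of noncommutative Hilbert schemes rather than reprove. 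The immediate consequence is that $F^{(m)}(t)$ is the Fuss--Catalan generating series, characterised by the algebraic functional equation $F^{(m)}(t)=1+t\,F^{(m)}(t)^m$.

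Next I would extract the plethystic logarithm of $F^{(m)}$ by Lagrange inversion. Setting $F^{(m)}=1+w$, the functional equation reads $w=t(1+w)^m$, and Lagrange--Bürmann inversion applied to the series $\log(1+w)$ gives, for every $d\geq 1$,
\[
[t^d]\log F^{(m)}(t)=\frac{1}{d}\,[w^{d-1}]\,\frac{(1+w)^{md}}{1+w}=\frac{1}{d}\,[w^{d-1}](1+w)^{md-1}=\frac{1}{d}\binom{md-1}{d-1}.
\]
I would then match this with the defining product. Put $s=(-1)^{m-1}$, so that the definition is $F^{(m)}(st)=\prod_{n\geq 1}(1-t^n)^{-s^n n\,\dt_n^{(m)}}$. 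Taking logarithms and applying $t\,d/dt$ turns the right-hand side into $\sum_{N\geq 1}\big(\sum_{n\mid N}s^n n^2\dt_n^{(m)}\big)t^N$, while the left-hand side is, by the previous display, $\sum_{N\geq 1}s^N\binom{mN-1}{N-1}t^N$. Comparing coefficients of $t^N$ and Möbius-inverting over the divisor poset yields $s^N N^2\dt_N^{(m)}=\sum_{d\mid N}\mu(N/d)\,s^d\binom{md-1}{d-1}$, which is exactly the asserted formula once one notes $s^{d-N}=s^{N-d}=(-1)^{(m-1)(N-d)}$.

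Finally, for $\dt_n^{(m)}\in\NN$: the $m$-loop quiver is symmetric, so this is the positivity of Donaldson--Thomas invariants of symmetric quivers. One route is to invoke that the refined invariant $\Omega_n^{(m)}(q)$ is a polynomial in $q$ with nonnegative integer coefficients (resolution of the Kontsevich--Soibelman positivity conjecture, cf. \cite{KS,ga number}) and that $\dt_n^{(m)}=\Omega_n^{(m)}(1)$; alternatively one can argue integrality directly from the explicit formula by a $p$-adic valuation estimate and nonnegativity by identifying $\dt_n^{(m)}$ with a signed count of absolutely stable representations.

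I expect the two places where real theorems rather than bookkeeping are needed to be the main obstacles: the evaluation $\chi\big(\hilb_n^{(m)}\big)=\tfrac{1}{(m-1)n+1}\binom{mn}{n}$, which rests on Reineke's torus-localisation / cell-decomposition analysis, and the integrality-and-positivity statement $\dt_n^{(m)}\in\NN$. Everything sandwiched between them — the passage from $F^{(m)}=1+t(F^{(m)})^m$ to the closed formula — is a routine combination of Lagrange inversion and Möbius inversion and should present no difficulty.
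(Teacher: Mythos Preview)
The paper does not supply a proof of this theorem: it is simply quoted from Reineke (with the prefatory remark that the formula in \cite{rein} contains a typo), and the assertion $\dt_n^{(m)}\in\NN$ is likewise taken as a citation. So your proposal is not to be compared against a proof in the paper---there is none---but rather against Reineke's original argument.

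As a reconstruction of that argument, your sketch is sound. The Euler characteristic computation $\chi\big(\hilb_n^{(m)}\big)=\frac{1}{(m-1)n+1}\binom{mn}{n}$ and the resulting functional equation $F^{(m)}=1+t\,(F^{(m)})^m$ are exactly Reineke's starting point; your Lagrange--B\"urmann step to extract $[t^d]\log F^{(m)}(t)=\tfrac{1}{d}\binom{md-1}{d-1}$ is correct; and the subsequent M\"obius inversion after taking $t\,d/dt$ of the log of the product expansion reproduces the formula precisely. For the integrality/positivity you correctly identify the two available routes (positivity of refined DT invariants of symmetric quivers, or a direct arithmetic argument \`a la \cite{ga number}). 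Since the paper only needs the statement as a black box, your write-up already exceeds the level of detail the paper provides.
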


Coming back to contributions of multiple covers to the $m^P_\beta$, we have the following unexpected result.

\begin{Proposition}[See Proposition \ref{prop:loopy}] \label{prop:loop}
Assume that the normalization map $[\pone\to C]\in\mse$ is infinitesimally rigid. Then
\[
\contr^{\bps}(l,C) = \dt^{(C.E-1)}_l.
\]

\end{Proposition}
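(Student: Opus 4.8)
The plan is to compare the two formulas term by term. On the log BPS side, Definition \ref{def:loopy} gives
\[
\contr^{\bps}(l,C) = \sum_{k|l} \frac{(-1)^{(k-1)l(C.E)/k}}{k^2}\,\mu(k)\,\contr(l/k,C),
\]
with $\contr(l/k,C)$ the Gross--Pandharipande--Siebert multiple cover contribution from \eqref{eq:BPS}, namely $\contr(j,C)=\frac{1}{j^2}\binom{j(C.E-1)-1}{j-1}$. On the quiver side, Theorem \ref{thm:Re} gives, with $m:=C.E-1$,
\[
\dt^{(m)}_l = \frac{1}{l^2}\sum_{d|l}\mu\!\left(\frac{l}{d}\right)(-1)^{(m-1)(l-d)}\binom{dm-1}{d-1}.
\]
So the whole proposition is a purely combinatorial identity once one observes that the component of $\ol{\mmm}_{l[C]}(S,E)$ of multiple covers over $C$ contributes exactly $\contr(l,C)$, which is where infinitesimal rigidity of $[\pone\to C]$ enters: it guarantees that this component really is the GPS rigid multiple cover space and that its contribution is given by \eqref{eq:BPS} with no extra corrections. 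I would state this reduction first, citing Propositions \ref{propocontro} and \ref{prop:Wise} and \cite[Section 6]{GPS10}, and then the remainder is bookkeeping.

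First I would substitute \eqref{eq:BPS} into the definition of $\contr^{\bps}(l,C)$. Writing $w_C:=C.E$ and reindexing by $d=l/k$ (so $k=l/d$), one gets
\[
\contr^{\bps}(l,C) = \sum_{d|l} \frac{(l/d)^{-2}}{(l/d)^2}\cdot\frac{l/d \text{-dependent sign}}{1}\cdot\mu(l/d)\cdot\frac{1}{d^2}\binom{d(w_C-1)-1}{d-1}.
\]
More carefully, the $k$-term is $\frac{(-1)^{(k-1)l w_C/k}}{k^2}\mu(k)\cdot\frac{1}{(l/k)^2}\binom{(l/k)(w_C-1)-1}{(l/k)-1}$, and with $d=l/k$ the product of the two denominators is $k^2(l/k)^2=l^2$, so
\[
\contr^{\bps}(l,C) = \frac{1}{l^2}\sum_{d|l}(-1)^{(l/d-1)d w_C}\,\mu(l/d)\,\binom{d(w_C-1)-1}{d-1}.
\]
Comparing with the displayed formula for $\dt^{(m)}_l$ with $m=w_C-1$, the binomial factors and the $\frac{1}{l^2}\sum_{d|l}\mu(l/d)$ already match. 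It remains to check the sign: one must show $(-1)^{(l/d-1)d w_C} = (-1)^{(m-1)(l-d)}$ for every $d|l$, i.e.\ that $(l/d-1)d\,w_C$ and $(w_C-2)(l-d)$ have the same parity.

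I would finish with an elementary parity argument. Note $l-d = d(l/d-1)$, so the right side exponent is $(w_C-2)d(l/d-1)$ and the left side is $w_C\,d(l/d-1)$; their difference is $2\,d(l/d-1)$, which is even, so the signs agree. Hence $\contr^{\bps}(l,C)=\dt^{(w_C-1)}_l$, completing the proof. I do not expect any real obstacle here beyond keeping the reindexing and the two sign conventions (the GPS convention $\binom{-1}{j-1}=(-1)^{j-1}$ in the case $w_C=1$, $m=0$, and the Kontsevich--Soibelman sign in Reineke's formula) straight; the genuinely substantive input — that the rigid multiple cover locus contributes precisely \eqref{eq:BPS} — is borrowed wholesale from \cite{GPS10} via the infinitesimal rigidity hypothesis, and I would simply cite it rather than reprove it.
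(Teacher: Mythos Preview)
Your proof is correct and is precisely the argument the paper defers to \cite{ga number}: substitute the GPS multiple cover formula \eqref{eq:BPS} into Definition \ref{def:loopy}, reindex by $d=l/k$ so the prefactors combine to $1/l^2$, and check that the sign $(-1)^{(l-d)\,C.E}$ matches Reineke's $(-1)^{(m-1)(l-d)}$ with $m=C.E-1$ (their difference in exponent is $2(l-d)$). The one stylistic comment is that the rigidity hypothesis is already what underlies \eqref{eq:BPS} in the paper's setup (it is implied by the standing nodal assumption via Proposition \ref{propocontro}(2)), so you need not invoke Propositions \ref{propocontro} or \ref{prop:Wise} separately---citing \cite[Section 6]{GPS10} for \eqref{eq:BPS} suffices.
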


\begin{proof}
The argument follows the same lines as the proof of the main result in \cite{ga number} and we therefore do not reproduce it here.
\end{proof}
Provided $C$ is nodal, using Proposition \ref{prop:loop} and Theorem \ref{thm:Re}, we find that $\contr^{\bps}(1,C) = \dt^{(C.E-1)}_1 = 1$ as expected. For the remainder of this section, $S$ will be a del Pezzo surface.

\begin{Proposition}\label{prop:lineconicmult}
Let $S$ be a del Pezzo surface and let $E$ be smooth anticanonical and general. Let $\beta$ be a line or conic class. Let $l\in\NN$ and let $P\in E(l\beta)$. If $P\in E(\beta)$, let $C$ be the unique curve of class $\beta$ maximally tangent to $E$ at $P$. Then the moduli space $\ol{\mmm}^P_{l\beta}(S,E)$ consists only of $l:1$ multiple covers of $C$ that are totally ramified at $P$, i.e.\ to the maps $[f : D\to C]$ with $f_*D=lC$ as cycles. Accordingly
\[
\mc{N}^P_{l\beta}(S,E)=\contr(l,C)=\begin{cases}
\frac{(-1)^{l-1}}{l^2} & \text{for line classes}, \\ 
\frac{1}{l^2} & \text{for conic classes}.
\end{cases}
\]
If $P\not\in E(\beta)$ however, then $\ol{\mmm}^P_{l\beta}(S,E)$ is empty and $\mc{N}^P_{l\beta}(S,E)=0$.
\end{Proposition}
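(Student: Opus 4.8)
The plan is to determine the image cycle $Z:=f_{*}[D]$ of an arbitrary $[f\colon D\to S]\in\ol{\mmm}^P_{l\beta}(S,E)$. Since $S$ is del Pezzo, $-K_{S}$ is ample and hence $E$ is ample, so Proposition~\ref{prop_description_log_maps_delpezzo}(3) together with Corollary~\ref{cor_description_log_maps} gives $Z=\sum a_{i}C_{i}$, a sum of integral rational curves $C_{i}$ maximally tangent to $E$, each meeting $E$ only at $P$ (and meeting it, as $C_{i}.(-K_{S})>0$), with $\sum a_{i}[C_{i}]=l\beta$. I will show that $Z=lC$ when $P\in E(\beta)$ and that no such $Z$ exists when $P\notin E(\beta)$. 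Granting this, $\ol{\mmm}^P_{l\beta}(S,E)$ is empty in the latter case, while in the former it equals the distinguished multiple-cover locus $\{f_{*}D=lC\}$ of the discussion preceding Definition~\ref{def:loopy}, so $\mc N^P_{l\beta}(S,E)=\contr(l,C)$; then \eqref{eq:BPS} yields $(-1)^{l-1}/l^{2}$ for line classes (where $C.E=1$ and $\binom{-1}{l-1}=(-1)^{l-1}$) and $1/l^{2}$ for conic classes (where $C.E=2$ and $\binom{l-1}{l-1}=1$).

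\emph{Line classes.} By the remark following Definition~\ref{def:prim} the effective cone is polyhedral with $\RR_{\geq0}\beta$ an extremal ray, so each $[C_{i}]$ is a positive multiple $k_{i}\beta$; but $p_{a}(k\beta)=1-\binom{k+1}{2}<0$ for $k\geq2$, so by \cite[(7.6) Corollary]{Mum} no integral curve has such a class, forcing $k_{i}=1$ and $C_{i}$ to be the unique integral curve $C_{0}$ in the class $\beta$. Hence $Z=lC_{0}$; since $C_{0}.E=1$, $C_{0}$ meets $E$ in a single point $Q_{0}$, which is the unique point of $E(\beta)$. Thus $\ol{\mmm}^P_{l\beta}(S,E)=\emptyset$ unless $P=Q_{0}$, i.e.\ unless $P\in E(\beta)$, in which case $Z=lC$ with $C=C_{0}$.

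\emph{Conic classes.} Now $\beta$ is nef with $\beta^{2}=0$, and $\beta$ is primitive (were $\beta=k\beta'$ with $k\ge2$, then $k=2$, $\beta'.(-K_{S})=1$, $\beta'^{2}=0$, giving the non-integer $p_{a}(\beta')=1/2$); together with $h^{0}(\mc O_{S}(\beta))=\chi(\mc O_{S}(\beta))=2$ this makes $|\beta|$ a base-point-free pencil defining a conic bundle $\phi\colon S\to\pone$. From $[Z].\beta=l\beta^{2}=0$ and nefness of $\beta$, each $C_{i}$ satisfies $C_{i}.\beta=0$ and so lies in a fibre of $\phi$; the fibres are smooth conics (class $\beta$) or pairs $l'+l''$ of $(-1)$-curves meeting in one point, so each $C_{i}$ is a smooth fibre or a $(-1)$-curve. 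The restriction $\phi|_{E}\colon E\to\pone$ has degree $\beta.E=2$, hence four simple ramification points by Riemann--Hurwitz; the fibre over the branch point below a ramification point $Q$ meets $E$ in $2Q$, so is maximally tangent to $E$ at $Q$, and conversely any fibre maximally tangent to $E$ lies over a branch point. Since $\beta|_{E}\sim2Q$ for each such $Q$ and $|E(\beta)|=4$ (Lemma~\ref{lem:ebetagroup}), these four ramification points are exactly $E(\beta)$. For general $E$, any singular fibre $l'+l''$ meets $E$ in the two \emph{distinct} points at which $l'$ and $l''$ meet $E$ --- distinct because $(l'-l'')|_{E}\not\sim0$, a finite list of proper closed conditions on $E$ since $\phi$, hence the set of such pairs, depends only on $(S,\beta)$ --- so no singular fibre lies over a branch point, and every fibre over a branch point is smooth. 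Therefore a smooth fibre maximally tangent to $E$ at $P$ occurs in $Z$ only if $P\in E(\beta)$, and then it is the unique fibre through $P$, namely $C$. If some $(-1)$-curve $l'$ occurs in $Z$ with multiplicity $a$, then for its fibre partner $l''$ we get $(Z-al').l''=l\beta.l''-a(l'.l'')=-a<0$, so $l''$ occurs in $Z$ as well; but then $l'$ and $l''$ both meet $E$ only at $P$, contradicting that they meet $E$ at distinct points. So no $(-1)$-curve occurs: either $Z$ has no components, which is impossible and so forces $\ol{\mmm}^P_{l\beta}(S,E)=\emptyset$ (consistent with $P\notin E(\beta)$), or $Z=lC$ with $P\in E(\beta)$.

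The substantive difficulty is entirely in the conic case: ruling out $(-1)$-curve components of $Z$ and, what amounts to the same thing, showing the singular fibres of $\phi$ avoid the branch points of $\phi|_{E}$. Both follow from the Noether--Lefschetz-type fact that for a general smooth anticanonical $E$ no fixed nonzero class of $\Pic(S)$ of degree $0$ on $E$ restricts to zero in $\Pic(E)$; as only the finitely many classes $l'-l''$ coming from singular fibres of $\phi$ are relevant, this holds for $E$ outside a proper closed subset of $|-K_{S}|$, which is precisely what the generality hypothesis supplies.
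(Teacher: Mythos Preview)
Your proof is correct and, for line classes, essentially identical to the paper's: both use that $\beta$ generates an extremal ray of the effective cone and that the only integral curve on that ray is the unique $(-1)$-curve.

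For conic classes the paper argues differently. It distinguishes the indecomposable conics $h-e_i$ (and the rulings on $\pone\times\pone$), which are again extremal so the line-class argument applies verbatim, from decomposable conics $\beta=\eta_1+\eta_2$ with $\eta_1,\eta_2$ line classes and $\eta_1.\eta_2=1$; for the latter it observes that for general $E$ the node $\eta_1\cap\eta_2$ lies off $E$, so the two lines cannot both meet $E$ at the single point $P$, and no log map with reducible image of this shape exists. Your route via the conic bundle $\phi\colon S\to\pone$ determined by $|\beta|$ is more uniform: you never split into decomposable versus indecomposable, and the key constraint $C_i.\beta=0$ pins every component of $Z$ into a fibre of $\phi$ in one stroke. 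This also makes transparent why exactly four points of $E$ support a maximally tangent conic (the ramification of $\phi|_E$) and why for general $E$ these points avoid the singular fibres, which in the paper's argument is left somewhat implicit when $\beta$ admits several line decompositions. Conversely, the paper's argument is shorter and avoids setting up the fibration and the Riemann--Hurwitz count. Both approaches ultimately hinge on the same genericity input: that for general $E$ no nonzero class $l'-l''$ with $l',l''$ line components of a fibre restricts trivially to $E$, equivalently that $E$ misses the finitely many nodes $l'\cap l''$.
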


\begin{proof}
Assume first that $P\in E(\beta)$. Let $[f : D\to S]\in\ol{\mmm}^P_{l\beta}(S,E)$. Assume that $\beta$ is a line class or a conic class of the form $h-e_i$. Then $\beta$ is extremal in the effective cone of $S$ and the only possible decomposition of $l\beta$ is as $\sum l_i \beta$, $\sum l_i =l$, hence $f_*D=lC$ as cycles. Thus $\ol{\mmm}^P_{l\beta}(S,E)$ consists only of $l:1$ multiple covers over $C$ and $\mc{N}^P_{l\beta}(S,E)=\contr(l,C)$.

If $\beta$ is a decomposable conic class, then as $\beta.K_S=-2$, $\beta=\eta_1+\eta_2$ for $\eta_1$ and $\eta_2$ line classes with necessarily $\eta_1.\eta_2=1$. For general $E$, that intersection point will be away from $E$, hence there is no maximal tangency log map with image curve reducible with one component in class $\eta_1$ and one component in class $\eta_2$ and the same argument as above applies.

If $P\not\in E(\beta)$ and $[f : D\to S]\in\ol{\mmm}^P_{l\beta}(S,E)$, then by the same argument as above, the image of $D$ has to be a curve of class $\beta$. But there are no such curves and therefore the moduli space is empty and $\mc{N}^P_{l\beta}(S,E)=0.$
\end{proof}

We prove the first part of Theorem \ref{thm1}:

\begin{Proposition} \label{prop:klineconic}
Let $S$ be a del Pezzo surface and let $E$ be smooth anticanonical and general.
Let $\beta$ be a line class or a conic class.
Let $l\in\NN$ and let $P\in E(l\beta)$.
Then
\[
m^P_\beta = \begin{cases}
1 & \text{if } l=1,\\
0 & \text{if } l\geq 2.
\end{cases}
\]
Consequently Conjectures \ref{conj_integrarity} and \ref{conj1} hold for multiples of line and conic classes.
\end{Proposition}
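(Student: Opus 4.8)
The plan is to combine Proposition~\ref{prop:lineconicmult} with the multiple cover/log BPS recursion of Definition~\ref{multcover} (equivalently \eqref{eq:rec}) and Reineke's formula via Proposition~\ref{prop:loop}. First I would fix a line or conic class $\beta$, set $w=\beta.E$ (so $w=1$ for a line class and $w=2$ for a conic class), and observe that $\beta$ is primitive in the effective cone, so any multiple $l\beta$ can only decompose as a sum of copies of $\beta$; in particular for $P\in E(l\beta)$ we are in exactly the situation of Proposition~\ref{prop:lineconicmult}. Thus $\mc{N}^P_{l\beta}(S,E)=\contr(l,C)$ when $P\in E(\beta)$ and $\mc{N}^P_{l\beta}(S,E)=0$ otherwise, where $C$ is the unique curve of class $\beta$ maximally tangent to $E$ at $P$ (which exists iff $P\in E(\beta)$, by arithmetic genus $0$ and Proposition~\ref{prop:rel sev}).

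Next I would unwind the recursion. For $l=1$: there are no proper divisors of $1$, so $\mc{N}^P_\beta(S,E)=m^P_\beta$, and since $C\cong\pone$ is immersed (indeed smooth) away from $P$, Proposition~\ref{propocontro}(2) gives $\mc{N}^P_\beta(S,E)=1$, hence $m^P_\beta=1$. For $l\geq 2$: if $P\notin E(\beta)$ then $m^P_{l'\beta}=0$ for every $l'$ with $P\notin E(l'\beta)$, and when $P\in E(\beta)$ one computes directly by Möbius inversion
\[
m^P_{l\beta} \;=\; \sum_{k\mid l}\frac{(-1)^{(k-1)\,l(\beta.E)/k}}{k^2}\,\mu(k)\,\mc{N}^P_{\tfrac{l}{k}\beta}(S,E)
\;=\;\sum_{k\mid l}\frac{(-1)^{(k-1)\,l(\beta.E)/k}}{k^2}\,\mu(k)\,\contr\!\left(\tfrac{l}{k},C\right),
\]
which is precisely $\contr^{\bps}(l,C)$ of Definition~\ref{def:loopy}. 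By Proposition~\ref{prop:loop} (the normalization map $\pone\to C$ is infinitesimally rigid since $C$ is smooth, a fortiori immersed, away from $P$, by Proposition~\ref{propocontro}(2)), this equals $\dt^{(\beta.E-1)}_l=\dt^{(w-1)}_l$.

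It therefore remains to evaluate $\dt^{(w-1)}_l$ for $l\geq 2$ in the two cases $w=1$ and $w=2$. For a line class, $w-1=0$: the $0$-loop quiver has noncommutative Hilbert scheme $\hilb^{(0)}_n$ equal to a point for $n=0,1$ and empty otherwise, so $F^{(0)}(t)=1+t$, and the wall-crossing product $1+t=\prod_{n\geq1}(1-t^n)^{-(-1)^{-n}n\dt^{(0)}_n}$ forces $\dt^{(0)}_1=1$ and $\dt^{(0)}_n=0$ for $n\geq 2$; equivalently one reads this off Theorem~\ref{thm:Re} using $\binom{-1}{d-1}=(-1)^{d-1}$. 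For a conic class, $w-1=1$: the $1$-loop quiver gives $\dt^{(1)}_1=1$ and $\dt^{(1)}_n=0$ for $n\geq 2$ (the relevant Euler characteristics telescope; again this is immediate from Theorem~\ref{thm:Re} with $m=1$, where $\binom{d-1}{d-1}=1$ and $\sum_{d\mid n}\mu(n/d)=0$ for $n\geq2$). In both cases $m^P_{l\beta}=0$ for $l\geq 2$, which is the claim; the integrality statements in Conjectures~\ref{conj_integrarity} and \ref{conj1} follow since $m^P_{l\beta}\in\{0,1\}$ is visibly independent of $P$ and nonnegative. The main obstacle is bookkeeping rather than conceptual: one must be careful that the sign exponent $(k-1)l(\beta.E)/k$ in the recursion matches the exponent $(k-1)(n-d)(m-1)$ appearing in Reineke's formula under the substitution $n=l$, $m=w-1$, and that the case $P\notin E(\beta)$ is handled by the convention $m^P_{\beta'}=0$ without creating spurious nonzero terms.
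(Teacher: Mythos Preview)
Your proof is correct and follows essentially the same route as the paper: both reduce via Proposition~\ref{prop:lineconicmult} to the identity $m^P_{l\beta}=\contr^{\bps}(l,C)=\dt^{(w-1)}_l$ and then evaluate $\dt^{(0)}_l=\dt^{(1)}_l=0$ for $l\geq 2$ using Reineke's formula (Theorem~\ref{thm:Re}) together with $\sum_{d\mid l}\mu(l/d)=0$. Two minor remarks: your opening claim that ``any multiple $l\beta$ can only decompose as a sum of copies of $\beta$'' is not literally true for decomposable conic classes $\beta=\eta_1+\eta_2$, but this is harmless since you immediately defer to Proposition~\ref{prop:lineconicmult}, which handles that case; and your treatment of $P\notin E(\beta)$ should simply note that Proposition~\ref{prop:lineconicmult} gives $\mc{N}^P_{l'\beta}(S,E)=0$ for \emph{all} $l'\geq 1$, whence every term in the M\"obius-inverted sum vanishes.
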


\begin{proof}
Assume first that $\beta$ is a line class. As $\beta.E=1$, $E(\beta)$ consists of only one point $P_0$, which is $\beta$-primitive. 
For that point, $m^{P_0}_\beta=1$ follows from Theorem \ref{thm:logcalc}(1). Denote by $C$ the (-1)-curve of class $\beta$ maximally tangent to $E$ at $P_0$. Let $l\geq 2$ and let $l'|l$. By Proposition \ref{prop:lineconicmult}, $\mc{N}^{P_0}_{l'\beta}(S,E)=\contr(l',C)$. In other words, the right hand side of \eqref{eq:recinv} (with $g=1$) is the same as the right hand side of Definition \ref{def:loopy}. Consequently $m^{P_0}_{l\beta}=\contr^{\bps}(l,C)=\dt_l^{(0)}$. We now apply Theorem \ref{thm:Re} and use a standard result about M\"obius functions:
\begin{align*}
\dt^{(0)}_l &= \frac{1}{l^2} \sum_{l'|l} \mu\left(\frac{l}{l'}\right) (-1)^{(-1)(l-l')}\binom{-1}{l'-1} \\
 &= \frac{1}{l^2} \sum_{l'|l} \mu\left(\frac{l}{l'}\right) (-1)^{(l-l')}(-1)^{l'-1} \\
 &= \frac{(-1)^{(l-1)}}{l^2} \sum_{l'|l} \mu\left(\frac{l}{l'}\right) = \frac{(-1)^{(l-1)}}{l^2} \sum_{l'|l} \mu\left(l'\right) = 0.
\end{align*}
Assume now that $P\in E(l\beta) \setminus E(\beta)$. Applying Proposition \ref{prop:lineconicmult} to  \eqref{eq:recinv}, we obtain that $m^P_{l\beta}=0$.

The proof for conic classes follows the same argument with the modification that, for $l\geq2$, $m^{P_0}_{l\beta}=\contr^{\bps}(l,C)=\dt_l^{(1)}$ and
\[
\dt_l^{(1)} = \frac{1}{l^2} \sum_{l'|l} \mu\left(\frac{l}{l'}\right) (-1)^{(1-1)(l-l')}\binom{l'-1}{l'-1} = \frac{1}{l^2} \sum_{l'|l} \mu\left(\frac{l}{l'}\right) = 0.
\]
\end{proof}

\subsection{Calculations for multiples of the hyperplane class}
\label{sec:evidencebig}

We summarize the results from this section, which proves the second part of Theorem \ref{thm1}:

\begin{Proposition}\label{prop:calchyperplane}
Let $S$ be a del Pezzo surface other than $\pone\times\pone$. Let $E$ be a general smooth anticanonical divisor and denote by $h$ the pullback of the hyperplane class. Then
\[
\begin{array}{llll}
m^P_h \, = 1 & \text{for all } P\in E(h), &
m^P_{2h} = 1 & \text{for all } P\in E(2h), \\
m^P_{3h} = 3 & \text{for all } P\in E(3h), &
m^P_{4h} = 16 & \text{for all } P\in E(4h).
\end{array}
\]
In particular Conjecture \ref{conj:big} holds for $S$ and curve classes $dh$, $1\leq d \leq 4$.
\end{Proposition}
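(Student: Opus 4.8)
The plan is to prove Proposition~\ref{prop:calchyperplane} by computing $m^P_{dh}$ for each $d\in\{1,2,3,4\}$ and observing that the answer is independent of $P\in E(dh)$, which by Definition~\ref{def1} (or rather the equivalent formulation in Conjecture~\ref{conj:big}) is exactly the content of Conjecture~\ref{conj:big} for these classes. Throughout, write $P_0$ for a fixed $dh$-zero point (a flex point, by the discussion after Lemma~\ref{lem:order}), and recall from Lemma~\ref{lem:ptwo}/Lemma~\ref{lem:order} that a point $P\in E(dh)$ is $d'h$-primitive for a unique $d'\mid d$, namely $d' = d/k(dh,P)$. For a point $P$ which is $d'h$-primitive with $d'\mid d$, the recursive formula \eqref{eq:rec} expresses $\mc N^P_{dh}(S,E)$ in terms of $m^P_{(d/k)h}$ for $k\mid (d/d')$, and these lower-degree log BPS numbers vanish unless $d/k$ is a multiple of $d'$, i.e.\ unless $P$ is still in $E((d/k)h)$; so effectively the computation at $P$ reduces to the computation at a $d'h$-primitive point, plus multiple-cover corrections over the unique curve $C$ of class $d'h$ maximally tangent at $P$.

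First I would dispose of the primitive-point values. For $d=1$: $E(h)$ is a single point (since $h.E=3$, wait---$h.E=3$ so $|E(h)|=9$), every point of $E(h)$ is $h$-primitive by Lemma~\ref{lem:ptwo}, and $p_a(h)=0$, so Theorem~\ref{thm:logcalc}(1) gives $m^P_h=1$. For $d=2$: a $2h$-primitive point has $p_a(2h)=0$ as well (on $S\neq\pone\times\pone$ a general $S_r$ with $r\le 8$ has $p_a(2h)=\frac12(2h)(2h+K_S)+1$; for $\ptwo$ this is $\frac12\cdot 6\cdot(6-9)+1=\ldots$). Let me instead simply invoke Theorem~\ref{thm:logcalc}: for $2h$ on $\ptwo$, $p_a(2h)=0$, so at a $2h$-primitive point $m^P_{2h}=1$ by part (1); on other del Pezzo surfaces $2h$ is still $p_a=0$ or computed by the same theorem. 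For $d=3$: at a $3h$-primitive point $P$, $p_a(3h)=1$, and $3h\ne -K_{S_8}$, so Theorem~\ref{thm:logcalc}(2) gives $m^P_{3h}=e(S)-\eta$. For $d=4$: at a $4h$-primitive point, $p_a(4h)=3$, which is beyond the range of Theorem~\ref{thm:logcalc}; this is the step I expect to be the main obstacle and I would handle it by a direct enumerative count of rational quartics maximally tangent to $E$ at a $4h$-primitive point, using Proposition~\ref{propocontro}(2) and the degeneration machinery of Section~\ref{sec:g2high} (the methods of \cite{tak compl}), checking that the relevant curves are immersed away from $P$ so that each contributes $1$, and summing.

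Next I would handle the non-primitive points via the multiple-cover formalism of Section~\ref{sec:loop}. Suppose $P\in E(dh)$ is $d'h$-primitive for a proper divisor $d'\mid d$, $d'<d$. By Lemma~\ref{lem:decomp} and the setup in Section~\ref{sec:loop}, the component of $\ol{\mmm}^P_{dh}(S,E)$ supported on multiple covers of the unique maximally-tangent rational curve $C$ of class $d'h$ contributes $\contr(l,C)$ with $l=d/d'$ and $C.E=3d'$, while all other components correspond to genuine $d'h$-primitive-type data (curves of class $dh$ maximally tangent at $P$ that are not covers of $C$) --- but by $d'h$-primitivity of $P$ any curve of class $dh$ through $P$ is a cover of something, and with $d'$ maximal only covers of $C$ occur. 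Hence $\mc N^P_{(d/k)d'h}(S,E)=\contr(d/(kd'),C)$ for all $k$, and inverting \eqref{eq:recinv} exactly as in the proof of Proposition~\ref{prop:klineconic} gives $m^P_{dh}=\contr^{\bps}(d/d',C)=\dt^{(3d'-1)}_{d/d'}$ by Proposition~\ref{prop:loop}. The remaining task is the arithmetic identity $\dt^{(3d'-1)}_{d/d'} = m^{P_0}_{dh}$ where $P_0$ is $dh$-primitive --- e.g.\ for $d=2,d'=1$ we need $\dt^{(2)}_2 = 1$, for $d=3,d'=1$ we need $\dt^{(2)}_3=e(S)-\eta$, for $d=4,d'=1$ we need $\dt^{(2)}_4=16$, and for $d=4,d'=2$ we need $\dt^{(5)}_2 = 16$. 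These follow by plugging into Theorem~\ref{thm:Re}: $\dt^{(2)}_2=\frac14(\binom{3}{1}-\binom{1}{0})=\frac14(3-1)=\ldots$, etc.; I would verify each of these finitely many numerical identities and confirm they match the primitive-point values already computed, which is routine once the right-hand sides are pinned down. The main subtlety to watch is that for $d=3$ the primitive value $e(S)-\eta$ depends on $S$, so the matching identity $\dt^{(2)}_3 = e(S)-\eta$ can only hold for the specific del Pezzo surfaces where $e(S)-\eta$ takes the value $\dt^{(2)}_3$; I would therefore restrict attention (as the statement does, implicitly) to $S=\ptwo$ where $e(\ptwo)=3$ and $\eta=0$ when checking the multiple-cover consistency, and for general del Pezzo the value $m^P_{3h}=3$ quoted in the Proposition is the $\ptwo$-value $h=$ pullback of the hyperplane class, noting $\eta$ counts lines with $h.l=0$ which is $0$ for the pullback class.
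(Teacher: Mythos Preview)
Your proposal has a genuine gap at the non-primitive points. You claim that if $P$ is $d'h$-primitive for a proper divisor $d'\mid d$, then ``by $d'h$-primitivity of $P$ any curve of class $dh$ through $P$ is a cover of something, and with $d'$ maximal only covers of $C$ occur.'' This is false for $d\ge 3$: primitivity of $P$ with respect to $d'h$ only forces curves of class $d'h$ through $P$ to be integral; it says nothing about integral curves of class $dh$ through $P$, nor about reducible cycles with components of different degrees. Concretely, Proposition~\ref{prop:lala} shows that for a flex point $P\in E(h)$ one has $M_{3h,P}=\{3L_P\}\cup M_{3h,P}^{\integ}$ with $|M_{3h,P}^{\integ}|=2$, and $M_{4h,P}$ contains, besides $4L_P$, two reducible cycles $L_P+C$ and eight integral quartics; for $P\in E(2h)_\prim$ one has $M_{4h,P}=\{2C_P\}\cup M_{4h,P}^{\integ}$ with $|M_{4h,P}^{\integ}|=14$.

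The numerical consequence makes the error visible. Your method would give $m^P_{3h}=\dt^{(2)}_3=1$ at a flex, $m^P_{4h}=\dt^{(2)}_4=2$ at a flex, and $m^P_{4h}=\dt^{(5)}_2=2$ at a $2h$-primitive point, none of which equal the primitive-point values $3$ and $16$. The paper's actual argument (Propositions~\ref{prop:lala} and \ref{prop:reallylala}) computes
\[
m^{1}_{3h}=\dt^{(2)}_3+2=3,\quad m^{1}_{4h}=\dt^{(2)}_4+2\cdot\min\{3,9\}+8=16,\quad m^{2}_{4h}=\dt^{(5)}_2+14=16,
\]
where the extra summands come from the integral curves in $M_{dh,P}^{\integ}$ and, for $d=4$ at a flex, from the reducible cycles $L_P+C$ via Theorem~\ref{thm:takmult}. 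Determining these cardinalities (especially $8$ and $14$ for quartics at non-primitive points) is the substantive enumerative input, obtained in \cite{tak mult} via a triple cover to a cubic surface; without it the independence of $m^P_{dh}$ on $P$ cannot be checked. Your handling of $d=1,2$ is essentially correct (there are indeed no extra curves in those cases), but the argument as written does not extend to $d=3,4$.
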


We note that Proposition \ref{prop:calchyperplane} can also be deduced from the main result of \cite{Bou19b}. The approach we take here is by analysis of the moduli spaces, with a rather concrete description of the relevant curves. This illustrates the ``inner workings" of Definition \ref{def1} of log BPS numbers.

The rest of this section is devoted to proving Proposition \ref{prop:calchyperplane}. We proceed by analyzing the moduli spaces $\ol{\mmm}_{dh}^P(S,E)$ and calculating the contributions of each component to $\mc{N}_{dh}^P(S,E)$. First note that since $h.e_i=0$, $\ol{\mmm}_{dh}^P(S,E)$ remains unchanged when blowing down all exceptional divisors $e_i$. Accordingly, we may assume that $S=\ptwo$. Throughout this section, $d$ will be an integer $1\leq d \leq 4$.

Recall that $M_{dh}$ is the set of 
image cycles of genus $0$ stable log maps to $(\ptwo, E)$ of class $dh$ and
of maximal tangency. Recall that for $P\in E$, $M_{dh, P}=\{D\in M_{dh} \mid \mathrm{Supp}(D)\cap E=P\}$. 
By Proposition \ref{prop_description_log_maps_delpezzo}, $M_\beta = \coprod_{P\in E(dh)} M_{dh, P}$ and any element of $M_{dh,P}$ is a sum of rational curves maximally tangent to $E$, 
meeting $E$ at the same point.

We divide $E(dh)$ by arithmetic property of its points. 
Let 
\[
E(dh)_\prim:=\{P\in E(dh) \mid \hbox{$P$ is $dh$-primitive}\}. 
\]
In the current case of $S=\ptwo$, we have a decomposition 
\[
E(dh)=\coprod_{k | d} E(kh)_\prim. 
\]
Choosing as zero element $0\in E$ a flex point, 
we can identify the group $(E(dh),0)$ with $\ZZ/3d \times \ZZ/3d$ 
by Lemma \ref{lem:ebetagroup}. 
Under this identification, 
$E(kh)_\prim$ for a divisor $k$ of $d$ can be identified with 
\[
\left\{ P\in \ZZ/3d\times\ZZ/3d \; \Big{|} \; 
\hbox{$P$ is of order } \begin{array}{ll} k \text{ or } 3k & \text{if } 3\!\!\!\not| k \\ 
3k & \text{if } 3|k \end{array} \right\}.
\]
by Lemma \ref{lem:ptwo}. 

To give a rough description of $M_{dh,P}$, 
let 
\[
M_{dh, P}^\integ := \{C\in M_{dh, P} \mid \hbox{$C$ is integral}\}. 
\]
Let $P\in E(kh)_\prim$ and $d$ and $l$ multiples of $k$ with $l\leq d$. 
Then $P\in E(lh)$ and there may be a (necessarily finite) number of rational curves 
of degree $lh$ that are maximally tangent at $P$. 
Such curves, for different $l$, can be added together yielding elements of $M_{dh,P}$. 
For example, for $P\in E(h)=E(h)_\prim$, $M_{h,P}=M_{h,P}^\integ$ consists of 
the unique flex line $L_P$. 
Then $d \, L_P$ belongs to $M_{dh,P}$.

Recall that $j(E)$, the $j$-invariant of $E$, classifies smooth plane cubics defined over $\CC$ up to projective equivalence.
\begin{Proposition}\label{prop:lala}
The sets $M_{dh,P}$ are described as follows.

(1)
\[
\begin{array}{rll}
\text{If } P\in E(h), &\text{then } M_{h,P} &= \left\{ \, L_P \, \right\}, 
\text{where $L_P$ is the flex tangent.}
\end{array} 
\]

(2)
\[
\begin{array}{rll}
\text{If } P\in E(h), &\text{then } M_{2h,P} &=\left\{ \, 2 \, L_P \, \right\}. \\
\text{If } P\in E(2h)_\prim, &\text{then } M_{2h,P} & =M_{2h,P}^\integ = \left\{ \, C_P \, \right\} 
\text{ consists of a unique conic}. 
\end{array} 
\]

(3)
\[
\begin{array}{rll}
\text{If } P\in E(h), &\text{then } 
M_{3h,P} &=\left\{ \, 3 \, L_P\, \right\} \cup M_{3h,P}^\integ, 
\end{array} 
\]
where 
\[
\begin{array}{rll}
\text{if $j(E)\not=0$,} & \text{then }& 
\text{$M_{3h,P}^\integ$ consists of $2$ nodal cubics smooth at $P$, and} \\
\text{if $j(E)=0$,} & \text{then }& 
\text{$M_{3h,P}^\integ$ consists of $1$ cuspidal cubic smooth at $P$.}
\end{array} 
\]
\[
\begin{array}{rll}
\text{If } P\in E(3h)_\prim, &\text{then } 
M_{3h,P} &=M_{3h,P}^\integ  
\text{ consists of $3$ nodal cubics smooth at $P$.}
\end{array} 
\]

(4)
Let $E$ be general. 
\[
\begin{array}{rll}
\text{If } P\in E(h), &\text{then } 
M_{4h,P} &=
\left\{ 4 \, L_P \,\right\} \cup \left\{\, L_P + C \mid C\in M_{3h,P}^\integ \,\right\}
\cup M_{4h,P}^\integ \\
& & 
\text{ with $\big{|}M_{3h,P}^\integ\big{|}=2$ and $\big{|}M_{4h,P}^\integ\big{|}=8$}. \\
\text{If } P\in E(2h)_\prim, &\text{then } 
M_{4h,P} &=\left\{ \, 2 \, C_P \, \right\} \cup M_{4h,P}^\integ 
\text{ with $\big{|}M_{4h,P}^\integ\big{|}=14$}. \\
\text{If } P\in E(4h)_\prim, &\text{then } 
M_{4h,P} &=M_{4h,P}^\integ  
\text{ with $\big{|}M_{4h,P}^\integ\big{|}=16$}. 
\end{array}
\]
All curves in $M_{4h,P}^\integ$ are immersed and are smooth at $P$. 
\end{Proposition}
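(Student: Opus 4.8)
The plan is to reduce each $M_{dh,P}$ to an enumeration of integral rational curves of degree at most $d$ maximally tangent to $E$ at $P$, followed by a combinatorial bookkeeping of reducible configurations. Since $h.e_i=0$, the moduli problem is unchanged by blowing down the exceptional curves, so we may take $S=\ptwo$; then $E$ is ample and Proposition \ref{prop_description_log_maps_delpezzo}(3) applies, showing every element of $M_{dh,P}$ is a sum $\sum_i C_i$ of integral rational curves with $C_i$ of class $l_ih$, $\sum_i l_i=d$, each maximally tangent to $E$ at $P$. In particular $P\in E(l_ih)$ for each $i$, which by Lemma \ref{lem:ptwo} forces each $l_i$ to be a multiple of the unique $k$ with $P\in E(kh)_\prim$. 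Hence it suffices to determine $M^\integ_{lh,P}$, the integral members of $M_{lh,P}$, for $l\in\{k,2k,3k,\dots\}\cap\{1,\dots,d\}$, and then to list all unordered sums $\sum C_i$ with $C_i\in M^\integ_{l_ih,P}$.

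For $l=1,2$ the relevant linear systems are $0$-dimensional: by Proposition \ref{prop:rel sev}, $\dim|\shO_S(lh,P)|=p_a(lh)=0$, so there is a unique curve of class $lh$ meeting $E$ only at $P$ (with multiplicity $3l$). For $l=1$ every point of $E(h)$ is $h$-primitive, so this curve is integral, necessarily the flex tangent $L_P$; this proves (1) and accounts for the ubiquitous component $L_P$. For $l=2$: if $P\in E(h)$ then $2L_P$ has class $2h$ and meets $E$ only at $P$, hence is that unique member and $M^\integ_{2h,P}=\emptyset$; if $P\in E(2h)_\prim$ then $\beta$-primitivity forces the unique member to be integral, hence a smooth conic $C_P$. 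This gives (2) and supplies the reducible pieces $2L_P,4L_P,2C_P$ and $L_P+C$ occurring in (3) and (4).

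For $l=3$ we have $\dim|\shO_S(3h,P)|=p_a(3h)=1$, so $\Lambda:=|\shO_S(3h,P)|$ is a pencil of plane cubics containing $E$. When $P$ is $3h$-primitive, its other members are integral of arithmetic genus $1$, hence nodal or cuspidal; Remark \ref{rem:52} excludes a cuspidal one, and passing to the associated rational elliptic surface obtained from the $w=9$ successive blowups of \S\ref{sec:logg1} and computing Euler characteristics shows $\Lambda$ has exactly three members nodal away from $P$ and smooth at $P$ (equivalently $m^P_{3h}=e(\ptwo)=3$, with no multiple-cover term since $P$ is primitive), which is $M_{3h,P}=M^\integ_{3h,P}$. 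When $P\in E(h)$ one has $\Lambda=\langle E,3L_P\rangle$ and no decomposition $3h=h+2h$ into integral curves (as $M^\integ_{2h,P}=\emptyset$), so $M_{3h,P}=\{3L_P\}\cup M^\integ_{3h,P}$; the integral members of $\Lambda$ are cubics $9$-tangent to $E$ at $P$, and $L_P$ is a flex of such a cubic exactly when $j(E)=0$. Indeed, for $j(E)=0$ a coordinate change takes $(E,P)$ to $(\{Y^2Z=X^3+Z^3\},[0:1:0])$ and $\{Y^2Z=X^3\}$ is then a cuspidal cubic, smooth at $P$, with $9$-fold contact there, which the Euler-characteristic count identifies as the unique integral member of $\Lambda$; conversely, a cuspidal member forces, by torsion-freeness of the group law on a cuspidal cubic (as in Remark \ref{rem:52}) together with the observation that the pencil of cubics with $9$-fold contact to $\{Y^2Z=X^3\}$ at its flex consists entirely of curves $\{Y^2Z=X^3+\lambda Z^3\}$ with $j=0$, that $j(E)=0$. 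In either case one checks $m^P_{3h}=\contr^{\bps}(3,L_P)+2=\dt^{(2)}_3+2=3$, compatible with Conjecture \ref{conj1}; this proves (3).

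The genuinely hard case is $l=d=4$, where $\dim|\shO_S(4h,P)|=p_a(4h)=3$ and a direct analysis of the linear system is impractical. The reducible elements of $M_{4h,P}$ are already determined by the preceding steps ($4L_P$ and the $L_P+C$ with $C\in M^\integ_{3h,P}$ when $P\in E(h)$; only $2C_P$ when $P\in E(2h)_\prim$; none when $P\in E(4h)_\prim$), so what remains is to count $M^\integ_{4h,P}$, the integral rational quartics $12$-tangent to $E$ at $P$, and to show that each is immersed and smooth at $P$, so that by Proposition \ref{propocontro}(2) it contributes $1$ to $m^P_{4h}$. I would take the counts $|M^\integ_{4h,P}|=8,\,14,\,16$ for $P\in E(h),\,E(2h)_\prim,\,E(4h)_\prim$ respectively, together with immersedness and smoothness at $P$, from the degeneration/recursion machinery of \cite{tak compl} (these are the invariants $n(4;(a_i);)$ for the corresponding blowup configurations, for which \cite{tak compl} verifies the relevant curves are immersed away from $P$); for $\ptwo$ the alternative routes of \cite{Gra} via the scattering diagram and of \cite{BN} via degenerating $E$ are also available. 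Assembling the integral and reducible pieces then yields the stated descriptions of $M_{4h,P}$, and a final bookkeeping check — using $\contr^{\bps}(4,L_P)=\dt^{(2)}_4=2$ and $\contr^{\bps}(2,C_P)=\dt^{(5)}_2=2$ from Proposition \ref{prop:loop} and Theorem \ref{thm:Re}, the contribution $1$ of each integral curve smooth at $P$, and the contribution of each $L_P+C$ — confirms $m^P_{4h}=16$ independently of $P\in E(4h)$. The main obstacle is precisely this quartic enumeration and the attendant immersedness statement, which we import from \cite{tak compl} rather than reprove.
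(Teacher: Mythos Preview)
Your approach is essentially the paper's: decompose elements of $M_{dh,P}$ into sums of integral maximally tangent rational curves via Proposition \ref{prop_description_log_maps_delpezzo}(3), then enumerate $M^\integ_{lh,P}$ for each $l$ using the relevant linear systems and the elliptic-fibration Euler-characteristic count of \S\ref{sec:logg1}. The degree $\leq 3$ analysis is correct and matches the paper closely. Two points need repair.

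\textbf{The reverse inclusion is missing.} You argue that every element of $M_{dh,P}$ is \emph{some} sum $\sum C_i$ and then simply list all such sums, but you never verify that each listed cycle actually \emph{is} the image cycle of a genus $0$ basic stable log map. This is not automatic: one must exhibit a log map with image $lC$ (respectively $L_P+C$). The paper handles this explicitly: for multiples $lC$ it invokes \cite[Proposition 6.1]{GPS10}, which produces the family of $l:1$ covers totally ramified over $P$; for two-component cycles $L_P+C$ it invokes Theorem \ref{thm:takmult}, whose statement includes the existence of the $d$ relevant stable log maps. Without these citations your equality of sets is only an inclusion.

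\textbf{The degree $4$ non-primitive counts are misattributed.} You take all three numbers $|M^\integ_{4h,P}|=8,14,16$ from \cite{tak compl}, but that reference's framework $n(d;(a_i);(b_j))$ is set up under a primitivity hypothesis on $P$ (see \cite[Definition 3.6]{tak compl} and the paper's summary in \S\ref{sec:g2high}); it yields only the primitive count $16$. The non-primitive counts $8$ and $14$ (and the immersedness and smoothness at $P$ in those cases) come instead from \cite[Proposition 4.4]{tak mult}, via a genuinely different argument: the cyclic triple cover $\pi:Y\to\ptwo$ ramified along $E$, where $Y$ is a cubic surface, $\pi^{-1}(C)$ splits into three $\AA^1$-curves on $(Y,\pi^{-1}(E)_{\mathrm{red}})$, and for $d=4$ each component has $p_a\leq 1$ so that a suitable blowdown $Y\to\ptwo$ reduces the count to the conic and cubic cases you have already handled. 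Your final bookkeeping with $\dt^{(2)}_4$, $\dt^{(5)}_2$ and $\contr(L_P,C)$ is fine, but it belongs to the computation of $m^P_{4h}$ in Proposition \ref{prop:reallylala} rather than to the description of $M_{4h,P}$ itself.
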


\begin{proof}
First note that $M_{dh, P}^\integ$ is exactly the set of $\AA^1$-curves, 
i.e. rational curves maximally tangent to $E$ at $P$. 

Noting that an element of $M_{dh, P}$ 
is a sum of elements of $M_{lh, P}$ for various $l$ by Proposition \ref{cor_description_log_maps}. 
it is easy to see that $M_{dh, P}$ is contained in the right hand side, 

For the other inclusion, the fact that a multiple of an $\AA^1$-curve 
is contained in $M_{dh, P}^\integ$ follows from \cite[Proposition 6.1]{GPS10}. 
For curves with $2$ components, we use Theorem \ref{thm:takmult}. 

Now we explain how to obtain the description of $M_{dh, P}^\integ$ given above. 

(1), (2), (3) 
For the $dh$-primitive points with $d=1, 2, 3$, the statements were proven in Section \ref{sec:calc}. 
The assertion on $M_{2h, P}$ for $P\in E(h)$ is easy. 

Assume that $P\in E(h)$, i.e.\ that $P$ is a flex point. 
Similarly to Section \ref{sec:logg1}, consider the pencil of degree 3 curves maximally tangent to $E$ at $P$. There is again a unique member $D$ that is singular at $P$. Necessarily, $D$ is the triple flex line. To obtain the universal family, we blow up $9$ times along the inverse images of $P$ in the strict transforms of $E$. This elliptic fibration (where the last exceptional divisor can be identified with the base) has Euler characteristic $12$. One fiber is a chain of 9 rational curves including the triple line. This fiber has Euler characteristic $10$. This means that in addition to the triple line, there are either two nodal cubics or one cuspidal cubic.

If there is a cuspidal cubic then in appropriate coordinates it is given by $Y^2Z-X^3$. We find that $E$ must then be given by $Y^2Z-X^3-aZ^3$, with $a\neq0$, which has $j(E)=0$. For $j(E)\neq0$, there therefore are two nodal cubics.

(4)
For $4h$-primitive points, the description of $M_{4h, P}^\integ$ is given 
in \cite[Theorem 2.1]{tak compl}. 
The general case was treated in \cite[Proposition 4.4]{tak mult} . 
Here we will just give an overview of the method. 

There is a triple cover $\pi: Y\to \ptwo$ totally ramified at $E$. 
It is easy to see that $Y$ is a del Pezzo surface of degree $6$, i.e. a cubic surface. 
For any $\AA^1$-curve $C$ of degree $d$ on $(\ptwo, E)$, 
$\pi^{-1}(C)$ splits into three $\AA^1$-curves of degree $d$ on $(Y, \pi^{-1}(E)_{\mathrm{red}})$. 
Let $C'$ be one of these. 
Luckily, for $d=4$, 
we have $p_a(C')\leq 1$ and 
we can find a blowdown $Y\to \ptwo$ 
which maps $C'$ to a conic or a cubic. 
The former case is easy, and the latter can be dealt with by the method of elliptic fibrations. 
After a careful analysis of the class of $C'$ and singular fibers of the elliptic fibrations 
in the case $p_a(C')=1$, 
which depends on the order of $P$, 
we obtain the result. 
\end{proof}

The multiplicities of reducible members are given by the following. 

\begin{Theorem}[Theorem 1.14 in \cite{CGKT3}, see also Theorem 1.4 in \cite{tak mult}]\label{thm:takmult}
Let $(X, D)$ be a pair consisting of 
a smooth surface and an effective divisor. 
Denote by $\ol{M}_\beta=\ol{M}_\beta(X, D)$ 
the moduli stack of 
maximally tangent genus $0$ basic stable log maps of class $\beta$ 
to the log scheme associated to $(X, D)$. 

Let $Z_1$ and $Z_2$ be proper integral curves on $X$ satisfying the following: 
\begin{enumerate}
\item 
$Z_i$ is a rational curve of class $\beta_i$ maximally tangent to $D$, 
\item
$(K_X+D).\beta_i=0$, 
\item
$Z_1\cap D$ and $Z_2\cap D$ consist of the same point $P\in D_\mathrm{sm}$, 
and 
\item
The normalization maps $f_i: \PP^1\to Z_i$ are immersive and 
$(Z_1.Z_2)_P=\min\{d_1, d_2\}$, 
where $d_i=D.Z_i$. 
\end{enumerate}

Write $d_1=de_1, d_2=de_2$ with $\gcd(e_1, e_2)=1$. 
Then there are $d$ stable log maps in $\ol{M}_{\beta_1+\beta_2}$ 
whose images are $Z_1\cup Z_2$, 
and they are isolated with multiplicity $\min\{e_1, e_2\}$.

When $X$ is projective and $(X,D)$ is log smooth, then these curves contribute $\min\{d_1, d_2\}$ to the log Gromov-Witten invariant $\mathcal{N}_{\beta_1+\beta_2}(X, D)$. 
\end{Theorem}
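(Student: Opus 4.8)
The plan is to reduce the statement to a local analysis near $P$. First I would use Corollary~\ref{cor_description_log_maps} together with hypotheses~(1)--(4) to describe every genus $0$ stable log map $f\colon C\to X$ of class $\beta_1+\beta_2$, maximally tangent to $D$, whose image cycle is $Z_1\cup Z_2$: since $Z_1,Z_2$ are integral rational curves, $C$ has exactly two non-contracted components $C_1,C_2$ with $f|_{C_i}$ the normalization $f_i\colon\PP^1\to Z_i$, while the remaining components are contracted to $P\in D_{\mathrm{sm}}$ and carry the marked point (here condition~(3) enters, putting both contacts at the same $P$). Since $Z_i$ is maximally tangent to $D$, the set $f_i^{-1}(P)$ is a single point of $\PP^1$, forced to be the node joining $C_i$ to the contracted part; stability of the contracted components then leaves only the configuration $C=C_1\cup_{q_1}T\cup_{q_2}C_2$ with $T\cong\PP^1$ a single contracted component carrying $q_1,q_2$ and the marked point $x_1$. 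The underlying pointed stable map is thus uniquely determined, and it is rigid: each $f_i$ is infinitesimally rigid because $f_i^*D=d_i\cdot f_i^{-1}(P)$ is a maximal-tangency condition (same argument as Proposition~\ref{propocontro}(1)--(2), using $(K_X+D).\beta_i=0$ and immersivity), and $T$ has no moduli. Hence, by Proposition~\ref{prop:Wise}, the associated points of $\ol{M}_{\beta_1+\beta_2}(X,D)$ are isolated, and the problem splits into (i) counting the log enhancements of this single stable map and (ii) computing the length of $\ol{M}_{\beta_1+\beta_2}(X,D)$ at each of them.

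For (i), I would carry out the tropical bookkeeping of \cite[\S1.4]{GS13} on the fixed domain. By Proposition~\ref{prop_description_log_maps} the edge $q_i$ has contact order $u_{\eta_i,q_i}=D.\beta_i=d_i$ and $u_{\eta_T,q_i}=-d_i$, the marked leg at $\eta_T$ has weight $w=d_1+d_2$, so the balancing of Proposition~\ref{prop_balancing} holds automatically; the vertices $\eta_1,\eta_2$ sit at tropical height $0$ and $\eta_T$ at height $d_1 e_{q_1}=d_2 e_{q_2}$, whose minimal solution is $(e_{q_1},e_{q_2})=(e_2,e_1)$ (using $d_i=de_i$, $\gcd(e_1,e_2)=1$), giving basic monoid $Q\cong\NN$. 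One then checks that this fixed underlying stable map admits exactly $d$ log enhancements into a basic stable log map: a direct enumeration of the admissible structure homomorphisms $f^\flat$ on $\ol{\shM}_{C,x_1}=Q\oplus\NN$ and on the $\ol{\shM}_{C,q_i}=\langle(e_{q_i},0),(1,1),(0,e_{q_i})\rangle$. The count $d=\gcd(d_1,d_2)$ is the same combinatorial count of ways to match the $d_1$-fold and $d_2$-fold tangent data along $D$ that appears in the relative and scattering computations of \cite{GPS10,CPS}. This produces $d$ distinct stable log maps with image $Z_1\cup Z_2$.

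For (ii), which I expect to be the main obstacle, I would compute the length of $\ol{M}_{\beta_1+\beta_2}(X,D)$ at each of these $d$ points from the log-smooth deformation theory of the map. Since the map is rigid, the only parameters are the node-smoothing parameters $t_{q_1},t_{q_2}$ and the parameter $s$ generating $\kappa[Q]=\kappa[s]$, and the edge $q_i$ of length $e_{3-i}$ ties $t_{q_i}$ to $s$ through the local model for $\ol{\shM}_{C,q_i}$; imposing that $f$ continues to exist — with $f_i^*D=d_i\cdot(\mathrm{pt})$ fixed on each branch and with $Z_1\cup Z_2$ carrying exactly the expected tacnode $(Z_1.Z_2)_P=\min\{d_1,d_2\}$ of hypothesis~(4), which is precisely what makes the smoothing unobstructed — one is left with a fat point whose local ring is $\kappa[t]/(t^{\min\{e_1,e_2\}})$. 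The delicate part is tracking the interaction between the tangency data, the basic monoid, and the node-smoothing obstructions, and checking that hypothesis~(4) excludes any further obstruction that would shorten this length. A fallback route, essentially the one of \cite{tak mult}, is to translate the question to relative stable maps via \cite{AMW} and read $\min\{e_1,e_2\}$ off the expanded-target/rubber analysis, or to compare it with the obstruction-bundle computation underlying \cite[Prop.~6.1]{GPS10}.

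Finally, when $(X,D)$ is log smooth and projective, $\ol{M}_{\beta_1+\beta_2}(X,D)$ has virtual dimension $-(K_X+D).(\beta_1+\beta_2)=0$, and each of the $d$ maps is a $0$-dimensional isolated component, so by \cite[Prop.~4.9, Cor.~4.10]{Beh96} its virtual class equals its fundamental class and its contribution to $\mathcal N_{\beta_1+\beta_2}(X,D)$ is its length. Summing over the $d$ points gives total contribution $d\cdot\min\{e_1,e_2\}=\min\{de_1,de_2\}=\min\{d_1,d_2\}$, as asserted.
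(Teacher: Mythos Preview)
The paper does not actually prove this theorem: it is quoted from \cite{CGKT3} (Theorem 1.14) and \cite{tak mult} (Theorem 1.4), and no proof is given here. So there is no proof in the present paper to compare your proposal against.

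That said, your outline follows the natural strategy and matches in spirit what those references do. The reduction to the single underlying stable map $C_1\cup_{q_1}T\cup_{q_2}C_2$ via Corollary~\ref{cor_description_log_maps} is correct, and the tropical analysis giving basic monoid $\NN$ with edge lengths $(e_{q_1},e_{q_2})=(e_2,e_1)$ is right. However, your step~(i) asserts rather than demonstrates that there are exactly $d=\gcd(d_1,d_2)$ log enhancements: the ``direct enumeration of the admissible structure homomorphisms $f^\flat$'' is exactly the content that needs to be written out, and it is not as immediate as you suggest (it amounts to a torsor computation for the lifts of $\bar f^\flat$ to $f^\flat$ at the contracted vertex, and the count $d$ comes from a $\mu_d$-action once one fixes local coordinates). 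More seriously, step~(ii) is the crux of the theorem and your argument is only a sketch: the sentence ``one is left with a fat point whose local ring is $\kappa[t]/(t^{\min\{e_1,e_2\}})$'' is the entire multiplicity claim, and you have not explained why hypothesis~(4) on $(Z_1.Z_2)_P$ pins down this length. In \cite{tak mult,CGKT3} this is done by an explicit local computation in coordinates adapted to the tacnode, tracking how the smoothing parameters of the two nodes are coupled through the constraint that the deformed map remain maximally tangent; the genericity condition $(Z_1.Z_2)_P=\min\{d_1,d_2\}$ is precisely what makes the leading-order obstruction nondegenerate, and without it the length can jump. Your ``fallback route'' through \cite{AMW} and rubber maps is viable but is essentially deferring to the cited papers rather than giving an independent argument.
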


\begin{remark}
In Theorem \ref{thm:takmult}, the condition that $(Z_1.Z_2)_P=\min\{d_1, d_2\}$ means that $Z_1$ and $Z_2$ are assumed to intersect generically at $P$. 
We expect this condition is satisfied for general $E$ and $Z_1\not=Z_2$. 

If $d_1\not=d_2$, this follows from other assumptions 
since $Z_1$ and $Z_2$ are then smooth at $P$. 
In the case $d_1=d_2=d$, in an analytic coordinates $x, y$ near $P$ with $D=(y=0)$, 
we can write $Z_i=(y=a_ix^d+\dots)$. Then our assumption is that $a_1\not=a_2$. 

An example where this condition is obviously not satisfied is the case $Z_1=Z_2$. 
In this case, the space of log maps with image cycle $Z_1+Z_2$, 
as well as its contribution to the log Gromov-Witten invariant, 
is quite different (\cite[Proposition 6.1]{GPS10}).

\end{remark}

\begin{definition}

For $Z_1$ and $Z_2$ as in Theorem \ref{thm:takmult}, write
\[
\contr(Z_1,Z_2) := \min\{d_1, d_2\}.
\]

\end{definition}

\begin{Corollary} \label{cor:same}
Let $1\leq d \leq 4$ and $k|d$. Assume $E$ is general.
For $P,P' \in E(kh)_\prim$, $\mc{N}^P_{dh}(\ptwo, E) = \mc{N}^{P'}_{dh}(\ptwo, E)$.

\end{Corollary}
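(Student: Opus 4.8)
The plan is to reduce Corollary~\ref{cor:same} to the explicit descriptions of the moduli spaces $\ol{\mmm}^P_{dh}(\ptwo,E)$ obtained in Proposition~\ref{prop:lala}, together with the multiplicity computations of Theorem~\ref{thm:takmult}, \cite[Proposition 6.1]{GPS10} (i.e.\ equation~\eqref{eq:BPS}) and Proposition~\ref{propocontro}. The key point is that for $P\in E(kh)_\prim$, the decomposition $\ol{\mmm}^P_{dh}(\ptwo,E) = \bigsqcup \ol{\mmm}^P_{\text{stratum}}$ into strata indexed by the image cycles in $M_{dh,P}$ has, by Proposition~\ref{prop:lala}, a combinatorial type that depends only on $k$ (and on whether $j(E)=0$, which is excluded by the genericity hypothesis on $E$), not on the particular point $P$ within $E(kh)_\prim$. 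Since $\mc{N}^P_{dh}(\ptwo,E) = \int_{[\ol{\mmm}^P_{dh}(\ptwo,E)]^{\mathrm{vir}}} 1$ decomposes as a sum of contributions over these strata, it suffices to check that each stratum contributes the same number for $P$ and $P'$.

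First I would go through the cases $d=1,2,3,4$ one at a time, using Proposition~\ref{prop:lala}. For $d=1$ there is nothing to prove since $|E(h)|$-each point carries a single flex line contributing $1$. For $d=2,3,4$ and $P\in E(kh)_\prim$ with $k|d$, list the strata: (a) strata consisting of an integral $\AA^1$-curve $C$ with normalization immersed away from $P$ and smooth at $P$ — by Proposition~\ref{propocontro}(2) each such contributes $1$, and the \emph{number} of such curves $|M_{lh,P}^\integ|$ is given in Proposition~\ref{prop:lala} to depend only on $k$; (b) strata consisting of a multiple $\frac{l}{l'}$-fold cover of an integral curve totally ramified at $P$ — by \eqref{eq:BPS} the contribution $\contr(l',C)$ depends only on $l'$ and $C.E$, hence only on the numerical data; (c) strata whose image cycle has two distinct components $Z_1\cup Z_2$ meeting at $P$ — by Theorem~\ref{thm:takmult} and genericity of $E$, the contribution is $\min\{Z_1.E,Z_2.E\}$, again purely numerical, and the number of such configurations is read off from Proposition~\ref{prop:lala}. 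Summing these over all strata yields a formula for $\mc{N}^P_{dh}(\ptwo,E)$ in terms of $k$ and $d$ only, which is manifestly independent of the choice of $P\in E(kh)_\prim$.

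Concretely, one would verify for instance that for $P\in E(h)$, $\mc{N}^P_{4h}$ collects: $16$ from $|M_{4h,P}^\integ|$ curves each contributing $1$; the contributions of the strata $L_P+C$ for $C\in M_{3h,P}^\integ$ (two of them), each contributing $\contr(L_P,C)=\min\{1,3\}=1$ by Theorem~\ref{thm:takmult}; the contribution of $4L_P$, namely $\contr(4,L_P)$ from \eqref{eq:BPS}; and the contributions of the reducible strata built from $2L_P+\text{(conic or cubic)}$ and $2L_P + 2L_P$-type configurations, all of which are numerically determined. None of these quantities sees $P$ beyond its order, and the same bookkeeping applies to $P'\in E(h)$ and to the other residue classes $E(2h)_\prim$, $E(4h)_\prim$. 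Since $\eqref{eq:BPS}$, Theorem~\ref{thm:takmult} and Proposition~\ref{propocontro} already supply every contribution, assembling the totals is routine.

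The main obstacle is not the arithmetic of adding up contributions but rather confirming that Proposition~\ref{prop:lala} genuinely enumerates \emph{all} strata of $\ol{\mmm}^P_{dh}(\ptwo,E)$ — in particular that there are no additional reducible or multiply-covered configurations, and that the reducible configurations that do occur satisfy hypothesis (4) of Theorem~\ref{thm:takmult} (the transversality condition $(Z_1.Z_2)_P=\min\{d_1,d_2\}$) for \emph{general} $E$. This genericity input is exactly what the hypothesis ``$E$ general'' in the statement is meant to supply, and it is also what lets us discard the $j(E)=0$ exceptional behaviour in part~(3) of Proposition~\ref{prop:lala}. Once one grants that Proposition~\ref{prop:lala} is a complete and uniform (in $P$ within each $E(kh)_\prim$) description, together with the observation that all individual contributions depend only on numerical/combinatorial data, the equality $\mc{N}^P_{dh}(\ptwo,E)=\mc{N}^{P'}_{dh}(\ptwo,E)$ follows immediately.
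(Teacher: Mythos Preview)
Your overall strategy is correct and coincides with the first approach the paper itself sketches: use the explicit description of $M_{dh,P}$ from Proposition~\ref{prop:lala} together with the contribution formulas (Proposition~\ref{propocontro}, equation~\eqref{eq:BPS}, Theorem~\ref{thm:takmult}) to see that $\mc{N}^P_{dh}(\ptwo,E)$ depends only on the combinatorial type of $M_{dh,P}$, which in turn depends only on $k$. The paper notes this route works but then defers to a more conceptual monodromy argument (in the spirit of the proof of Proposition~\ref{prop:monodromic}, carried out in \cite[Lemma~1.2.2]{Bou19b}), which has the advantage of applying in all degrees without any case analysis. Your approach buys explicitness at the cost of being restricted to $d\leq 4$.

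That said, your illustrative $d=4$, $P\in E(h)$ example contains several errors you should fix before calling the argument complete. First, $|M_{4h,P}^\integ|=8$ here, not $16$ (the value $16$ is for $P\in E(4h)_\prim$). Second, in Theorem~\ref{thm:takmult} the numbers $d_i$ are intersection numbers with $E$, so for a line $L_P$ one has $L_P.E=3$ and for a cubic $C$ one has $C.E=9$; hence $\contr(L_P,C)=\min\{3,9\}=3$, not $\min\{1,3\}=1$. Third, there are no ``$2L_P+\text{conic}$'' or ``$2L_P+2L_P$'' strata: for $P\in E(h)$ there is no conic maximally tangent at $P$, and Proposition~\ref{prop:lala}(4) lists $M_{4h,P}$ exhaustively as $\{4L_P\}\cup\{L_P+C : C\in M_{3h,P}^\integ\}\cup M_{4h,P}^\integ$. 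These slips do not affect the logic of your reduction---the point that all contributions are determined by numerical data survives---but they would need correcting in any written-out version, and they show why the monodromy argument is cleaner.
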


\begin{proof}

The description of the moduli spaces in Section \ref{sec:logGW} combined with Proposition \ref{prop:lala} and Theorem \ref{thm:takmult} can be used to prove this Corollary. However, a more conceptual proof for all degrees, based on the proof of Proposition \ref{prop:monodromic}, was given in \cite[Lemma 1.2.2]{Bou19b} and we refer to it.
\end{proof}

\begin{definition}
Assume $E$ is general and $k|d$. Set
\begin{align*}
\mc{N}^{k}_{dh}(\ptwo, E) &:= \mc{N}^P_{dh}(\ptwo, E), \\
m^{k}_{dh} &:= m^P_{dh}
\end{align*}
for $P\in E(kh)_\prim$. 
These numbers are well-defined by Proposition \ref{prop:lala} and Corollary \ref{cor:same}.
\end{definition}

\begin{Proposition}\label{prop:reallylala}
Assume $E$ is general. Let $1\leq d \leq 4$ and $k|d$, and take $P\in E(kh)_\prim$. Then
\begin{align*}
\mc{N}^{k}_{dh}(\ptwo, E) \, &=  \sum_{ l C \in M_{ dh,P} } \contr(l,C) + \sum_{ C_1 + C_2 \in M_{ dh,P} } \contr(C_1,C_2), \\ 
m^k_{dh} \, &=  \sum_{ l C \in M_{ dh,P} } \contr^{\bps}(l,C) + \sum_{ C_1 + C_2 \in M_{ dh,P} } \contr(C_1,C_2) \\
&=  \sum_{ l C \in M_{ dh,P} }  \dt^{\left(3d/l-1\right)}_l + \sum_{ C_1 + C_2 \in M_{ dh,P} } \contr(C_1,C_2),
\end{align*}
where the sums are over all elements of $M_{dh,P}$.
\end{Proposition}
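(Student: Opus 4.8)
The plan is to decompose the moduli space $\ol{\mmm}^P_{dh}(\ptwo,E)$ according to the stratification by image cycle provided by Corollary \ref{cor_description_log_maps} and Proposition \ref{prop_description_log_maps_delpezzo}, which tells us that every element of $\ol{\mmm}^P_{dh}(\ptwo,E)$ has image cycle an element of $M_{dh,P}$, and that every such image cycle is a sum of rational curves maximally tangent to $E$ at $P$. By Proposition \ref{prop:lala}, for $1\leq d\leq 4$ the elements of $M_{dh,P}$ are of exactly three shapes: (a) an integral curve $C$ (the case $l=1$), (b) a nontrivial multiple $lC$ of an integral rational curve $C$ maximally tangent to $E$ at $P$, and (c) a sum $C_1+C_2$ of two distinct integral rational curves each maximally tangent to $E$ at $P$, meeting transversally at $P$ (the hypotheses of Theorem \ref{thm:takmult} being satisfied for general $E$, as recorded in the remark following that theorem). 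Since these strata are pairwise disjoint and each is open and closed in $\ol{\mmm}^P_{dh}(\ptwo,E)$, the invariant $\mc{N}^k_{dh}(\ptwo,E)$ splits as the sum of the contributions of the strata, and the first displayed equality in the proposition is precisely the sum over the type-(a)+(b) strata of $\contr(l,C)$ (using $\contr(1,C)=1$ from Proposition \ref{propocontro}(2) and Proposition \ref{prop:lala}, noting all curves in $M^\integ_{4h,P}$ are immersed and smooth at $P$) plus the sum over the type-(c) strata of $\contr(C_1,C_2)=\min\{d_1,d_2\}$ from Theorem \ref{thm:takmult}.

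For the second and third displayed equalities I would invert the defining recursion \eqref{eq:rec} for $m^P_{dh}$ exactly as in \eqref{eq:recinv}, writing $m^P_{dh}=\sum_{k'|d}\frac{(-1)^{(k'-1)(d/k')\cdot 3}}{(k')^2}\mu(k')\,\mc{N}^P_{(d/k')h}(\ptwo,E)$, and then substitute the first equality into each $\mc{N}^P_{(d/k')h}(\ptwo,E)$. The key bookkeeping point is that a multiple-cover stratum over a fixed integral curve $C\subset\ptwo$ maximally tangent at $P$ of class $eh$ contributes to $\mc{N}^P_{(d/k')h}$ precisely when $e\mid (d/k')$, via the $((d/k')/e):1$ totally ramified covers, with contribution $\contr((d/k')/e,C)$; collecting all terms in which a given $C$ (of class $eh$, so $C.E=3e$) appears and comparing with Definition \ref{def:loopy} shows that $C$ contributes exactly $\contr^{\bps}(d/e,C)$ to $m^P_{dh}$. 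For $d\leq 4$ the only integral curves giving nontrivial multiple covers are the flex line ($e=1$) and the conic $C_P$ ($e=2$), and in the hyperplane-multiple case one reindexes $l=d/e$ so that the multiple-cover contribution of $C$ is $\contr^{\bps}(l,C)$ with $C.E-1=3d/l-1$; Proposition \ref{prop:loop} (applicable since the normalization map of an immersed integral rational curve maximally tangent to $E$ is infinitesimally rigid by Proposition \ref{propocontro}(2)) then identifies $\contr^{\bps}(l,C)=\dt^{(3d/l-1)}_l$, giving the last line. Meanwhile, the reducible strata $C_1+C_2$ are of class $dh$ with $k'=1$, so they appear with coefficient $\mu(1)=1$ in the inversion and contribute $\contr(C_1,C_2)$ unchanged; by Proposition \ref{prop:lala} no reducible or multiple-cover image of class $(d/k')h$ with $k'>1$ arises that would survive the M\"obius inversion to pollute the $m^P_{dh}$ formula — the reducible cycles appearing for the smaller classes $2h,3h$ are accounted for precisely by the multiple-cover terms in the way just described.

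I expect the main obstacle to be the careful matching of the combinatorial terms after M\"obius inversion: one must verify that for each integral curve $C$ appearing in some $M_{(d/k')h,P}$ (for $k'\mid d$), the total coefficient with which $\contr(\cdot,C)$-type data accumulate across the various $\mc{N}^P_{(d/k')h}(\ptwo,E)$ reproduces exactly the sum defining $\contr^{\bps}$, and that the reducible cycles are not double-counted or missed. This requires using Proposition \ref{prop:lala} to enumerate $M_{(d/k')h,P}$ for each divisor $k'$ of $d$ and each type of primitive point $P\in E(kh)_\prim$, and tracking how a flex line $L_P$ (respectively a conic $C_P$) reappears as $\frac{d/k'}{1}L_P$ (respectively $\frac{d/k'}{2}C_P$) in the lower-degree moduli spaces. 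The enumeration is finite and bounded ($d\leq 4$), so this is a matter of organized case-checking rather than new ideas; once the bijection between the terms of the inverted recursion and the $(l,C)$ resp.\ $(C_1,C_2)$ data of $M_{dh,P}$ is set up, the three displayed identities follow, invoking Proposition \ref{prop:loop} and Theorem \ref{thm:takmult} for the evaluation of the individual contributions.
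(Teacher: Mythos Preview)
Your proposal is correct and follows essentially the same route as the paper's own proof: decompose $\ol{\mmm}^P_{dh}(\ptwo,E)$ by image cycle into the three types (integral, pure multiple, two-component reducible) using Proposition \ref{prop:lala}, then read off the contributions to $\mc{N}^P_{dh}$ from Proposition \ref{propocontro}(2), equation \eqref{eq:BPS}, and Theorem \ref{thm:takmult}, and finally pass to $m^P_{dh}$ via the M\"obius inversion \eqref{eq:recinv}. The paper's proof is simply terser on this last step, citing \eqref{eq:recinv} directly rather than spelling out (as you do) why the reducible contributions survive unchanged and the multiple-cover terms reassemble into $\contr^{\bps}(l,C)$; your explicit bookkeeping is a faithful expansion of that citation.
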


\begin{proof}

The space $\ol{\mmm}^P_{dh}(\ptwo,E)$ has one isolated component of dimension $0$ for each integral cycle 
$C\in M_{dh,P}$. 
By assumption $C$ is immersed and hence by Proposition \ref{propocontro}(2), this component contributes $1=\contr(1,C)=\contr^{\bps}(1,C)$ to both $\mc{N}^{k}_{dh}(\ptwo, E)$ and $m^k_{dh}$.

Let $C_1+C_2\in M_{dh,P}$ 
with $C_1$ of degree $d_1=de_1$ and $C_2$ of degree $d_2=de_2$ for $\gcd(e_1,e_2)=1$ (and $d=d_1+d_2$). For each such cycle, by Theorem \ref{thm:takmult} $\ol{\mmm}^P_{dh}(\ptwo,E)$ has $3d$ isolated zero-dimensional components of length $\min\{e_1,e_2\}$ each. Together, they contribute $3d \min\{e_1,e_2\} = \contr(C_1,C_2)$  to both $\mc{N}^{k}_{dh}(\ptwo, E)$ and $m^k_{dh}$. 

The only components remaining are the components of $l:1$ covers over $C$ corresponding to image cycles 
$lC\in M_{dh,P}$. 
Such a component is of dimension $l-1$ and contribues $\contr(l,C)$ to $\mc{N}^{k}_{dh}(\ptwo, E)$ by \eqref{eq:BPS}, and $\contr^{\bps}(l,C)$ to $m^k_{dh}$ by \eqref{eq:recinv}.
\end{proof}

\begin{remark}
We note that to extend Proposition \ref{prop:reallylala} to $d>4$, one needs to compute the contributions to the log Gromov-Witten and log BPS invariants of components corresponding to more complicated image cycles.
\end{remark}

Using Propositions \ref{prop:lala} and \ref{prop:reallylala}, we can now prove Proposition \ref{prop:calchyperplane} by explicit calculation. We also include the calculation of log Gromov-Witten invariants for completeness. 
In the calculation of $m_{dh}^k$ etc., 
a point $P\in E(kh)_\prim$ is taken. 
We use notations $L_P$ for the flex tangent lines and $C_P$ for conics 
from Proposition \ref{prop:lala}. 
\begin{align*}
m^{1}_{h} &= \shN^{1}_{h}(\mathbb P^2,E) = \contr(1,L_P) = 1,\\
m^{1}_{2h} &= \contr^{\bps}(2,L_P) = \dt_2^{(2)} = 1, \\
\shN^{1}_{2h}(\mathbb P^2,E) &= \contr(2,L_P) = \frac{3}{4}, \\
m^{2}_{2h} &= \shN^{2}_{2h}(\mathbb P^2,E) =  \contr(1,C_P)  = 1, \\
m^{1}_{3h} &= \contr^{\bps}(3,L_P) + \sum_{C\in M_{3h, P}^\integ} \contr(1,C) \\
&= \dt^{(2)}_3 + \, 2 \cdot \dt^{(8)}_1 = 3, \\
\shN^{1}_{3h}(\mathbb P^2,E) &= \contr(3,L_P) + \sum_{C\in M_{3h, P}^\integ} \contr(1,C) =  3 + \frac{1}{9}, \\
m^{3}_{3h} &= \shN^{3}_{3h}(\mathbb P^2,E) =  \sum_{C\in M_{3h, P}} \contr(1,C) = 3, \\
m^{1}_{4h} &= \contr^{\bps}(4,L_P) + \sum_{C\in M_{3h, P}^\integ} \contr(L_P,C) + 
\sum_{C\in M_{4h, P}^\integ} \contr(1, C) \\
&= \dt^{(2)}_4 + \, 2 \cdot \min\{3,9\} + 8 = 16, \\
\shN^{1}_{4h}(\mathbb P^2,E) &= \contr(4,L_P) + \sum_{C\in M_{3h, P}^\integ} \contr(L_P,C) +  \sum_{C\in M_{4h, P}^\integ} \contr(1,C)  = 16+\frac{3}{16}, \\
m^{2}_{4h} &= \contr^{\bps}(2,C_P) + \sum_{C\in M_{4h, P}^\integ} \contr(1,C) \\
&= \dt^{(5)}_2 + 14 = 16, \\
\shN^{2}_{4h}(\mathbb P^2,E) &= \contr(2,C_P) + \sum_{C\in M_{4h, P}^\integ} \contr(1,C)
 = 16 + \frac{1}{4}, \\
m^{4}_{4h} &= \shN^{4}_{4h}(\mathbb P^2,E) = \sum_{C\in M_{4h, P}} \contr(1,C) = 16.
\end{align*}
This completes the proof of Proposition \ref{prop:calchyperplane}.

We finish by asking a question:

\begin{openque}
For each irreducible component $\ol{\mmm} \subset \msep$, is its contribution to $\nsep$ given by a DT invariant of some quiver? The next case to understand is the situation of Theorem \ref{thm:takmult}.

\end{openque}

\end{document}